\newcommand{\twocopy}{\ensuremath{2L}}
\newcommand{\aug}{\ensuremath{\varepsilon}}
\newcommand{\df}{\ensuremath{\partial}}
\newcommand{\dfe}{\ensuremath{\partial^\varepsilon}}
\newcommand{\fdf}{\ensuremath{\widehat{\df}}}
\newcommand{\faug}{\ensuremath{\widehat{\aug}}}
\newcommand{\fdfe}{\ensuremath{\fdf^{\faug}}}
\newcommand{\alg}{\ensuremath{\mathcal{A}}}
\newcommand{\qset}{\ensuremath{\mathcal{Q}}}
\newcommand{\ms}{\ensuremath{\mathcal{M}}}
\newcommand{\ev}{\operatorname{ev}}
\DeclareMathOperator{\area}{Area}
\newcommand{\la}{\langle}
\newcommand{\ra}{\rangle}
\newcommand{\pa}{\partial}
\newcommand{\Spa}{\operatorname{Span}}
\newcommand{\ix}{\operatorname{Index}}
\newcommand{\krn}{\operatorname{Ker}}
\newcommand{\img}{\operatorname{Im}}
\newcommand{\sblv}{{\mathcal{H}}}
\newcommand{\Ordo}{{\mathbf{O}}}
\newcommand{\cc}{\ensuremath{\mathbb{C}}}
\newcommand{\rr}{\ensuremath{\mathbb{R}}}
\newcommand{\zz}{\ensuremath{\mathbb{Z}}}
\newcommand{\qq}{\ensuremath{\mathbb{Q}}}
\theoremstyle{plain}
\newtheorem{thm}{Theorem}[section]
\newtheorem{cor}[thm]{Corollary}
\newtheorem{lem}[thm]{Lemma}
\newtheorem{claim}[thm]{Claim}
\newtheorem{prop}[thm]{Proposition}
\theoremstyle{definition}
\newtheorem{defn}[thm]{Definition}
\theoremstyle{remark}
\newtheorem{rem}[thm]{Remark}
\newtheorem{exam}[thm]{Example}
\numberwithin{equation}{section}
\def\dfn#1{{\em #1}}
\begin{document}

\title[A Duality Exact Sequence]{A Duality Exact Sequence for
  Legendrian Contact Homology}

\author[T. Ekholm]{Tobias Ekholm} \address{Uppsala University, Box 480,
  751 06 Uppsala, Sweden} \email{tobias@math.uu.se}
\author[J. Etnyre]{John B. Etnyre} \address{Georgia Institute of
  Technology, Atlanta, GA 30332} \email{etnyre@math.gatech.edu}
\author[J. Sabloff]{Joshua M. Sabloff} \address{Haverford College,
  Haverford, PA 19041} \email{jsabloff@haverford.edu}

\thanks{TE was supported by the Alfred P. Sloan Foundation, by
  NSF-grant DMS-0505076, and by the Royal Swedish Academy of Sciences,
  the Knut and Alice Wallenberg Foundation. JE was partially supported
  by NSF grants DMS-0804820,  DMS-0707509 and DMS-0244663. \\ 
  An updated version of this article has appeared in the Duke Mathematical Journal, Volume 150 (2009), no.~1, 1--75.}

\begin{abstract}
  We establish a long exact sequence for Legendrian submanifolds
  $L\subset P \times \rr$, where $P$ is an exact symplectic manifold,
  which admit a Hamiltonian isotopy that displaces the projection of
  $L$ to $P$ off of itself. In this sequence, the singular homology
  $H_\ast$ maps to linearized contact cohomology $CH^{\ast}$, which
  maps to linearized contact homology $CH_\ast$, which maps to
  singular homology. In particular, the sequence implies a duality
  between $\krn(CH_{\ast}\to H_\ast)$ and
  $CH^{\ast}/\img(H_\ast)$. Furthermore, this duality is compatible
  with Poincar{\'e} duality in $L$ in the following sense: the
  Poincar{\'e} dual of a singular class which is the image of $a\in
  CH_\ast$ maps to a class $\alpha\in CH^{\ast}$ such that
  $\alpha(a)=1$.


  The exact sequence generalizes the duality for Legendrian knots in
  $\rr^3$ \cite{duality} and leads to a refinement of the Arnold
  Conjecture for double points of an exact Lagrangian admitting a
  Legendrian lift with linearizable contact homology, first proved in
  \cite{ees:ori}.
\end{abstract}

\maketitle


\section{Introduction}
\label{sec:intro}

Legendrian contact homology, originally formulated in \cite{chv,
  yasha:icm}, is a Floer-type invariant of Legendrian submanifolds
that lies within Eliashberg, Givental, and Hofer's Symplectic Field
Theory framework \cite{egh}.  If $(P, d\theta)$ is an exact symplectic
$2n$-manifold with finite geometry at infinity, then Legendrian
contact homology associates to a Legendrian submanifold $L\subset (P
\times \rr, dz-\theta)$, where $z$ is a coordinate in the
$\rr$-factor, the ``stable tame isomorphism'' class of an associative
differential graded algebra (DGA) $(\alg(L), \df)$.  The algebra is
freely generated by the Reeb chords of $L$ --- that is, integral
curves of the Reeb field $\partial_z$ that begin and end on $L$ ---
and is graded using a Maslov index.  The differential comes from
counting holomorphic curves in the symplectization of $(P\times
\rr,L)$; in the present case, this reduces to a count of holomorphic
curves in $P$ with boundary on the projection of $L$; see
\cite{ees:high-d-analysis, ees:pxr} and below.

In general, it is difficult to extract information directly from the
Legendrian contact homology DGA. An important computational technique
is Chekanov's linearization $(Q(L), \df_1)$ of $(\alg(L), \df)$. The
linearization is defined when $\alg(L)$ admits an isomorphism that
conjugates $\pa$ to a differential $\pa'$ which respects the word
length filtration on $\alg(L)$. In this case, the linearized homology
is the $E_1$-term in the corresponding spectral sequence for computing
the homology of $\pa'$. The linearized contact homology may depend on
the choice of conjugating isomorphism, but the \emph{set} of
isomorphism classes of linearized contact homologies is invariant
under deformations of $L$.  Legendrian contact homology and its
linearized version have turned out to be quite effective tools for
providing obstructions to Legendrian isotopies \cite{chv,
  ees:high-d-geometry, epstein-fuchs, kirill, lenny:computable,
  lenny-lisa}.  These results indicate the so-called ``hard''
properties of Legendrian embeddings.

The main result of this paper describes an important structural
feature of linearized Legendrian contact homology. Say that a
Legendrian submanifold $L \subset P \times \rr$ is \emph{horizontally
  displaceable} if the projection of $L$ to $P$ can be completely
displaced off of itself by a Hamiltonian isotopy.  This condition
always holds if $P = \rr^{2n}$ (or, more generally, if $P = M \times
\cc$) or if $L$ is a ``local'' submanifold that lies inside a Darboux
chart.

\begin{thm} \label{thm:duality} Let $L \subset P^{2n} \times \rr$ be a
  closed, horizontally displaceable Legendrian submanifold whose
  projection to $P$ has only transverse double points. If $L$ is spin,
  then let $\Lambda$ be $\zz$, $\qq$, $\rr$, or $\zz_m$. Otherwise,
  let $\Lambda$ be $\zz_2$.

  If the contact homology of $L$ is linearizable, then the linearized
  contact homology and cohomology and the singular homology of $L$
  with coefficients in $\Lambda$ fit into a long exact sequence:
  \begin{equation}\label{e:seq}
    \cdots \rightarrow H_{k+1}(L) \overset{\sigma_*}{\rightarrow}
    H^{n-k-1}(Q(L)) \rightarrow H_k(Q(L))
    \overset{\rho_*}{\rightarrow}
    H_k(L)\rightarrow
    \cdots.
  \end{equation}
  Furthermore, if $\Lambda$ is a field, if $\la\,,\ra$ is the pairing
  between the homology and cohomology of $Q(L)$, and if $\bullet$ is
  the intersection pairing on homology of $L$, then for $\gamma\in
  H_{n-k}(L)$ and $\alpha\in H_{k}(Q(L))$
  \[
  \la \sigma_\ast(\gamma),\alpha\ra=\gamma\bullet\rho_\ast(\alpha).
  \]  
\end{thm}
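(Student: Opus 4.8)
The plan is to realize the sequence \eqref{e:seq} through a two-copy construction and then to read the pairing formula directly off the geometry of the holomorphic disks that define $\sigma_\ast$ and $\rho_\ast$.

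\emph{Two-copy construction.} Fix a Morse function $f\colon L\to\rr$ and let $\twocopy=L_0\cup L_1$ be the Legendrian obtained from $L_0=L$ by pushing off a second copy $L_1$ in the $z$-direction by $\aug f$ for small $\aug>0$. For generic $f$ and $\aug$ the Lagrangian projection of $\twocopy$ has only transverse double points, and the Reeb chords of $\twocopy$ fall into three families: (i) the original chords of $L$, each appearing as an $L_0L_0$- and an $L_1L_1$-chord; (ii) for each original chord, two ``long'' mixed chords, one $L_0L_1$ and one $L_1L_0$, whose gradings are complementary in the sense of the $n-1-\,\cdot\,$ shift appearing in the theorem; and (iii) for each critical point $p$ of $f$, one ``short'' mixed chord $x_p$, sitting in degree $\operatorname{ind}_f(p)$ as an $L_1L_0$-chord and in the complementary degree as an $L_0L_1$-chord. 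Since the projection of $L$, hence for $\aug$ small that of $\twocopy$, is horizontally displaceable, the disk counts of $\twocopy$ are defined and $\twocopy$ inherits an augmentation $\aug_0\oplus\aug_0$ from a linearization $\aug_0$ of $L$; one works with the corresponding linearized complex $(Q(\twocopy),\df_1)$.

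\emph{Structure of $\df_1$ and the exact sequence.} Using the disk-counting description of $\df$ from \cite{ees:high-d-analysis, ees:pxr} together with a Morse--Bott/neck-stretching analysis as $\aug\to 0$, one identifies the rigid disks of $\twocopy$ with pairs consisting of a rigid disk of $L$ and a gradient flow tree of $f$. This puts $(Q(\twocopy),\df_1)$ into block-triangular form whose ingredients are two copies of $Q(L)$, the dual complex of $Q(L)$ (from the $L_0L_1$ long chords, with the degree reversal $n-1-\,\cdot\,$), and a Morse-type complex built from the short mixed chords, which computes $H_\ast(L)$ on the $L_1L_0$ side and, by Poincar\'e duality on $L$, $H^{n-\ast}(L)$ on the $L_0L_1$ side. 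The off-diagonal pieces of $\df_1$ that glue these blocks are precisely the map $\rho_\ast\colon H_\ast(Q(L))\to H_\ast(L)$ and, in the complementary degree, a degree-shifted adjoint of $\sigma_\ast\colon H_\ast(L)\to H^{n-1-\ast}(Q(L))$. Assembling the blocks into a short exact sequence of complexes (equivalently, an exact triangle), and using displaceability once more to force the vanishing that makes the triangle exact, yields \eqref{e:seq}.

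\emph{The pairing formula.} With $\sigma_\ast$ and $\rho_\ast$ identified as above, both $\la\sigma_\ast(\gamma),\alpha\ra$ and $\gamma\bullet\rho_\ast(\alpha)$ are integers computed by rigid $\twocopy$-disks: the first by disks with a positive puncture at an $L_0L_1$ short chord $x_p$, with $\operatorname{ind}_f(p)$ matched to $\gamma$ by Poincar\'e duality, and a negative puncture along a cycle representing $\alpha$; the second by disks with a positive puncture along $\alpha$, a negative puncture at an $L_1L_0$ short chord $x_q$, and $q$ paired with $\gamma$ by the Morse intersection product. I would prove these two counts agree by producing a single $1$-dimensional moduli space of $\twocopy$-disks carrying one positive and one negative mixed puncture together with a marked point constrained to a cycle representing $\gamma$, and showing that its boundary decomposes exactly into the two configurations above — a holomorphic-disk incarnation of the standard fact that Morse-theoretic Poincar\'e duality is realized by the intersection pairing, in the spirit of the one-dimensional case in \cite{duality}. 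In parallel, and as a check on signs, I would exploit the $z\mapsto -z$ symmetry of $P\times\rr$: it exchanges $L_0L_1$- and $L_1L_0$-chords, sends $x_p$ to $x_p$ in the complementary grading, and identifies the complex computing $CH^\ast$ with the dual of the one computing $CH_\ast$; tracking $\sigma_\ast$ and $\rho_\ast$ through this symmetry exhibits them as adjoint with respect to $\la\,,\ra$ and $\bullet$, which is the assertion.

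\emph{Main obstacle.} The analytic core is the $\aug\to 0$ degeneration: one must show every rigid $\twocopy$-disk is accounted for by a flow tree paired with a disk of $L$, with no stray contributions from short trivial strips or from disks concentrating near double points, and one must pin down coherent orientations so that the off-diagonal blocks are genuinely $\rho_\ast$ and a true adjoint of $\sigma_\ast$ rather than these maps only up to sign or filtration ambiguity — this is where the spin hypothesis and the choice of $\Lambda$ enter. The second delicate point is the compactness and gluing in the pairing step, ensuring that the relevant $1$-dimensional moduli space has no boundary components beyond the two desired ones, in particular that breakings at interior Reeb chords of $L$ cancel in pairs. Once these two facts are secured, the formula follows by bookkeeping.
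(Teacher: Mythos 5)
Your proposal follows essentially the same route as the paper: the two-copy of $L$ obtained by pushing off along a Morse function $f$, the length/Stokes filtration putting the linearized differential of the two-copy into lower-triangular form, the degeneration identifying rigid two-copy disks with rigid disks of $L$, lifted disks, and disk-plus-flow-line configurations (so that the mixed blocks become $Q(L)$, its dual complex, and the Morse complex of $f$, with the connecting entries $\rho$ and $\sigma$), and horizontal displaceability forcing the whole mixed subcomplex to be acyclic, which is exactly what turns the mapping-cone triangle into the long exact sequence. The one place you genuinely diverge is the pairing formula: the paper does not run a one-dimensional moduli space with a marked point constrained to a cycle representing $\gamma$; instead it first proves that $\rho_*$ and $\sigma_*$ are independent of the perturbing Morse function (via continuation maps and comparison maps between the Morse and singular chain complexes), and then establishes the chain-level identity by a direct bijection: a lifted generalized disk contributing to $\rho^f$ becomes, after reversing the assignment of the two copies along its boundary, a lifted generalized disk contributing to $\sigma^{-f}$, with Poincar\'e duality realized by the Morse-theoretic chain map from $CM_k(L;-f)$ to $CM^{n-k}(L;f)$ composed with continuation. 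Your cobordism idea could probably be carried out, but it requires compactness/gluing input the paper avoids, and your proposed check via the symmetry $z\mapsto -z$ is not available: negating $z$ reverses the coorientation of the contact form and exchanges the two copies, so it does not act on the fixed two-copy complex; the adjointness in the paper comes instead from the $f\leftrightarrow -f$ reversal. Two small bookkeeping corrections: each critical point of $f$ contributes a single Reeb chord of the two-copy, graded by its Morse index minus one (not one chord in each of two complementary degrees), and in the degeneration single gradient flow lines suffice rather than flow trees, since a disk whose positive puncture is mixed has exactly one mixed negative puncture.
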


A version of Theorem \ref{thm:duality} was described for Legendrian
$1$-knots in $J^{1}(\rr)$ in \cite{duality}, where it was proved that
off of a ``fundamental class'' of degree $1$, the linearized
Legendrian contact homology obeys a ``Poincar\'e duality'' between
homology groups in degrees $k$ and $-k$.  Theorem \ref{thm:duality}
can also be interpreted as a Poincar{\'e} duality theorem for
Legendrian contact homology, up to a fixed error term which depends
only on the topology of the Legendrian submanifold. Any class in
$H_k(Q(L))$ that is in the kernel of $\rho_\ast$ has a dual class in
$H^{n-k-1}(Q(L))$ determined up to the image of $\sigma_\ast$. In
particular, there is a ``duality'' between $\ker (\rho_\ast)$ and
$\text{coker}(\sigma_\ast).$ We call the quotient
$H_k(Q(L))/\ker(\rho_\ast)$ the {\em manifold classes of $L$}. The
last equation of Theorem \ref{thm:duality} implies the following: if
\[
V_{k;0}=\ker(\sigma_\ast)=\img(\rho_\ast)\subset H_{k}(L)
\]
then  
\[
V_{n-k;0}=V_{k;0}^{\perp_{\bullet}},
\]
where $V_{k;0}^{\perp_{\bullet}}$ denotes the annihilator of $V_{k;0}$
with respect to the intersection pairing. In particular, if
$\{\beta_1,\dots,\beta_r\}$ is a basis in $V_{k;0}$ which is extended
to a basis $\{\beta_1,\dots\beta_r,\gamma_1,\dots,\gamma_s\}$ and if
$\{\beta_1',\dots,\beta_r',\gamma_1',\dots,\gamma_s'\}$ is the dual
basis in $H_{n-k}(L)$ under the intersection pairing then
$\{\gamma_1',\dots,\gamma_s'\}$ is a basis in $V_{n-k;0}$. The map
$\rho_\ast$ induces an isomorphism between $V_{k;0}$ and
$H_{k}(L)/\ker(\rho_\ast)$. One may think of this isomorphism as a
correspondence between pairs of Poincar{\'e} dual classes,
$(\beta_j,\beta_j')\in H_k(L)\times H_{n-k}(L)$ and
$(\gamma_l',\gamma_l)\in H_{n-k}(L)\times H_{k}(L)$, and manifold
classes in $H_k(Q(L))/\ker(\rho_\ast)$ and
$H_{n-k}(Q(L))/\ker(\rho_\ast)$, respectively. We prove in
Theorem~\ref{thm:mfld-class} that, over $\zz_2$, the fundamental class
$[L] \in H_n(L)$ is always in the image of $\rho_\ast$. In other
words, in the pair of Poincar{\'e} dual classes $([{\rm point}],[L])$,
it is $[L]$ which is hit by $\rho_\ast$. For Poincar{\'e} dual pairs
of homology classes other than this one, however, it is impossible to
say {\em a priori} which class is in the image of $\rho_\ast$, as
Example~\ref{exam:superspun} shows.

An important application of Legendrian contact homology was to the
Arnold conjecture for Legendrian submanifolds, which states that the
number of Reeb chords of a generic Legendrian submanifold $L \subset
J^{1}(\rr^{n})$ is bounded from below by half the sum of Betti numbers
of $L$. For Legendrian submanifolds with linearizable contact
homology, this conjecture was proved in \cite{ees:ori}. Theorem
\ref{thm:duality} gives the following refinement of that result.


\begin{thm}\label{thm:arnold} Let $L\subset P \times \rr$  and
  $\Lambda$ be as in Theorem~\ref{thm:duality}, with the additional
  assumptions that the first Chern class of $TP$ vanishes and that the
  Maslov number of $L$ equals zero.  Let $c_m$ denote the number of
  Reeb chords of $L$ of grading $m$ and let $b_k$ denote the $k^{\rm
    th}$ Betti number of $L$ over $\Lambda$.  If the Legendrian
  contact homology of $L$ admits a linearization, then
  \[
  c_m+c_{n-m}\ge b_m
  \]
  for $0\le m\le n$.
\end{thm}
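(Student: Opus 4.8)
The plan is to derive the Reeb chord estimate by combining the long exact sequence \eqref{e:seq} of Theorem~\ref{thm:duality} with the obvious count that $\dim_\Lambda H_k(Q(L)) \le c_k$, which holds because $H_k(Q(L))$ is a subquotient of the degree-$k$ part of the linearized complex $Q(L)$, whose rank is exactly $c_k$ (recall $Q(L)$ is freely generated by Reeb chords). Since $\Lambda$ is a field, the hypothesis that $c_1(TP)=0$ and the Maslov number of $L$ vanishes ensures that the grading on $\alg(L)$, and hence on $Q(L)$, is a genuine $\zz$-grading, so that the linearized contact cohomology satisfies $\dim_\Lambda H^j(Q(L)) = \dim_\Lambda H_j(Q(L))$ by the universal coefficient theorem. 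In particular $\dim_\Lambda H^{n-m}(Q(L)) \le c_{n-m}$.

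First I would fix $m$ with $0\le m\le n$ and isolate the relevant portion of \eqref{e:seq}, namely
\[
H_{m}(L) \overset{\sigma_\ast}{\longrightarrow} H^{n-m}(Q(L)) \longrightarrow H_{m-1}(Q(L)) \overset{\rho_\ast}{\longrightarrow} H_{m-1}(L)
\]
together with the adjacent piece
\[
H_{m}(Q(L)) \overset{\rho_\ast}{\longrightarrow} H_{m}(L) \overset{\sigma_\ast}{\longrightarrow} H^{n-m-1}(Q(L)).
\]
Wait --- the indexing in \eqref{e:seq} needs to be read carefully: the connecting map out of $H_{k+1}(L)$ lands in $H^{n-k-1}(Q(L))$, so setting $k=m-1$ gives the map $\sigma_\ast\colon H_m(L)\to H^{n-m}(Q(L))$, whose cokernel injects into $H_{m-1}(Q(L))$, etc. The key point is exactness at $H_m(L)$: we have $H_m(L) = \img(\rho_\ast\colon H_m(Q(L))\to H_m(L)) \oplus (\text{a complement mapping injectively under }\sigma_\ast)$ is not quite right since these need not split, but at the level of dimensions exactness at $H_m(L)$ gives
\[
b_m = \dim \img\bigl(\rho_\ast\colon H_m(Q(L))\to H_m(L)\bigr) + \dim \img\bigl(\sigma_\ast\colon H_m(L)\to H^{n-m}(Q(L))\bigr).
\]
The first summand is at most $\dim H_m(Q(L)) \le c_m$. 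The second summand is at most $\dim H^{n-m}(Q(L)) = \dim H_{n-m}(Q(L)) \le c_{n-m}$. Adding these gives $b_m \le c_m + c_{n-m}$, as desired.

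The only genuine subtlety --- and the step I would treat most carefully --- is the bookkeeping that turns the long exact sequence into the dimension count, i.e.\ making sure the image of $\sigma_\ast$ out of $H_m(L)$ and the image of $\rho_\ast$ into $H_m(L)$ really do account for all of $H_m(L)$ by exactness, and that the grading conventions line up so that $\sigma_\ast$ on $H_m(L)$ targets $H^{n-m}(Q(L))$ rather than some shifted group. This is why the hypotheses on $c_1(TP)$ and the Maslov number enter: without them the relevant groups are only $\zz_\mu$- or $\zz_2$-graded and the statement "$\dim H^j(Q(L)) = \dim H_j(Q(L))$" for each integer $j$, together with the clean integer indexing in \eqref{e:seq}, could fail or require reinterpretation. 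Once the grading is honestly over $\zz$ and $\Lambda$ is a field, the argument is just rank-nullity applied twice along \eqref{e:seq}, together with $\gdim Q(L)$ in degree $k$ equal to $c_k$. I do not expect any analytic input beyond Theorem~\ref{thm:duality} itself.
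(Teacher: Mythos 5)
Your proposal is correct and takes essentially the same route as the paper: exactness of \eqref{e:seq} at $H_m(L)$ gives $b_m=\dim\img\bigl(\rho_*\colon H_m(Q(L))\to H_m(L)\bigr)+\dim\img\bigl(\sigma_*\colon H_m(L)\to H^{n-m}(Q(L))\bigr)$, and the two summands are bounded by $c_m$ and $c_{n-m}$. The only (harmless) difference is in the second bound: the paper invokes the pairing statement in Theorem~\ref{thm:duality} to get $\dim\img\sigma_*=\dim\img\bigl(\rho_*\colon H_{n-m}(Q(L))\to H_{n-m}(L)\bigr)\le c_{n-m}$, whereas you use $\dim H^{n-m}(Q(L))=\dim H_{n-m}(Q(L))\le c_{n-m}$ over the field $\Lambda$; both are valid and both need field coefficients.
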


The key to the proof of Theorem \ref{thm:duality} is to study the
Legendrian contact homology of the ``two-copy'' link $2L$ consisting
of $L$ and another copy of $L$ shifted high up in the
$z$-direction. Holomorphic disks of $2L$ admit a description in terms
of holomorphic disks of $L$ together with negative gradient flow lines
of a Morse function on $L$. This description of holomorphic disks
allows for a particularly nice characterization of the linearized
contact homology of $2L$ in terms of the linearized contact homology
of $L$ and its Morse homology.  The observation that $2L$ is isotopic
to a link with no Reeb chord connecting different components then
yields the exact sequence.

The paper is organized as follows. In Section~\ref{sec:background}, we
set notation and review the basic definitions and constructions in
Legendrian contact homology. The algebraic framework for the statement
and proof of Theorem \ref{thm:duality} is established in
Section~\ref{sec:algebra} using the description of holomorphic disks
with boundary on $2L$. The analysis necessary for this description is
carried out in Section~\ref{sec:disks}. In Section~\ref{sec:duality},
the duality theorem is established.  Section~\ref{sec:ex} contains the
proof of Theorem \ref{thm:arnold}, and discusses, via examples, the
relationship between the manifold classes in linearized contact
homology and the singular homology of the Legendrian submanifold. In
Appendix~\ref{apdx:framings}, we establish some results related to
gradings in contact homology.

{\em Acknowledgments:} The authors thank Fr\'ed\'eric Bourgeois for
useful discussions which led to the current formulation of our main
result.

\section{Background Notions}
\label{sec:background}

This section
sets terminology and gives a brief review of the basic ideas of
Legendrian submanifold theory and Legendrian contact homology. See
\cite{ees:high-d-analysis, ees:high-d-geometry, ees:ori, ees:pxr} as
well as Section 6 for details.

\subsection{The Lagrangian Projection into $P$}
\label{ssec:projs}
Throughout the paper we let $P$ denote an exact symplectic manifold
with symplectic $2$-form $d\theta$, primitive $1$-form $\theta$, and
finite geometry at infinity. We consider $P\times\rr$ as a contact
manifold with contact $1$-form $dz-\theta$, where $z$ is a coordinate
along the $\rr$-factor.  We use the projection $\Pi_P: P \times \rr
\to P$ and call it the \dfn{Lagrangian projection}. If $L\subset
P\times \rr$ is a Legendrian submanifold then $\Pi_P(L) \subset P$ is
a Lagrangian immersion with respect to the symplectic form
$d\theta$.

As mentioned in the introduction, the Reeb vector field of $dz-\theta$ is
$\pa_z$. Consequently, if $L$ is a Legendrian submanifold then there is a
bijective correspondence between Reeb chords of $L$ and double points
of $\Pi_P(L)$. We will use ``Reeb chord'' and ``double point''
interchangeably, depending on context.

\subsection{Legendrian Contact Homology}
\label{ssec:lch}

The Legendrian contact homology of a Legendrian submanifold $L \subset
P \times \rr$ is defined to be the homology of a differential graded
algebra (DGA) denoted by $(\alg(L), \df)$. In this paper, we will take
the coefficients of $\alg(L)$ to be  $\Lambda$ where  $\Lambda$ is $\zz$, 
$\qq$, $\rr$, or $\zz_n$ if $L$ is spin and $\zz_2$ otherwise. In general, it 
is possible to define the DGA with coefficients in the group ring
$\Lambda[H_1(L)]$.

\subsubsection{The Algebra}
\label{ssec:algebra}
Let $L$ be a
spin\footnote{This condition is unnecessary if we work with $\zz_2$
  coefficients.} Legendrian submanifold of $(P \times \rr, dz-\theta)$
such that all self-intersections of the Lagrangian projection of $L$
are transverse double points; such a Legendrian will be called 
\emph{chord generic}. (Note that if $L \subset P\times\rr$ is any
Legendrian submanifold then there exists arbitrarily small Legendrian
isotopies $L_t$, $0\le t\le1$ such that $L_0=L$ and such that $L_1$ is
chord generic.)  Label the double points of the Lagrangian projection
of $L$ with the set $\qset = \{q_1, \ldots, q_N\}$.  Above each $q_i$,
there are two points $q_i^+$ and $q_i^-$ in $L$, with $q_i^+$ having
the larger $z$ coordinate.

Let $Q(L)$ be the free $\Lambda$-module generated by the set of double
points $\qset$, and let $\alg(L)$ be the free unital tensor algebra of
$Q(L)$.  This algebra should be considered to be a \emph{based}
algebra, i.e.\ the generating set \qset\ is part of the data.  For
simplicity, we will frequently suppress the $L$ in the notation for
$Q(L)$ and $\alg(L)$, and we will write elements of $\alg$ as sums of
words in the elements of \qset.

\subsubsection{The Grading}
\label{ssec:grading}

Before defining the grading, we need to make some preliminary
definitions and choices.  Let $c_1(P)$ denote the first Chern class of
$TP$ equipped with an almost complex structure compatible with the
symplectic form $d\theta$.  Define the \dfn{greatest
divisor} $g(P)$ as follows: if $c_1(P) = 0$, then $g(P) = 0$;
otherwise, if $c_1(P) \neq 0$, then let $g(P)$ be the largest positive
integer such that $c_1(P) = g(P) a$ for some $a \in H^2(P;\zz)$ 
such that $g' a\ne 0$ if $0<g'<g(P)$.

The set of complex trivializations of $TP$ over a closed curve
$\gamma$ in $P$ is a principal homogeneous space over $\zz$. In
particular, given two trivializations $Z$ and $Z'$ of $TP$ along
$\gamma$, there is a well-defined distance $d(Z,Z')\ge 0$ which is the
absolute value of the class in $\pi_1\bigl(GL(k,\cc)\bigr)\cong\zz$
determined by $Z'$ if $Z$ is considered as the reference framing.

\begin{defn}\label{d:Z/gZ-framing} Let $g\in\zz$, $g\ge 0$. A {\em
    $\zz_g$-framing} of $TP$ along a closed curve $\gamma\subset
  P$ is an equivalence class of complex trivializations of $TP$
  along $\gamma$, where two trivializations $Z_0$ and $Z_1$ belong to
  the same equivalence class if $d(Z_0,Z_1)$ is divisible by $g$.
\end{defn}

In Appendix~\ref{apdx:framings}, we show how choices of sections in
certain frame bundles of $TP$ over the $3$-skeleton of some fixed
triangulation of $P$ induce a {\em loop $\zz_g$-framing of $P$} where
$g=g(P)$, i.e., a $\zz_g$-framing of $TP|_\gamma$ for any loop
$\gamma$ in $P$. We also show that such loop $\zz_g$-framings are
unique up to an action of $H^{1}(M;\zz_g)$. If $g=0$, then
$\zz_g$-framings are ordinary framings and the framings are unique up
to action of $H^{1}(M;\zz)$. In the special case $P = T^*M$, it is
straightforward to see that $c_1(P)=0$ and that there is a canonical
loop framing; see Remark~\ref{r:cotangent1}. In what follows, we
assume that a loop $\zz_g$-framing for $P$ has been fixed.

If $\gamma$ is a loop in $L$, then the tangent planes of $L$ give a
loop of Lagrangian subspaces of $TP$ along $\gamma$. Using a
trivialization $Z$ representing the $\zz_g$-framing of $TP|_\gamma$
(see Lemma~\ref{l:indframe}), we get a Maslov index
\[
\mu(\gamma,Z)\in \zz
\]
of the loop of tangent planes along $\gamma$. Since a change of
complex trivialization by one unit changes the Maslov index by $2$
units, this gives a homomorphism
\[ H_1(L;\zz)\to \zz_{2g}.
\] A generator $m(L)$ of the image of this homomorphism is called a
{\em Maslov number of $L$}.

We are now ready to define the grading.  Assume that $L \subset P
\times \rr$ is connected.  For each of the double points in $\qset$,
choose a \dfn{capping path} $\gamma_i$ in $L$ that runs from $q_i^+$
to $q_i^-$.  The Lagrangian projections of tangent planes to $L$ along
$\gamma_i$ gives a bundle of Lagrangian subspaces over $\gamma_i$ in
$TP$.  Pick a trivialization $Z$ of $TP$ over $\gamma_i$ representing
its $\zz_g$-framing. This gives a path $\Gamma_i$ of Lagrangian
subspaces in $\cc^{n}\approx\rr^{2n}$.  Let $\widehat\Gamma_i$ be the
result of closing the path $\Gamma_i$ to a loop using a positive
rotation (see \cite{ees:high-d-geometry}) along the complex angle
between the endpoints of $\Gamma_i$. The \dfn{Conley Zehnder index}
$\nu(\gamma_i,Z)$ is the Maslov index of $\widehat\Gamma_i$. Let
\[
|q_i|_Z=\nu(\gamma_i,Z)-1\in\zz.
\]
The grading $|q_i|$ of $q_i$ is
\begin{equation} \label{eqn:grading}
  |q_i| = \pi(|q_i|_Z\!\!\!\mod{2g})\in\zz_{lcm(2g,m(L))},
\end{equation}
where $\pi\colon\zz_{2g}\to \zz_{lcm(2g,m(L))}$ is the
projection. Note that this grading is independent of the choice of
capping path and of the choice of representative framing $Z$. It does,
however, depend on the choice of loop $\zz_g$-framing.

The grading of a word in $\alg$ is the sum of the gradings of its
constituent letters.  This grading will be extended to the two-copy
$2L$ in Section~\ref{ssec:2-copy}.

\subsubsection{The Differential}
\label{ssec:diffl}
The differential on $\alg$ is a degree $-1$ endomorphism that comes
from counting rigid $J$-holomorphic disks in $P$ with boundary on
$\Pi_P (L)$. Here, we first give a general discussion of
$J$-holomorphic disks and the moduli spaces they constitute. After
that, we give brief definitions of properties of the almost complex
structure $J$ and of $L$ that we will use to prove that the moduli
spaces have certain regularity properties, see Lemmas~\ref{lem:tvmspc}
and~\ref{lem:2pos}. The proofs of these lemmas and more details on the
definitions are found in Subsection \ref{s:cmdli}.  Finally, we define
the differential.

A \dfn{marked disk} $D_{m+1}$ is the unit disk in $\cc$ together with
$m+1$ marked points $\{x_0, x_1, \ldots, x_m \}$ in counter-clockwise
order along its boundary.  Let $\partial \widehat{D}_{m+1}$ be the
boundary of $D_{m+1}$ with the marked points removed.  Over a double
point $q$ of $\Pi_P(L)$ lie two points $q^+$ and $q^-$ in $L$; let
$W^+$ be a neighborhood of $q^+$ in $L$, and similarly for $W^-$.
Given a continuous map $u: (D_{m+1}, \partial D_{m+1}) \to (P,
\Pi_P(L))$ with $u(x_j) = q$, say that $u$ has a \dfn{positive
  puncture} (resp.  \dfn{negative puncture}) at $q$ if, as the
boundary of $D_{m+1}$ near $x_j$ is traversed counter-clockwise, its
image under $u$ lies in $\Pi_P(W^-)$ (resp.  $\Pi_P(W^+)$) before
$x_j$ and in $\Pi_P(W^+)$ (resp.  $\Pi_P(W^-)$) after.

Now we can define the moduli spaces of holomorphic disks with boundary
on $L$. Fix an almost complex structure $J$ on $P$ compatible with
$d\theta$.  For $a, b_1, \ldots, b_k \in \qset$ and $A \in H_1(L)$,
define the moduli space $\ms_A(a; b_1, \ldots, b_k)$, to be the set of
maps $u: D_{m+1} \to P$ that are $J$-holomorphic, i.e. that satisfy
$du + J\circ du\circ i=0$, and so that:
\begin{itemize}
\item The boundary of the punctured disk is mapped to $\Pi_P (L)$,
\item The map $u$ has a positive puncture at $x_0$ with $u(x_0) =
  a$,
\item The map $u$ has negative punctures at $x_j$, $j>0$, with $u(x_j)
  = b_j$, and
\item The restriction $u|_{\partial \widehat{D}_{m+1}}$ admits a
  continuous lift into $L$ which, together with the capping paths
  $-\gamma_a$ and $\gamma_{b_j}$, gives a loop that represents $A$.
\end{itemize}
These disks should be taken modulo holomorphic reparametrization. Note
that
\[
\ms_A(a;b_1,\dots,b_k)=\bigcup_\alpha\ms_A^{\alpha}(a;b_1,\dots,b_k),
\]
where $\alpha$ ranges over the homotopy classes represented by the
disk maps.  Consider a map of the boundary of a punctured disk
corresponding to $A\in H_1(L)$ and fix a homotopy class of disk maps
$\alpha$ with this boundary condition. Fix a trivialization $TP$ along
the boundary of the disk map which extends over the disk and fix
representatives $Z_a, Z_{b_1},\dots, Z_{b_k}$ of the $\zz_g$-framings
of the capping paths of $a,b_1,\dots,b_k$ which agree with this
trivialization at the double points. This gives a trivialization of
$TP$ along the closed-up loop representing $A$. Consider the loop of
Lagrangian planes tangent to $\Pi_P(L)$ along this closed-up loop and
let $\mu(A,\alpha)$ denote its Maslov index measured using the
trivialization just described.


In order to show that moduli spaces of holomorphic disks are nice
spaces we require the Legendrian $L$ and the almost complex structure
$J$ to have certain properties. We use the following notation. As in
\cite{ees:pxr}, we say that an almost complex structure $J$ on $P$,
compatible with $d\theta$, is {\em adjusted to $L$} if there are
coordinate neighborhoods near all double points of $\Pi_P(L)$ where
$J$ looks like the standard complex structure on $\cc^{n}$. We call
such coordinates on a neighborhood of a double point {\em double point
  coordinates}. Given an almost complex structure adjusted to $L$, we
say that $L$ is {\em normalized at crossings} if its Lagrangian
projection consists of linear subspaces near each double
point. Coordinates on $L$ near an endpoint of a Reeb chord that turn
$\Pi_P$ into a linear map into double point coordinates on $P$ are
called {\em Reeb chord coordinates}. Finally, a {\em symplectic
  neighborhood map of $L$} is a symplectic immersion $\Phi$ from a
neighborhood of the $0$-section in $T^{\ast}L$ which extends the
Lagrangian immersion $\Pi_P\colon L\to P$. We say that an almost
complex structure on $P$ is {\em standard in a neighborhood of
  $\Pi_P(L)$} if its pulls back under some symplectic neighborhood map
to an almost complex structure induced by a Riemannian metric $g$ on
$L$; see Remark \ref{rem:acsfrommetr}.

Let $L$ be a chord generic Legendrian submanifold and let $J_0$ be an
almost complex structure adjusted to $L$. Assume that $L$ is
normalized at crossings and that $J_0$ is standard in a neighborhood
of $\Pi_P(L)$. Let $N$ be a small closed regular neighborhood of
$\Pi_P(L)$ which is included in the region where $J_0$ is
standard. Write ${\mathcal J}(N)$ for the space of almost complex
structures, compatible with $d\theta$, which agrees with $J_0$ in $N$
and equip it with the $C^{2}$-topology. The following transversality
result is closely related to Proposition 2.3 in \cite{ees:pxr}.

\begin{lem}\label{lem:tvmspc}
  Let $L$ be a chord generic Legendrian submanifold. Then, after
  arbitrarily $C^{1}$-small deformation of $L$, there exists an almost
  complex structure $J_0$ adjusted to $L$, $L$ is normalized at
  crossings, and $J_0$ is standard in a neighborhood of
  $\Pi_P(L)$. Furthermore there exists a closed neighborhood $N$ of
  $\Pi_P(L)$ such that for an open dense set of almost complex
  structures $J\in {\mathcal J}(N)$ the moduli space
  $\ms_A^{\alpha}(a;b_1,\dots,b_k)$ of $J$-holomorphic disks is a
  transversely cut out manifold of dimension $d$, where
  $$d=|a|_{Z_a}-\sum_j|b_j|_{Z_{b_j}}+\mu(A,\alpha)-1.$$
\end{lem}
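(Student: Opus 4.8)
The plan is to prove Lemma~\ref{lem:tvmspc} in three stages: first arrange the geometry (the existence of $J_0$ adjusted to $L$, the normalization at crossings, and the standard form near $\Pi_P(L)$), then establish the Fredholm setup that makes $\ms^\alpha_A(a;b_1,\dots,b_k)$ a zero set of a section of a Banach bundle whose linearization has index equal to the claimed $d$, and finally run a Sard--Smale argument to obtain transversality for a generic $J \in \mathcal J(N)$. The first stage is essentially a normal form calculation: near each double point $q_i$ one picks Darboux-type coordinates in which the two sheets of $\Pi_P(L)$ become a pair of transverse linear Lagrangian subspaces of $\cc^n$; after a $C^1$-small perturbation of $L$ supported near the chords one may simultaneously flatten $\Pi_P(L)$ there and choose $J_0$ to be standard on a neighborhood $N$, invoking a symplectic neighborhood map $\Phi$ of $L$ and a metric $g$ to extend standardness over all of $N$ (cf.\ Remark~\ref{rem:acsfrommetr}); this is the construction referred to just before the lemma, and it is close to the setup in \cite{ees:pxr}.

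For the Fredholm/index computation I would set up, for fixed $A$ and fixed homotopy class $\alpha$ with $k$ negative punctures, the usual space $\mathcal B$ of $W^{2,p}_\delta$ (or weighted Sobolev) maps $u\colon (D_{m+1},\partial\widehat D_{m+1}) \to (P,\Pi_P(L))$ with the prescribed asymptotics $u(x_0)=a$, $u(x_j)=b_j$, and with exponential decay at the punctures governed by the complex angles at the double points. The $\bar\partial_J$-operator is a section of the Banach bundle $\mathcal E \to \mathcal B$ whose fibre is $W^{1,p}_\delta(u^*TP \otimes \Lambda^{0,1})$, and $\ms^\alpha_A(a;b_1,\dots,b_k)$ is its zero locus modulo the reparametrization group of $D_{m+1}$ (dimension $m+1-3 = m-2$ for $m\ge 2$, with the standard modifications for small $m$). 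The linearization $D_u\bar\partial_J$ is a Fredholm operator of Cauchy--Riemann type on a punctured disk with Lagrangian boundary conditions; its index is computed by the Riemann--Roch formula for such boundary value problems, where each puncture contributes a Conley--Zehnder term. Using the trivialization of $TP$ along the boundary that extends over the disk, together with the chosen $\zz_g$-framings $Z_a, Z_{b_j}$ of the capping paths, the total Maslov contribution assembles exactly into $\mu(A,\alpha)$, the positive puncture contributes $|a|_{Z_a}$ via the closing-up-by-positive-rotation convention defining $\nu$, each negative puncture contributes $-|b_j|_{Z_{b_j}}$, and the $-1$ is the dimension of the (one-dimensional after quotienting) automorphism group — equivalently the $-1$ already built into $|q|_Z = \nu - 1$ plus the dimension of $PSL(2,\rr)$-type reparametrizations, bookkept so that the final formula reads $d = |a|_{Z_a} - \sum_j |b_j|_{Z_{b_j}} + \mu(A,\alpha) - 1$. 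This is a careful but standard index count; I would cite the analogous computations in \cite{ees:high-d-analysis, ees:pxr} and just indicate how the framings enter.

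The transversality step is where the real work lies, and it is the step I expect to be the main obstacle. The universal moduli space $\ms^\alpha_A(a;b_1,\dots,b_k;\mathcal J(N))$ of pairs $(u,J)$ solving $\bar\partial_J u = 0$ is cut out transversally provided the extended linearization $D_u\bar\partial_J \oplus (Y \mapsto \tfrac12 Y\circ du\circ i)$ is surjective; by the usual argument (Floer's $C^\varepsilon$-space or a $C^2$-topology density argument) surjectivity fails only if some nonzero element $\eta$ in the cokernel of $D_u\bar\partial_J$ annihilates all variations $Y$ of $J$ supported in $N \setminus \Pi_P(L)$, which by unique continuation forces $\eta$ — and hence, again by unique continuation and the somewhere-injectivity of $u$ — to vanish, a contradiction. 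The delicate points are: (i) somewhere-injectivity of the relevant disks, which one gets because non-constant $J$-holomorphic disks with boundary on an embedded (Lagrangian-immersed, but embedded away from the double points) $\Pi_P(L)$ have an open dense set of injective boundary or interior points — here one must be slightly careful near the double points where $J = J_0$ is fixed, but since $J_0$ is standard there the disks are explicitly understood and cannot be entirely contained in $N$'s standard region in a way that obstructs the argument; and (ii) the fact that we are only allowed to vary $J$ inside $\mathcal J(N)$, i.e.\ $J$ must equal $J_0$ on $N$ — this is handled by noting that a generic disk has a boundary point mapping outside $N$, or more precisely that the image of a non-constant disk is not contained in $N$ (a monotonicity/energy argument, since $N$ is a thin neighborhood and the disk has a fixed positive energy determined by $a$ and the $b_j$), so variations of $J$ supported away from $N$ suffice. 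Granting these, Sard--Smale applied to the projection from the universal moduli space (a separable Banach manifold) to $\mathcal J(N)$ yields an open dense — in fact residual, upgraded to open dense using compactness of the relevant moduli spaces in each dimension — set of $J$ for which $\ms^\alpha_A(a;b_1,\dots,b_k)$ is a transversely cut out manifold of the stated dimension $d$, completing the proof.
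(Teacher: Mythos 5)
Your first stage (the normal form for $L$ and $J_0$ near the double points, the flat metric, and the extension via a symplectic neighborhood map) matches the paper, and your Fredholm/index discussion is the standard material the paper simply imports from \cite{ees:pxr}. The genuine gap is in the transversality step, precisely where you place the ``real work'': your justification that perturbations of $J$ supported away from $N$ suffice. (As a preliminary slip, note that variations ``supported in $N\setminus\Pi_P(L)$'' are not allowed at all, since elements of ${\mathcal J}(N)$ must agree with $J_0$ on all of $N$; and ``a generic disk has a boundary point mapping outside $N$'' cannot be right, because the boundary maps into $\Pi_P(L)\subset N$.) The substantive problem is the fallback claim that a non-constant disk of fixed positive energy cannot be contained in a thin neighborhood $N$ of $\Pi_P(L)$ ``by monotonicity.'' Monotonicity bounds from below the area of a curve passing through the center of a ball; it does not force a disk of fixed area to leave a neighborhood of an $n$-dimensional Lagrangian. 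Indeed, in this paper monotonicity is used in exactly the opposite direction (see the proofs of Lemma~\ref{lem:2-punc} and Lemma~\ref{lem:outest}, where small-area disks are confined to a small neighborhood of $\Pi_P(L)$), and disks hugging $\Pi_P(L)$ --- thin bigons at a crossing, disks close to flow trees --- are precisely the ones at issue. So if $N$ is fixed in advance, you cannot exclude relevant disks lying entirely in the region where $J=J_0$ is frozen, and for those your Sard--Smale argument produces no perturbation at all.

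What you are missing is that the neighborhood $N$ is part of the conclusion and must be produced at the end by a compactness argument. The paper's route is: (i) in the surjectivity argument of \cite[Lemma 4.5(1)]{ees:pxr} the perturbation $K_S$ can be taken with support disjoint from $\Pi_P(L)$ itself --- this uses only that a non-constant holomorphic disk cannot have image contained in the totally real $\Pi_P(L)$, hence has interior somewhere-injective points mapping off it; (ii) the a priori area bound shows that only finitely many Reeb chord collections can bound non-empty moduli spaces, and Gromov compactness then leaves only finitely many pairs $(A,\alpha)$ to consider; (iii) Gromov compactness of this finite collection of moduli spaces, together with (i), lets one choose a single closed neighborhood $N$ of $\Pi_P(L)$ avoided by the supports of all needed perturbations, so that transversality is achieved within ${\mathcal J}(N)$; the same finiteness is what upgrades a residual set to an open dense one. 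Your proposal omits the finiteness step (ii) and replaces (iii) by the incorrect monotonicity claim; with those two points repaired, it would essentially coincide with the paper's argument.
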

Lemma \ref{lem:tvmspc} is proved in Subsection \ref{s:cmdli}.


We are now ready to discuss the moduli spaces involved in the
definition of the differential. Fix an almost complex structure $J$ so
that Lemma \ref{lem:tvmspc} holds. (Later, in order to deal with the
two-copy $2L$ of $L$, see Subsection \ref{ssec:2-copy}, we also need
to fix an Morse function $f$ so that Lemma \ref{lem:2pos} discussed
below holds.)

If $L$ is spin, the moduli spaces can be consistently oriented (see
\cite{ees:ori}).  If $\ms_A(a;b_1, \ldots, b_k)$ is zero dimensional,
then by Gromov compactness it is compact and the algebraic count of
points $\# \ms_A(a; b_1, \ldots, b_k)$ makes sense.  This count allows
us to define the differential on generators as follows:
\begin{equation}
  \df a = \sum_{\dim(\ms_A(a; b_1, \ldots, b_k))=0} \bigl(\# \ms_A(a; b_1, \ldots, b_k)
  \bigr) b_1 \cdots b_k.
\end{equation}
We then extend the differential to $\alg(L)$ via linearity and the
Leibniz rule.

The following lemma combines the definition of $\df$ with
Stokes' Theorem.

\begin{lem} \label{lem:stokes} Let $\ell(q_i)$ be the length of the Reeb
  chord lying above $q_i$.  If $u \in \ms(a; b_1, \ldots, b_k)$, then:
  \begin{equation*}
    \ell(a) - \sum_k \ell(b_j)\ge C\area(u) > 0,
  \end{equation*}
  for some constant $C>0$.
\end{lem}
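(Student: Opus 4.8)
The plan is to interpret both sides of the inequality as integrals of $d\theta$ and use positivity of the symplectic area of a $J$-holomorphic curve. First I would recall that the length $\ell(q_i)$ of the Reeb chord over $q_i$ is exactly the difference of $z$-coordinates between $q_i^+$ and $q_i^-$, and that along any path in $L$ the Legendrian condition $dz = \theta$ forces $z$ to change by the integral of $\theta$; hence $\ell(q_i)$ equals $\int_{\gamma_i}\Pi_P^*\theta$ up to sign, or more invariantly $\ell(q_i) = \int$ of $\theta$ along the portion of the Reeb chord, which by Stokes applied across any disk bounding the difference of capping lifts is computable from $d\theta$-areas. The cleanest route: for $u\in\ms(a;b_1,\dots,b_k)$, lift $u|_{\partial\widehat D_{m+1}}$ to $L$, so that near the $j$-th negative puncture the lift jumps from $b_j^+$ down to $b_j^-$ (a drop of $\ell(b_j)$ in the $z$-direction) and near the positive puncture it jumps from $a^-$ up to $a^+$ (a rise of $\ell(a)$). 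Since $dz=\theta$ pulls back to $\Pi_P^*\theta$ along the lift, integrating $d(\Pi_P^*\theta)=\Pi_P^*d\theta$ over $D_{m+1}$ and applying Stokes' theorem yields
\[
\int_{D_{m+1}} u^* d\theta \;=\; \ell(a) - \sum_j \ell(b_j),
\]
the boundary terms being precisely the $z$-jumps at the punctures.

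Next I would invoke positivity of energy: because $J$ is compatible with $d\theta$ and $u$ is $J$-holomorphic, the integrand $u^*d\theta$ is pointwise nonnegative, indeed $u^*d\theta = |du|^2\,dA$ for a suitable area form, so $\int_{D_{m+1}} u^*d\theta = \area(u) \ge 0$, with equality only for a constant map. Since $u$ has a genuine positive puncture at $a$ and at least the one boundary arc, it is nonconstant, so $\area(u)>0$. Taking $C=1$ (or absorbing the precise normalization of the compatible metric into $C$, in case $u^*d\theta = C^{-1}|du|^2\,dA$ is the convention in play) gives $\ell(a) - \sum_j \ell(b_j) \ge C\,\area(u) > 0$, which is the claim.

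The only genuinely delicate point is bookkeeping the boundary contributions in Stokes' theorem correctly: one must check that the continuous lift of $u|_{\partial\widehat D_{m+1}}$ into $L$ exists (guaranteed by the fourth defining condition of the moduli space, via the capping paths), that the orientation conventions make the positive puncture contribute $+\ell(a)$ and each negative puncture $-\ell(b_j)$, and that the punctured boundary $\partial\widehat D_{m+1}$ has no extra contribution ``at infinity'' near the marked points — which follows from the removable-singularity/exponential-decay behavior of holomorphic disks near punctures, so the $z$-coordinate of the lift extends continuously to the jump values. I expect this orientation-and-limits check to be the main obstacle; everything else is a direct application of Stokes' theorem together with the standard positivity $u^*d\theta\ge 0$ for $J$-holomorphic maps with $J$ compatible with $d\theta$.
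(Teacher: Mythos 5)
Your argument is correct and is exactly the argument the paper has in mind: the paper states this lemma without proof, noting only that it "combines the definition of $\df$ with Stokes' Theorem," and your computation $\ell(a)-\sum_j\ell(b_j)=\int_{D}u^{\ast}d\theta=\area(u)>0$ (using $dz=\theta$ along the boundary lift into $L$, the Reeb-chord jumps at the punctures, and pointwise nonnegativity of $u^{\ast}d\theta$ for $J$ compatible with $d\theta$, with $C$ absorbing the normalization of the area form and nonconstancy giving strict positivity) is precisely that standard Stokes argument.
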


The central result of the theory is the following theorem.

\begin{thm}[\cite{ees:high-d-analysis, ees:high-d-geometry, ees:ori,
    ees:pxr}] \label{thm:lch} The differential \df\ satisfies $\df^2 =
  0$ and the ``stable tame isomorphism class'' (and hence the
  homology) of the DGA $(\alg, \df)$ is invariant under Legendrian
  isotopy.
\end{thm}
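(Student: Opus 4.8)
The statement is a known theorem, proved in the cited references; here is the plan one would follow.

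To establish $\df^2=0$, the plan is to analyze the boundary of the compactified one-dimensional moduli spaces. Fix $a,b_1,\dots,b_k\in\qset$ with $\dim\ms_A(a;b_1,\dots,b_k)=1$; by Lemma~\ref{lem:tvmspc} this is a smooth $1$-manifold for generic $J\in{\mathcal J}(N)$. First I would show that it admits a compactification to a compact $1$-manifold with boundary whose boundary points are exactly the two-level broken configurations: a rigid disk with positive puncture at $a$, one extra negative puncture at some $c\in\qset$, and a subcollection of the $b_j$ as its remaining negative punctures, glued along $c$ to a rigid disk with positive puncture at $c$ and the rest of the $b_j$ as negative punctures --- which are precisely the terms appearing in $\df(\df a)$ after applying the Leibniz rule. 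Compactness of this compactification rests on Gromov compactness together with the a priori area bound of Lemma~\ref{lem:stokes}, which prevents loss of energy; sphere bubbling is excluded because $[d\theta]$ vanishes on $\pi_2(P)$, disk bubbling without punctures is excluded because $\theta$ restricts to the exact form $d(z|_L)$ on $L$, so a holomorphic bubble whose boundary closes up in $\Pi_P(L)$ has zero area, and breaking at infinity is excluded by the finite geometry of $P$ at infinity. A gluing theorem then identifies each broken configuration with exactly one end of the $1$-manifold, and conversely every end is of this form; counting boundary points modulo $2$, or with the coherent orientations of \cite{ees:ori} when $L$ is spin, gives $\df^2=0$.

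For invariance under Legendrian isotopy, one route is a bifurcation analysis: a generic Legendrian isotopy $L_t$ between chord generic endpoints is chord generic for all but finitely many $t$, and at each exceptional parameter either a pair of Reeb chords is born or dies, or the family crosses a ``handle slide'' at which a rigid holomorphic disk degenerates. Over a parameter interval with no exceptional moment the DGA is canonically unchanged; across a handle slide it changes by an elementary tame automorphism built from the degenerating disk; across a birth--death moment it changes by an algebraic stabilization. Hence the stable tame isomorphism class, and therefore the homology, is an invariant. Equivalently, and this is the form carried out in \cite{ees:high-d-geometry}, I would construct directly from an isotopy a DGA morphism $\Phi\colon(\alg(L_0),\df_0)\to(\alg(L_1),\df_1)$ by counting rigid holomorphic disks with a time-dependent moving boundary condition interpolating between $\Pi_P(L_0)$ and $\Pi_P(L_1)$, build a morphism $\Psi$ from the reverse isotopy, show that $\Psi\circ\Phi$ and $\Phi\circ\Psi$ are DGA-homotopic to the identity by a count in a two-parameter family, and invoke the algebraic fact that a DGA morphism admitting such a homotopy inverse is a stable tame isomorphism.

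The substantive work in either approach, and the step I expect to be the main obstacle, is the analytic package underlying all of the above: the Fredholm and index theory for the relevant $\bar\partial$-operators yielding the dimension formula of Lemma~\ref{lem:tvmspc}, transversality in the parametrized setting --- where genericity generally cannot be achieved by varying $J$ alone and one must also perturb the Legendrian --- the gluing theorems that match broken and unbroken configurations near each end, the refined compactness statement that keeps curves in a compact region using finite geometry at infinity, and the consistent orientation scheme that underlies the spin hypothesis. All of this is established in \cite{ees:high-d-analysis, ees:high-d-geometry, ees:ori, ees:pxr}, and for the present paper it may be taken as input.
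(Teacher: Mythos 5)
This paper does not prove Theorem~\ref{thm:lch} itself but quotes it from \cite{ees:high-d-analysis, ees:high-d-geometry, ees:ori, ees:pxr}, and your outline correctly reflects how those references establish it: $\df^2=0$ via compactification and gluing of one-dimensional moduli spaces with bubbling excluded by exactness, and invariance via stable tame isomorphisms, with the Fredholm, transversality, gluing, and orientation analysis taken as input from the same sources. The only quibble is attribution: the cited papers prove invariance by the bifurcation-type analysis (handle slides and birth/death of Reeb chord pairs) rather than by the moving-boundary cobordism morphisms you ascribe to \cite{ees:high-d-geometry}, but since both routes are standard and the theorem is used here as a black box, this does not affect correctness.
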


See \cite{chv} for the definition of ``stable tame isomorphism''; we
will need only some straightforward consequences of the definition,
not its precise formulation, in this paper.

\subsubsection{Another moduli space}

We will also need to consider moduli spaces of rigid holomorphic disks
with boundary on $L$ with exactly two positive punctures $a_1,a_2$ and
an arbitrary number of negative punctures. There are new
transversality issues in this case, as the setup allows for multiple
covers. In order to deal with such issues, we study closely related
holomorphic disks, which in a sense give resolutions of multiple
covers. More precisely, we take two copies $L_0$ and $L_1$ of $L$ and
let $\ms_A(a_1,a_2;b_1,\dots,b_k; c_1, \ldots c_l)$ denote the moduli
space of holomorphic disks which are maps $u: S\to P$, where $S$ is a
strip $\rr\times [0,1]$ with boundary punctures $x_1,\dots,x_k$
appearing in order along $\rr \times \{0\}$ and $y_1, \ldots, y_l$
appearing in order along $\rr \times \{1\}$ with the following
properties.  The map $u$ takes the puncture at $-\infty$ to $a_1$, where $a_1$ is a Reeb chord from $L_1$ to $L_0$, the puncture at $+\infty$ to $a_2$ where $a_2$ is a Reeb chord from $L_0$ to $L_1$, 
the puncture $x_j$ to $b_j$, where the $b_j$'s  Reeb chords from $L_0$ to $L_0$, and the puncture
$y_j$ to $c_j$, where the $c_j$'s are Reeb chords from $L_1$ to $L_1$. It has positive punctures at $\pm\infty$ and negative punctures elsewhere. It takes the boundary components in
$\rr\times\{0\}$ to $L_0$ and those in $\rr\times\{1\}$ to $L_1$, and
$A$ encodes the homology class of the boundary data as before. In
complete analogy with the one-punctured case, we have
\[
\ms_A(a_1,a_2;b_1,\dots,b_k; c_1, \ldots
c_l)=\bigcup_\alpha\ms_A^{\alpha}(a_1,a_2;b_1,\dots,b_k; c_1, \ldots
c_l),
\]
where $\alpha$ ranges over the homotopy classes of disk maps. Again,
we choose a trivialization $TP$ for the boundary of the disk which
extends over the disk and is compatible with trivializations $Z_{c}$
representing the $\zz_g$-framings of the capping paths of the Reeb
chords of the disk.

In order to achieve transversality for $\ms_A(a_1,a_2;b_1,\dots,b_k;
c_1, \ldots c_l)$, we push $L_1$ off of $L=L_0$.  To state this more
precisely, we require $L$ to be normalized at crossings with respect
to some almost complex structure $J$ adjusted to $L$. Let $f\colon L\to
\rr$ be a Morse function. We say that $f$ is {\em admissible} if its
critical points lie outside Reeb chord coordinate neighborhoods in $L$
and if $f$ is real analytic in Reeb chord coordinates near every Reeb
chord endpoint.

Fix an admissible Morse function $f\colon L\to\rr$ which is
sufficiently small so that the graph of $df$ in $T^{\ast}L$ lies in
the domain of definition of a symplectic neighborhood map $\Phi$ of
$L$. Let $L_1(f)$ be the image of the graph of $df$ under $\Phi$. We
write $\ms_A(a_1,a_2;b_1,\dots,b_k; c_1, \ldots c_l;f)$ for the moduli
space of $J$-holomorphic curves with boundary on $L_0\cup L_1(f)$ and
punctures as described above.  We have the following result.

\begin{lem}\label{lem:2pos}
  Let $\hat f$ be a sufficiently small admissible Morse function and
  let $k\ge 2$. Then there are admissible Morse functions $f$
  arbitrarily $C^{k}$-close to $\hat f$ such that if
  $$d=|a_1|_{Z_{a_1}}+|a_2|_{Z_{a_2}}-\sum_{j=1}^k|b_j|_{Z_{b_j}}
  -\sum_{j=1}^l|c_j|_{Z_{c_j}} - n + 1 + \mu(A,\alpha) \le 0,$$ then
  $\ms_A^{\alpha}(a_1,a_2;b_1,\dots,b_k,c_1,\dots, c_l)$ is a
  transversely cut out manifold of dimension $d$.
\end{lem}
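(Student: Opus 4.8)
The plan is to establish Lemma~\ref{lem:2pos} by the same strategy used for Lemma~\ref{lem:tvmspc}: set up a universal moduli space fibered over the parameter space (here the space of admissible Morse functions $f$ near $\hat f$), show it is a Banach manifold by proving surjectivity of a universal linearized operator, and then invoke the Sard--Smale theorem to get that the fiber over a generic $f$ is a manifold. The dimension formula should come from an index computation for the relevant $\bar\partial$-operator on the strip with two positive and several negative punctures, which is why the $-n+1$ correction term appears: closing up the two positive punctures and the negative punctures into a loop and computing the Conley--Zehnder/Maslov contributions gives exactly $|a_1|+|a_2|-\sum|b_j|-\sum|c_j|-n+1+\mu(A,\alpha)$; I would borrow this computation essentially verbatim from \cite{ees:high-d-geometry, ees:pxr}. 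The role of the admissibility hypothesis is to guarantee that the curves are, near their punctures and in particular near $L_1(f)$, controlled by the real-analytic local model, so that the standard asymptotic analysis and Fredholm theory apply; the role of pushing $L_1$ off $L_0$ by the graph of $df$ is precisely to \emph{break} the multiple-cover phenomenon that obstructs transversality when one works directly with two positive punctures on a single $L$.

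The key steps, in order, would be: (1) identify the configuration space of maps $u\colon S\to P$ with the prescribed boundary and asymptotic conditions as a Banach manifold (weighted Sobolev spaces with small exponential weights at the punctures, chosen so the operator is Fredholm of the claimed index); (2) form the universal section $\bar\partial$ over the product of this configuration space with the space of admissible $f$ near $\hat f$, and compute its vertical differential; (3) prove this universal linearized operator is surjective — the crux — by a unique-continuation/exponential-decay argument showing that a nonzero element of the cokernel would have to vanish on an open set and hence everywhere, using the freedom to vary $f$ (which moves $L_1(f)$ and hence changes the boundary condition along $\rr\times\{1\}$) together with the real-analyticity near the punctures; (4) apply Sard--Smale to the projection to the $f$-factor to conclude that for a residual, hence $C^k$-dense, set of $f$ the moduli space $\ms_A^\alpha$ is cut out transversely; (5) note the dimension count $d$ is the Fredholm index from step (1), and observe that when $d\le 0$ transversality forces the space to be empty or a genuine (possibly negative-"dimensional", i.e.\ empty) manifold — the hypothesis $d\le 0$ is what lets us get away with varying only $f$ and not $J$ (which must stay fixed and equal to the product structure near $L_0\cup L_1$ so that the correspondence with gradient flow lines in Section~\ref{sec:disks} works).

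The main obstacle is step~(3), the surjectivity of the universal operator. The subtlety particular to this lemma — absent in Lemma~\ref{lem:tvmspc} — is the presence of \emph{two} positive punctures, which a priori allows a disk to be a branched multiple cover of a lower-energy disk or to have components forced to coincide, and such configurations are never regular no matter how $J$ is perturbed. Perturbing $f$ alone has limited reach: it only alters the Lagrangian boundary condition on the $L_1$-side of the strip. I would handle this exactly as in \cite{ees:pxr}: first use a somewhere-injectivity argument to reduce to simple curves (a disk with a positive puncture asymptotic to a mixed chord $a_1$ from $L_1$ to $L_0$ cannot be multiply covered in the naive sense once $L_1$ is genuinely displaced from $L_0$ near the relevant region), then run the standard $f$-variation argument on the injective part of the boundary, which lies on $L_1(f)$ away from the Reeb-chord neighborhoods precisely because $f$ is admissible. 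The bookkeeping of the weights at the mixed punctures $a_1, a_2$ and the verification that the resulting operator has index equal to the stated $d$ is routine but must be done carefully; everything else is a transcription of the arguments already in the literature cited in the excerpt.
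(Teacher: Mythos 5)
Your overall framework (universal moduli space over the space of admissible $f$, surjectivity of a universal linearized operator, Sard--Smale, index count giving $d$) is the standard one, but the crux --- your step (3) --- is where your argument diverges from the paper's and where it has a genuine gap. The paper's proof (following the analogue of Theorem 7.12 of \cite{ees:high-d-analysis}) does not perturb $f$ on an ``injective part of the boundary away from the Reeb-chord neighborhoods''; it perturbs $\hat f$ \emph{near the double point at one of the positive corners}, and the geometric point that makes this work is that the two positive punctures map to \emph{different} points of $\Pi_P(L_0)\cap\Pi_P(L_1(\hat f))$ (since $a_1$ runs from $L_1$ to $L_0$ and $a_2$ from $L_0$ to $L_1$), so the perturbation at one positive corner is independent of the behavior at the other. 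Even so, this only yields surjectivity on the complement of a class of \emph{exceptional} disks, and the hypothesis $d\le 0$ is then used in an essential way: as in \cite{ees:high-d-analysis}, the existence of exceptional disks of dimension $\le 0$ would contradict the surjectivity already established, so they are excluded by contradiction rather than handled directly.

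Your proposal skips exactly this mechanism. Absence of multiply covered disks (which does hold here, and is the content of the Remark following the lemma, not something you need a somewhere-injectivity reduction for) does \emph{not} give you a boundary point on the $L_1(f)$-side that is injective relative to the rest of the boundary; the $\rr\times\{1\}$ boundary arc can overlap itself (and $\Pi_P(L_1(f))$ is $C^1$-close to $\Pi_P(L_0)$), in which case a variation of $f$ supported there pairs with a cokernel element at several boundary points and the standard unique-continuation argument does not close. Relatedly, in your scheme the hypothesis $d\le 0$ does no logical work --- you invoke it only as ``what lets us get away with varying only $f$'' --- whereas if your surjectivity argument were correct as stated the lemma would hold with no dimension restriction at all. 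To repair the proof you should relocate the perturbation to a neighborhood of the double point at one positive corner, where the asymptotics give the needed injectivity of the incoming and outgoing boundary strands, and then add the dimension-$\le 0$ argument ruling out the exceptional configurations.
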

Lemma~\ref{lem:2pos} is proved in Subsection~\ref{s:cmdli}.


\begin{rem}
  It is essential in the proof of Lemma \ref{lem:2pos} that the
  perturbations near one of the positive punctures are independent
  from those at the other. This is not true for multiply covered disks
  if the boundary of the disk lies on a single immersed Lagrangian
  $L$, but using two Lagrangian submanifolds resolves this problem:
  since the Reeb chord $a_1$ starts on $L_1$ and ends on $L_0$ and the
  Reeb chord $a_2$ starts at $L_0$ and ends on $L_1$, there are no
  multiply covered disks in $\ms(a_1,a_2;b_1,\dots,b_k;c_1,\dots
  c_r)$. Furthermore, since both positive punctures map to mixed Reeb
  chords, ``boundary bubbling'' is not possible for topological
  reasons, i.e., broken disks which arises as limits of a sequence of
  disks in $\ms(a_1,a_2;b_1,\dots,b_k;c_1,\dots c_r)$ must be joined
  at Reeb chords, not at boundary points; see \cite[Definition
  2.1]{ekholm:rat-sft-ex-lag}.
\end{rem}

\begin{rem}
  Note that Lemma \ref{lem:2pos} gives one way of resolving the
  transversality problems arising from multiply covered disks. In
  order to compute the number of disks which would arise on the
  one-copy version, one would have to study the obstruction bundles
  over certain multiply covered disks on $L$.
\end{rem}

\subsection{Linearization}
\label{ssec:linearization}

Legendrian contact homology is not an easy invariant to use, as it is
a non-commutative algebra given by generators and relations.  One
important tool in extracting useful information from Legendrian
contact homology is Chekanov's \dfn{linearized contact homology}
\cite{chv}.  To define it, break the differential \df\ into components
$\df = \sum_{r=0}^\infty \df_r$, where the image of $\df_0$ lies in
the ground ring and $\df_r$ maps $Q$ to $Q^{\otimes r}$.  If $\df_0 =
0$ --- that is, if there are no constant terms in the differential
\df\ --- then the equation $\df^2=0$ implies that $\df_1^2=0$ as well,
and hence that $(Q, \df_1)$ is a chain complex in its own right.

Even if the DGA does not satisfy $\df_0=0$, it may be tame isomorphic
to one that does; in this case, we call the DGA \dfn{good}. The
existence of such an isomorphism is equivalent to the existence of an
\dfn{augmentation} of the DGA $(\alg, \df)$, i.e.\ a graded algebra
map $\aug$ from the algebra to the ground ring such that $\aug(1) = 1$
and $\aug \circ \df = 0$.  To see why this is true, define an
isomorphism $\Phi^\aug: \alg \to \alg$ on the generators of \alg\ by:
\begin{equation}
  \Phi^\aug(q_i) = q_i + \aug(q_i).
\end{equation}
This map conjugates $(\alg, \df)$ to a good DGA $(\alg,
\df^\aug)$.  It is easy to check that $\df^\aug$ may be
computed from \df\ by replacing $q_i$ with $q_i + \aug(q_i)$ in
the expression for the differential. 

For any given augmentation $\aug$, it is straightforward to compute
the homology $H_*(Q, \df^\aug_1)$.  The set $\mathcal{H}(\alg, \df)$
of all such ``linearized'' homologies taken with respect to all
augmentations of $(\alg, \df)$ is an invariant of the stable tame
isomorphism class of $(\alg, \df)$ \cite{chv}.  It is this set of
linearized homologies for which we will prove duality.

Below, if $\epsilon$ is an augmentation and if $q\in Q$ is a generator then we say that $q$ is {\em augmented} if $\epsilon(q)\ne 0$. 

\subsection{Basic Examples}
\label{ssec:basic-ex}

Before proceeding to the proof of duality, let us look at a few basic
examples in $J^1(\rr^n) = \rr^{2n+1}$.  In these examples, we use the
\dfn{front projection} $\Pi_F: J^1(\rr^n) \to J^{0}(\rr^n)=
\rr^{n+1}$. In the front projection, a Reeb chord occurs when two
tangent planes to $\Pi_F (L)$ over some point in $\rr^n$ are parallel.

In terms of the front projection, the Conley-Zehnder index of a
capping path $\gamma_i$ is calculated as follows. Let $x_i\in \rr^n$
be the projection of the double point $q_i$ to $\rr^n$.  Around
$q_i^\pm$, the front projection $\Pi_F(L)$ is the graph of functions
$h_i^\pm$ with domain in a neighborhood of $x_i$.  Define the
difference function for $q_i$ to be $h_i = h_i^+ - h_i^-$.  Since the
tangent planes to $\Pi_F(L)$ are parallel at $q_i^\pm$, the difference
function $h_i$ has a critical point at $x_i$ and, if $L$ is chord
generic, the Hessian $d^2h_i$ is non-degenerate at $x_i$.  For the
proof of the following lemma, see \cite{ees:high-d-geometry}.

\begin{lem} \label{lem:front-grading}
  If $\gamma_i$ is a generic capping path for $q_i$, then
  \begin{equation}\label{e:frontnu}
    \nu(\gamma_i) = D(\gamma_i) - U (\gamma_i) + I_{h_i}(x_i),
  \end{equation}
  where $D(\gamma_i)$ (resp.  $U(\gamma_i)$) is the number of cusps
  that $\gamma_i$ traverses in the downward (resp. upward)
  $z$-direction and $I_{h_i}(x_i)$ is the Morse index of $h_i$ at $x_i$.
\end{lem}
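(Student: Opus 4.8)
The plan is to compute $\nu(\gamma_i)$ by splitting the closed-up loop $\widehat\Gamma_i$ into two arcs and evaluating each separately. Recall that $\widehat\Gamma_i$ is $\Gamma_i$, the path of Lagrangian projections of tangent planes of $L$ along $\gamma_i$, followed by a closing arc $\kappa$ from $\Gamma_i(1)=\ell_1$ to $\Gamma_i(0)=\ell_0$ given by a positive rotation through the complex angle between $\ell_1$ and $\ell_0$; here $\ell_0$ and $\ell_1$ are the Lagrangian projections of $T_{q_i^+}L$ and $T_{q_i^-}L$. Since the ambient space is $J^1(\rr^n)$, so that $P=\rr^{2n}=T^\ast\rr^n$, I would work in the standard trivialization of $TP$, in which the vertical subbundle is a fixed Lagrangian $V$ and $\ell_0=\mathrm{graph}(A^+)$, $\ell_1=\mathrm{graph}(A^-)$ with $A^\pm=d^2h_i^\pm(x_i)$. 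Because the Maslov index of a loop is independent of the reference Lagrangian and additive under concatenation, and because $\ell_0$ and $\ell_1$ are transverse to $V$, we get $\nu(\gamma_i)=\mu(\widehat\Gamma_i)=\mu_V(\Gamma_i)+\mu_V(\kappa)$, where $\mu_V$ denotes the Maslov index of a path relative to $V$. The first summand records the caustic crossings of $\gamma_i$; the second is a correction depending only on $(V,\ell_0,\ell_1)$, since the positive rotation is canonically attached to the pair $(\ell_1,\ell_0)$.

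For $\mu_V(\Gamma_i)$: every plane along $\Gamma_i$ is tangent to the Lagrangian immersion $\Pi_P(L)$, and such a plane lies in the Maslov cycle $\Sigma(V)=\{\ell:\ell\cap V\neq\{0\}\}$ precisely when the front $\Pi_F(L)$ is not a graph over $\rr^n$ at the corresponding point, i.e.\ precisely when $\gamma_i$ passes through the caustic. A generic capping path meets the caustic transversally and only at cuspidal-edge points, so $\Gamma_i$ meets $\Sigma(V)$ transversally and only in its top stratum, each such crossing contributing $\pm 1$. I would then fix the sign by a local computation in the standard normal form of a cuspidal edge, in which the front is locally $z=\pm\tfrac{2}{3}|t|^{3/2}$ plus a nondegenerate quadratic form in the remaining variables: tracking the rotation of the tangent plane of $\Pi_P(L)$ shows that a cusp traversed in the direction of decreasing $z$ contributes $+1$, and one traversed in the direction of increasing $z$ contributes $-1$. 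Hence $\mu_V(\Gamma_i)=D(\gamma_i)-U(\gamma_i)$.

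For $\mu_V(\kappa)$: the endpoints $\ell_0$ and $\ell_1$ of $\kappa$ are transverse to $V$, hence never lie on $\Sigma(V)$, so $\mu_V(\kappa)$ varies continuously — hence is locally constant — on the set of pairs $(A^+,A^-)$ with $A^+-A^-$ nonsingular, and therefore depends only on the signature of $A^+-A^-$. A normal-form computation — applying a symplectic change of coordinates preserving $V$ and the horizontal subspace and then decomposing into blocks, which reduces matters to the case $n=1$ — identifies $\mu_V(\kappa)$ with the number of negative eigenvalues of $A^+-A^-$. Since $A^+-A^-=d^2h_i^+(x_i)-d^2h_i^-(x_i)=d^2h_i(x_i)$, this equals the Morse index $I_{h_i}(x_i)$. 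Adding the two contributions gives $\nu(\gamma_i)=D(\gamma_i)-U(\gamma_i)+I_{h_i}(x_i)$. (As a byproduct, since $\nu(\gamma_i)$ and $I_{h_i}(x_i)$ are independent of the capping path, so is $D(\gamma_i)-U(\gamma_i)$.)

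The individual geometric ingredients — ``the caustic is the projection of the singular locus of the front,'' ``a transverse cuspidal-edge crossing is a transverse crossing of the Maslov cycle,'' and ``the Conley--Zehnder index of a short Reeb chord is the Morse index of its difference function'' — are all standard, so the main obstacle is not locating the argument but making everything compatible with the precise conventions built into the definition of $\nu$: the orientation of the Maslov cycle, and the fact that $\widehat\Gamma_i$ is closed up by a \emph{positive} rotation through the complex angle. The real work is the bookkeeping needed to guarantee that the cusp contributions carry exactly the sign appearing in $D(\gamma_i)-U(\gamma_i)$ (rather than its negative) and that $\mu_V(\kappa)$ comes out to $I_{h_i}(x_i)$ rather than $n-I_{h_i}(x_i)$ or $I_{h_i}(x_i)$ shifted by a constant; testing a model capping path for one double point — e.g.\ the standard Legendrian unknot in $\rr^{2n+1}$, where $I_{h_i}=n$, the capping path runs over exactly one downward cusp, and $\nu=n+1$ — is a convenient way to pin down all the signs.
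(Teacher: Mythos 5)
Your proposal is correct; note that the paper itself gives no proof of this lemma but simply refers to \cite{ees:high-d-geometry}, and your argument is essentially the one given there: split $\widehat\Gamma_i$ into the tangent-plane path $\Gamma_i$ and the closing positive rotation, count signed transverse crossings of the Maslov cycle of the vertical Lagrangian (downward cusps $+1$, upward cusps $-1$), and identify the closing rotation's contribution with the Morse index of $d^2h_i(x_i)$. Your sign-convention check on the standard Legendrian unknot ($\nu=n+1$, $D-U=1$, $I_{h_i}=n$) is exactly the right way to pin down the only delicate points.
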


\begin{exam}[The flying saucer]\label{ex:flying-saucer} The front
  projection of the simplest Legendrian submanifold of $\rr^{2n+1}$ is
  shown in Figure~\ref{fig:flying-saucer}.  This Legendrian $L$ is an
  $n$-sphere and $\Pi_P(L)$ has exactly one double point $c$. The
  grading on this double point is $|c|=n.$ Thus, $\alg(L)$ is the
  $\zz$-algebra generated by $c$. Moreover, for $n \geq 2$, one easily
  sees that $\df c=0$ for degree reasons. Thus, the differential is
  good and the linearized contact homology is:
  \begin{equation}
    H_k(Q(L))=\begin{cases}
      \zz & \text{if $k=n$, }\\
      0& \text{otherwise}.
    \end{cases}
  \end{equation}
\end{exam}

\begin{figure}
  {\epsfxsize=4in\centerline{\epsfbox{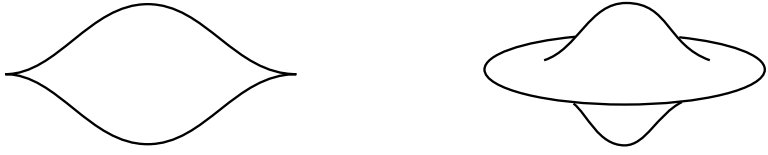}}}
  \caption{Front projection of the Flying Saucer in dimension 3, on
    the left, and 5, on the right.}
  \label{fig:flying-saucer}
\end{figure}

\begin{exam} \label{ex:stab-ex} Consider the two flying saucers in
  Figure~\ref{fig:stab-ex}.  Connect them by a curve $c(s)$
  parametrized by $s \in [-1,1]$ that runs from the cusp edge of one
  flying saucer to the cusp edge of the other.  Take a small
  neighborhood of $c$ whose cross-sections are round balls whose radii
  decrease from $s=-1$ to $s=0$, and increase from $s=0$ to $s=1$.
  Finally, introduce cusps along the sides of the neighborhood and
  join it smoothly to the two flying saucers.  If $n$ is even, the
  resulting Legendrian sphere $L$ has the same classical invariants as
  the flying saucer.  Note that this is Example 4.12 of
  \cite{ees:high-d-geometry}.
  \begin{figure}[ht]
    \relabelbox \small {\epsfxsize=4.2in\centerline{\epsfbox{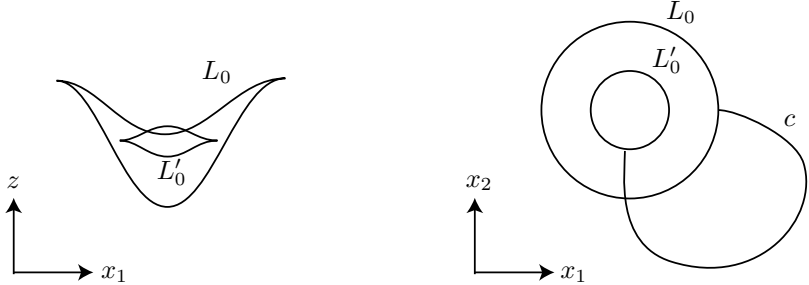}}}
    \relabel{1}{$x_1$}
    \relabel{2}{$x_2$}
    \relabel{3}{$x_1$}
    \relabel{z}{$z$}
    \relabel{0}{$L_0$}
    \relabel{p}{$L_0'$}
    \relabel{t}{$L_0$}
    \relabel{l}{$L_0'$}
    \relabel{c}{$c$}
    \endrelabelbox
    \caption{On the left side, the $x_1z$-slice of part of $L_1$ is
      shown. To see this portion in $\rr^3$, rotate the figure about
      its center axis. On the right side, we indicate the arc $c$
      connecting the two copies of $L_0.$}
    \label{fig:stab-ex}
  \end{figure}

  The connecting tube can be chosen such that there are six Reeb
  chords $q_1, \ldots, q_6$ that come from the two flying saucers and
  exactly one more Reeb chord $q$ at the center of the connecting
  tube.  The degrees of these chords are:
  \begin{align*}
    |q_1| &= |q_2| = |q_5| = n \\
    |q_3| &= |q_6| = |q| = n-1 \\
    |q_4| &= 0.
  \end{align*}
  For $n>2$, the fact that there are no chords of degree $1$ implies
  that $\alg(L)$ must be a good algebra.  Further, it is clear that
  $H_0(Q(L)) \cong \zz$.  We will calculate the remainder of the
  linearized cohomology using Theorem~\ref{thm:duality} in
  Section~\ref{ssec:explan-ex}.
\end{exam}

\begin{rem}
  In contrast to dimension three, where DGAs with $\df_0 = 0$ almost
  never arise directly from the geometry of a Legendrian knot, this is
  not infrequently the case in higher dimensions.  In dimension three,
  however, the theory of the existence of augmentations has been
  well-studied (see \cite{fuchs:augmentations, fuchs-ishk,
    ns:augm-rulings, rutherford:kauffman, rulings}), while less work
  has been done in higher dimensions.  One exception is Ng's use of
  augmentation ideals in his study of Knot Contact Homology
  \cite{lenny:knots3}, which is based on the technology of Legendrian
  contact homology.
\end{rem}

\section{Algebraic Framework}
\label{sec:algebra}

In this section we present the algebraic setup for the linearized
contact homology of the two-copy $2L$ of a Legendrian submanifold
$L\subset P$. The chain complex $Q(2L)$ encodes the chain complexes
for the linearized contact homology, the linearized contact
cohomology, and the Morse homology of a function $f$ on $L$.  Similar
setups for the two-copy of a Legendrian submanifold were used in
\cite{ees:ori} for the proof of double-point estimates of exact
Lagrangians as well as in \cite{duality} for the proof of duality for
Legendrian contact homology in three dimensions.

\subsection{The Two-Copy Algebra}
\label{ssec:2-copy}

Let $\phi_t\colon P\times\rr\to P\times\rr$ denote the time $t$ Reeb
flow. Note that $\phi_t(p,t_0)=(p,t_0+t)$.  The \dfn{two-copy}
$\twocopy$ of a Legendrian submanifold $L \subset P \times \rr$ is the
Legendrian link composed of $L$ and $\phi_s(L)$, where $s\gg 0$.  The
Reeb chords of this link are degenerate: at every point of $L$ which
is not a Reeb chord endpoint, there starts a Reeb chord ending on
$\phi_s(L)$, while at every Reeb chord endpoint of $L$, there start
two such chords. This degenerate situation is similar to a Morse-Bott
degeneration in Morse theory. We use the following perturbation of
$\phi_s(L)$ in order to get back into a generic situation. Let $f$ be
a $C^1$-small admissible Morse-Smale function on $L$.
Perturb $\phi_s(L)$ to $df$ inside
$T^*\phi_{s}{L}$, and then use the symplectic neighborhood map to
transfer the result to $P$.  Finally, lift the result back to $P
\times \rr$. We call this perturbed
Legendrian submanifold $\tilde{L}$.  Note that the $z$-coordinate of a
point $p$ in the new $\tilde{L}$ differs by $f(p)$ from the
corresponding point in $\phi_s(L)$.

The generators of $\alg(\twocopy)$ come from two sources. First, for
every double point $q$ of $\Pi_P (L)$, there are four double points of
$\Pi_P(\twocopy)$: two copies of the original double point of
$\Pi_P(L)$, one denoted $q^0$ in $\Pi_P(L)$ and one denoted
$\tilde{q}^{0}$ in $\Pi_P(\tilde{L})$, and two intersections between
$\Pi_P(L)$ and $\Pi_P (\tilde{L})$.  Second, each critical point of
$f$ gives a double point of $\Pi_P(2L)$. Thus, we can split the
generators of the two-copy DGA into four types:
\begin{description}
\item[Pure generators] Double points $q^0$ of $\Pi_P (L)$ and their
  nearby counterparts $\tilde{q}^{0}$ for $\Pi_P(\tilde{L})$,
\item[Mixed $q$ generators] Intersections $q^1$ between $\Pi_P(L)$
  and $\Pi_P (\tilde{L})$ so that $(q^1)^-$ lies near $(q^0)^-$ and
  $(q^1)^+$ lies near $(\tilde{q}^0)^+$,
\item[Mixed $p$ generators] Intersections $p^1$ between $\Pi_P(L)$
  and $\Pi_P (\tilde{L})$ so that $(p^1)^-$ lies near $(q^0)^+$ and
  $(p^1)^+$ lies near $(\tilde{q}^0)^-$, and
\item[Mixed Morse generators] Double points $c^1$ corresponding to
  critical points of $f$.
\end{description}

\begin{figure}[ht]
  \relabelbox \small {\centerline{\epsfbox{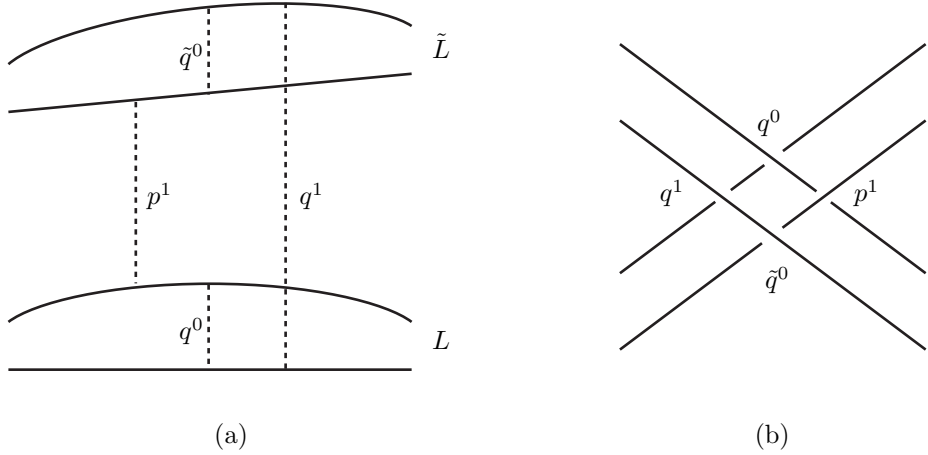}}}
  \relabel{1}{$\tilde{q}^0$}
  \relabel{2}{$p^1$}
  \relabel{3}{$q^1$}
  \relabel{4}{${q}^0$}
  \relabel{5}{$L$}
  \relabel{6}{$\tilde{L}$}
  \relabel{7}{(a)}
  \relabel{8}{(b)}
  \relabel{9}{$\tilde{q}^0$}
  \relabel{10}{$p^1$}
  \relabel{11}{$q^1$}
  \relabel{12}{${q}^0$}
  \endrelabelbox
  \caption{Three of the four types of generators of the two-copy
    algebra, as seen from (a) the front projection of \twocopy\ and
    (b) the Lagrangian projection of \twocopy\ for a knot in the
    standard contact $\rr^3$.}
  \label{fig:generators}
\end{figure}

The first three types of generators are pictured schematically in
Figure~\ref{fig:generators}. For a small perturbation $f$, we have the
following relationships between the lengths $\ell$ of each type of
chord:
\begin{equation} \label{eqn:length}
  \begin{split}
    \ell(q^1) &\approx s + \ell(q^0), \\
    \ell(c^1) &\approx s, \\
    \ell(p^1) &\approx s - \ell(q^0).
  \end{split}
\end{equation}


To define the grading on $\alg(\twocopy)$, we follow the relative
grading of \cite{chv, lenny:computable}. First, choose a basepoint $p
\in L$ away from the Reeb chords of \twocopy\ and let $\tilde{p}$ be
its $z$-translate in $\tilde{L}$. Fix an identification of $T_pL$ and
$T_{\tilde{p}}\tilde{L}$.  Next, choose capping paths $\gamma_i$ for
the Reeb chords of $L$ that run through the marked point $p$ and
denote by $\gamma_i^+$ and $\gamma_i^-$ the paths $p$ splits
$\gamma_i$ into, where $\gamma^{+}_{i}$ ($\gamma^{-}_{i}$) contains
the starting point (ending point) of $\gamma_{i}$. Finally, choose
capping paths $\tilde{\gamma}_i$ for the double points on $\tilde{L}$
to be $z$-translates of these (with small deformations near the
endpoints when necessary).  Given suitable trivializations of $TL$
($T\tilde{L}$) along $\gamma_i$ ($\tilde{\gamma}_i$), we can then
define loops $\hat{\Gamma}^\pm_i$ of Lagrangian planes in $\rr^{2n}$
by concatenating $\Gamma^+_i$ and $\Gamma^-_i$ and closing up as
before.

It is obvious that the $q^0$ ($\tilde{q}^{0}$) generators inherit
their gradings from $\alg(L)$.  Using the concatenation property of
the Conley-Zehnder index \cite{robbin-salamon:maslov}, it is easy to
see that
\begin{equation}\label{eqn:q1-grading}
  |q^1_i| = |q^0_i|.
\end{equation}
The loops $\hat{\Gamma}^\pm_i$ for the $p^1$ generators are simply the
reverses of the loops for the $q^1$ generators, so $\nu(p^1_i) =
n-\nu(q^1_i)$.  It follows that:
\begin{equation} \label{eqn:p-grading}
  |p^1_i| = -|q^1_i|+n-2.
\end{equation}
For the gradings of the $c^{1}$ generators, we work in local
coordinates around the critical point in $L$, where we can consider
$L$ to be the $1$-jet of the constant function and $\tilde{L}$ to be
the $1$-jet of $f$. Choosing $\gamma^+$ to be the reverse of a small
perturbation of the translate of $\gamma^-$, we can contract $\Gamma$
into this neighborhood and use the proof of
Lemma~\ref{lem:front-grading} in \cite{ees:high-d-geometry} to obtain:
\begin{equation}\label{eqn:c-grading}
  |c^1_i| = \ix_{c_i}(f) - 1.
\end{equation}

In order to distinguish the differential $\pa$ on $\alg(L)$ from the
differential on $\alg(2L)$, we will denote the latter by
\fdf. Furthermore, provided the Morse function $f$ is sufficiently
small, the algebras $\alg(L)$ and $\alg(\tilde L)$ are canonically
isomorphic DGAs. We will use this identification without further
comment below.

\subsection{The Linearized Complex}
\label{ssec:lin-2-copy}

Assume that the contact homology of $L\subset P \times \rr$ is
linearizable and let $\aug\colon\alg(L)\to\Lambda$ be an
augmentation. Define the algebra map $\faug: \alg(\twocopy) \to
\Lambda$ on the generators by:
\begin{equation}
  \faug(x) = \begin{cases}
    \aug(x) & x \text{ is a } q^0 \text{ or } \tilde{q}^0 \text{ generator,} \\
    0 & \text{otherwise}.
  \end{cases}
\end{equation}
Then \faug\ is an augmentation of $(\alg(\twocopy), \fdf)$. To see
this, first note that Lemma~\ref{lem:stokes} and equation
(\ref{eqn:length}) imply that the differential of any $q^0$ or
$\tilde{q}^{0}$ generator agrees with the original differential
\df. In particular,
\[
\faug \circ \fdf(q^0)=\faug \circ \fdf(\tilde{q}^0)=0.
\]
Second, note that every term in the differential of a mixed Reeb chord
must contain at least one more mixed chord. This implies:
\[
\faug \circ \fdf(q^1) = \faug \circ \fdf(p^1)=\faug \circ \fdf(c^1)=0,
\]
and the augmentation property follows.

The techniques just used generalize to the following lemma.

\begin{lem} \label{lem:one-punc} If a holomorphic disk with boundary
  on $2L$ has one positive puncture and its positive puncture maps to
  a mixed Reeb chord then it has exactly one negative puncture mapping
  to a mixed Reeb chord.  If a holomorphic disk with boundary on $2L$
  has one positive puncture and its positive puncture maps to a pure
  Reeb chord then all its negative punctures map to pure Reeb chords as
  well.
\end{lem}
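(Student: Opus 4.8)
The plan is to exploit the length estimate of Lemma~\ref{lem:stokes} together with the approximate length relations \eqref{eqn:length} for the two-copy. The key observation is that, up to an error of order $\ell(q^0)$ (which is $O(1)$ as $s \to \infty$), every mixed chord $q^1$, $p^1$, $c^1$ has length approximately $s$, while every pure chord $q^0$ (or $\tilde q^0$) has length approximately $\ell(q^0) = O(1)$. Thus a holomorphic disk $u$ with positive puncture at $a$ and negative punctures at $b_1,\dots,b_k$ must satisfy $\ell(a) - \sum_j \ell(b_j) > 0$, and substituting the estimates from \eqref{eqn:length} forces strong constraints on how many of the $b_j$ can be mixed.

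First I would make the length bookkeeping precise: choose $s$ large enough (relative to the finitely many chord lengths $\ell(q^0)$ of $L$, which are fixed once $L$ and $J$ are fixed and $f$ is small) so that every mixed chord has length in the interval $(s - C_0, s + C_0)$ and every pure chord has length in $(0, C_0)$, where $C_0 = \max_i \ell(q_i)$. Now suppose $u \in \ms(a; b_1,\dots,b_k)$ and let $\nu$ be the number of mixed chords among $b_1,\dots,b_k$.

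\emph{Case 1: $a$ is mixed.} Then $\ell(a) < s + C_0$. If $\nu \geq 2$, then $\sum_j \ell(b_j) > 2(s - C_0)$, so $\ell(a) - \sum_j \ell(b_j) < (s+C_0) - 2(s-C_0) = -s + 3C_0 < 0$ once $s > 3C_0$, contradicting Lemma~\ref{lem:stokes}. If $\nu = 0$, then the lift of $u|_{\partial \widehat D}$ together with capping paths represents a class in $H_1(2L)$; but a disk all of whose punctures map to pure chords of a single component, with a mixed positive puncture, is topologically impossible — the boundary of $u$ runs along $L_0$ and $L_1$ and must switch components an even number of times, yet the only chords available to switch components are mixed, so $\nu = 0$ forces $a$ pure. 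Hence $\nu = 1$; one checks similarly that this single mixed negative chord is of the correct type ($q^1$, $p^1$, or $c^1$) by the component-switching parity. This gives the first assertion.

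\emph{Case 2: $a$ is pure.} Then $\ell(a) < C_0$. If $\nu \geq 1$, then $\sum_j \ell(b_j) > s - C_0$, so $\ell(a) - \sum_j \ell(b_j) < C_0 - (s - C_0) = 2C_0 - s < 0$ once $s > 2C_0$, again contradicting Lemma~\ref{lem:stokes}. Hence $\nu = 0$: all negative punctures are pure. This gives the second assertion. Finally, a pure positive puncture on a disk whose boundary lies entirely on one of the two components (since no mixed chords appear, the boundary never switches components) is exactly a disk counted by $\fdf$ restricted to that component, consistent with the earlier remark that $\fdf$ agrees with $\df$ on pure generators.

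The main obstacle I expect is the \emph{topological} argument ruling out $\nu=0$ (resp. the parity of component-switching) in Case~1: one must argue carefully that the boundary lift of the disk into $2L = L_0 \sqcup L_1$, interpreted via the description in the fourth bullet of the definition of $\ms_A(a;b_1,\dots,b_k)$ and the mixed/pure classification of Subsection~\ref{ssec:2-copy}, switches between $L_0$ and $L_1$ precisely at mixed Reeb chords and nowhere else. This is where one uses that the perturbation $\tilde L$ is $C^1$-close to $\phi_s(L)$, so that near each pure double point the two sheets of $\Pi_P(2L)$ belong to the \emph{same} component, whereas near a mixed double point they belong to \emph{different} components; the length estimates of Lemma~\ref{lem:stokes} then do all the remaining work.
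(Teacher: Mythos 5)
Your proposal is correct and follows essentially the same route as the paper, whose proof is a one-line appeal to Lemma~\ref{lem:stokes} and Equation~\eqref{eqn:length}: mixed chords have length $\approx s$ and pure chords length $O(1)$, so the positivity of $\ell(a)-\sum_j\ell(b_j)$ bounds the number of mixed negative punctures exactly as you argue. Your explicit parity argument (the boundary lift switches components only at mixed chords, and a closed boundary must switch an even number of times, forcing at least one mixed negative puncture when the positive puncture is mixed) is precisely the point the paper leaves implicit in calling the lemma ``immediate,'' and it is stated correctly.
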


\begin{proof}
  Immediate from Lemma~\ref{lem:stokes} and Equation
  (\ref{eqn:length}).
\end{proof}

Let us continue to explore the consequences of Lemma~\ref{lem:stokes}.
Consider the chain complex $Q(\twocopy)$ of the linearized contact
homology of $\twocopy$ with the differential $\fdfe_1$.  We decompose
$Q(\twocopy)$ into four summands
\[
Q(\twocopy)= Q^1 \oplus C^1 \oplus P^1 \oplus Q^0;
\]
where $Q^{1}$ is generated by mixed $q$ generators, $C^{1}$ by mixed
Morse generators, $P^{1}$ by mixed $p$-generators, and $Q^{0}$ by pure
generators. Some components of the matrix of $\fdfe_1$ corresponding
to this decomposition will turn the summands into chain complexes in
their own right, while other components will become maps between these
complexes. In the next section, we will reinterpret the complexes
corresponding to the summands as the original linearized complex, its
cochain complex, and the Morse-Witten complex of $L$ with respect to
the Morse function $f$.  The maps between these complexes will give
the exact sequence of Theorem \ref{thm:duality}.

\begin{rem}
  For simpler notation below, we will suppress the augmentations from
  the notation for the differential writing simply $\fdf_1$ and
  $\df_1$ instead of $\fdfe_1$ and $\dfe_1$, respectively.
\end{rem}

We next consider the decomposition of the differential corresponding
to the direct sum decomposition above.  Lemma~\ref{lem:one-punc}
implies that $Q^0\subset Q(2L)$ is a subcomplex and that the component
of $\fdf_1$ which maps $Q^1 \oplus C^1 \oplus P^1$ to $Q^0$
vanishes. Thus $Q^1 \oplus C^1 \oplus P^1$ is a subcomplex as well. It
is the subcomplex $Q^1 \oplus C^1 \oplus P^1$ which carries the
important information for the the exact sequence. We therefore
restrict attention to this subcomplex and leave $Q^{0}$ aside.

Lemma~\ref{lem:stokes} and Equation \eqref{eqn:length} imply that the
linearized differential takes on the following lower triangular form
with respect to the splitting $Q^1 \oplus C^1 \oplus P^1$:
\begin{equation} \label{eqn:small-d}
  \fdf_1 = \begin{bmatrix}
    \fdf_q & 0 & 0  \\
    \rho & -\fdf_c & 0  \\
    \eta & \sigma & \fdf_p
  \end{bmatrix}.
\end{equation}

Since $\fdf_1^2 = 0$, the diagonal entries in the matrix of
$\fdf_1^{2}$ give
\begin{equation} \label{eqn:diag-d}
  \fdf_q^2 = 0,  \qquad \fdf_c^2 = 0,\quad \text{and}\quad  \fdf_p^2 = 0.
\end{equation}
The subdiagonal entries give
\begin{equation} \label{eqn:rho-chain} \rho \fdf_q - \fdf_c \rho = 0
  \quad \text{and} \quad\fdf_p \sigma - \sigma \fdf_c = 0.
\end{equation}
The last interesting entry in the matrix for $\fdf_1^2$ is
\begin{equation} \label{eqn:rho-eta-0} \eta \fdf_q + \fdf_p \eta +
  \sigma \rho = 0.
\end{equation}

\begin{prop} \label{prop:small-subcomplexes} With notation as above,
  we have:
  \begin{enumerate}
  \item $(Q^1, \fdf_q)$, $(C^1,
    \fdf_c)$, and $(P^1, \fdf_p)$ are all chain complexes.
  \item The maps $\rho$ and $\sigma$ are chain maps of degree
    $-1$.
  \item $\sigma_* \rho_* = 0$, where $\eta$ acts as a chain
    homotopy between $\sigma \rho$ and the zero map.
  \end{enumerate}
\end{prop}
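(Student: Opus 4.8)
The plan is to read everything off the identities collected in equations \eqref{eqn:diag-d}, \eqref{eqn:rho-chain}, and \eqref{eqn:rho-eta-0}, together with the grading information already recorded in \eqref{eqn:q1-grading}, \eqref{eqn:p-grading}, and \eqref{eqn:c-grading}. These three equations are exactly the ``diagonal,'' ``subdiagonal,'' and ``corner'' components of the matrix identity $\fdf_1^2 = 0$ applied to the lower-triangular form \eqref{eqn:small-d}, so the proposition is essentially a bookkeeping exercise once one believes that $\fdf_1^2 = 0$ (which is part of Theorem~\ref{thm:lch} applied to $2L$) and that $\fdf_1$ has the stated triangular shape (which follows from Lemma~\ref{lem:stokes} and \eqref{eqn:length}, as explained in the paragraph preceding \eqref{eqn:small-d}).

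For part (1), I would simply observe that \eqref{eqn:diag-d} says $\fdf_q^2 = \fdf_c^2 = \fdf_p^2 = 0$, so each of $(Q^1,\fdf_q)$, $(C^1,\fdf_c)$, $(P^1,\fdf_p)$ is a complex; one should also note that $\fdf_q$, $\fdf_c$, $\fdf_p$ each lower degree by $1$, since $\fdf_1$ does and the splitting $Q^1 \oplus C^1 \oplus P^1$ is by generator type, not by a grading shift (the sign on $-\fdf_c$ in \eqref{eqn:small-d} is a convention that does not affect $\fdf_c^2 = 0$). For part (2), the two equations in \eqref{eqn:rho-chain}, namely $\rho\fdf_q = \fdf_c\rho$ and $\fdf_p\sigma = \sigma\fdf_c$, are precisely the statements that $\rho\colon (Q^1,\fdf_q) \to (C^1,\fdf_c)$ and $\sigma\colon (C^1,\fdf_c) \to (P^1,\fdf_p)$ are chain maps (the apparent sign discrepancy with the $-\fdf_c$ entry is exactly absorbed by that sign). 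For the degree claim I would check it against the gradings: a $q^1_i$ generator has degree $|q^0_i|$, a $c^1_j$ generator has degree $\ix_{c_j}(f) - 1$, a $p^1_i$ generator has degree $-|q^0_i| + n - 2$; since $\rho$ and $\sigma$ are components of the degree $-1$ map $\fdf_1$, they drop degree by $1$ on the nose.

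For part (3), the corner identity \eqref{eqn:rho-eta-0}, $\eta\fdf_q + \fdf_p\eta + \sigma\rho = 0$, rearranges to $\sigma\rho = -(\fdf_p\eta + \eta\fdf_q)$, which exhibits $-\eta$ (equivalently $\eta$, up to the overall sign) as a chain homotopy from $\sigma\rho$ to $0$; since $\eta$ has degree $-1$ as a component of $\fdf_1$, this is a chain homotopy of the expected degree, and hence the induced map on homology $\sigma_*\rho_* = (\sigma\rho)_* = 0$. I do not anticipate a genuine obstacle here: every ingredient is already in place, and the only thing to be careful about is matching the signs in \eqref{eqn:small-d}--\eqref{eqn:rho-eta-0} to the standard conventions for ``chain map'' and ``chain homotopy'' so that the minus signs land where they should; I would state explicitly that the sign choices in \eqref{eqn:small-d} were made precisely so that $\rho$ and $\sigma$ come out as honest (unsigned) chain maps. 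The one conceptual point worth a sentence is why $\fdf_1$ has the triangular form \eqref{eqn:small-d} at all, but that is already argued in the text via the action filtration \eqref{eqn:length}: a disk contributing to $\fdf_1$ with positive puncture at a mixed chord has exactly one negative mixed puncture (Lemma~\ref{lem:one-punc}), and the action inequality $\ell(q^1) > \ell(c^1) > \ell(p^1)$ forbids the ``upper triangular'' components, so I would just cite that discussion rather than repeat it.
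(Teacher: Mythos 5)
Your proposal is correct and follows essentially the same route as the paper, whose proof simply cites the identities \eqref{eqn:diag-d}, \eqref{eqn:rho-chain}, and \eqref{eqn:rho-eta-0} extracted from $\fdf_1^2=0$ in the lower-triangular form \eqref{eqn:small-d}. Your extra remarks on the degree check and the sign conventions are fine but not needed beyond what the text already establishes.
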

\begin{proof}
  This follows from \eqref{eqn:diag-d}, \eqref{eqn:rho-chain}, and
  \eqref{eqn:rho-eta-0}.
\end{proof}

Combining $Q^1$ and $C^1$ into $QC^1 = Q^1 \oplus C^1$ gives another
useful view of $\fdf_1$.  Let $\fdf_{qc} = -\fdf_q - \rho + \fdf_c$
and let $H = \eta + \sigma$.  With respect to the splitting $QC^1
\oplus P^1$, then, $\fdf_1$ takes the following form:
\begin{equation}
  \fdf_1 = \begin{bmatrix}
    -\fdf_{qc} & 0 \\
    H & \fdf_p
  \end{bmatrix}.
\end{equation}
In parallel to Proposition~\ref{prop:small-subcomplexes}, we obtain
the following proposition.

\begin{prop} \label{prop:large-subcomplexes}
  With notation as above we have:
  \begin{enumerate}
  \item $(QC^1, \fdf_{qc})$ is a chain complex.
  \item $H: (QC^1, \fdf_{qc}) \to (P^1, \fdf_p)$ is a chain map of
    degree $-1$.
  \end{enumerate}
\end{prop}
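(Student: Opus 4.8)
The plan is to deduce both assertions directly from the already-established structural equations in Proposition~\ref{prop:small-subcomplexes} together with the identities \eqref{eqn:diag-d}, \eqref{eqn:rho-chain}, and \eqref{eqn:rho-eta-0}. The point is purely algebraic: we have rewritten the lower-triangular matrix \eqref{eqn:small-d} of $\fdf_1$ by bundling the first two summands, and we must check that the bundled $2\times 2$ block matrix still squares to zero, which is equivalent to the two claimed statements. First I would observe that, by definition, $\fdf_{qc}$ is the block $\begin{bmatrix}\fdf_q & 0\\ \rho & -\fdf_c\end{bmatrix}$ acting on $QC^1=Q^1\oplus C^1$ (up to the overall sign), and $H=\eta+\sigma$ is the block $\begin{bmatrix}\eta & \sigma\end{bmatrix}\colon Q^1\oplus C^1\to P^1$, so that the displayed $2\times 2$ form of $\fdf_1$ is literally the same operator as in \eqref{eqn:small-d}, just grouped differently. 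Hence $\fdf_1^2=0$ is inherited.

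For (1), computing the top-left block of $\fdf_1^2$ in the $QC^1\oplus P^1$ splitting gives $\fdf_{qc}^2=0$; unpacking this into the $Q^1\oplus C^1$ components recovers exactly $\fdf_q^2=0$, $\fdf_c^2=0$ from \eqref{eqn:diag-d} and the compatibility $\rho\fdf_q-\fdf_c\rho=0$ from \eqref{eqn:rho-chain}, so no new input is needed — $(QC^1,\fdf_{qc})$ is a chain complex. Alternatively one can phrase this as: $\fdf_{qc}$ is the mapping-cone differential of the chain map $\rho\colon (Q^1,\fdf_q)\to(C^1,\fdf_c)$ (Proposition~\ref{prop:small-subcomplexes}(2)), and a mapping cone is automatically a complex. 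For (2), the off-diagonal block of $\fdf_1^2$ in the coarse splitting yields $H\fdf_{qc}+\fdf_p H=0$, i.e.\ $H$ commutes with the differentials up to the sign built into $\fdf_{qc}$, so $H$ is a chain map; expanded in components, $H\fdf_{qc}+\fdf_p H=0$ is precisely the pair of relations $\eta\fdf_q+\fdf_p\eta+\sigma\rho=0$ (equation \eqref{eqn:rho-eta-0}) and $\fdf_p\sigma-\sigma\fdf_c=0$ (the second half of \eqref{eqn:rho-chain}). The degree statement follows since $\rho$, $\sigma$, and $\eta$ all lower degree by $1$ (the first two by Proposition~\ref{prop:small-subcomplexes}(2), the last because it is a component of $\fdf_1$), and $\fdf_q$, $\fdf_c$, $\fdf_p$ lower degree by $1$ as components of $\fdf_1$, so $\fdf_{qc}$ is degree $-1$ and $H$ is degree $-1$.

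There is essentially no obstacle here: the proposition is a bookkeeping repackaging of Proposition~\ref{prop:small-subcomplexes} and the sign conventions in \eqref{eqn:small-d}. The only point that requires a moment's care is tracking the signs — the block $\fdf_{qc}=-\fdf_q-\rho+\fdf_c$ is set up so that the internal Koszul signs of the mapping cone match the signs already forced by $\fdf_1^2=0$, and one should verify that the $-\fdf_c$ entry (rather than $+\fdf_c$) is what makes $H\fdf_{qc}+\fdf_p H=0$ reproduce \eqref{eqn:rho-eta-0} with the correct sign. Thus the proof is simply: restate the grouped matrix, note it equals the ungrouped one so $\fdf_1^2=0$ still holds, and read off (1) and (2) from the block entries of $\fdf_1^2=0$, citing \eqref{eqn:diag-d}, \eqref{eqn:rho-chain}, \eqref{eqn:rho-eta-0}.
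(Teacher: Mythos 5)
Your route is the same as the paper's: Proposition~\ref{prop:large-subcomplexes} is obtained there ``in parallel to'' Proposition~\ref{prop:small-subcomplexes}, i.e.\ by reading off the block entries of $\fdf_1^2=0$ in the coarser splitting $QC^1\oplus P^1$, which is exactly your plan, and your treatment of part (1) (including the mapping-cone remark) and of the degree count is fine.

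However, the one place you say needs care --- the signs --- is where you slip. Since the coarse matrix has $-\fdf_{qc}$, not $\fdf_{qc}$, in its upper-left corner, the lower-left block of $\fdf_1^2$ is $\fdf_p H - H\fdf_{qc}$, so what $\fdf_1^2=0$ gives is the strict commutation $\fdf_p H = H\fdf_{qc}$ (which is precisely the chain-map condition asserted in the proposition), \emph{not} the anticommutator $H\fdf_{qc}+\fdf_p H=0$ that you state. Moreover, your claim that $H\fdf_{qc}+\fdf_p H=0$ expands to \eqref{eqn:rho-eta-0} and the second half of \eqref{eqn:rho-chain} is false: its $C^1$-component reads $\fdf_p\sigma+\sigma\fdf_c=0$, contradicting $\fdf_p\sigma-\sigma\fdf_c=0$, and its $Q^1$-component reads $\fdf_p\eta-\eta\fdf_q-\sigma\rho=0$ rather than \eqref{eqn:rho-eta-0}. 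The correct expansion, using $\fdf_{qc}=-\fdf_q-\rho+\fdf_c$ and $H=\eta+\sigma$, is
\[
\fdf_p H - H\fdf_{qc} \;=\; \bigl(\fdf_p\eta+\eta\fdf_q+\sigma\rho,\;\fdf_p\sigma-\sigma\fdf_c\bigr),
\]
and setting this to zero recovers exactly \eqref{eqn:rho-eta-0} and \eqref{eqn:rho-chain}. With that sign corrected, your argument is complete and coincides with the paper's.
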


Another way to look at the Proposition \ref{prop:large-subcomplexes}
is as follows:

\begin{cor} \label{cor:mapping-cone} The complex $(QC^1 \oplus P^1,
  -\fdf_{qc} + H + \fdf_p)$ is the mapping cone of $H$. Further,
  $QC^1$ itself is the mapping cone of $\rho$.
\end{cor}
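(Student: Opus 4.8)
The plan is to unwind the definition of the mapping cone and observe that it reproduces, block by block, the forms of the differential already computed. Recall that if $f\colon(A,d_A)\to(B,d_B)$ is a chain map, its mapping cone $\operatorname{Cone}(f)$ is the complex whose underlying graded module is $A[-1]\oplus B$ and whose differential is, in block form,
\[
\begin{bmatrix} -d_A & 0 \\ f & d_B \end{bmatrix};
\]
thus the only input needed to recognize a given complex as such a cone is that the two diagonal blocks be chain complexes and the lower-left block a chain map. Both of these facts are already in hand.

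For the first assertion I would apply this recipe with $A=(QC^1,\fdf_{qc})$, $B=(P^1,\fdf_p)$ and $f=H$. Proposition~\ref{prop:large-subcomplexes} says precisely that $(QC^1,\fdf_{qc})$ and $(P^1,\fdf_p)$ are chain complexes and that $H$ is a chain map; its being of degree $-1$ rather than $0$ is exactly compensated by the shift $A[-1]$ in the cone. The differential named in the corollary, $-\fdf_{qc}+H+\fdf_p$, is by definition the block matrix $\left[\begin{smallmatrix}-\fdf_{qc}&0\\ H&\fdf_p\end{smallmatrix}\right]$ on $QC^1\oplus P^1$, which is the cone differential of $H$; its square vanishes simply because it is $\fdf_1^2=0$ restricted to the subcomplex $Q^1\oplus C^1\oplus P^1$ and rewritten in the $QC^1\oplus P^1$ splitting.

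For the second assertion I would apply the same recipe with $A=(Q^1,\fdf_q)$, $B=(C^1,\fdf_c)$ and $f=\rho$. Proposition~\ref{prop:small-subcomplexes} records that $(Q^1,\fdf_q)$ and $(C^1,\fdf_c)$ are chain complexes and that $\rho$ is a (degree $-1$) chain map, and the definition $\fdf_{qc}=-\fdf_q-\rho+\fdf_c$ is, as a block matrix on $Q^1\oplus C^1$, $\left[\begin{smallmatrix}-\fdf_q&0\\ -\rho&\fdf_c\end{smallmatrix}\right]$ --- the cone differential of $\rho$, up at most to the sign of the $\rho$-entry, which is absorbed by rescaling the $C^1$ summand by $-1$ and does not change the complex up to isomorphism. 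Hence $QC^1=\operatorname{Cone}(\rho)$.

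The one point that takes a moment's care, and the only place anything could go wrong, is the bookkeeping of the degree shift and of signs: because $\rho$, $\sigma$, and hence $H$ are chain maps of degree $-1$, one must check that it is precisely the shift $A[-1]$ that makes the entries of the block matrices in \eqref{eqn:small-d} degree-homogeneous, and that the explicit signs there (the $-\fdf_c$ on the diagonal, the minus signs in the definition of $\fdf_{qc}$) match whichever sign convention for $\operatorname{Cone}$ one fixes, with any leftover sign removed by negating a summand. As this is entirely routine, I would present the corollary as an immediate consequence of the two preceding propositions.
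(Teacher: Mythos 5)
Your proof is correct and is essentially the paper's own argument: the paper gives no separate proof, presenting the corollary as an immediate restatement of Propositions~\ref{prop:small-subcomplexes} and~\ref{prop:large-subcomplexes}, exactly as you do by matching the block forms of $\fdf_{qc}$ and $-\fdf_{qc}+H+\fdf_p$ with the cone differentials of $\rho$ and $H$. Your extra remarks on the degree shift and sign conventions are the routine bookkeeping the paper leaves implicit.
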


As we shall see, the mapping cone of $H$ is acyclic and hence $H$ is
an isomorphism. In fact, we will use this isomorphism to place the
linearized contact cohomology inside the exact sequence.

\subsection{Identifying the Complexes in $Q(\twocopy)$}
\label{ssec:subcomplexes}

In this section, we identify the subcomplexes of $Q(2L)$ discussed in
the previous section with the original linearized chain complex of
$L$, its cochain complex, and the Morse-Witten complex of $L$. The
proofs of these identifications rest on an analytic theorem which
describes all rigid holomorphic disks for a sufficiently small
admissible perturbation function $f$ satisfying certain technical
conditions. We will describe that theorem here, but defer the detailed
analytic treatment necessary for its proof to Section~\ref{sec:disks}.

\subsubsection{Generalized Disks}
\label{ssec:gen-disks}

In order to state the analytic
theorem for disks of the two-copy, we need to introduce terminology
for objects built from holomorphic disks with boundary on $L$ and flow
lines of the Morse function $f$ on $L$ used to shift $\phi_s(L)$ to $\tilde L$.

First, we make some assumptions.  Let $J$ be an almost complex
structure adjusted to $L$ and let $L$ be normalized at double
points. Assume
that $J$ is standard in a neighborhood of $\Pi_P(L)$ with respect to a
Riemannian metric $g$ on $L$. Let $f$ be an admissible Morse function
which is round at critical points with respect to $g$; see $({\bf
  a5})$ in Subsection \ref{s:morseflow}. We call triples $(f,g,J)$
with these properties \dfn{adjusted to} $L$; see Subsection
\ref{s:morseflow}. (The condition that the Morse function be round at
critical points is inessential and made only in order to simplify some
technicalities in proofs.)
 
Assume that the conclusions of Lemma \ref{lem:tvmspc} hold for
$J$-holomorphic disks with boundary on $L$. Then as explained in
Subsection \ref{s:cmdli}, the compactified moduli space
$\overline{\ms}$ of $J$-holomorphic disks with one positive puncture
is a $C^{1}$-manifold with boundary with corners. Furthermore, as
explained in Subsection \ref{s:evmaps}, the corresponding compactified
moduli space $\overline{\ms^{\ast}}$ of $J$-holomorphic disks with an
additional marked point on the boundary comes equipped with a $C^1$
evaluation map $\ev\colon \overline{\ms^{\ast}}\to L$.  Further,
assume that the admissible perturbing Morse function $f$ and the
Riemannian metric $g$ on $L$ are Morse-Smale and that, together with
the almost complex structure $J$, they have the property that the
stable and unstable manifolds of the critical points of $f$, defined
using the $g$-gradient of $f$, are stratumwise transverse to the
evaluation map $\ev\colon\overline{\ms^{\ast}}\to L$. For existence of
adjusted triples $(f,g,J)$ which satisfy these transversality
conditions, see Lemma \ref{lem:morsetv}.

We define a \dfn{generalized disk} to be a pair $(u, \gamma)$
consisting of a holomorphic disk $u \in \ms=\ms_A(a;b_1,\dots,b_k)$
with boundary on $L$ and a negative gradient flow line $\gamma$ of the
Morse function $f$ beginning or ending at the boundary of $u$; the
other end of $\gamma$ must lie at a critical point of $f$. We call the
point on the boundary of $u$ where the flow line $\gamma$ begins or
ends the \dfn{junction point} of $(u,\gamma)$. If the flow line starts
at the junction point then $(u,\gamma)$ has a \dfn{negative Morse
  puncture}; otherwise it has a \dfn{positive Morse puncture}.  The
\dfn{formal dimension} $\dim((u,\gamma))$ of a generalized disk
$(u,\gamma)$ as follows.
\begin{equation} \label{eqn:gen-formal-dim}
  \dim((u,\gamma))=
  \begin{cases} \dim \ms +1 + (I_f(p)-n), &\text{$p$ a
      positive Morse puncture,}\\ \dim \ms +1 - I_f(p), &\text{$p$
      a negative Morse puncture,}
  \end{cases}
\end{equation}
where $I_f(p)$ is the index of the critical point $p$ at the end of
$\gamma$ which is not the junction point.

Generalized disks correspond to intersections of a stable/unstable
manifold of $f$ with the evaluation map
$\ev\colon\overline{\ms^\ast}\to L$.  The transversality conditions of
the triple $(f,g,J)$ of the Morse function, the metric, and the almost
complex structure discussed above imply that such intersections are
transverse. In particular, it follows from
Lemma~\ref{lem:morsetv} that generalized disks determined by
$(f,g,J)$ have the following three properties: there are no
generalized disks of formal dimension $<0$, every generalized disk of
dimension $0$ is transversely cut out by its defining equation or
\dfn{rigid}, and the set of rigid generalized disks is finite. (The third property is a consequence of the compactness of the moduli space of holmorphic disks and the transveraslity of the evaluation map). We say
that triples $(f,g,J)$ with these properties and for which rigid
generalized disks satisfy certain pairwise transversality conditions
as in Condition $({\bf g3})$ of
Lemma~\ref{lem:morsetv}, are {\em generic with respect to
  rigid generalized disks}.

We define a \dfn{lifted disk} to be a holomorphic disk $u$ in the
space $\ms_A(a; b_1, \ldots, b_k)$ or $\ms_A(a_1,a_2;b_1,\dots,b_k,
c_1,\dots, c_l)$ together with the following data: If $u\in\ms_A(a;b_1,\dots,b_k)$ then choose a puncture $x_j$, $j>0$, of the marked disk $D_{m+1}$.  This puncture and $x_0$ (which maps to the positive puncture $a$)
split $\partial D_{m+1}$ into two components; assign the label $L$ to
one component and $\tilde{L}$ to the other. If $u\in
\ms_A(a_1,a_2;b_1,\dots,b_k,c_1\dots, c_l)$, note first that the projection of $\tilde L$ into $P$ agrees with the projection of $L_1(f)$, then we assign the label $L$ to $\rr\times\{0\}$ and $\tilde L$ to $\rr\times\{1\}$.

Finally, we define a \dfn{lifted generalized disk}. Let $(u,\gamma)$
be a generalized disk. The Morse flow line $\gamma$ in $L$ and its
orientation reversed $z$-translate $\tilde\gamma$ in $\tilde L$ are
two oriented curves, one which is oriented away from the junction
point and one which is oriented toward it. The positive puncture and
the junction point subdivide the boundary of the domain of $u$ into
two parts. One of these parts is oriented toward the junction point
and the other one away from it. If the curve $\gamma$ is oriented away from the junction point then we assign the component $L$ to the part of the boundary of $u$ which is oriented toward the junction point and $\tilde L$ to the other part. If the curve $\gamma$ is oriented toward the junction point then we assign the component $L$ to the part of the boundary of $u$ which is oriented away from the junction point and $\tilde L$ to the other part. 

We note that lifted disks, lifted generalized disks, and negative flow
lines of $f$ give rise to continuous maps from the boundary of a
punctured disk to $2L$ in a natural way. We call such maps {\em lifted
  boundary maps}.

\begin{figure}[ht]
  \relabelbox \small {\centerline{\epsfbox{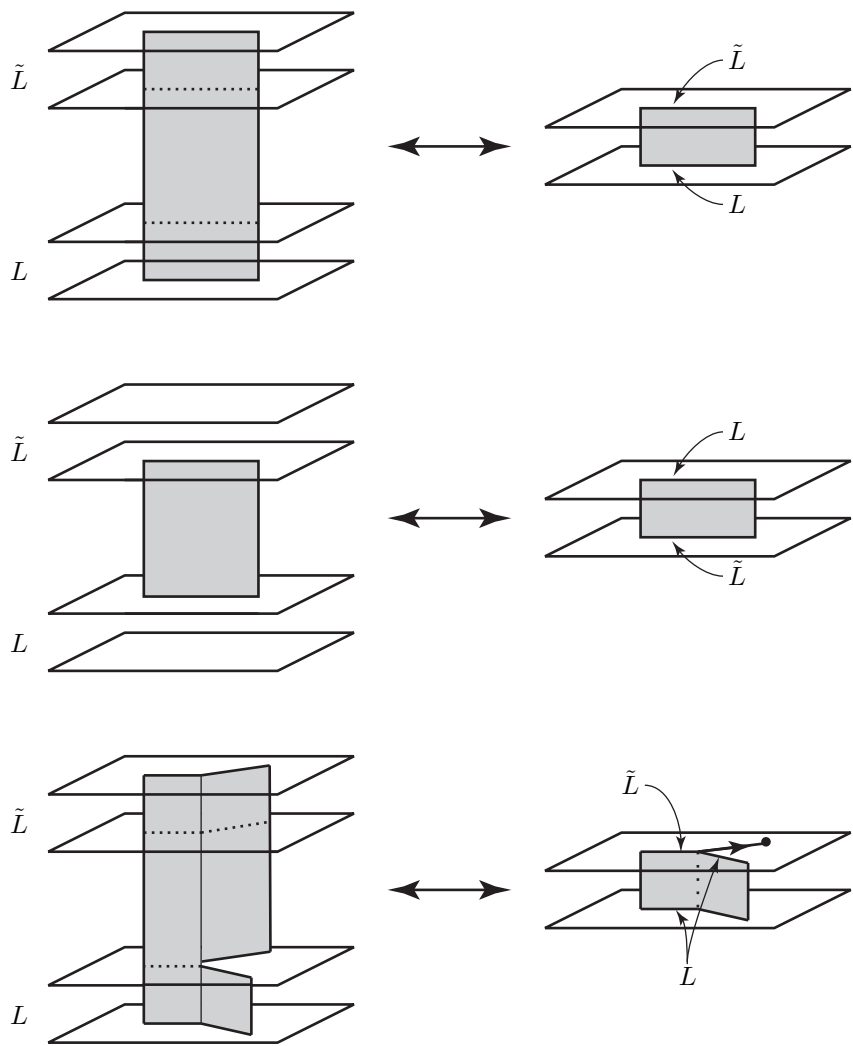}}}
  \relabel{1}{$\tilde{L}$}
  \relabel{2}{$L$}
  \relabel{3}{$\tilde{L}$}
  \relabel{4}{$L$}
  \relabel{5}{$\tilde{L}$}
  \relabel{6}{$L$}
  \relabel{7}{$\tilde{L}$}
  \relabel{8}{$L$}
  \relabel{9}{$\tilde{L}$}
  \relabel{A}{$L$}
  \relabel{B}{$L$}
  \relabel{C}{$\tilde{L}$}
  \endrelabelbox
  \caption{Schematic pictures of disks in the front diagrams of
    \twocopy\ (on the left side) and $L$ (on the right side) for parts (2) and (3) of the correspondence of
    Theorem~\ref{thm:disk+Morse}. The top two diagrams illustrate part
    (2), while the bottom illustrates part (3).  The labels on the
    right side are the labels of the lifted (generalized) disk.}
  \label{fig:gen-disks}
\end{figure}

We now have all of the objects we need to state the main analytic theorem: 

\begin{thm}\label{thm:disk+Morse}
  There exist an admissible Morse function $f\colon L\to\rr$, a
  Riemannian metric $g$ on $L$, and an almost complex structure $J$ on
  $P$ with the following properties.
  \begin{itemize}
  \item[$({\rm i})$] The pair $(f,g)$ is Morse-Smale.
  \item[$({\rm ii})$] The triple $(f,g,J)$ is generic with respect to rigid
    generalized disks.
  \item[$({\rm iii})$] Lemma \ref{lem:2pos} holds for $J$-holomorphic disks with
    boundary on $L_0\cup L_1(f)$.
  \item[$({\rm iv})$] All moduli spaces of $J$-holomorphic disks with one positive
    puncture and with boundary on $2L=L\cup\tilde{L}$ of dimension
    $\le 0$ are transversely cut out. (Here, as above $\tilde L$ is the push off of the Reeb flow image $\phi_s(L)$ of $L$ along $df$.)
  \end{itemize}
  Furthermore, there is a bijective correspondence between the set $X$
  of rigid holomorphic disks with boundary on $2L$ and the union $Y$
  of two (disjoint) copies of the set of rigid disks with boundary on
  $L$, rigid lifted disks determined by $(f,g,J)$, and rigid lifted
  generalized disks determined by $(f,g,J)$. This correspondence is
  such that the continuous lift of the boundary of a holomorphic disk
  with boundary on $2L$ is contained in a small neighborhood of the
  lifted boundary map of the object to which it corresponds, and vice
  versa.  In particular: 
  \begin{enumerate}
  \item The set of disks in $X$ without mixed punctures corresponds to
    the subset of $Y$ consisting of the two copies of the set of disks
    with boundary on $L$: disks with boundary on $L$ corresponds to
    one copy and disks with boundary on $\tilde{L}$ to the other.
  \item The set of disks in $X$ with mixed punctures but without mixed
    Morse punctures corresponds to the subset of $Y$ of lifted
    disks. See Figure~\ref{fig:gen-disks}.
  \item The set of disks in $X$ with exactly one
    Morse puncture corresponds to the subset of $Y$ of lifted generalized
    disks. See Figure~\ref{fig:gen-disks}.
  \item The set of disks in $X$ with two mixed Morse punctures
    corresponds to the subset of $Y$ of rigid negative gradient flow
    lines of $f$.
  \end{enumerate}
\end{thm}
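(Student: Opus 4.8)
The plan is to establish the claimed bijection between $X$ and $Y$ by combining two ingredients: a gluing/compactness package for the degenerating family of almost complex structures (or, equivalently, the degenerating family of perturbations $\tilde L = L_1(f)$ as $f\to 0$ and $s\to\infty$), and the transversality statements in Lemmas~\ref{lem:tvmspc} and~\ref{lem:2pos} together with the Morse-theoretic transversality of the evaluation map $\ev\colon\overline{\ms^\ast}\to L$ promised in Lemma~\ref{lem:morsetv}. First I would fix the data: choose an adjusted triple $(f,g,J)$ as described before the theorem, using Lemma~\ref{lem:tvmspc} to make the one-positive-puncture disks on $L$ transversely cut out, Lemma~\ref{lem:2pos} to control the two-positive-puncture strips, and Lemma~\ref{lem:morsetv} to make stable/unstable manifolds of $f$ stratumwise transverse to $\ev$; this secures properties $({\rm i})$, $({\rm ii})$, $({\rm iii})$. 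Property $({\rm iv})$ — transversality for the $2L$ moduli spaces of dimension $\le 0$ — will then follow from the correspondence itself, since each rigid object in $Y$ is already transversely cut out, and gluing produces exactly one transversely cut out disk in $X$ near it.

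Next I would build the map $Y\to X$ by gluing. An element of $Y$ is one of: a rigid disk on $L$ (or $\tilde L$); a rigid lifted disk, i.e.\ a rigid disk on $L$ with a choice of negative puncture $x_j$ splitting $\partial D_{m+1}$ into an $L$-arc and a $\tilde L$-arc; a rigid lifted generalized disk $(u,\gamma)$ with its junction point and orientation data; or a rigid negative gradient flow line of $f$ between two critical points. For each of these I would describe the corresponding Reeb chords of $2L$ dictated by the length estimates \eqref{eqn:length} — a disk with no mixed punctures gives a $q^0$- or $\tilde q^0$-disk; cutting at $x_j$ converts $b_j$ into the mixed chord $q^1_{b_j}$ and $a$ into $p^1_a$ (so the disk acquires one positive and one negative mixed puncture); a generalized disk with a Morse puncture of index $\kappa$ produces a disk with exactly one mixed Morse puncture $c^1$ of grading $\kappa-1$ by \eqref{eqn:c-grading}; and a flow line between critical points of indices $\kappa_1,\kappa_2$ produces a disk with two mixed Morse punctures. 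The gluing is local near $\Pi_P(L)$ in the region where $J$ is standard, where the symplectic neighborhood map identifies things with $T^*L$ and holomorphic strips near the zero section are, to leading order, graphs of solutions of a $\bar\partial$-type equation whose boundary conditions interpolate between $0$ and $df$; along the parts of the boundary of $u$ far from the junction this forces the strip to shadow $u$ itself, while near a junction point or a Morse puncture it shadows a gradient flow segment of $f$. Standard gluing (in the style of \cite{ees:ori,duality}, and the analysis of Section~\ref{sec:disks}) then yields, for each rigid object in $Y$ and all sufficiently small $f$, a unique nearby rigid $J$-holomorphic disk with boundary on $2L$, transversely cut out; this defines the injection $Y\hookrightarrow X$ and gives the neighborhood statement about boundary maps.

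For surjectivity I would argue by compactness: given a rigid disk in $X$, send $f\to 0$, $s\to\infty$ and extract an SFT/Gromov limit. Lemma~\ref{lem:one-punc} constrains the combinatorics of mixed punctures — one positive mixed puncture forces exactly one negative mixed puncture — so the only possible limits are configurations consisting of a disk on $L$ (or $\tilde L$) possibly with flow-line pieces attached at the formerly-mixed punctures; the length bookkeeping \eqref{eqn:length} and Lemma~\ref{lem:stokes} rule out degenerations into two or more nonconstant disks on $L$ (the lengths would have to split, contradicting rigidity and the dimension count), and the dimension formula \eqref{eqn:gen-formal-dim} together with genericity with respect to rigid generalized disks rules out limits of negative formal dimension. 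Hence the limit is exactly one of the four types of objects in $Y$, and by the uniqueness half of the gluing theorem the original disk must be the glued disk associated to that object. This closes the bijection. I expect the \textbf{main obstacle} to be the gluing analysis near a Morse puncture — where one must glue a holomorphic disk on $L$ to a rigid gradient flow line of $f$ in a degenerating neck — and, relatedly, showing that the only limiting configurations are the four listed ones and nothing with a ``broken'' mixed chord or a boundary-bubble; controlling these requires the quantitative length estimates \eqref{eqn:length}, the index/transversality input of Lemma~\ref{lem:morsetv}, and the analytic groundwork deferred to Section~\ref{sec:disks}. Everything else — the grading identities \eqref{eqn:q1-grading}, \eqref{eqn:p-grading}, \eqref{eqn:c-grading} and the resulting labeling — is bookkeeping that falls out of the correspondence once the analysis is in place.
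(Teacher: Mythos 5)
Your overall strategy --- fix $(f,g,J)$ using Lemmas \ref{lem:tvmspc}, \ref{lem:2pos} and \ref{lem:morsetv}, glue each rigid object of $Y$ to a unique nearby rigid disk with boundary on $2L$, and get surjectivity from a degeneration/compactness argument as the perturbation shrinks --- is the paper's strategy (Subsections \ref{ssec:gen-to-hol} and \ref{s:disktotree}). But there is a genuine gap in your treatment of the disks with mixed punctures and no Morse punctures. You take ``lifted disk'' to mean only a rigid disk in $\ms_A(a;b_1,\dots,b_k)$ with a chosen negative puncture; the set $Y$ in the theorem also contains lifted disks coming from the two-positive-puncture moduli spaces $\ms_A(a_1,a_2;b_1,\dots,b_k;c_1,\dots,c_l)$ of Lemma \ref{lem:2pos}. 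These are not optional: they are precisely the $2L$-disks with a positive mixed $q^1$-puncture and a negative mixed $p^1$-puncture, i.e.\ the disks defining $\eta$ and hence the map $H$ on which the duality isomorphism rests, and they do not arise by cutting a one-positive-puncture disk. Indeed your chord bookkeeping at this point is wrong: cutting $u\in\ms_A(a;b_1,\dots,b_k)$ at a negative puncture $x_j$ produces either two $q^1$-type mixed chords (positive at $q^1_a$, negative at $q^1_{b_j}$) or two $p^1$-type mixed chords (positive at $p^1_{b_j}$, negative at $p^1_a$), according to which arc is labelled $\tilde L$ --- never the pair $q^1_{b_j}$, $p^1_a$ that you describe. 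Correspondingly, in your surjectivity step the assertion that every rigid $2L$-disk Gromov-limits to ``a disk on $L$ possibly with flow lines attached'' fails for disks whose two mixed punctures both have positive induced asymptotics: pushing those down to a single copy of $L$ runs exactly into the multiple-cover/transversality problem that the two-copy setup of Lemma \ref{lem:2pos} was introduced to avoid. The paper handles them at fixed perturbation, essentially by definition, as the moduli spaces of Lemma \ref{lem:2pos} (after a further small perturbation of $f$ near the double points), not by a limit argument; a proof along your lines must separate this case out.

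A second, smaller point: the compactness direction needs substantially more than extracting an SFT/Gromov limit, because the flow-line portions of the $2L$-disks carry area of size $\Ordo(\lambda)$ and converge to constant maps in any Gromov limit. One must establish the $\Ordo(\lambda)$ derivative bound on the part of the domain cut off by suitable vertical segments, run a blow-up analysis (adding marked points to regain uniform derivative bounds), and prove the ``no gap'' statement that the limiting flow line emanates from a point of the boundary of the limit disk; this, together with the normal forms for $(f_\lambda,g,J)$ near special points that make the technology of \cite{ekholm:morse-flow} applicable, is the bulk of Subsections \ref{s:morseflow} and \ref{s:disktotree}. You flag this as the main obstacle, which is fair, but in the proposal it is deferred rather than addressed.
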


It follows that the contact homology differential on $\alg(2L)$ can be
computed in terms of rigid disks, rigid lifted generalized disks, and
rigid lifted generalized disks. This theorem will be proven in Section
\ref{sec:disks}.

\subsubsection{Chain Complexes}
\label{ssec:chain-complex}

Theorem \ref{thm:disk+Morse} allows us to prove the following correspondences
between chain complexes.

\begin{prop}\label{prop:correspondence}
With notation as established in Section~\ref{ssec:lin-2-copy}, we have:
  \begin{enumerate}
  \item $(Q^1, \fdf_q)$ is isomorphic to the original linearized chain
    complex $(Q(L), \df_1)$ and
  \item $(C^1, \fdf_c)$ is isomorphic to the Morse-Witten complex
    $CM_*(L; f)$  with respect to the perturbing Morse function
    $f$, with degrees shifted so that $C^1_k \cong CM_{k+1}(L;f)$.
  \end{enumerate}
\end{prop}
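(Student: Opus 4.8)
The plan is to read off both isomorphisms directly from the dictionary provided by Theorem~\ref{thm:disk+Morse}, matching generators on each side and then matching the differential coefficient by coefficient. For part~(1), the mixed $q$-generators $q^1_i$ are in bijection with the double points $q_i$ of $\Pi_P(L)$, and equation~\eqref{eqn:q1-grading} already gives $|q^1_i|=|q^0_i|=|q_i|$, so the underlying graded modules $Q^1$ and $Q(L)$ are identified. It remains to check that $\fdf_q$, which by definition is the $Q^1\to Q^1$ component of the linearized differential $\fdf_1$ (equivalently $\fdfe_1$), agrees with $\df_1$ under this identification. By Lemma~\ref{lem:one-punc}, a holomorphic disk on $2L$ contributing to $\fdf_q$ has its positive puncture at a mixed Reeb chord, hence exactly one negative mixed puncture; since we are in the $Q^1\to Q^1$ component, that negative mixed puncture is again a $q^1$-generator and all remaining negative punctures are pure. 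After linearizing (replacing pure generators by their $\faug$-values, i.e. the $\aug$-values of the corresponding $q^0$'s), case~(2) of the correspondence in Theorem~\ref{thm:disk+Morse} identifies each such rigid disk on $2L$ with a rigid lifted disk, which is exactly a rigid disk on $L$ with a choice of which boundary arc is labelled $L$ versus $\tilde L$; the labelling is forced by the positions of the positive puncture $a$ and the chosen negative puncture $b_j$. Thus the count of rigid disks on $2L$ contributing the coefficient of $q^1_j$ in $\fdf_q(q^1_i)$ equals, after substituting the augmentation, the count of rigid disks on $L$ contributing the coefficient of $b_j$ in $\df^\aug a$; that is precisely $\langle \df_1 q_i, q_j\rangle$. (One must also invoke property~(iv) of Theorem~\ref{thm:disk+Morse} to know all the relevant moduli spaces on $2L$ are transversely cut out, and, in the spin case, check the orientations match up — the sign conventions from \cite{ees:ori} are set up so that the two-copy construction respects them.)

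For part~(2), the mixed Morse generators $c^1_i$ are in bijection with the critical points of $f$, and equation~\eqref{eqn:c-grading} gives $|c^1_i|=\ix_{c_i}(f)-1$, which is exactly the degree shift $C^1_k\cong CM_{k+1}(L;f)$ asserted. So again the graded modules are identified, and it remains to match $\fdf_c$ (the $C^1\to C^1$ component of $\fdf_1$) with the Morse--Witten differential of $f$. A rigid disk on $2L$ contributing to the $C^1\to C^1$ component has positive puncture at a mixed Reeb chord — necessarily, by the length estimates in \eqref{eqn:length} and Lemma~\ref{lem:stokes}, a $c^1$ generator — and a negative $c^1$ puncture, so it has two mixed Morse punctures (one positive, one negative) and, by Lemma~\ref{lem:one-punc} and the length bookkeeping, no other mixed punctures; after linearizing, the remaining negative punctures are pure and get absorbed into the augmentation. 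Case~(4) of Theorem~\ref{thm:disk+Morse} identifies such rigid disks with rigid negative gradient flow lines of $f$ running between two critical points of $f$ whose indices differ by one. Hence the coefficient of $c^1_j$ in $\fdf_c(c^1_i)$ counts (with sign) the rigid gradient trajectories from $c_i$ to $c_j$, which is exactly the Morse--Witten differential $\partial^{CM}$. The minus sign attached to $\fdf_c$ in the matrix~\eqref{eqn:small-d} is a bookkeeping convention that does not affect the isomorphism type of the complex. One should note that there are no disks on $2L$ with a single positive Morse puncture and only pure negative punctures contributing here, again for length reasons.

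The main obstacle I expect is not the combinatorics of which punctures can occur — that is dictated cleanly by Lemma~\ref{lem:one-punc} and the length relations~\eqref{eqn:length} — but rather the bookkeeping needed to make the identification honest on the nose: pinning down the labelling rule for lifted (generalized) disks so that it is genuinely a bijection and not merely a surjection, and, in the oriented setting, verifying that the signs coming from the coherent orientations on the $2L$ moduli spaces match the signs in $\df_1$ and in the Morse complex. Both of these are ``routine but delicate'' and rely on the precise statement of Theorem~\ref{thm:disk+Morse} (in particular the clause that the lift of the $2L$-disk boundary lies in a small neighborhood of the lifted boundary map of the corresponding object), so the proof of Proposition~\ref{prop:correspondence} is really just an unwinding of that theorem together with the grading computations \eqref{eqn:q1-grading} and \eqref{eqn:c-grading}; I would present it as such, deferring all analytic content to Section~\ref{sec:disks}.
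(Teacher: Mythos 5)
Your proposal is correct and follows essentially the same route as the paper: identify generators via the obvious bijections with the gradings from \eqref{eqn:q1-grading} and \eqref{eqn:c-grading}, then match the differentials by translating rigid disks on $2L$ with mixed $q$ (resp.\ mixed Morse) punctures into rigid lifted disks (resp.\ rigid flow lines) via Lemma~\ref{lem:one-punc} and parts (2) and (4) of Theorem~\ref{thm:disk+Morse}, checking the converse by noting that exactly one of the two lifts of a rigid disk on $L$ produces the required mixed punctures. The paper's proof is precisely this unwinding, with the analytic content deferred to Section~\ref{sec:disks} as you indicate.
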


\begin{proof}
  For both parts of the proposition, there is an obvious bijective
  correspondence between the generators of the underlying graded
  groups (up to the indicated shift in the second part). We will now
  show that the disks contributing to the differentials also
  correspond.

  For $(Q^1, \fdf_q)$, a disk $u$ that contributes $q^1_k$ to $\fdf_q
  q^1_j$ has one positive mixed $q$ puncture $q^1_j$, one negative
  mixed $q$ puncture $q^1_k$, and possibly other negative augmented
  pure $q$ punctures. By Theorem~\ref{thm:disk+Morse}, this
  corresponds to a lifted disk with a positive puncture at $q_k$, a
  negative puncture at $q_j$, and possibly other augmented negative
  punctures.  The rigid disk underlying the lifted disk contributes
  $q_k$ to $\df_1 q_j$ in $Q(L)$, as required.  Conversely, exactly
  one of the two lifts of such a rigid disk yields a disk with mixed
  $q$ punctures at $q_j$ and $q_k$ and appropriate signs at the
  punctures.

  For $(C^1, \fdf_c)$, a disk $u$ that contributes $c^1_k$ to $\fdf_c
  c^1_j$ has a positive mixed Morse puncture at $c^1_j$ and a negative
  mixed Morse puncture at $c^1_k$.  Lemma~\ref{lem:one-punc} and
  Theorem~\ref{thm:disk+Morse} imply that this disk has only these two
  mixed Morse punctures and corresponds to a flow line from the
  critical point $c_j$ to $c_k$, as in the Morse-Witten differential.
  The converse is clear.
\end{proof}

We next relate $(P^1, \fdf_p)$ to the cochain complex of $(Q^1,
\fdf_q)$. Define a pairing $\langle, \rangle_q$ on the generators of
$P^1_{n-k-2} \otimes Q^1_k$ by
\begin{equation}
  \langle p_i^1, q_j^1 \rangle_q = \delta_{ij}.
\end{equation}

\begin{prop} \label{prop:cochain} The complex $(P^1, \fdf_p)$ is
  isomorphic to the cochain complex of $(Q^1, \fdf_q)$ with respect to
  the pairing $\langle, \rangle_q$.
\end{prop}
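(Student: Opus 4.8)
The plan is to identify $P^1$, up to a degree shift, with the graded dual of $Q^1$, and then show that under this identification $\fdf_p$ is the adjoint of $\fdf_q$ up to a single global sign. Since $\langle p^1_i,q^1_j\rangle_q=\delta_{ij}$ and $|p^1_i|=n-2-|q^1_i|$ by \eqref{eqn:p-grading}, the pairing restricts to a perfect pairing $P^1_{n-k-2}\otimes Q^1_k\to\Lambda$ for every $k$, so $P^1_{n-k-2}\cong\Hom(Q^1_k,\Lambda)$ canonically; the cochain complex of $(Q^1,\fdf_q)$, with $k$-th group $\Hom(Q^1_k,\Lambda)$ and its dual differential, thus becomes a complex carried by $P^1$ whose differential lowers degree by one, just as $\fdf_p$ does. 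Hence everything reduces to the identity $\langle\fdf_p\,\xi,\zeta\rangle_q=\pm\langle\xi,\fdf_q\,\zeta\rangle_q$ for $\xi\in P^1$, $\zeta\in Q^1$, with one overall sign; because $\langle\,,\,\rangle_q$ is diagonal on generators, this amounts to showing that, for all $i$ and $j$, the coefficient of $p^1_j$ in $\fdf_p p^1_i$ equals $\pm$ the coefficient of $q^1_i$ in $\fdf_q q^1_j$.

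To prove this I would repeat the argument of Proposition~\ref{prop:correspondence}, but this time keep track of \emph{both} lifts of each rigid disk on $L$. A rigid disk on $\twocopy$ contributing the coefficient of $q^1_i$ in $\fdf_q q^1_j$ has positive puncture at $q^1_j$; by Lemma~\ref{lem:one-punc} it has a unique negative mixed puncture, and since $\faug$ vanishes on all mixed generators this puncture is the only unaugmented negative puncture and hence equals $q^1_i$, while every other negative puncture is pure and augmented. By Theorem~\ref{thm:disk+Morse}(2) such a disk is the lift of a rigid disk $u$ on $L$ with positive puncture at $q_j$, a distinguished negative puncture at $q_i$, and further negative punctures $q_{c_1},\dots,q_{c_r}$ (all augmented), together with a choice of which of the two boundary arcs of $u$ cut out by $x_0$ and the distinguished puncture is labeled $L$; the labeling producing $q$-type mixed punctures contributes $\pm\prod_l\aug(q_{c_l})$. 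The opposite labeling of the same $u$ produces a rigid disk on $\twocopy$ whose two mixed punctures are of $p$-type: inspecting the local model near the two punctures — and using that $(p^1)^\pm$ sits near $(q^0)^\mp$, the reverse of the convention defining $q^1$ — the distinguished negative puncture of $u$ becomes the positive mixed-$p$ puncture $p^1_i$, the positive puncture of $u$ becomes the negative mixed-$p$ puncture $p^1_j$, and the remaining punctures stay pure and augmented with the same weights. So this disk contributes $\pm\prod_l\aug(q_{c_l})$ to the coefficient of $p^1_j$ in $\fdf_p p^1_i$. Summing over all pairs $(u,\text{distinguished puncture})$ gives the desired identity, with a global sign that depends only on the orientation conventions, not on $i$, $j$, or $u$.

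Finally I would dispose of that global sign by the remark that $(C^\bullet,d)\cong(C^\bullet,-d)$ via multiplication by $(-1)^{\deg}$, so the adjointness identity is precisely the statement that $(P^1,\fdf_p)$ is the cochain complex of $(Q^1,\fdf_q)$ with respect to $\langle\,,\,\rangle_q$. The step I expect to be the crux is the local analysis at the two mixed punctures: one must verify carefully that swapping the $L$ and $\tilde L$ labels on the two boundary arcs interchanges a $q$-type pair of mixed punctures with a $p$-type pair, reverses the positive/negative roles of the two distinguished punctures and swaps the double points to which they are attached, and leaves the remaining pure punctures — and therefore the products of augmentation values — unchanged. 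Given Theorem~\ref{thm:disk+Morse} and the bookkeeping already carried out in Proposition~\ref{prop:correspondence}, the rest is routine.
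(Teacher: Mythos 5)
Your argument is correct and is essentially the paper's own proof: you reduce to the adjointness identity $\langle\fdf_p p_j,q_k\rangle_q=\langle p_j,\fdf_q q_k\rangle_q$ and establish it by using Theorem~\ref{thm:disk+Morse}(2) to pass to rigid lifted disks and then reversing the $L$/$\tilde L$ labeling, which exchanges the $q$-type mixed punctures with $p$-type ones and swaps their positive/negative roles, exactly as in the paper (cf.\ the top drawings of Figure~\ref{fig:gen-disks}). Your additional bookkeeping of a possible global sign goes slightly beyond the paper's proof but does not change the method.
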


\begin{proof}
  We need to show that
  \begin{equation} \label{eqn:cochain} \langle \fdf_p p_j, q_k
    \rangle_q = \langle p_j, \fdf_q q_k \rangle_q.
  \end{equation}
  A disk contributing to the left-hand side of \eqref{eqn:cochain} has
  a mixed positive puncture at $p_j$, a mixed negative puncture at
  $p_k$, and possibly other augmented pure punctures.  By
  Theorem~\ref{thm:disk+Morse}, such a disk corresponds to a unique
  rigid lifted disk.  Reversing assignment of components of \twocopy\
  in the lifted rigid disk, we obtain a lifted disk which contributes
  to the right-hand side; see the top two drawings in
  Figure~\ref{fig:gen-disks}. Similarly, to each disk contributing to
  the right hand side, we find a unique disk contributing to the left
  hand side.  \end{proof}

\subsubsection{The Maps $\rho_*$ and $\sigma_*$}
\label{ssec:rho-sigma}

In order to interpret Poincar\'e duality in the Morse-Witten complex,
we first recall that if $f_0$ and $f_1$ are Morse functions on $L$
with Morse-Smale gradient flows, then there is a continuation
homomorphism $h\colon CM_\ast(L;f_0)\to CM_{\ast}(L;f_1)$.  To define
this map, choose a generic path of functions $f_s$, $s\in\rr$, which
agrees with $f_0$ for $s\le 0$ and with $f_1$ for $s\ge 1$.  The maps
is then given by counting solutions of the differential equation
$\dot\gamma(t)=-\nabla f_t(\gamma(t))$, $t\in\rr$, which are
asymptotic to critical points of $f_0$ as $t \to -\infty$ and to
critical points of $f_1$ as $t \to+\infty$.

Poincar\'e duality for $H_*(L)$ can then be realized on the chain
level in the Morse-Witten complex as follows (see \cite{milinkovic} or
\cite{schwarz}, for instance): there is an obvious correspondence
\begin{align*}
  \Delta: CM_k(L;-f) &\to CM^{n-k}(L;f) \\
  x &\mapsto \langle x, \cdot \rangle.
\end{align*}
It induces an isomorphism $\Delta_*: HM_k(L;-f) \to HM^{n-k}(L;f)$ on
homology which, when combined with the continuation map $h: CM_k(L;f) \to
CM_k(L;-f)$, yields the Poincar\'e duality isomorphism $\Delta_*
h_*$.  We can thus define a Poincar\'e pairing on $HM_k(L;f) \otimes
HM_{n-k}(L;f)$ by the following pairing at the chain level:
\begin{equation} \label{eqn:poincare-pairing} x \bullet y = (\Delta
  \circ h (x))(y).
\end{equation}

Using this interpretation of Poincar\'e duality, we get the following
relationship between the maps $\rho$ and $\sigma$.

\begin{prop} \label{prop:rho-sigma-adj} The maps $\rho_*$ and
  $\sigma_*$ are adjoints in the following sense:
  \begin{equation*}
    x \bullet \rho_* q  = \langle \sigma_*x, q \rangle_q.
  \end{equation*}
\end{prop}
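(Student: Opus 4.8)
The plan is to prove the identity on the chain level and then invoke the
identifications of Propositions~\ref{prop:correspondence} and
\ref{prop:cochain}. By bilinearity, and since both sides descend to homology,
it suffices to show
\[
\langle \sigma x, q\rangle_q \;=\; \bigl(\Delta\circ h(x)\bigr)(\rho q)
\]
for a Morse cycle $x\in C^1\cong CM_\ast(L;f)$ and a cycle $q\in(Q^1,\fdf_q)$,
with $h$ and $\Delta$ as in \eqref{eqn:poincare-pairing}.

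First I would unwind the two sides geometrically. A disk on $2L$ contributing
to $\langle\sigma x,q\rangle_q$ has a positive mixed Morse puncture at $x$, a
negative mixed $p$-puncture at the $p$-generator dual to $q$ (under the
identification $P^1\cong (Q^1)^\ast$ of Proposition~\ref{prop:cochain}), and
augmented pure punctures; by Lemma~\ref{lem:one-punc} it has exactly one Morse
puncture, so part~(3) of Theorem~\ref{thm:disk+Morse} makes it correspond to a
rigid lifted generalized disk $(u,\gamma)$ in which $u$ carries a distinguished
puncture at $q$ and augmented pure punctures, and $\gamma$ runs from the
critical point $x$ to the junction point on $\partial u$ (the positive Morse
puncture convention). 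Likewise, writing $\rho q=\sum_c m_c\,c$ in
$CM_\ast(L;f)$, part~(3) of Theorem~\ref{thm:disk+Morse} identifies $m_c$ with
a count of rigid generalized disks $(u',\gamma')$, with $u'$ again carrying a
distinguished puncture at $q$ and augmented pures and $\gamma'$ running from the
junction point on $\partial u'$ to $c$.

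Next I would carry out the $L\!\leftrightarrow\!\tilde L$ relabeling used in the
proof of Proposition~\ref{prop:cochain}, now applied to the Morse and $p$
generators. This relabeling turns $f$ into $-f$ on the Morse part, so it
expresses $\langle\sigma x,q\rangle_q$ as the count of the same kind of
configuration --- a rigid holomorphic disk on $L$ with a distinguished puncture
at $q$ and augmented pures, equipped with a gradient trajectory joining $x$ to
the junction point on its boundary --- but with the trajectory now taken for
$-f$. It then remains exactly to see that this count equals
$\bigl(\Delta\circ h(x)\bigr)(\rho q)$, and this is where the continuation map
$h\colon CM_\ast(L;f)\to CM_\ast(L;-f)$ and the duality chain map $\Delta$ do
their work: concatenating an $h$-trajectory from $x$ to a critical point $c$
with the flow line $\gamma'$ of $\rho q$ reversed produces, after a standard
Morse-theoretic gluing (the broken configurations being precisely the ends of
the one-parameter family of trajectories from $x$ to a free point on
$\partial u'$), exactly the configurations counted on the left, while the cycle
hypotheses on $x$ and $q$ guarantee that the only other ends of this
one-parameter family --- where $u'$ degenerates at a Reeb chord --- cancel.
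Equivalently, the signed count of ``disk with a puncture at $q$, plus a gradient
trajectory emanating from $x$ and ending on the disk's boundary'' is unchanged
under a generic homotopy of the flow rule.

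I expect the main obstacle to be the bookkeeping of directions and signs: one
must pin down, in each of the two pictures, whether the attached flow line
constitutes a positive or a negative Morse puncture (these conventions interact
delicately with the $L\!\leftrightarrow\!\tilde L$ relabeling and with the
resulting appearance of $-f$, hence of $h$), and then carry the coherent
orientations of \cite{ees:ori} through the correspondence of
Theorem~\ref{thm:disk+Morse} and through the Morse-theoretic gluings so that
the two signed counts come out literally equal rather than equal up to sign. As
a structural consistency check one may observe that $\langle\,,\rangle_q$ on
$Q^1\oplus P^1$ and $\bullet$ on $C^1$ assemble into a single nondegenerate
pairing of total degree $n-2$ on $Q^1\oplus C^1\oplus P^1$ with respect to which
$\fdf_1$ is graded self-adjoint: the $Q^1$--$P^1$ block of this
self-adjointness is Proposition~\ref{prop:cochain}, the $C^1$--$C^1$ block is
Poincar\'e duality in Morse homology, and the $Q^1$--$C^1$ block is precisely
the asserted identity $x\bullet\rho_\ast q=\langle\sigma_\ast x,q\rangle_q$.
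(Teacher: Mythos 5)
Your overall strategy is correct, but it follows a genuinely different route from the paper's. The paper never builds a continuation cobordism: it first proves Lemma~\ref{lem:rho-sigma-ind} (that $\rho_*$ and $\sigma_*$ intertwine continuation maps) by writing $\rho^i(q)=\Psi^i(K_q)$, with $K_q$ the singular chain carried by $\ev(\overline{\ms}(q))$, and invoking the Schwarz-style compatibility of $\Phi^f$ and $\Psi^f$ with continuation (Lemmas~\ref{lem:phi-chain-map} and~\ref{lem:psi-chain-map}). This reduces the Proposition to the chain-level identity $\tilde h(y)\bullet\rho^f q=\langle\sigma^{-f}y,q\rangle_q$, which is then a tautological bijection: reversing the $L\leftrightarrow\tilde L$ lift turns a generalized disk whose flow line (for $f$) runs from the junction point to the critical point into one whose flow line (for $-f$) runs from the critical point to the junction point, i.e.\ it exchanges $\rho^{f}$-configurations with $\sigma^{-f}$-configurations, and no gluing or one-parameter families are needed. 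You instead keep the perturbation $f$ on both sides and verify the comparison with $\Delta\circ h$ by hand, via a parameterized moduli space of disks with an attached half-infinite continuation trajectory. That is self-contained (it bypasses the singular-chain comparison), but it requires transversality, compactness and gluing for disks coupled to trajectories of a non-autonomous interpolation between $f$ and $-f$ --- a parameterized extension of Lemma~\ref{lem:morsetv} and Theorem~\ref{thm:disk+Morse} that the paper's route deliberately avoids.

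Two points in your sketch need repair before it is a proof. First, the family you name, ``trajectories from $x$ to a free point on $\partial u'$,'' is not one-dimensional if $u'$ is held fixed: in a rigid generalized disk only the pair (disk, flow line) is rigid, while the disk itself moves in a positive-dimensional moduli space, so the correct parameterized space must let the disk vary over $\ms^{\ast}$ (disks with a marked point) and couple the marked point to half-trajectories of an $s$-shifted homotopy from $f$ to $-f$, with $s\in[0,\infty)$ supplying the one parameter: at $s=0$ one recovers the $f$-flow lines from $x$ to the junction point (the $\langle\sigma x,q\rangle_q$ count), and as $s\to\infty$ the broken configurations consisting of a continuation trajectory from $x$ to $c$ followed by the reversed flow line of a $\rho$-configuration (the $(\Delta\circ h(x))(\rho q)$ count as in \eqref{eqn:poincare-pairing}). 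Second, the ends of this family are not only disk degenerations at Reeb chords: the trajectory can also break at a critical point of $f$ (these terms pair against the Morse differential of $x$ and vanish because $x$ is a cycle), and the marked point can collide with a puncture (excluded by genericity); your ``cycle hypotheses on $x$ and $q$'' are indeed what is needed, but the accounting of ends must include these. With those amendments, and the orientation bookkeeping you already flag, your argument yields a valid alternative proof; the shortest fix, of course, is to quote Lemma~\ref{lem:rho-sigma-ind}, after which your relabeling step becomes exactly the paper's reversal bijection.
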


Proposition \ref{prop:rho-sigma-adj} is proved at the end of this
section. The proof uses the following lemma.

\begin{lem} \label{lem:rho-sigma-ind} The maps $\rho_*$ and $\sigma_*$
  are independent of the perturbation function $f$.  That is, if $f_0$
  and $f_1$ are two generic perturbations and $h_*: HM_*(L;f_0) \to
  HM(L;f_1)$ is the continuation map, then:
  \begin{equation*}
    \rho^1_* = h_*  \rho^0_* \quad \text{and} \quad \sigma^1_*
    h_* = \sigma^0_*,
  \end{equation*}
  where $\rho^i$ and $\sigma^i$ are the maps corresponding to the
  perturbation function $f_i$.
\end{lem}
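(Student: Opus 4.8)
The plan is to realize a change of perturbing function as a one-parameter family of two-copies and to keep track of the induced map on linearized complexes. Choose a generic path $f_s$, $s\in[0,1]$, of admissible Morse functions, each sufficiently small, with $f_s=f_0$ for $s$ near $0$ and $f_s=f_1$ for $s$ near $1$. This gives a path of two-copy Legendrians $2L_s=L\cup\tilde L_s$ in which the component $L$ and the almost complex structure $J$ never move. Counting rigid holomorphic disks in the $s$-parametrized moduli spaces, exactly as in the standard proof of invariance of Legendrian contact homology (Theorem~\ref{thm:lch}), then produces a chain map $\Phi\colon(Q(2L_0),\fdf_1^0)\to(Q(2L_1),\fdf_1^1)$ between the linearized complexes associated to the augmentations $\faug_0,\faug_1$ coming from the fixed augmentation $\aug$ on $\alg(L)$. (In fact $\Phi$ is a chain homotopy equivalence, but only the chain-map property is used.)

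The first step is to observe that $\Phi$ is block lower-triangular. By Lemma~\ref{lem:stokes} (applied to $2L_s$) and the length estimates \eqref{eqn:length}, with $s\gg 0$ fixed along the whole path, the actions of the four families of generators remain well separated, with $Q^0$ lowest and $P^1<C^1<Q^1$ among the mixed chords; since disk counts respect the action filtration, so does $\Phi$. In particular $\Phi$ descends to the quotient complex $Q^1\oplus C^1\oplus P^1$ and is lower-triangular there, so that its diagonal blocks $\Phi_{qq}\colon(Q^1_0,\fdf_q^0)\to(Q^1_1,\fdf_q^1)$, $\Phi_{cc}\colon(C^1_0,\fdf_c^0)\to(C^1_1,\fdf_c^1)$, and $\Phi_{pp}\colon(P^1_0,\fdf_p^0)\to(P^1_1,\fdf_p^1)$ are themselves chain maps.

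The technical heart is to identify these diagonal blocks, which requires an $s$-parametrized version of Theorem~\ref{thm:disk+Morse} (and of Lemmas~\ref{lem:2pos} and~\ref{lem:morsetv}). Because $L$ and $J$ are independent of $s$, there are no $J$-holomorphic disks with boundary on $L$ of formal dimension $-1$ for any value of $s$, so the parametrized moduli that would contribute off-diagonally to $\Phi_{qq}$ and to $\Phi_{pp}$ is empty; hence $\Phi_{qq}=\id$ on $(Q(L),\df_1)$ and $\Phi_{pp}=\id$ on its cochain complex. On the other hand, by Lemma~\ref{lem:one-punc} and the parametrized Theorem~\ref{thm:disk+Morse}, a disk contributing to $\Phi_{cc}$ has exactly two punctures, both mixed Morse, and corresponds to a rigid solution of the continuation equation $\dot\gamma(t)=-\nabla f_{t}(\gamma(t))$ for the path $\{f_s\}$; these are precisely the trajectories defining the Morse continuation map $h\colon CM_\ast(L;f_0)\to CM_\ast(L;f_1)$. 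Thus $\Phi_{cc}=h$, compatibly with the degree shift $C^1_k\cong CM_{k+1}(L;f)$.

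Finally, the $(C^1,Q^1)$- and $(P^1,C^1)$-components of the chain-map relation $\Phi\,\fdf_1^0=\fdf_1^1\,\Phi$ read, up to the signs coming from the $-\fdf_c$ in \eqref{eqn:small-d}, $\Phi_{cc}\rho^0-\rho^1\Phi_{qq}=\fdf_c^1\Phi_{cq}+\Phi_{cq}\fdf_q^0$ and $\Phi_{pp}\sigma^0-\sigma^1\Phi_{cc}=\fdf_p^1\Phi_{pc}+\Phi_{pc}\fdf_c^0$, exhibiting $\Phi_{cq}$ and $\Phi_{pc}$ as chain homotopies. Passing to homology and substituting $\Phi_{qq}=\Phi_{pp}=\id$ and $\Phi_{cc}=h$ gives $\rho^1_\ast=h_\ast\rho^0_\ast$ and $\sigma^1_\ast h_\ast=\sigma^0_\ast$, as claimed. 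The main obstacle is the parametrized refinement of the analysis of Section~\ref{sec:disks}, in particular verifying that the disks with two mixed Morse punctures in the family $2L_s$ are exactly the continuation trajectories of $\{f_s\}$; given the tools developed there this is routine, and the remainder of the argument is formal. (One can also avoid parametrized holomorphic curves: by Theorem~\ref{thm:disk+Morse} the maps $\rho^f,\sigma^f$ are obtained by intersecting the fixed evaluation map $\ev\colon\overline{\ms^\ast}\to L$, weighted by $\aug$, with the stable and unstable manifolds of $f$, and one invokes the standard fact that such Morse-theoretic evaluations of a fixed chain change by the continuation map under a change of $f$.)
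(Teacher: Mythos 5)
Your main argument takes a genuinely different route from the paper, but its central step is not justified. You build everything on a chain map $\Phi$ between the linearized two-copy complexes obtained by ``counting rigid holomorphic disks in the $s$-parametrized moduli spaces, exactly as in the standard proof of invariance of Legendrian contact homology (Theorem~\ref{thm:lch})''. That is not what the cited invariance proof provides: invariance of $(\alg,\fdf)$ is established through stable tame isomorphisms produced by a bifurcation analysis, not by a continuation-type count, and for good reason --- at handle-slide moments the differential jumps by conjugation by a DGA automorphism, and at the instants where critical points of $f_s$ are born or die (unavoidable when $f_0$ and $f_1$ are arbitrary generic perturbations, so that $C^1_0$ and $C^1_1$ have different ranks) the link $2L_s$ fails to be chord generic and the comparison requires stabilization. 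So the object $\Phi$, with the block-triangular form and the identifications $\Phi_{qq}=\Phi_{pp}=\id$, $\Phi_{cc}=h$ that your final computation needs, does not come for free. Moreover, identifying the blocks requires a parametrized version of Theorem~\ref{thm:disk+Morse} precisely at the parameter values where the genericity of Lemma~\ref{lem:morsetv} fails (that failure is what creates the dimension $-1$ configurations contributing to your homotopies $\Phi_{cq},\Phi_{pc}$), and the further fact that the bifurcation count of rigid flow lines of $f_s$ over the path agrees on homology with the continuation map $h$ defined by the $t$-dependent flow is itself a theorem (of the type proved in \cite{schwarz:equivalence}), not an observation. None of this machinery is developed in the paper, and calling it routine understates a substantial amount of work; as written, the proof has a genuine gap at the existence and identification of $\Phi$.

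Your closing parenthetical, by contrast, is exactly the paper's proof: by Theorem~\ref{thm:disk+Morse} one has $\rho^i(q)=\Psi^i(K_q)$, where $K_q$ is the fixed chain carried by $\ev\colon\overline{\ms^{\ast}}(q)\to L$ and $\Psi^f$ is intersection with stable manifolds, and the whole content of the lemma is then that $\Psi^f$ (and $\Phi^f$) are chain maps compatible with continuation, which the paper does not invoke as a ``standard fact'' but proves as Lemmas~\ref{lem:phi-chain-map} and~\ref{lem:psi-chain-map}, following Schwarz. Had you fleshed out that one sentence --- in particular proving the compatibility of $\Psi^f$ with $h$ on the relevant class of chains, including the evaluation chains of compactified moduli spaces --- you would have the paper's argument, which avoids parametrized holomorphic curve theory entirely by keeping the holomorphic data fixed and moving only the finite-dimensional Morse-theoretic data.
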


Lemma \ref{lem:rho-sigma-ind} uses some results from Morse theory
which we present before giving its proof. Similar Morse
theoretic questions were considered in \cite{fooo,
  schwarz:equivalence}. Our approach here is a modification of that in
\cite{schwarz:equivalence}.

Denote the singular chain complex of $L$ by $C_\ast(L)$ and the
corresponding homology by $H_\ast(L)$. Define a map $\Phi^f: CM_*(L;f)
\to C_*(L)$ as follows. If $x$ is a critical point of $f$, then
$\Phi^f(x)$ is the singular chain carried by the closure of the
unstable manifold $\overline{W^{\rm u}(x)}$.

\begin{lem} \label{lem:phi-chain-map} The map $\Phi^f$ is a chain map
  that is compatible with the continuation map $h_*: HM_*(L;f_0) \to
  HM_*(L;f_1)$ in the sense that $\Phi^{f_1}_* h_* = \Phi^{f_0}_*$.
\end{lem}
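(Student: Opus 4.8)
The plan is to treat the two assertions in turn, both via standard compactification arguments in Morse theory; since the second is established through a chain homotopy, only the statement on induced maps is claimed.

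\emph{$\Phi^f$ is a chain map.} By the Morse--Smale condition the compactified unstable manifold $\overline{W^{\mathrm u}(x)}$ is the image, under an evaluation map, of a compact manifold with corners of dimension $I_f(x)$ whose codimension-one boundary is $\bigsqcup_{I_f(y)=I_f(x)-1}\mathcal M_f(x,y)\times\overline{W^{\mathrm u}(y)}$ (with multiplicities and signs), while all corner strata of codimension $\ge 2$ have image of dimension $\le I_f(x)-2$. After fixing once and for all a coherent way of regarding these closures as singular chains --- for instance a triangulation of $L$ adapted to the stratification by the sets $\overline{W^{\mathrm u}(\cdot)}$, or equivalently the push-forward of the fundamental chains of the compactified unstable manifolds, the orientations being supplied by the usual coherent orientation of unstable manifolds (using that $L$ is oriented when $\Lambda\neq\zz_2$) --- this geometric boundary formula becomes
\[
\partial\,\Phi^f(x)=\sum_{I_f(y)=I_f(x)-1} n_f(x,y)\,\Phi^f(y)=\Phi^f(\partial_{CM}x),
\]
where $n_f(x,y)=\#\mathcal M_f(x,y)$ is the signed count of $g$-gradient trajectories from $x$ to $y$ and $\partial_{CM}$ is the Morse differential; thus $\Phi^f$ is a chain map.

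\emph{Compatibility with continuation.} Fix a generic homotopy $f_s$ equal to $f_0$ for $s\le0$ and to $f_1$ for $s\ge1$, and let $\Psi_{t,\tau}\colon L\to L$ be the flow from time $\tau$ to time $t$ of $\dot\gamma=-\nabla f_t(\gamma)$. For $x\in\operatorname{Crit}(f_0)$ I would introduce the non-autonomous unstable set
\[
\mathbf W(x)=\bigl\{(p,\tau)\in L\times\rr:\ \lim_{t\to-\infty}\Psi_{t,\tau}(p)=x\bigr\},
\]
which, after shrinking $f_s$ and arranging the usual genericity (so that all unstable and stable manifolds of $f_0,f_1$ and all spaces of continuation trajectories are transversely cut out), is a submanifold of dimension $I_{f_0}(x)+1$. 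Using the compactness theory for the non-autonomous flow I would show that $\overline{\mathbf W(x)}\subset L\times[-\infty,+\infty]$ is a compact manifold with corners whose codimension-one boundary consists of: the slice $\tau=-\infty$, which equals $\overline{W^{\mathrm u}_{f_0}(x)}$ (the flow being autonomous $=f_0$ there); the slice $\tau=+\infty$, which by the principle that a submanifold pushed forward under a gradient flow converges to a weighted union of unstable manifolds equals $\sum_{I_{f_1}(y)=I_{f_0}(x)} n_h(x,y)\,\overline{W^{\mathrm u}_{f_1}(y)}$, with $n_h(x,y)$ the signed count of continuation trajectories from $x$ to $y$ (i.e.\ the coefficient of $y$ in $h(x)$); and the breaking faces $\mathcal M_{f_0}(x,x')\times\overline{\mathbf W(x')}$ with $I_{f_0}(x')=I_{f_0}(x)-1$. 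Defining $K\colon CM_\ast(L;f_0)\to C_{\ast+1}(L)$ by taking $K(x)$ to be the singular chain carried by the image of $\overline{\mathbf W(x)}$ under the projection $L\times[-\infty,+\infty]\to L$ (codimension-$\ge2$ strata dropping dimension and being invisible to $\partial$), the above reads, up to sign,
\[
\partial K + K\,\partial_{CM}=\Phi^{f_1}\!\circ h-\Phi^{f_0},
\]
so that $\Phi^{f_1}_\ast h_\ast=\Phi^{f_0}_\ast$. This is a mild modification of the cobordism argument of \cite{schwarz:equivalence}; compare also \cite{fooo}.

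The routine ingredients are the transversality statements (genericity of $g$, $f_0$, $f_1$ and the homotopy $f_s$) and the sign bookkeeping. The main obstacle is making the two displayed geometric boundary formulas rigorous: one must fix a framework in which $\overline{W^{\mathrm u}(\cdot)}$ and $\overline{\mathbf W(\cdot)}$ determine honest singular chains whose boundaries are computed by the indicated stratum decompositions --- equivalently, the compactified spaces of gradient and continuation trajectories, together with the associated compactified non-autonomous unstable sets, must be manifolds with corners having precisely the stated codimension-one boundary and no extra strata. Once that structure is in place, everything above is formal.
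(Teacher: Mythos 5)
Your proposal is correct and follows essentially the same route as the paper: the chain-map property comes from the stratum description of $\partial\overline{W^{\mathrm u}(x)}$ (the paper cites Lemma~4.2 of \cite{schwarz:equivalence}), and the compatibility with continuation comes from the chain homotopy carried by the closure of the set of points lying on non-autonomous trajectories asymptotic to $x_0$, exactly the object the paper takes from the proof of Lemma~4.8 of \cite{schwarz:equivalence}. Your write-up merely spells out the manifold-with-corners boundary formula, the breaking faces giving the $K\,\partial_{CM}$ term, and the sign conventions that the paper delegates to Schwarz's gluing and compactness results.
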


\begin{proof}
  By Lemma 4.2 of \cite{schwarz:equivalence}, the boundary of
  $\overline{W^{\rm u}(x)}$ consists of products of rigid flow lines
  from $x$ to $y$ with $\overline{W^{\rm u}(y)}$.  The first part of
  the lemma follows.  The second part is similar to the proof of Lemma~4.8 
  of \cite{schwarz:equivalence}, to which we refer the reader for the 
  necessary gluing and compactness results:
  let $x_j$ be critical points of
  $f_j$, $j=0,1$, such that the number of rigid solutions of
  $\dot\gamma(t)=-\nabla f_{t}(\gamma(t))$ asymptotic to $x_0$ as $t
  \to -\infty$ and $x_1$ as $t \to +\infty$ equals $m$.  To construct
  a chain whose boundary is the chain carried by the closure of the
  unstable manifold $\overline{W^{\rm u}(x_0)}$ (defined with respect
  to $f_0$) and $m$ times the chain carried by the closure of the
  unstable manifold $\overline{W^{\rm u}(x_1)}$ (defined with respect
  to $f_1$), consider the closure of the set of points which lie on
  some solution curve of $\dot\gamma(t)=-\nabla f_{t}(\gamma(t))$
  which is asymptotic to $x_0$ as $ t\to-\infty$. 
\end{proof}

\begin{rem}
  For technical reasons, we assume from here on that $C_*(L)$ is
  generated by a set of $C^{1}$ chains that is the union of both the
  chains in the image of $\Phi^f$ and the chains in the image of the
  evaluation maps of all moduli spaces of holomorphic disks with
  boundary on $L$, one marked point, and at most two positive
  punctures.\footnote{See Subsection~\ref{ssec:diffl}, above, and
    \cite[\S7.7]{ees:high-d-analysis}.} Further, we assume that the
  $C^{1}$ chains are transverse to the closures of the stable manifolds
  $\overline{W^{\rm s}(x)}$.  That this chain complex exists follows
  from arguments similar to those in \cite[\S19]{fooo}.
\end{rem}

The inverse map $\Psi^f: C_*(L) \to CM_\ast(L;f)$ is defined using
intersections between chains and the stable manifolds of the critical
points as follows. Let:
\begin{equation}
  \Psi^f(\chi) = \sum_x (\chi \bullet W^{\rm s}(x))x,
\end{equation}
where $\chi$ is a chain, $\bullet$ denotes the intersection product on
chains, and where the sum ranges over all critical points $x$ of $f$.
Note that the intersection product on chains is well-defined by our
assumptions on the chains generating $C_*(L)$.  Using the same proof
as for Lemma~\ref{lem:phi-chain-map}, though this time based on ideas
from Section 4.2 of \cite{schwarz:equivalence}, we obtain the
following.

\begin{lem} \label{lem:psi-chain-map} The map $\Psi^f$ is a chain map
  compatible with the continuation map.
\end{lem}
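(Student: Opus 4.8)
The plan is to run the proof of Lemma~\ref{lem:phi-chain-map} in its Poincar\'e‑dual form, replacing unstable manifolds by stable manifolds and the operation of carrying a chain by intersection with a chain. The geometric input I would use is the dual of the boundary description quoted from \cite[Lemma~4.2]{schwarz:equivalence}: for the Morse--Smale pair $(f,g)$ the compactified stable manifold $\overline{W^{\rm s}(z)}$ is a compact manifold with corners whose codimension‑one boundary, as a singular chain, equals $\sum_{x} n(x,z)\,\overline{W^{\rm s}(x)}$, where the sum is over critical points $x$ with $\ix_x(f)=\ix_z(f)+1$ and $n(x,z)$ is the number of rigid negative gradient trajectories from $x$ to $z$, i.e.\ the coefficient of $z$ in $\partial_{CM}x$. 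Granting this, the chain‑map property is the standard one‑dimensional moduli‑space argument: for a generating $C^1$‑chain $\chi\in C_k(L)$ and a critical point $z$ with $\ix_z(f)=k-1$, the transversality of $\chi$ to $\overline{W^{\rm s}(z)}$ and to its boundary strata assumed in the Remark above makes $\chi\cap\overline{W^{\rm s}(z)}$ a compact $1$‑manifold with boundary (the higher‑codimension strata of $\overline{W^{\rm s}(z)}$ meet $\chi$ in negative expected dimension, hence not at all). Its endpoints are of exactly two kinds --- points of $(\partial\chi)\cap W^{\rm s}(z)$, which assemble to the coefficient of $z$ in $\Psi^f(\partial\chi)$, and points of $\chi\cap\overline{W^{\rm s}(x)}$ for the $x$ above, which assemble to the coefficient of $z$ in $\partial_{CM}\Psi^f(\chi)$ --- so counting them with sign gives $\partial_{CM}\Psi^f=\pm\,\Psi^f\partial$, the signs being matched exactly as in \cite[\S4]{schwarz:equivalence}. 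No grading bookkeeping is needed beyond the observation that $\chi\bullet W^{\rm s}(x)$ is nonzero only when $\dim\chi=\ix_x(f)$, so $\Psi^f$ automatically sends $C_k(L)$ to $CM_k(L;f)$.

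For compatibility with the continuation map I would mirror the second half of the proof of Lemma~\ref{lem:phi-chain-map}. Fix a generic homotopy $f_s$, $s\in\rr$, with $f_s=f_0$ for $s\le 0$ and $f_s=f_1$ for $s\ge 1$, let $h\colon CM_*(L;f_0)\to CM_*(L;f_1)$ be the continuation map, and for each critical point $z$ of $f_1$ form the parametrized stable manifold $\widehat{W}^{\rm s}(z)$, the closure of the set of points lying on some solution of $\dot\gamma(t)=-\nabla f_t(\gamma(t))$ asymptotic to $z$ as $t\to+\infty$. As in \cite[\S4.2]{schwarz:equivalence}, $\widehat{W}^{\rm s}(z)$ is a compact manifold with corners of dimension $\dim W^{\rm s}_{f_1}(z)+1$ whose codimension‑one boundary consists of the pure‑$f_1$ stratum $\overline{W^{\rm s}_{f_1}(z)}$ together with the strata $\sum_x M(x,z)\,\overline{W^{\rm s}_{f_0}(x)}$ obtained by breaking off a rigid $f_0$‑trajectory at $t=-\infty$, where $M(x,z)$ is the number of rigid continuation trajectories from $x$ to $z$, so that $h(x)=\sum_z M(x,z)\,z$. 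Intersecting $\widehat{W}^{\rm s}(z)$ with a $C^1$‑cycle $\chi$ and counting with sign the boundary points of the resulting compact $1$‑manifold yields, up to a term involving $\partial\chi$, the identity of coefficients $\langle\Psi^{f_1}(\chi),z\rangle=\pm\langle h\,\Psi^{f_0}(\chi),z\rangle$; hence $\Psi^{f_1}_*=h_*\,\Psi^{f_0}_*$ on homology, which is the asserted compatibility (matching the convention used for $\Phi^f$ in Lemma~\ref{lem:phi-chain-map}, under which $\Phi$ and $\Psi$ are mutually inverse on homology).

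The step I expect to be the main obstacle is the same one implicit already in Lemma~\ref{lem:phi-chain-map}: establishing rigorously, via the standard gluing and compactness results for Morse trajectory spaces (cf.\ \cite{schwarz:equivalence} and \cite[\S19]{fooo}), that $\overline{W^{\rm s}(z)}$ and its parametrized analogue $\widehat{W}^{\rm s}(z)$ are genuine compact manifolds with corners with precisely the asserted codimension‑one boundary strata, and --- more delicately in the present setting --- that all of this is compatible with the fixed generating set of $C^1$‑chains for $C_*(L)$ chosen in the Remark above, so that the chain‑level intersection products, and the Leibniz rule used implicitly in both steps, are well defined. Once that infrastructure is in place --- and it requires no analysis beyond \cite{schwarz:equivalence, fooo} --- the proof is a formal manipulation of the two boundary formulas above, entirely parallel to the proof of Lemma~\ref{lem:phi-chain-map}.
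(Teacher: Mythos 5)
Your proposal is correct and follows essentially the same route as the paper, whose proof simply repeats the argument of Lemma~\ref{lem:phi-chain-map} ``based on ideas from Section 4.2 of \cite{schwarz:equivalence}'': you dualize by replacing unstable by (parametrized) stable manifolds and chain-carrying by intersection with chains, which is exactly the intended argument, including the treatment of the continuation compatibility via the parametrized trajectory spaces.
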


Since it is clear that $\Psi^f \circ \Phi^f$ is the identity on
$CM_*(L;f)$, and since the Morse and singular homologies are
isomorphic, it follows that $\Phi^f_*$ and $\Psi^f_*$ are mutual
inverses.

We can now prove that $\rho_*$ and $\sigma_*$ are independent of the
perturbation.

\begin{proof}[Proof of Lemma~\ref{lem:rho-sigma-ind}]
  Let $\ms(q)$ be the moduli space of holomorphic disks with positive
  puncture at $q$ and possibly negative punctures at augmented
  crossings.  Let $K_q$ be the image in $C_*(L)$ of
  $\overline{\ms}(q)$ under the evaluation map.
  Theorem~\ref{thm:disk+Morse} and the discussion before it imply that
  $\rho$ may be defined using lifted generalized disks, so we obtain:
  \begin{equation*}
    \rho^i(q) = \Psi^i(K_q).
  \end{equation*}
  If $\alpha \in H_*(Q^1, \fdf_q)$, then:
  \begin{align*}
    \rho^0_* \alpha &= \Psi^0_*(K_\alpha) \\
    &= h_* \Psi^1_*(K_\alpha) \\
    &= h_* \rho^1_* \alpha.
  \end{align*}

  The proof for $\sigma_*$ is similar.
\end{proof}

Finally, we are ready to prove the original proposition.

\begin{proof}[Proof of Proposition~\ref{prop:rho-sigma-adj}]
  Denote by $\rho^f$ and $\sigma^{-f}$ the maps induced by the
  perturbations $\pm f$.  By Lemma~\ref{lem:rho-sigma-ind}, it
  suffices to prove that, on the chain level,
  \begin{equation} \label{eqn:rho-sigma-adj} \tilde{h}(y) \bullet
    \rho^f q = \langle \sigma^{-f} y, q \rangle_q
  \end{equation}
  for any given $y \in CM_k(L;-f)$, where $\tilde{h}$ is the
  continuation map from $CM_k(L; -f) \to CM_k(L;f)$.

  Theorem~\ref{thm:disk+Morse} says that a contribution to the
  left-hand side is given by a lifted generalized disk with a mixed
  positive puncture at $q$ and a negative Morse puncture at $y$ (and
  possibly other negative augmented pure punctures), where the
  flow line $\gamma$ in the generalized disk is a negative gradient
  flow line for $f$.  By reversing the lifting, we obtain a lifted
  generalized disk with a mixed negative puncture at the $p$
  corresponding to $q$ and a positive Morse puncture at $y$.  This
  time, however, the flow line $\gamma$ points in the opposite
  direction, and hence is a negative gradient flow line for $-f$.
  Thus, this disk contributes to $\sigma^{-f} y$, and hence to the
  right hand side. Therefore, there is a bijective correspondence
  between the disks that determine the left- and right-hand sides of
  Equation~(\ref{eqn:rho-sigma-adj}).
\end{proof}

\section{The Duality Sequence}
\label{sec:duality}

In this section, we prove Theorem \ref{thm:duality} in two
steps. First, we establish an isomorphism between $QC^{1}$ and $P^{1}$
(using notation as in Section \ref{ssec:lin-2-copy}) for $L\subset P
\times \rr$.  Second, we collect the information from the
identifications of Section \ref{ssec:subcomplexes}.

\subsection{The Duality Isomorphism}
\label{ssec:duality-map}

Assume that $L\subset P \times \rr$ is a Legendrian submanifold with
satisfying the assumptions of Theorem~\ref{thm:duality}.  The first
step will be to show that $H_*$ is an isomorphism between $QC^1$ and
$P^1$.

\begin{prop} \label{prop:duality-isom} The map $H_*: H_*(QC^1) \to
  H_*(P^1)$ is a degree $-1$ isomorphism.
\end{prop}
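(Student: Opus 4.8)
The plan is to reinterpret the assertion as the acyclicity of the ``mixed'' part of $Q(\twocopy)$, and then to prove that acyclicity by the standard invariance argument, exploiting the fact that $\twocopy$ is Legendrian isotopic to a split link.

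First I would set up the reduction. By Corollary~\ref{cor:mapping-cone} the complex $(QC^1\oplus P^1,\fdf_1)=(Q^1\oplus C^1\oplus P^1,\fdf_1)$ is the mapping cone of $H$, so $H_*$ is an isomorphism exactly when this complex is acyclic; write $\bar Q=Q^1\oplus C^1\oplus P^1$ for it. As observed in Section~\ref{ssec:lin-2-copy}, Lemma~\ref{lem:one-punc} yields a splitting $Q(\twocopy)=Q^0\oplus\bar Q$ \emph{as chain complexes}, and $(Q^0,\fdf_1)$ is canonically the direct sum of the linearized complex $(Q(L),\df_1)$ and the corresponding complex for $\tilde L$, which coincide under the canonical identification $\alg(\tilde L)\cong\alg(L)$; hence $H_*(Q^0)\cong H_*(Q(L),\df_1)^{\oplus 2}$. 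Since $Q(\twocopy)=Q^0\oplus\bar Q$ as complexes, $H_*(Q(\twocopy),\fdf_1)\cong H_*(Q^0)\oplus H_*(\bar Q)$, and as all these groups are finitely generated it therefore suffices to prove
\[
 H_*\bigl(Q(\twocopy),\fdf_1\bigr)\ \cong\ H_*(Q(L),\df_1)^{\oplus 2}.
\]
(The degree $-1$ claim is automatic, since $H=\eta+\sigma$ is built from components of the degree $-1$ differential $\fdf_1$.)

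Next I would produce the split model for $\twocopy$, which is the one place the hypothesis of Theorem~\ref{thm:duality} enters. Lifting a Hamiltonian isotopy of $P$ that displaces $\Pi_P(L)$ off itself to a contact isotopy $\hat\psi_t$ of $P\times\rr$, apply $\hat\psi_t$ to the second component $\tilde L$ of $\twocopy$ while keeping $L$ fixed and perturb to be chord generic; since $\Pi_P(\tilde L)$ is $C^1$-close to $\Pi_P(L)$, the endpoint $L''=\hat\psi_1(\tilde L)$ has $\Pi_P(L)\cap\Pi_P(L'')=\emptyset$ and is Legendrian isotopic to $L$. Thus $\twocopy$ is Legendrian isotopic to the split link $L\sqcup L''$. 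By Theorem~\ref{thm:lch} the DGAs $(\alg(\twocopy),\fdf)$ and $(\alg(L\sqcup L''),\df)$ are stable tame isomorphic, so $\faug$ is carried to an augmentation $\faug'$ of $\alg(L\sqcup L'')$ with $H_*(Q(\twocopy),\fdf_1)\cong H_*(Q(L\sqcup L''),\df_1^{\faug'})$. Because $\Pi_P(L)$ and $\Pi_P(L'')$ are disjoint, $L\sqcup L''$ has no mixed Reeb chords: $\alg(L\sqcup L'')=\alg(L)\ast\alg(L'')$, the augmentation splits as $\faug'=\faug'_1\ast\faug'_2$, the linearized differential preserves $Q(L\sqcup L'')=Q(L)\oplus Q(L'')$, and so $H_*(Q(L\sqcup L''),\df_1^{\faug'})=H_*(Q(L),\df_1^{\faug'_1})\oplus H_*(Q(L''),\df_1^{\faug'_2})$. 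Since the isotopy fixes $L$ and takes the $L$- and $\tilde L$-copies inside $\twocopy$ to the two factors, $\faug'_1$ is $\aug$ and $\faug'_2$ corresponds to $\aug$ under the Legendrian isotopy $L''\simeq L$; each summand is then isomorphic to $H_*(Q(L),\df_1)$, which gives the displayed isomorphism and hence $H_*(\bar Q)=0$.

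The main obstacle is the last claim just used: that the stable tame isomorphism $\alg(\twocopy)\to\alg(L\sqcup L'')$ coming from the isotopy is compatible with the decomposition of $\twocopy$ into its two geometric copies, so that the transported augmentation $\faug'$ is the ``diagonal'' one assembled from $\aug$. The difficulty is that short mixed Reeb chords necessarily appear somewhere along a displacing isotopy, so the sub-DGAs generated by the pure chords of the two components are not preserved at intermediate times. I would handle this by moving one component at a time and invoking naturality of the stable tame isomorphism under a Legendrian isotopy of a single component of a link --- equivalently, by tracking the augmentation directly through the isotopy. Everything else (Lemma~\ref{lem:one-punc}, Corollary~\ref{cor:mapping-cone}, the splitting $Q(\twocopy)=Q^0\oplus\bar Q$, and the elementary cancellation of finitely generated modules) is already in place, so the argument is formal once this compatibility is established.
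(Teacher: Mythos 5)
Your overall skeleton --- reduce via Corollary~\ref{cor:mapping-cone} to acyclicity of $Q^1\oplus C^1\oplus P^1$, split off $Q^0$, and compare $\twocopy$ with the horizontally displaced two-copy --- is the same as the paper's. The genuine gap is exactly the step you flag yourself: the claim that the stable tame isomorphism provided by Theorem~\ref{thm:lch} carries $\faug$ to the ``diagonal'' augmentation, i.e.\ that $\faug'_1=\aug$ and that $\faug'_2$ corresponds to $\aug$ under the isotopy $L''\simeq L$. Nothing quoted in the paper gives this: the stable tame isomorphism is assembled from stabilizations and tame automorphisms along an isotopy during which mixed chords are born and die, and there is no naturality statement saying it respects the subalgebras generated by the pure chords of the two components or that it sends $\faug$ to a product augmentation; likewise ``$\faug'_2$ corresponds to $\aug$'' presupposes a way of transporting a specific augmentation through a Legendrian isotopy, a functoriality that is not established here (and is not a formal consequence of stable tame invariance). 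So the proposal is not ``formal once this compatibility is established'' --- that compatibility is the entire difficulty, and the suggested repair (move one component at a time and invoke naturality) is itself an unproved and nontrivial assertion.

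By contrast, the paper's proof never identifies which augmentation $\faug$ is carried to. It uses only the invariance of the \emph{set} of linearized contact homologies (Section~\ref{ssec:linearization}): after the top component is displaced there are no mixed Reeb chords at all, so every linearized homology of the displaced link comes from pure generators, and the paper concludes from this set-level invariance that the linearized homology of $\twocopy$ comes from $Q^0$ only, i.e.\ that $QC^1\oplus P^1$ is acyclic and hence $H_*$ is an isomorphism. In short, your route runs parallel to the paper's but hinges on a strictly stronger augmentation-tracking statement that you neither prove nor can cite, whereas the paper's argument is set up precisely so as not to need it; as written, your proof has a gap at that final identification.
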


\begin{proof}
  By Corollary~\ref{cor:mapping-cone}, it suffices to prove that the
  complex
  \[
  (QC^1 \oplus P^1, -\fdf_{qc} + H + \fdf_p)
  \]
  is acyclic.  Since $(Q(\twocopy), \fdf_1)$ splits as the direct sum
  of two subcomplexes $Q^0$ and $QC^1 \oplus P^1$, it is clear that
  \begin{equation}
    H_*(Q(\twocopy)) \cong H_*(Q^0) \oplus H_*(QC^1 \oplus P^1).
  \end{equation}
  In order to prove that $QC^1 \oplus P^1$ is acyclic,
  we show that the linearized contact homology of $\twocopy$
  comes from $Q^0$ only, as follows.

  Modify the two-copy by a Legendrian isotopy that leaves the bottom
  component fixed and horizontally displaces the top component
  by the lift of a Hamiltonian isotopy in $P$ of
  the projection of $L$ off of itself.
  For any
  augmentation, the linearized contact homology of this shifted
  two-copy is clearly isomorphic to the homology of $Q^0$ since all of
  the other components of $Q(2L)$ are trivial.  Since the set of
  linearized contact homologies is invariant under Legendrian isotopy,
  this implies that, for any augmentation, the linearized contact
  homology of $\twocopy$ comes from $Q^0$ only, as desired.
\end{proof}

\begin{rem}
  It should not be surprising that $H$ turns out to be an isomorphism
  essential to the proof of duality. In finite-dimensional Morse
  theory, rigid gradient flow trees with two positive ends and one
  vertex define the Poincar\'e duality isomorphism (see
  \cite{cohen-betz}, for example).  Using
  Theorem~\ref{thm:disk+Morse}, it is straightforward to see that the
  disks that define $H$ --- or at least $\eta$ --- correspond to disks
  with two positive punctures and no non-augmented negative punctures
  in $L$.  Thus, the disks defining $H$ are combinatorially similar to
  the gradient flow trees that give Poincar\'e duality.
\end{rem}

\subsection{The Proof of Theorem \ref{thm:duality}}
\label{ssec:duality-pf}

The complex $QC^1$ is the mapping cone of $\rho$.  The corresponding
long exact sequence is:
\begin{equation}
  \cdots \to H_k(C^1) \overset{i_*}{\to} H_k(QC^1) \to H_k(Q^1)
  \overset{\rho_*}{\to} \cdots,
\end{equation}
where $i: C^1 \to QC^1$ is the natural inclusion.  By
Proposition~\ref{prop:correspondence}(2), we have $H_k(C^1) \cong
H_{k+1}(L)$.  By Propositions~\ref{prop:cochain} and
\ref{prop:duality-isom}, the middle term $H_k(QC^1)$ is isomorphic to
$H^{n-k-1}(Q^1)$. Further, we have $H_* i_* = \sigma_*$.  Inserting
these facts into the exact sequence above yields:
\begin{equation}
  \cdots \to H_{k+1}(L) \overset{\sigma_*}{\to} H^{n-k-1}(Q^1)
  \to H_k(Q^1) \overset{\rho_*}{\to} \cdots.
\end{equation}
Proposition~\ref{prop:correspondence}(1) now finishes the proof of the
first part of the duality theorem.  The second part of the duality
theorem follows directly from Proposition~\ref{prop:rho-sigma-adj}.

\section{Applications and Examples}
\label{sec:ex}

\subsection{Proof of Theorem \ref{thm:arnold}}
\label{ssec:arnold-proof}

Let $L\subset P \times \rr$ be a Legendrian submanifold with
linearizable contact homology over a field $\Lambda$. To set notation,
let:
\begin{align*}
  b_k &= \dim H_k(L), \\
  r_k &= \dim \img \rho_* \subset H_k(L), \\
  s_k &= \dim \img \sigma_* \subset H^{n-k}(Q(L)). \\
\end{align*}
Note that $r_k$ is at most the dimension of $H_k(Q(L))$, which in turn
bounds below $c_k$, the number of Reeb chords of grading $k$.  By the
second part of Theorem \ref{thm:duality}, we obtain $s_k = r_{n-k}$,
so:
\begin{equation*} \label{eqn:split-betti} b_k = r_k + s_k = r_k +
  r_{n-k} \leq c_k + c_{n-k}.
\end{equation*}

\subsection{Basic Examples}
\label{ssec:explan-ex}

We study the relation between the linearized contact homology and the
Morse homology implied by Theorem \ref{thm:duality} in several simple
examples.

\begin{exam}[The Flying Saucer,
  revisited] \label{ex:flying-saucer-hom} Recall the flying saucer of
  Example~\ref{ex:flying-saucer}, whose linearized homology (and
  cohomology) is $\zz$ in degree $n$ and trivial otherwise.  The
  relevant part of the duality exact sequence in
  Theorem~\ref{thm:duality} is
  \begin{equation*}
    \cdots \to H^{-1}(Q(L)) \to H_n(Q(L)) \overset{\rho_*}{\to}
    H_n(L) \to H^0(Q(L)) \to \cdots.
  \end{equation*}
  This reduces to $0 \to \zz \to \zz \to 0$.  Thus, we see that
  $\rho_*$ gives an isomorphism between the degree $n$ linearized
  homology and the group generated by the fundamental class $[L]$ of
  $L \cong S^n$.

  Since $H_0(Q(L)) = 0$, and hence the image of $\rho_*$ in $H_0(L)$
  is trivial, it is clear that the Poincar\'e dual of $[L]$ evaluates
  to $0$ on the image of $\rho_*$ in $H_0(L)$, as guaranteed by the
  Theorem~\ref{thm:duality}.  In fact, as we can see from the
  following part of the exact sequence, $H_0(L)$ is isomorphic to the
  dual of $H_n(Q(L))$
  \begin{equation*}
    \cdots \to H_0(Q(L)) \to H_0(L) \overset{\sigma_*}{\to}
    H^n(Q(L)) \to H_{-1}(Q(L)) \to \cdots.
  \end{equation*}
  In other words, we see that the Morse homology of $L$ splits
  between the linearized homology and the linearized cohomology.
\end{exam}

\begin{exam}[Chekanov's Example in $\rr^3$, reinterpreted]
  \label{ex:3d}
  \begin{figure}[ht]
    \relabelbox
    \small{\centerline{\epsfbox{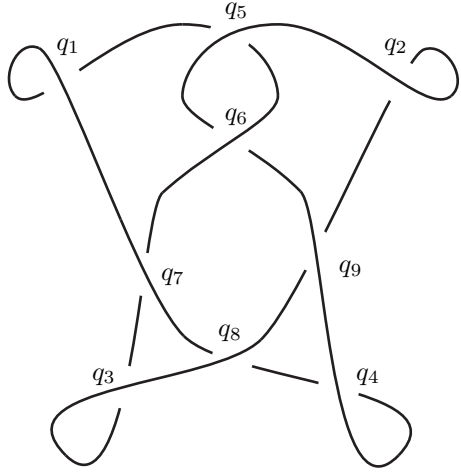}}}
    \relabel {1}{$q_1$}
    \relabel {2}{$q_2$}
    \relabel {3}{$q_3$}
    \relabel {4}{$q_4$}
    \relabel {5}{$q_5$}
    \relabel {6}{$q_6$}
    \relabel {7}{$q_7$}
    \relabel {8}{$q_8$}
    \relabel {9}{$q_9$}
    \endrelabelbox
    \caption{The knot $L$.}
    \label{fig:chex}
  \end{figure}
  Consider the Legendrian knot $L$ in $\rr^3$ whose Lagrangian
  projection is shown in Figure~\ref{fig:chex}.  Working over $\zz_2$,
  the algebra for $L$ is $\alg(L)=\zz_2\langle q_1, \ldots q_9\rangle$
  with $|q_i|=1$ for $i=1,\ldots, 4$, $|q_5|=2=-|q_6|$, and $|q_i| =
  0$ for $i \geq 7$.  We have
  \begin{equation*}
    \partial q_i= \begin{cases} 1+ q_7 + q_7q_6q_5& i=1,\\ 1+ q_9+
      q_5q_6q_9&i=2,\\ 1+ q_8q_7&i=3,\\ 1+ q_9q_8&i=4,\\ 0& i\geq 5.
    \end{cases}
  \end{equation*}

  This differential is not augmented, but there is a unique
  augmentation $\aug$ that sends $q_7, q_8$, and $q_9$ to $1$. The
  linearized differential is
  \begin{equation}
    \dfe_1 q_i= \begin{cases} q_7 & i=1,\\ q_9&i=2,\\
      q_8+q_7&i=3,\\ q_9+q_8&i=4,\\ 0& i\geq 5.
    \end{cases}
  \end{equation}
  Thus, we have the following ranks for the linearized homology

  \begin{center}
    \begin{tabular}{c|ccccc}
      $k$ & $-2$ & $-1$ & $\mathbf{0}$ & $1$ & $2$ \\ \hline
      $\dim H_k(Q(L))$ & $1$ & $0$ & $0$ & $1$ & $1$
    \end{tabular}
  \end{center}

  This computation agrees with the predictions of the duality theorem
  in \cite{duality}: off of a class in degree $1$, the
  linearized homology is symmetric about degree $0$.

  The first interesting part of the duality exact sequence is
  \begin{equation*} \cdots \to H_{-1}(L)\to H^2(Q(L)) \to
    H_{-2}(Q(L)) \to H_{-2}(L) \to \cdots.
  \end{equation*}
  This sequence reduces to $0 \to \zz_2 \to \zz_2 \to 0$ and shows
  that $H^2(Q(L))$ and $H_{-2}(Q(L))$ are isomorphic; this is the
  ``duality'' in Theorem~\ref{thm:duality}.

  The next interesting parts of the exact sequence are
  \begin{equation*} \cdots \to H^{-1}(Q(L))\to H_{1}(Q(L))
    \to H_1(L) \to H^0(Q(L)) {\to} \cdots.
  \end{equation*}
  and
  \begin{equation*}
    \cdots \to H_0(Q(L)) \to H_0(L) \to H^1(Q(L)) \to H_{-1}(Q(L)) \to \cdots.
  \end{equation*}
  As above, these sequences both reduce to $0 \to \zz_2 \to \zz_2\to
  0$.  As in the flying saucer example, we see that the homology of $L
  \cong S^1$ is split between the linearized contact homology (in this
  case, $H_1(L) \cong H_1(Q(L))$) and cohomology (in this case,
  $H_0(L) \cong H^1(Q(L))$) in Poincar\'e dual pairs.
\end{exam}

\begin{exam}[Front Spinning]
  Another source of examples in which the manifold classes follow an
  interesting pattern is the front spinning construction from
  \cite{ees:high-d-geometry}.  Given a Legendrian submanifold $L$ in
  $\rr^{2n+1}$, we construct the \emph{suspension} $\Sigma(L)$ of $L$
  by ``spinning the front of $L$''. More specifically, suppose $\phi
  \colon L\to \rr^{2n+1}$ is a parametrization of $L$, and for $p\in
  L$, we write $\phi(p)=(x_1(p),y_1(p), \ldots, x_n(p),y_n(p),z(p))$.
  The front projection $\Pi_F(L)$ of $L$ is parametrized by
  $\Pi_F\circ \phi(p)=(x_1(p),\ldots, x_n(p),z(p))$. We may assume
  that $L$ has been translated so that the $x_1$ coordinates of all
  points in $\Pi_F(L)$ are positive.  If we embed $\rr^{n+1}$ into
  $\rr^{n+2}$ via $(x_1, \ldots, x_n, z)\mapsto (x_0=0, x_1,\ldots
  x_n,z)$, then $\Pi_F(\Sigma L)$ is obtained by revolving
  $\Pi_F(L)\subset\rr^{n+1}$ around the subspace $\{x_0=x_1=0\}$ as in
  Figure~\ref{1fig:1spin}.
  \begin{figure}[ht]
    \relabelbox \small {\epsfxsize=3in\centerline{\epsfbox{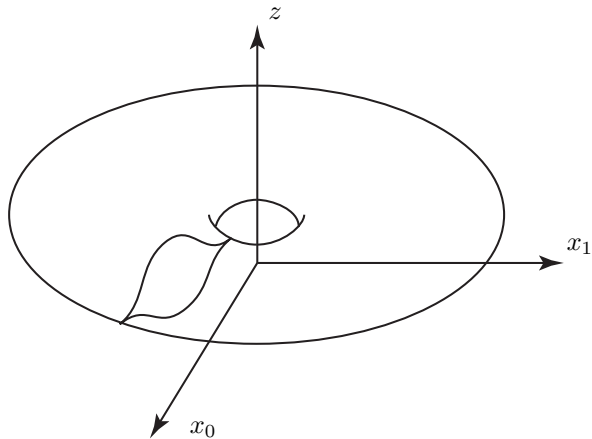}}}
    \relabel{0}{$x_0$}
    \relabel{1}{$x_1$}
    \relabel{z}{$z$}
    \endrelabelbox
    \caption{The front of $\Sigma L.$}
    \label{1fig:1spin}
  \end{figure}
  That is, we can parametrize $\Pi_F(\Sigma L)$ by $(x_1(p)
  \sin\theta, x_1(p) \cos\theta , x_2(p), \ldots, x_n(p),z(p))$, for
  $\theta\in S^1$. Thus, $\Pi_F(\Sigma L)$ is the front for a
  Legendrian embedding $L\times S^1\to\rr^{2n+3}$. We denote the
  corresponding Legendrian submanifold by $\Sigma L$.

  We can derive the following facts about the contact homology DGA
  over $\zz_2$ of $\Sigma L$ from Proposition 4.17 of
  \cite{ees:high-d-geometry}: if $\{q_1, \ldots, q_n\}$ are the
  generators of $\alg(L)$, then $\alg(\Sigma L)$ is stable tame
  isomorphic to an algebra generated by the set
  \begin{equation*}
    \bigl\{q_1[\alpha], \ldots, q_n[\alpha], \hat{q}_1[\beta], \ldots
    \hat{q}_n[\beta]\bigr\}_{\alpha = 0,2, \beta = 1,3}.
  \end{equation*}
  Let $\Delta_\alpha: \alg(L) \to \alg(\Sigma L)$ take $q_i$ to
  $q_i[\alpha]$, and similarly for $\Delta_\beta$.  The gradings of
  the new generators are given by $|q_i[\alpha]| = |q_i|$ and
  $|\hat{q}_i[\beta]| = |q_i| + 1$.  Using the form of the
  differential in \cite{ees:high-d-geometry}, we can see that if
  $\varepsilon$ is an augmentation of $(\alg(L), \df^L)$, then the
  following formula defines an augmentation of $(\alg(\Sigma L),
  \df^\Sigma)$
  \begin{align*}
    \varepsilon_\Sigma(q_i[\alpha]) &= \varepsilon(q_i), \\
    \varepsilon_\Sigma(\hat{q}_i[\beta]) &= 0.
  \end{align*}
  We will use this form of augmentation from here on, though there
  might be others.  Finally, the linearized differential takes the
  following form, where the augmentations have been suppressed from the
  notation
  \begin{align*}
    \df^\Sigma_1 q_i[\alpha] &= \Delta_\alpha (\df^L_1 q_i), \\
    \df^\Sigma_1 \hat{q}_i[\beta] &= q_i[0] + q_i[2] + \Delta_\beta
    (\df^L_1 q_i).
  \end{align*}

  \begin{claim} \label{clm:kunneth} The linearized homology of $\Sigma
    L$ with respect to the augmentation $\varepsilon_\Sigma$ may be
    computed using a K\"unneth-like formula
    \begin{equation*}
      H_*(Q(\Sigma L))
      \simeq H_*(Q(L)) \otimes H_*(S^1).
    \end{equation*}
  \end{claim}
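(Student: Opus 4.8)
The plan is to produce an explicit graded change of basis on the chain complex $(Q(\Sigma L),\df^\Sigma_1)$ that splits it, as a complex of $\zz_2$-vector spaces, into a direct sum of three pieces: a copy of $(Q(L),\df^L_1)$, a copy of $(Q(L),\df^L_1)$ with all gradings raised by one, and an acyclic complex. Granting this, taking homology gives $H_k(Q(\Sigma L))\cong H_k(Q(L))\oplus H_{k-1}(Q(L))$, which is exactly the degree-$k$ part of $H_\ast(Q(L))\otimes H_\ast(S^1)$: since $H_\ast(S^1;\zz_2)$ has rank one in degrees $0$ and $1$ and vanishes otherwise, $\bigl(H_\ast(Q(L))\otimes H_\ast(S^1)\bigr)_k = H_k(Q(L))\oplus H_{k-1}(Q(L))$.

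Concretely, write $\df^L_1 q_i=\sum_j c_{ij}q_j$ with $c_{ij}\in\zz_2$, so that the formulas for $\df^\Sigma_1$ recalled just before the claim read $\df^\Sigma_1 q_i[\alpha]=\sum_j c_{ij}q_j[\alpha]$ for $\alpha\in\{0,2\}$ and $\df^\Sigma_1\hat q_i[\beta]=q_i[0]+q_i[2]+\sum_j c_{ij}\hat q_j[\beta]$ for $\beta\in\{1,3\}$. I would then replace the basis $\{q_i[0],q_i[2],\hat q_i[1],\hat q_i[3]\}$ of $Q(\Sigma L)$ by
\[
\{q_i[0]\}\cup\{q_i':=q_i[0]+q_i[2]\}\cup\{\hat q_i[1]\}\cup\{\hat q_i'':=\hat q_i[1]+\hat q_i[3]\},
\]
which is a graded change of basis because $|q_i'|=|q_i[0]|=|q_i|$ and $|\hat q_i''|=|\hat q_i[1]|=|q_i|+1$. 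A one-line computation from the displayed formulas, using that $2=0$, gives
\[
\df^\Sigma_1 q_i[0]=\sum_j c_{ij}q_j[0],\quad \df^\Sigma_1 q_i'=\sum_j c_{ij}q_j',\quad \df^\Sigma_1\hat q_i''=\sum_j c_{ij}\hat q_j'',
\]
together with $\df^\Sigma_1\hat q_i[1]=q_i'+\sum_j c_{ij}\hat q_j[1]$.

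These four formulas exhibit $(Q(\Sigma L),\df^\Sigma_1)$ as the direct sum of subcomplexes $\Spa\{q_i[0]\}\oplus\Spa\{q_i',\hat q_i[1]\}\oplus\Spa\{\hat q_i''\}$. The first summand is $(Q(L),\df^L_1)$; the third is $(Q(L),\df^L_1)$ with grading shifted up by one; and the middle summand is the mapping cone of the identity map of $(Q(L),\df^L_1)$, hence acyclic. Taking homology therefore yields $H_\ast(Q(\Sigma L))\cong H_\ast(Q(L))\oplus H_{\ast-1}(Q(L))\cong H_\ast(Q(L))\otimes H_\ast(S^1)$, which is the claim. This is pure linear algebra, so I do not expect a genuine obstacle; the only real content is spotting the change of basis above, and the one point worth stating explicitly is that ``$Q(\Sigma L)$'' here denotes the linearized complex, taken with respect to $\varepsilon_\Sigma$, of the normal form for $\alg(\Sigma L)$ recalled from \cite{ees:high-d-geometry} before the claim, whose linearized differential is the one displayed there.
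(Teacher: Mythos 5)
Your proposal is correct and follows essentially the same route as the paper: both start from the explicit linearized differential recalled before the claim and perform the key change of basis replacing $\hat q_i[3]$ by $\hat q_i[1]+\hat q_i[3]$, so that $Q[1+3]$ and $Q[0]$ carry the homology. Your extra substitution $q_i[2]\mapsto q_i[0]+q_i[2]$ only sharpens the paper's homology-level identification of $z[0]$ and $z[2]$ into an explicit direct-sum splitting with an acyclic mapping-cone summand; the argument is otherwise the same.
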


  \begin{proof}
    To prove the claim, we change basis by replacing $\hat{q}_i[3]$ by
    $\hat{q}_i[1] + \hat{q}_i[3]$.  The subspace $Q[1+3]$ spanned by
    these new basis elements forms a subcomplex that is isomorphic to
    the original $(Q(L), \df^L_1)$, but with gradings shifted up by
    $1$.  Similarly, the subspaces $Q[0]$ spanned by the $q_i[0]$ and
    $Q[2]$ spanned by the $q_i[2]$ also form subcomplexes isomorphic
    to $(Q(L), \df^L_1)$. Finally, the restriction of the differential
    to the subspace $Q[1]$ yields no new cycles.  Further, if $z$ is a
    cycle in the original chain complex, then the differential of
    $\Delta_1(z)$ identifies the cycles $z[0]$ and $z[2]$ in homology.
    Thus, the linearized homology of $\Sigma L$ comes from $Q[1+3]$
    and $Q[0]$, each of which are copies of the original chain complex
    $Q(L)$, with degrees in $Q[1+3]$ shifted up by one.
  \end{proof}

  If we apply the spinning construction $k$ times to the
  $n$-dimensional ``flying saucer'' of
  Example~\ref{ex:flying-saucer-hom}, the claim above shows that the
  linearized homology corresponds to the homology of $T^k$ with
  degrees shifted up by $n$.  This is half of the homology of $S^n
  \times T^k$, and hence the linearized homology consists entirely of
  manifold classes.

  For a more interesting example, apply the spinning construction $k$
  times to the Legendrian knot $L$ in Example~\ref{ex:3d} to obtain a
  Legendrian $T^{k+1} \subset \rr^{2k+3}$.  Setting $k=2$ and using
  the calculation of the linearized homology in Example~\ref{ex:3d}
  and Claim~\ref{clm:kunneth}, we easily obtain the following
  dimensions

  \begin{center}
    \begin{tabular}{c|ccccccc}
      $k$ & $-2$ & $-1$ & $0$ & $\mathbf{1}$ & $2$ & $3$ & $4$ \\ \hline
      $\dim H_k(Q(\Sigma^2L))$ & $1$ & $2$ & $1$ & $1$ & $3$ & $3$ & $1$
    \end{tabular}
  \end{center}

  The duality theorem implies that the non-manifold classes are
  symmetric about $\frac{3-1}{2} = 1$, and we can see that every
  non-manifold class in $H_k(Q(L))$ has been replaced by $H_*(T^2)$,
  shifted by $k$.  The remaining classes are the manifold classes,
  corresponding to $H_*(T^2)$ shifted up by one degree.
\end{exam}

\subsection{Finding Manifold Classes}
\label{ssec:manifold-ex}

As indicated by the examples above, it is sometimes possible to use
symmetry arguments to predict the degree in which manifold classes
appear.  In particular, working over $\zz_2$, we can generalize the
fact from \cite{duality} that for a Legendrian \emph{knot}, there is
always a manifold class in degree $1$ as follows.

\begin{thm} \label{thm:mfld-class} Suppose $L \subset P \times \rr$
  satisfies the assumptions of Theorem~\ref{thm:duality}.  If the
  contact homology DGA of $L$ is good, then $\rho_*$ is trivial in
  degree $0$ and onto in degree $n$.  That is, the linearized homology
  of $L$ (with respect to any augmentation) always has a $\zz_2$
  factor in degree $n$ corresponding to the fundamental class of $L$.
\end{thm}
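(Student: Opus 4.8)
The plan is to derive both assertions from a single dimension count: the chain‑level map $\sigma\colon C^1\to P^1$ of \eqref{eqn:small-d} kills every generator coming from an index‑$n$ critical point of the perturbing Morse function $f$. To see this, fix an index‑$n$ critical point $c$ of $f$ (a local maximum) and consider a holomorphic disk on $2L$ contributing a term of $\fdf_1c^1$. Its unique positive puncture is at the mixed Morse generator $c^1$, so by Lemma~\ref{lem:one-punc} it has exactly one mixed negative puncture; a term of $\sigma$ occurs precisely when that puncture is a $p^1$‑generator, in which case $c^1$ is the disk's only Morse puncture. By Theorem~\ref{thm:disk+Morse}(3) the disk corresponds to a rigid lifted generalized disk $(u,\gamma)$, and --- exactly as in the bookkeeping behind Proposition~\ref{prop:correspondence}(2) and Proposition~\ref{prop:rho-sigma-adj}, where a \emph{positive} mixed Morse puncture of a $2L$‑disk is matched with a flow line issuing \emph{from} the corresponding critical point --- the generalized disk has a positive Morse puncture at $c$. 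Hence \eqref{eqn:gen-formal-dim} gives $\dim((u,\gamma))=\dim\ms+1+(I_f(c)-n)=\dim\ms+1\ge 1$, since $I_f(c)=n$ and the moduli space $\ms$ of disks on $L$ containing $u$ is nonempty and so has nonnegative dimension by Lemma~\ref{lem:tvmspc}. There is therefore no rigid generalized disk of this type, and $\sigma(c^1)=0$.

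From here everything is formal. First, since $CM_{n+1}(L;f)=0$, every class of $HM_n(L;f)\cong H_n(L)$ is represented by a Morse cycle supported on index‑$n$ critical points; by the previous paragraph and the identification $(C^1,\fdf_c)\cong CM_*(L;f)$ of Proposition~\ref{prop:correspondence}(2), $\sigma$ vanishes on any such cycle, so the induced map $\sigma_*$ of \eqref{e:seq} vanishes on $H_n(L)$ (one runs the argument componentwise if $L$ is disconnected). Second, exactness of \eqref{e:seq} at $H_n(L)$ then gives $\img(\rho_*\colon H_n(Q(L))\to H_n(L))=\ker(\sigma_*)=H_n(L)$, i.e.\ $\rho_*$ is onto in degree $n$; in particular the fundamental class lies in its image, which is the degree‑$n$ assertion. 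Third, since $\Lambda=\zz_2$ is a field the pairing identity of Theorem~\ref{thm:duality} applies: for $\alpha\in H_0(Q(L))$ and any $\gamma\in H_n(L)$ one gets $\gamma\bullet\rho_*(\alpha)=\la\sigma_*(\gamma),\alpha\ra=0$, and as the intersection pairing $H_n(L)\times H_0(L)\to\zz_2$ is nondegenerate and $\rho_*(\alpha)\in H_0(L)$, this forces $\rho_*(\alpha)=0$; thus $\rho_*$ is trivial in degree $0$.

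The one genuine obstacle is the first paragraph, and within it the delicate point is the \emph{direction} of the Morse puncture under the correspondence of Theorem~\ref{thm:disk+Morse}: the formal dimension collapses to $\dim\ms+1$ only because $c^1$ enters $\sigma$ as a positive puncture of the $2L$‑disk and therefore yields a \emph{positive} Morse puncture (flow line terminating on $\partial u$); had it instead yielded a negative Morse puncture, \eqref{eqn:gen-formal-dim} would read $\dim\ms+1-n$, which is compatible with rigid disks when $\dim\ms=n-1$ and proves nothing. So I would be careful to extract this sign convention precisely from the disk descriptions already recorded in Section~\ref{ssec:rho-sigma}, rather than rederive it. Everything after that --- top Morse homology being generated by index‑$n$ critical points, exactness of the duality sequence, and nondegeneracy over a field of the homology/cohomology and intersection pairings --- is standard.
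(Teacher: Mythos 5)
Your reduction steps are fine (and your route is the mirror image of the paper's: the paper proves $\rho_*=0$ in degree $0$ directly and then uses the pairing to get surjectivity in degree $n$, whereas you aim at $\sigma_*=0$ on $H_n(L)$ first), but the key chain-level claim in your first paragraph has a genuine gap, and in fact is false as stated. Rigidity of the generalized disk forces $\dim\ms=-1$, and your appeal to Lemma~\ref{lem:tvmspc} only rules out \emph{nonconstant} holomorphic disk parts. It does not rule out the degenerate configuration in which the disk part is a constant map to an augmented double point $q$ with $|q|=0$ (one positive and one negative puncture, formal dimension $|q|-1=-1$), with the flow line running from the index-$n$ critical point $c$ down to one of the chord endpoints $q^{\pm}$. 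This is exactly the case the paper's own proof of this theorem has to confront on the $\rho$ side (``$u$ must be a constant map to the double point $q$''), and these configurations do correspond to honest rigid disks on $2L$: thin triangles with punctures of type $(c^{1},p^{1},q^{0})$ or $(c^{1},\tilde{q}^{0},p^{1})$ whose ``tail'' follows the gradient trajectory from the maximum to $q^{\pm}$ (compare Remark~\ref{rem:gencoeff}, where the $\rho$-side triangles $(q^{1},c^{1},q^{0})$ and $(q^{1},\tilde{q}^{0},c^{1})$ are listed explicitly). Since $q^{\pm}$ lies in $W^{\rm u}(c)$ for a unique maximum $c$ generically, the coefficient of $p^{1}_q$ in $\sigma(c^{1})$ is the number of endpoints of $q$ lying in $W^{\rm u}(c)$, which is $1$ whenever the two endpoints of an augmented chord sit in unstable manifolds of different maxima. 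So $\sigma(c^{1})\ne 0$ at the chain level in general, and no dimension count alone can give what you want.

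What is true, and what your outline needs, is a homology-level cancellation: summing over the Morse fundamental cycle (the sum of all index-$n$ critical points over $\zz_2$), each augmented chord contributes its two endpoints, hence an even coefficient, so $\sigma_*$ kills the fundamental class. But at that point you have reproduced, in adjoint form, precisely the paper's argument that $\rho(q)$ for $|q|=0$ is a sum of an even number of minima, all homologous because $L$ is connected. So either carry out that even-count/connectedness argument on the $\sigma$ side (taking the constant-disk configurations seriously), or simply follow the paper: prove $\rho_*=0$ in degree $0$ by the constant-disk analysis and deduce surjectivity in degree $n$ from the pairing identity of Theorem~\ref{thm:duality}. The remaining formal steps of your proposal (exactness of \eqref{e:seq} at $H_n(L)$, and nondegeneracy of the $\zz_2$ intersection pairing to pass between the two degrees) are correct.
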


\begin{proof}
  By the second part of Theorem~\ref{thm:duality}, it suffices to
  prove that $\rho_*$ is trivial in degree $0$.  To compute $\rho$ in
  degree zero, we need to find lifted generalized disks $(u, \gamma)$
  in $L$ with one negative Morse puncture and a positive mixed
  puncture at $q$ with $|q| = 0$.  The fact that this generalized disk
  is rigid implies, via Equation (\ref{eqn:gen-formal-dim}), that $u$
  belongs to a moduli space of dimension $-1$.  That is, $u$ must be a
  constant map to the double point $q$ with one positive and one
  negative puncture. 
    Further, $q$ must be
  augmented, or else the generalized disk will not contribute to
  $\rho$.

  There are two possibilities for the flow line $\gamma$.  There are
  two sheets of $\Pi_P(L)$ incident to $q$, and the flow line
  $\gamma$ can start on either of these two sheets.  In a suitably
  generic setup, the self-intersection points of $\Pi_P(L)$ are
  disjoint from the stable manifolds of the critical points of $f$
  with index greater than $0$, so the two flow lines starting at $q$
  both descend to a minimum of $f$.  It follows that $\rho (q)$ is
  either zero or the sum of two minima of $f$.  Thus, given a homology
  class $\alpha \in H_0(Q^1)$, its image under $\rho_*$ is the sum of
  an even number of minima. As $L$ is connected, these minima are all
  homologous, so $\rho_* \alpha = 0$.
\end{proof}

\begin{rem}\label{rem:gencoeff}
  Provided certain orientation conventions are employed, Theorem
  \ref{thm:mfld-class} holds for more general coefficients in the case
  that $L$ is spin. More precisely, the two disks corresponding to the
  flow lines in the last argument of the proof should cancel with
  signs. The ordered punctures of these two disks are of the forms
  $(q^{1}, c^{1}, q^{0})$ and $(q^{1},\tilde{q}^{0},c^{1})$,
  respectively. Choosing capping operators as in \cite{ees:ori}, the
  capping operator of $\tilde q^{0}$ is identical to that of $
  {q}^{0}$ and both have index $1$. The capping operator of $c^{1}$
  has index $0$ if $n=\dim(L)$ is odd and index $1$ if $n$ is
  even. Thus, the disks cancel also with signs if $n$ is even. If $n$
  is odd, they cancel with signs provided we redefine the augmentation
  $\hat\epsilon$ on the $\tilde{q}^{0}$-chords by declaring that
  $\faug(\tilde{q}^{0})=-\aug(q)$ instead. Here $q$ is the Reeb chord
  of $L$ which corresponds to the Reeb chord $\tilde{q}^{0}$ of
  $\tilde L$, and $\aug$ is the augmentation on $\alg(L)$.
\end{rem}

\begin{cor} \label{cor:sphere-duality} Let $L$ be a Legendrian homology 
$n$-sphere satisfy the assumptions
  of Theorem~\ref{thm:duality}.  If the contact homology DGA of $L$ is
  good, then the linearized homology of $L$ (with respect to any
  augmentation) always has a $\zz_2$ factor in degree $n$ and, off of
  this factor, the remaining homology is symmetric about
  $\frac{n-1}{2}$. Said another way, if $P(t)$ is the
  Poincar\'e-Chekanov polynomial of the linearized homology, then:
  \begin{equation*}
    P(t)-t^{n-1}P(t^{-1})=(t^{n}-t^{-1}).
  \end{equation*}
\end{cor}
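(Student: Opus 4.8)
The plan is to feed the two hypotheses directly into the duality exact sequence \eqref{e:seq} and to track what happens at the finitely many degrees where the singular homology of $L$ is nonzero. Since $L$ is a homology $n$-sphere, $H_j(L)=\zz_2$ for $j=0,n$ and vanishes otherwise; in particular $H_1(L;\zz)=0$ (for $n\ge 2$), so the Maslov number of $L$ vanishes and the grading on $Q(L)$ may be taken $\zz$-valued, which is what makes the Poincar\'e--Chekanov polynomial $P(t)=\sum_k(\dim H_k(Q(L)))\,t^k$ meaningful. Write $h_k=\dim H_k(Q(L))$, and recall that over the field $\zz_2$ the pairing $\la\,,\ra$ identifies $\dim H^{j}(Q(L))$ with $h_j$.

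First I would run the generic part of the argument. For every $k\notin\{-1,0,n-1,n\}$, both terms $H_{k+1}(L)$ and $H_k(L)$ flanking the segment
\[
H_{k+1}(L)\overset{\sigma_*}{\to}H^{n-k-1}(Q(L))\to H_k(Q(L))\overset{\rho_*}{\to}H_k(L)
\]
of \eqref{e:seq} vanish, so $H_k(Q(L))\cong H^{n-k-1}(Q(L))$ and hence $h_k=h_{n-1-k}$. The involution $k\mapsto n-1-k$ preserves the exceptional set $\{-1,0,n-1,n\}$, pairing it as $\{0,n-1\}$ and $\{-1,n\}$, so only these four degrees need separate treatment.

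Next I would handle the exceptional degrees using Theorem~\ref{thm:mfld-class}: since the DGA of $L$ is good, $\rho_*$ is trivial in degree $0$ and onto in degree $n$. For the pair $\{0,n-1\}$, surjectivity of $\rho_*$ in degree $n$ forces $\sigma_*\colon H_n(L)\to H^0(Q(L))$ to vanish, and since $H_{n-1}(L)=0$ (for $n\ge 2$) the relevant segment of \eqref{e:seq} collapses to $0\to H^0(Q(L))\to H_{n-1}(Q(L))\to 0$, giving $h_0=h_{n-1}$ with no correction. For the pair $\{-1,n\}$, the segment $0=H_{n+1}(L)\to H^{-1}(Q(L))\to H_n(Q(L))\overset{\rho_*}{\to}H_n(L)=\zz_2$, together with surjectivity of $\rho_*$, identifies $H^{-1}(Q(L))$ with $\ker\rho_*$, so $h_{-1}=h_n-1$. (Triviality of $\rho_*$ in degree $0$, which by the duality half of Theorem~\ref{thm:duality} is equivalent to surjectivity in degree $n$, recovers the same relations from the segment around $k=0$, serving as a consistency check.)

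Finally I would assemble: $h_k=h_{n-1-k}$ for all $k$ except that $h_n=h_{-1}+1$. Since $t^{n-1}P(t^{-1})=\sum_k h_{n-1-k}t^k$, the difference $P(t)-t^{n-1}P(t^{-1})=\sum_k(h_k-h_{n-1-k})t^k$ has all coefficients zero except that of $t^n$, which is $+1$, and that of $t^{-1}$, which is $-1$; thus $P(t)-t^{n-1}P(t^{-1})=t^{n}-t^{-1}$. Reading this back, the degree-$n$ class is the extra one (namely $[L]$, the manifold class provided by Theorem~\ref{thm:mfld-class}), and removing it leaves $P(t)-t^{n}$ invariant under $t\mapsto t^{-1}$ up to the factor $t^{n-1}$, i.e.\ symmetric about $\frac{n-1}{2}$. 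The only delicate point is the edge bookkeeping at the four exceptional degrees --- in particular, making sure the single correction term lands in degree $n$ (and its mirror $-1$) and nowhere else; for $n=1$ one checks that the identity reduces to the knot duality of \cite{duality}.
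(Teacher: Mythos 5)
Your proof is correct and follows exactly the route the paper intends: the paper states this corollary without proof as an immediate consequence of the exact sequence of Theorem~\ref{thm:duality} together with Theorem~\ref{thm:mfld-class}, and your bookkeeping at the four exceptional degrees (using surjectivity of $\rho_*$ in degree $n$ and triviality in degree $0$) is precisely the intended derivation of the identity $P(t)-t^{n-1}P(t^{-1})=t^{n}-t^{-1}$.
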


Theorem \ref{thm:mfld-class} and Corollary \ref{cor:sphere-duality}
can greatly ease computations, as can be seen in the next example; see
also \cite{melvin-shrestha}.

\begin{exam}[Example~\ref{ex:stab-ex}, revisited]
  Recall Example~\ref{ex:stab-ex}, in which two flying saucers are
  attached by a tube.  For $n>2$, the algebra is good for degree
  reasons, with three generators in degree $n$, three in degree $n-1$,
  and one is degree $0$.  The fact that the generator of degree $0$ is
  isolated shows that $\dim H_0(Q(L)) = 1$, and
  Corollary~\ref{cor:sphere-duality} immediately implies that
  \begin{equation*}
    P(t) = 1+t^{n-1} + t^n.
  \end{equation*}
  In particular, the duality theorem forces nonzero differentials
  between degrees $n$ and $n-1$.
\end{exam}

\begin{exam}[A Non-Spun Torus]
  The previous example may be combined with Claim~\ref{clm:kunneth} to
  produce an example of a Legendrian $S^1 \times S^n$ that is not spun
  from a Legendrian $S^n$ for $n > 1$.  Let $L$ be the Legendrian
  $n+1$-sphere constructed in the previous example, and let $L'$ be
  standard $n$-dimensional flying saucer.  Taking the connect sum $L
  \# \Sigma L'$ as in \cite{ees:high-d-geometry} yields a Legendrian
  submanifold with four generators in degree $n+1$, five generators in
  degree $n$, and one in degree $0$. As before, the fact that the
  generator of degree $0$ is isolated shows that $\dim H_0(Q(L)) = 1$,
  and Theorem~\ref{thm:duality} and~\ref{thm:mfld-class} 
  immediately imply that
  \begin{equation*}
    P(t) = 1+2t^{n} + t^{n+1}.
  \end{equation*}
  This homology, however, is not of the form $H_*(Q(L'')) \otimes
  H_*(S^1)$ near degree $0$, so $L \# \Sigma L'$ cannot be a spun
  submanifold.
\end{exam}

The final example shows that Theorem~\ref{thm:mfld-class} is the best
that we can hope for in terms of pinning down the degrees of the
manifold classes.

\begin{exam}[Super-spun Products of Spheres] \label{exam:superspun}
  Construct a generic front diagram in $\rr^{p+k+1}$ as follows: let
  $S^p \times S^k \hookrightarrow \rr^{p+k+1}$ be the standard
  embedding, with $S^p \times S^k = \partial (S^p \times B^{k+1})$.
  Deform the embedding so that the $S^k$ cross-sections are fronts for
  the flying saucer (see Example~\ref{ex:flying-saucer}).  Let $F: S^p
  \to \rr$ be a Morse function with one maximum and one minimum, and
  scale each cross section $\{x\} \times S^k$ by $1+\epsilon F(x)$ for
  some small $\epsilon>0$.

  The Legendrian embedding $L$ coming from this front has exactly two
  Reeb chords, one of degree $k+p$ and the other of degree $k$.
  Suppose, for convenience, that $p > k > 1$, so that the algebra
  $\alg(L)$ is good for degree reasons.  Direct calculation then shows
  that the linearized homology is $\zz$ in degrees $k+p$ and $k$, and
  zero otherwise.  Theorem~\ref{thm:duality} shows that the homology
  class of degree $k$ is a manifold class coming from $H_k(S^p \times
  S^k)$.

  Reversing the roles of $k$ and $p$ yields another Legendrian
  embedding of $S^p \times S^k$, but this time with a linearized
  homology class in degree $p$ is a manifold class that comes from
  $H_p(S^p \times S^k)$.  Thus, it is impossible to determine a priori
  which classes in $H_*(L)$ in degrees other than $0$ or $n$ will be
  manifold classes.
\end{exam}

\section{Proof of the Main Analytic Theorem}
\label{sec:disks}

This section is devoted to the proof of Theorem~\ref{thm:disk+Morse},
which describes the rigid holomorphic disks with boundary on $2L$ in
terms of holomorphic disks with boundary on $L$ and flow lines of a
Morse function on $L$. We present the main steps of the proof in
several subsections as follows: in Subsection \ref{s:cmdli}, we endow
compactified moduli spaces of holomorphic disks with boundary on $L$
with a structure of a manifold with boundary with corners, and in
Subsection \ref{s:evmaps}, we define evaluation maps on such
compactified moduli spaces of disks with an additional marked
point. In Subsection \ref{s:morseflow}, we study notions of genericity
for generalized disks and prove existence results for Morse functions
and for almost complex structures with desired transversality
properties. 


In Subsections \ref{s:disktotree} and \ref{ssec:gen-to-hol}, we turn
to the correspondence between rigid disks with boundary on \twocopy\
and rigid lifted and lifted generalized disks on $L$. In
particular, we show how holomorphic disks with boundary on $2L$
converge to generalized disks as the perturbation of the second copy
approaches zero and how generalized disks can be glued to holomorphic
disks with boundary on $2L$ for small enough separation,
respectively. Finally, in Subsection \ref{s:disk+Morse}, we 
prove Theorem~\ref{thm:disk+Morse}.

\subsection{Compactified moduli spaces as manifolds with boundary with
  corners}
\label{s:cmdli}

Moduli spaces of $J$-holomorphic disks with boundary on $L$ admit
natural compactifications consisting of broken holomorphic disks. In
order to describe these compactified spaces, we first describe the
smooth pieces out of which they are built. This description is
provided by Lemma \ref{lem:tvmspc}. We prove this lemma and Lemma
\ref{lem:2pos} in Subsection \ref{sssec:tvlemmas} after recalling some properties
of almost complex structures on cotangent bundles induced by
Riemannian metrics. With these lemmas established, we show how to glue
the smooth pieces together to form a manifold with boundary with
corners. More precisely, in Subsection \ref{sssec:brokendisk}, we
describe the broken disks that compactify the moduli space. In
Subsections \ref{sssec:floerpicard} and \ref{sssec:fansetup}, we
describe a basic tool for gluing and how to express the problem of
gluing broken holomorphic disks in a language suitable for using this
basic tool. In Subsection \ref{ssec:apphol}, we start our construction
of coordinate charts by constructing approximately holomorphic disks
near broken disks and in Subsection \ref{sssec:cornermap}, we complete
it. Finally, in Subsection \ref{sssec:cornerstr}, we show that the
coordinate charts fit together to give the desired structure of a
manifold with boundary with corners.

\subsubsection{Proofs of Lemmas \ref{lem:tvmspc} and \ref{lem:2pos}}
\label{sssec:tvlemmas}
\begin{rem}\label{rem:acsfrommetr}
  Since we will use some properties of the almost complex structure on
  a cotangent bundle induced by a Riemannian metric, we include a
  short discussion of its definition. Let $q=(q_1,\dots,q_n)$ be local
  coordinates on $L$ and let $p=(p_1,\dots,p_n)$ be dual coordinates
  on the fibers of $T^{\ast}L$. Write $\pa_j=\frac{\pa}{\pa q_j}$ and
  $\pa_{j^{\ast}}=\frac{\pa}{\pa p_j}$ for the corresponding tangent
  vectors. In order to shorten notation, we employ the summation
  convention in what follows. Let $g$ be a metric on $L$,
  $g(q)=g_{ij}(q)dq_i\otimes dq_j$ and let $g^{ij}(q)$ be such that
  $g^{ij}(q)g_{jk}(q)=\delta^{i}_k$, where $\delta^{i}_k$ is the
  Kronecker delta. Let $\Gamma^{i}_{jk}$ be the Christoffel symbols of
  $g$ given by
\[
\Gamma^{i}_{jk}(q)=\tfrac12\sum_sg^{is}\bigl(\pa_k(g_{js})+\pa_j(g_{ks})-\pa_s(g_{jk})\bigr).
\]
Then, the induced covariant derivative on the cotangent bundle is
\[
\nabla(a_idq_i)=\left(\pa_j(a_k) -\Gamma^{i}_{jk}a_{i}\right)\,dq_j\otimes dq_k.
\]
This covariant derivative gives a decomposition of the tangent bundle
$T(T^{\ast}L)$ into a vertical sub-bundle $V$ given by the kernel of
the differential of the projection $\pi\colon TL\to L$ and the
horizontal sub-bundle $H$ with fiber at $\alpha\in T^{\ast}L$ spanned
by tangent vectors at the starting point of covariantly constant
curves with initial value $\alpha$. In local coordinates
\[
V_{(q,p)}=\Spa\left\{\pa_{j^{\ast}}\right\}_{j=1}^{n},\quad
H_{(q,p)}=\Spa\left\{\pa_j+p_r\Gamma^{r}_{js}\pa_{s^{\ast}}\right\}_{j=1}^{n}.
\]  
The almost complex structure $J$ induced by $g$ satisfies $J(V)=H$ and
is defined as follows on vertical vectors $v\in V_\alpha$: translate
$v$ to the origin of $T_{\pi(\alpha)}^{\ast}L$, identify the
translate with a tangent vector of $L$ using the metric, and let $Jv$
be the negative of the horizontal lift of this tangent vector. In
local coordinates:
\begin{equation}\label{e:Jloccoord}
  J\pa_{j^{\ast}}=-g^{kj}\bigl(\pa_k+p_{r}\Gamma^{r}_{ks}\pa_{s^{\ast}}\bigr),\quad
  J\pa_j=g_{jk}\pa_{k^{\ast}} + g^{ls}\bigl(p_r\Gamma^{r}_{js}\pa_l+p_{r}p_{v}\Gamma^{r}_{js}\Gamma^{v}_{lt}\pa_{t^{\ast}}\bigr).
\end{equation} 
\end{rem}


\begin{proof}[Proof of Lemma \ref{lem:tvmspc}]
  Using Darboux balls around the double points of $\Pi_P(L)$, we may
  identify these neighborhoods with small balls around $0$ in
  $\cc^{n}$. Let $J_0$ in these balls correspond to the standard
  almost complex structure. It is then easy to find a small Legendrian
  isotopy which makes both sheets at each intersection point agree
  with their tangent spaces at $0$. Note that in a neighborhood of the
  intersection point, the pull-back of $J_0$ under the neighborhood
  map which identifies cotangent fibers with Lagrangian subspaces
  perpendicular the image linear subspace correspond to the almost
  complex structure induced by a flat metric, see Remark
  \ref{rem:acsfrommetr}. Letting $g$ be a metric which is flat in
  these neighborhoods we may push forward the corresponding almost
  complex structure to a neighborhood of $\Pi_P(L)$ using a symplectic
  neighborhood map which agrees with the map discussed above near each
  Reeb chord endpoint. This gives an almost complex structure in a
  neighborhood of $\Pi_P(L)$ compatible with $d\theta$. Extending it
  to all of $P$ establishes the first statement of the theorem.

  The proof of the second statement is a word by word repetition of
  the proof of Proposition 2.3 in \cite{ees:pxr} once we establish
  transversality within the smaller class of almost complex structures
  used here (the additional condition here is that they are required
  to be standard in some neighborhood of $\Pi_P(L)$) and an upper
  bound on dimension.

  We start with the dimension bound. Since the area of any non-trivial
  $J$-holomorphic disk is bounded from below and since there are only
  finitely many Reeb chords, it follows that there are only finitely
  many Reeb chord collections $(a;b_1,\dots,b_k)$ such that
  $\ms_A(a;b_1,\dots,b_k)$ may be non-empty. Gromov
  compactness\footnote{See \cite{ees:high-d-analysis,ees:pxr} for
    proofs of Gromov compactness in the present setting.} then implies
  that for each fixed Reeb chord collection, there are at most
  finitely many homology classes $A$ and homotopy classes $\alpha$
  such that $\ms_A^{\alpha}(a;b_1,\dots,b_k)$ may be non-empty. This
  gives the uniform dimension bound.

  As for transversality, in the argument in the proof of surjectivity
  in \cite[Lemma 4.5 (1)]{ees:pxr}, the perturbation $K_S$ may be
  taken to have support disjoint from $\Pi_P(L)$. Hence, it follows
  from Gromov compactness of the moduli space of $J$-holomorphic disks
  and from the uniform area bound of the disks discussed above that we
  can achieve transversality among $J$ which agree with $J_0$ in some
  neighborhood $N$ of $\Pi_P(L)$.
\end{proof}

\begin{proof}[Proof of Lemma \ref{lem:2pos}]
  The proof follows from an argument entirely analogous to the proof
  of \cite[Theorem 7.12]{ees:high-d-analysis}: consider the linearized
  $\bar\pa_J$-operator at a solution $u\colon D\to P$ with boundary on
  $L_0\cup L_1(\hat f)$. The two positive punctures of $u$ map to
  different points in $\Pi_P(L_0)\cap \Pi_P(L_1(\hat f))$ since at a
  double point corresponding to a positive puncture the disk comes in
  along the lower branch and out along the upper. Thus, by adding
  perturbations of $\hat f$ supported near the double point at one of
  the positive corners of $u$, we show that the operator is surjective
  on the complement of exceptional holomorphic disks. Exactly as in
  the proof of \cite[Theorem 7.12]{ees:high-d-analysis}, it follows
  that the existence of exceptional disks of small dimension
  contradicts this surjectivity result for disks of dimensions $\le
  0$. The lemma follows.
\end{proof}

\subsubsection{Broken disks}\label{sssec:brokendisk}

By Gromov compactness, a sequence of elements in
$\ms_A(a;b_1,\dots,b_k)$ has a subsequence which converges to a broken
$J$-holomorphic disk with one positive puncture. As we shall see, the
smooth moduli spaces corresponding to the pieces of the broken disks
fit together to a compact manifold with boundary with corners
$\overline{\ms}(a;b_1,\dots,b_k)$, the interior of which is
$\ms_A(a;b_1,\dots,b_k)$. 


A \dfn{broken $J$-holomorphic disk} with positive puncture at a Reeb
chord $a$ and negative punctures at the Reeb chords $b_1, \ldots, b_k$
is a collection of $J$-holomorphic disks $u^0, \ldots, u^r$ together
with a connected rooted (and hence directed) planar tree $T$ that has
$r+k+2$ vertices, of which $1$ is the root and another $k$ are leaves.
We will frequently drop $T$ from the notation in the future.  To each
interior vertex of $T$ there corresponds one disk $u^j$, with $u^0$
adjacent to the root, and to each leaf there corresponds a Reeb chord
$b_j$ in counterclockwise order from the left of the root.  These
correspondences satisfy:
\begin{itemize}
\item Each disk $u^j$ has one positive puncture and as many negative
  punctures as there are incoming edges into the vertex $j$.  
\item If there is an interior edge from vertex $j$ to vertex $l$, and
  that edge is the $m^{\text{th}}$ incoming edge counterclockwise from
  the outgoing edge of the vertex $l$, then the positive puncture of
  $u^j$ and the $m^{\text{th}}$ negative puncture of $u^l$ both map to
  the same Reeb chord $q$. We say that $u^{j}$ {\em is attached} to
  $u^{l}$ at $q$.
\item If there is a leaf labeled $b_j$ whose outgoing edge goes to
  vertex $l$, and that edge is the $m^{\text{th}}$ incoming edge
  counterclockwise from the outgoing edge of the vertex $l$, then the
  $m^{\text{th}}$ negative puncture of $u^l$ maps to the Reeb chord
  $b_j$.
\item The positive puncture of $u^0$ maps to the Reeb chord $a$.
\end{itemize}
See Figure~\ref{fig:disk-tree}. Let $T^j$ be the be the subtree of $T$
obtained by detaching the outgoing edge from $u^j$ from its endpoint
and adding a new root.  We also remark that if $u^{0},\dots,u^{r}$ are
the pieces of a broken $J$-holomorphic disk such that the boundary of
$u^{j}$ correspond to the homology class $A^{j}\in H_1(L)$ then the
boundary of the disk obtained by attaching all disks of the broken
disk corresponds to $A=\sum_j A^{j}$.

\begin{figure}[ht]
  \relabelbox
  \small{\epsfxsize=2in\centerline{\epsfbox{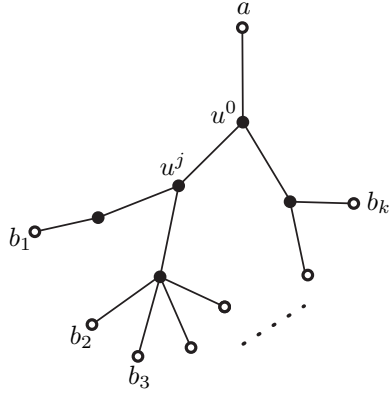}}}
  \relabel {1}{$a$}
  \relabel {2}{$u^0$}
  \relabel {3}{$u^j$}
  \relabel {7}{$b_1$}
  \relabel {8}{$b_2$}
  \relabel {9}{$b_3$}
  \relabel {10}{$b_k$}
  \endrelabelbox
  \caption{The tree structure of a broken disk with positive
    puncture at $a$ and negative punctures at $b_1, \ldots, b_k$.}
  \label{fig:disk-tree}
\end{figure}

\subsubsection{A gluing lemma}\label{sssec:floerpicard}

Let $u^{0},\dots,u^{r}$ be a broken $J$-holomorphic disk with negative
punctures at Reeb chords $b_1,\dots,b_k$, let $u^{j}$ represent an
element in $\ms^{j}=\ms_{A^{j}}(a^{j};b_1^{j},\dots,b_{k_j}^{j})$, and
write $\ms=\ms_{A}(a;b_1,\dots,b_k)$, where $A=\sum_j A^{j}$. In order
to equip $\overline{\ms}_{A}(a,b_1,\dots,b_k)$ with the structure of a
manifold with boundary with corners, we first construct a map
\begin{equation}\label{e:cornermap}
\Phi\colon \ms^{0}\times\dots\times\ms^{r}\times[0,\infty)^{r}\to\ms.
\end{equation}
The map $\Phi$ is defined by gluing the pieces of the broken disk into
a nearby holomorphic disk.  To construct the map, we will pass to a
more general functional analytic setup so that we can apply the
following result (known as Floer's Picard lemma):

\begin{lem}\label{lem:FloerPicard}
  Let $f\colon B_1\to B_2$ be a smooth Fredholm map of Banach spaces,
  \begin{equation}\label{e:TaylorFP}
    f(v)=f(0)+df(v)+N(v).
  \end{equation}
  Assume that $df$ is surjective and has a bounded right inverse $Q$
  and that the non-linear term $N$ satisfies a quadratic estimate of
  the form
  \begin{equation}\label{e:QuadraticFP}
    \|N(u)-N(v)\|_{B_2}\le C\|u-v\|_{B_1}(\|u\|_{B_1}+\|v\|_{B_1}).
  \end{equation}
  If $\|Qf(0)\|\le\frac{1}{8C}$, then for $\epsilon<\frac{1}{4C}$,
  $f^{-1}(0)\cap B(0;\epsilon)$, where $B(0;\epsilon)$ is an
  $\epsilon$-ball around $0\in B_1$, is a smooth submanifold
  diffeomorphic to the $\epsilon$-ball in $\krn(df)$.
\end{lem}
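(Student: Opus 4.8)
The plan is to run a parametrized contraction-mapping argument, exhibiting $f^{-1}(0)$ near the origin as the graph of a smooth map from a ball in $\krn(df)$ into $\img(Q)$. First I would record three elementary consequences of the hypotheses. Setting $v=0$ in \eqref{e:TaylorFP} gives $N(0)=0$, so \eqref{e:QuadraticFP} specializes to $\|N(v)\|_{B_2}\le C\|v\|_{B_1}^{2}$; differentiating \eqref{e:TaylorFP} at $v=0$ shows that the derivative of $N$ vanishes at the origin; and since $df\circ Q=\id_{B_2}$, the bounded operator $P:=Q\circ df$ is idempotent with $\img P=\img(Q)$ and $\krn P=\krn(df)$, so $B_1=\krn(df)\oplus\img(Q)$ as a topological direct sum.

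Write $v=k+w$ with $k\in\krn(df)$ and $w\in\img(Q)$. Since $df(k)=0$ and $Q\,df(w)=w$ for $w\in\img(Q)$, applying $Q$ to $f(0)+df(w)+N(k+w)=0$ shows that $f(k+w)=0$ implies $w=F_k(w)$, where
\[
F_k(w):=-Qf(0)-QN(k+w),
\]
and, conversely, applying $df$ to $w=F_k(w)$ and using $df\circ Q=\id$ recovers $f(k+w)=0$. The heart of the proof is then to show that for every $k$ in a small ball $B_{\krn}(0;\delta)$ the map $F_k$ carries a small ball $B_{\img Q}(0;r)$ into itself and is a contraction there. The quadratic estimate gives $\|F_k(w)-F_k(w')\|\le\|Q\|\,C\bigl(\|k+w\|+\|k+w'\|\bigr)\|w-w'\|$, which is at most $\tfrac12\|w-w'\|$ once $\delta+r$ is small enough, and $\|F_k(w)\|\le\|Qf(0)\|+\|Q\|\,C\|k+w\|^{2}$, which stays $\le r$ provided $\delta$ and $r$ are chosen compatibly with the numerical hypotheses $\|Qf(0)\|\le\tfrac{1}{8C}$ and $r\le\epsilon<\tfrac{1}{4C}$; this self-map estimate is exactly where those hypotheses are consumed (and where $\|Q\|$ is understood to be absorbed into $C$). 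The Banach contraction principle then yields a unique fixed point $w(k)\in B_{\img Q}(0;r)$. Since the derivative of $N$ vanishes at the origin, $\id+Q\,dN(k+w)$ is invertible near $0$, so the implicit function theorem applied to $(k,w)\mapsto w+Qf(0)+QN(k+w)$ (equivalently, differentiation of the fixed-point equation) shows that $k\mapsto w(k)$ is smooth.

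To conclude, set $\Theta(k)=k+w(k)$. This is a smooth map $B_{\krn}(0;\delta)\to B_1$ all of whose values are zeros of $f$, and it admits the bounded linear map $\id-P=\id-Q\,df$ as a left inverse (since $(\id-P)(k+w(k))=k$); restricting $\id-P$ to $\img\Theta$ therefore gives a smooth inverse of $\Theta$, so $\Theta$ is a smooth embedding and $\img\Theta$ is a submanifold diffeomorphic to a ball in $\krn(df)$. Conversely, any zero $v$ of $f$ with $\|v\|<\epsilon$ decomposes as $v=(\id-P)v+Pv$ with $Pv$ a fixed point of $F_{(\id-P)v}$ lying in the relevant ball, hence $Pv=w((\id-P)v)$ by uniqueness and $v=\Theta((\id-P)v)\in\img\Theta$; thus $\img\Theta=f^{-1}(0)\cap B(0;\epsilon)$, which is the assertion of the lemma. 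I expect the only genuinely delicate point to be the constant bookkeeping in the self-map estimate for $F_k$: one must track the radii $\delta,r$, the norm $\|Q\|$, and the norms of the projections $P$ and $\id-P$ so that the two ball inclusions (the domain condition $\|k+w\|\le\epsilon$ and the target condition $\|F_k(w)\|\le r$) hold simultaneously with the stated constants. Everything else — the reduction to a fixed-point equation, the smooth dependence on the parameter $k$, and the identification of the graph of $\Theta$ with the full zero set inside $B(0;\epsilon)$ — is then formal.
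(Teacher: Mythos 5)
Your proposal is correct and is essentially the paper's argument: both are parametrized contraction/Newton arguments over $\krn(df)$, the paper packaging the slice condition via the augmented map $\hat f(b_1)=\bigl(f(b_1),\pi(b_1)-k\bigr)$ with Newton iteration, while you solve the equivalent fixed-point equation $w=-Qf(0)-QN(k+w)$ on the complement $\img(Q)$ and get smoothness of $k\mapsto w(k)$ from the implicit function theorem. The constant bookkeeping you flag is handled no more precisely in the paper, whose proof is a sketch citing Floer, so there is no substantive gap to report.
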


\begin{proof}
  The proof appears in Floer \cite{floer-mem}. Here we give a short
  sketch pointing out some features that we will use below. Let
  $K=\krn(df)$ and choose a splitting $B_1=B_1'\oplus K$ with
  projection $\pi\colon B_1\to K$. For $k\in K$, define $\hat f\colon
  B_1\to B_2\oplus K$, $\hat
  f(b_1)=\bigl(f(b_1),\pi(b_1)-k\bigr)$. Then solutions to the
  equation $f(b_1)=0$ with $\pi(b_1)=k$ are in $1-1$ correspondence
  with solutions to the equation $\hat f(b_1)=0$. Moreover, the
  differential $d\hat f$ is an isomorphism with inverse $\hat Q$.

  On the other hand, solutions of the equation $\hat f(v)=0$ are in
  $1-1$ correspondence with fixed points of the map $F\colon B_1\to
  B_1$ given by
  \[
  F(v)=v-\hat Q\hat f(v).
  \]
  To produce fixed points the Newton iteration scheme is applied: if
  \[
  v_0=k,\quad v_{j+1}=v_j-\hat Q\hat f(v_j),
  \]
  then $v_{j}$ converges to $v_\infty$ as $j\to\infty$ and
  $F(v_\infty)=v_{\infty}$. Furthermore, if $\|f(0)\|$ is sufficiently
  small then there is $0<\delta<1$ such that:
  \[
  \|v_{j+1}-v_j\|\le \delta^{j}\|f(0)\|
  \]
  and consequently
  \begin{equation}\label{e:iterest}
    \|v_\infty-v_0\|\le M\|f(0)\|,
  \end{equation}
  where $M$ is a constant.
\end{proof}

\subsubsection{Functional analytic setup}\label{sssec:fansetup}

The functional analytic setup we use is that of a bundle of Banach
manifolds of maps from the disk with boundary punctures into $P$ with
boundary on $\Pi_P(L)$ and with punctures mapping to double points of
$\Pi_P(L)$.  The base of the bundle is the space of conformal
structures on that disk; see \cite[Section 3.1]{ees:pxr}. A
neighborhood of a given map $w$ in this bundle is parametrized by an
exponential map acting on the product of a weighted Sobolev space
$\sblv_{2,\delta}$ of vector fields along $w$ and a finite dimensional
space of conformal variations $V_{\rm con}$ of the source. In this
setting, the $\bar\pa_J$-operator gives a Fredholm section of the
bundle of complex anti-linear maps from the tangent space of the
source disk of $w$ to the pull-back $w^{\ast}(TP)$. If
$\bar\pa_J(w)=0$ then the zero-set of this section gives a
neighborhood of $w$ in the moduli space.

To better understand the setup, let us discuss the domains of the
disks in greater detail. In order to gain better control of the
map $\Phi$ in \eqref{e:cornermap}, we add extra marked points to disks
as follows. Let $u\colon D_{m+1}\to P$ be a $J$-holomorphic disk with
boundary on $L$ and with positive puncture at the Reeb chord $c$ with
endpoints $c^{\pm}\in L$. Let $S_{c^{\pm}}^{\rm in}\subset L$ and
$S_{c^{\pm}}^{\rm out}\subset L$ be small concentric spheres in $L$
centered at $c^{\pm}$ where $S_{c^{\pm}}^{\rm out}$ lies outside
$S_{c^{\pm}}^{\rm in}$. Since $L$ is normalized at double points,
elementary Fourier analysis can be used in a straightforward manner to
derive asymptotics of $u$ near double points of $\Pi_P(L)$. The
asymptotics show that if $S_{c^{\pm}}^{\rm in/out}$ are chosen small
enough then for an open dense set of radii there are points
$p_{\pm}^{\rm in/out}\in \pa D_{m+1}$ which lie close to the positive
puncture of $u$ such that $u(p_{\pm}^{\rm in/out})\in S_{c^{\pm}}^{\rm
  in/out}$ and such that the intersections are
transverse. Consequently, there are such points for each
$J$-holomorphic $v$ in some neighborhood of $u$. Consider
$p_{\pm}^{\rm in/out}$ as punctures in the domain which are required
to map to $S_{c^{\pm}}^{\rm in/out}$, i.e. we view $u$ as a map
$u\colon D_{m+5}\to P$. In \cite[Section 4.2.3]{ees:ori} and
\cite[Section 8.6]{ees:high-d-analysis} it is explained how to
describe a neighborhood of $u$ in the moduli space in terms of disks
with such extra marked points corresponding to intersections. We will
use such disks with extra marked points below. Note that we add four
marked points to each disk, which makes the domain of any disk stable.

Consider a broken disk with pieces $u^{0},\dots,u^{k}$ as above.  We
represent the domain of a disk $u^{j}$ as the upper half plane
$H=\{x+iy\in\cc\colon y\ge 0\}$ with marked points on the boundary in
the following way: the puncture of $H$ at $\infty$ corresponds to the
positive puncture of $u^{j}$ and we take the punctures closest to the
positive puncture (i.e. the punctures corresponding to $p^{\rm
  in}_\pm$) to lie at $x=\pm 1$. Then the location of remaining
punctures in the interval $(-1,1)\subset\rr=\pa H$ determines the
conformal structure on the domain of $u^{j}$ uniquely. We will equip
such punctured half planes with a special metric for which the
neighborhood of any puncture looks like a half infinite strip. More
precisely, at the positive puncture we use the map
$(-\infty,0]\times[0,1]\to H$, $\tau+it\mapsto 2e^{-\pi(\tau+it)}$ to
identify the neighborhood with a half strip, at other punctures $q$ we
fix $r_0>0$ sufficiently small so that the interval on the real axis
of length $2r_0$ centered at $q$ does not contain other punctures and
use the map $[0,\infty)\times[0,1]\to H$, $\tau+it\mapsto
q+r_0e^{-\pi(\tau+it)}$ to identify the neighborhood with a half
strip. We use a Riemannian metric on this domain which agrees with the
standard metric on the half strips $(-\infty,-1]\times[0,1]$
(respectively $[1,\infty)\times[0,1]$) and which interpolates smoothly to
the flat metric on the compact part of $H$ which is the complement of
the open half strip neighborhoods. We denote the domain of $u^{j}$ by
$\Delta^{j}$.

We now give a more precise definition of the space of conformal
variations $V_{\rm con}^{j}$ of the source of $u^{j}$: conformal
variations of the domain of $u^{j}$ are linearizations of
diffeomorphisms which move the punctures along the boundary. We take
$V_{\rm con}^{j}$ to be the space spanned by conformal variations
supported near all punctures except the positive puncture and the two
punctures closest to it (these two correspond to $p^{\rm in}_\pm$). In
terms of half strip coordinates, a conformal variation of the source
has the form $\bar\pa(\beta v)$ where $\beta$ is a smooth cut-off
function equal to $1$ near the puncture and equal to $0$ on remaining
parts of the disk, and where $v$ is the holomorphic vector field given
by $e^{\pi z}$ in the half strip $[0,\infty)\times[0,1]$.

\subsubsection{Approximately holomorphic maps}\label{ssec:apphol}

In order to define $\Phi(u^{0},\dots,u^{r},\rho)$, where $u^{j}\in
\ms^{j}$ and where $\rho=(\rho_1,\dots,\rho_r)\in[0,\infty)^{r}$ has
all components $\rho_j$ sufficiently large, we first define a domain
$\Delta(\rho)$ which is conformally equivalent to the upper half plane
with punctures on the boundary but which has a metric somewhat
different from the metrics on $\Delta^{j}$.  Once we have that domain,
we will construct an approximately holomorphic map on
$\Delta(\rho)$. 

The punctures on a domain $\Delta^{j}$ are of two types: punctures
corresponding to interior edges of the tree $T$ and those
corresponding to edges coming from leaves. We call the first type of
punctures {\em bound punctures} and the second {\em free
  punctures}. Let $\hat\Delta^{0}(\rho)$ denote the subset of
$\Delta^{0}$ obtained as follows: for each bound puncture $q$ in
$\Delta^{0}$, cut off $(\rho_j,\infty)\times[0,1]$ from its half strip
neighborhood, where the index $j$ corresponds to the index of the disk
$u^{j}$ attached at $q$. For $j>0$, let $\hat\Delta^{j}(\rho)$ denote
the subset of $\Delta^{j}$ obtained as follows: at the positive
puncture cut off $(-\infty,-\rho_j+1)\times[0,1]$ and for each bound
puncture $q$ in $\Delta^{j}$, cut off $(\rho_k,\infty)\times[0,1]$
from its half strip neighborhood, where the index $k$ corresponds to
the index of the disk $u^{k}$ attached at $q$.

Join the domains $\Delta^{j}(\rho)$ to the domain $\Delta(\rho)$ by
identifying the vertical part of boundary of $\hat\Delta^{j}(\rho_j)$,
$j\ge 1$, which lies in the half strip corresponding to the positive
puncture with the vertical part of the boundary of
$\hat\Delta^{l}(\rho)$ lying in the half strip of the negative
puncture at which the positive puncture of $\Delta^{j}$ is
attached. Since metrics agree in gluing regions, this gives a domain
with a metric. In order to determine the conformal structure we note
that if we view all domains as upper half planes with punctures on the
boundary then the above construction corresponds to cutting out
half-disks of radii $e^{-\pi\rho_j}$ near bound punctures and
inserting scaled cut off half planes. This construction is repeated so
that further scaled half planes are attached in scaled half planes
already attached; see Figure \ref{fig:uhalfpl}.

\begin{figure}[ht]
  \relabelbox \small
  {\epsfxsize=3in\centerline{\epsfbox{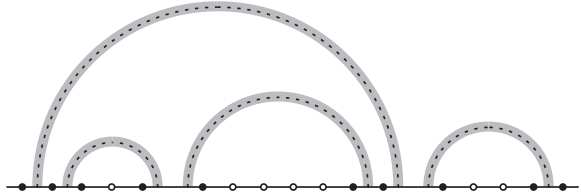}}}
  \endrelabelbox
  \caption{The glued domain for the broken disk in
    Figure~\ref{fig:disk-tree}. Shaded regions correspond to the half
    strips near the punctures.  Stationary punctures are filled,
    moving punctures are not.}
  \label{fig:uhalfpl}
\end{figure}

Consider next the space of conformal variations $V_{\rm con}(\rho)$ of
$\Delta(\rho)$. We will use a decomposition of this space of the form
\[
V_{\rm con}(\rho)= V_{\rm con}'(\rho)\oplus V_{\rm con}''(\rho).
\]
To define the decomposition we first note that $V_{\rm con}(\rho)$ is
spanned by conformal variations as described above supported near each
puncture except at the positive puncture and at the two punctures
closest to it. These punctures correspond to the positive puncture of
$u^{0}$ and the intersection points $p^{\rm in}_{\pm}$ of
$u^{0}$. Other punctures of the disk $\Delta(\rho)$ are of two types:
\begin{itemize}
\item {\em Moving punctures} which correspond to (free) punctures in
  $\Delta^{j}$ where some conformal variation in $V_{\rm con}^{j}$ was
  supported; see Figure \ref{fig:uhalfpl}.
\item {\em Stationary punctures} which correspond to (free) punctures
  in $\Delta^{j}$ where no conformal variation in $V_{\rm con}^{j}$
  was supported; see Figure \ref{fig:uhalfpl}.
\end{itemize}
Second, we use the tree structure on the broken disk. Let $T$ be the
directed tree corresponding to the broken disk. We take $V'_{\rm
  con}(\rho)$ generated by the conformal variations at all moving
punctures and the following additional variations
$\gamma^{j}$. Consider the conformal representative of $\Delta(\rho)$,
which is an upper half plane $H(\rho)$ with boundary punctures. Let
$u^{j}$ be a disk. For large $\rho_j$ the punctures of $\Delta(\rho)$
corresponding to the negative punctures of $u^{k}$ for all $u^{k}\in
T^{j}$ lie very close together and we let $\gamma^{j}$ denote the
conformal variation which correspond to moving all these punctures
uniformly along the boundary $\pa H$. In terms of the generators of
$V_{\rm con}(\rho)$ this is a linear combination of the variations at
all stationary and moving punctures coming from $\Delta^{k}$ for
$u^{k}\in T^{j}$. Finally, we take $V''_{\rm con}(\rho)$ to be
generated by the conformal variations $\theta_j$ which correspond to
moving all punctures corresponding to negative punctures of $u^{k}\in
T^{j}$ toward the mid-point of the subinterval $I^{j}\subset \rr=\pa
H$ bounded by the punctures corresponding to $p_{\pm}^{\rm in}$ in
$\Delta^{j}$, by scaling and which keeps all other punctures fixed.

Consider $H(\rho)$ with boundary punctures as just described and
define
\[
d(\rho,j)=d^{\rm in}_+(\rho,j)+d^{\rm in}_-(\rho,j),
\]
where $d^{\rm in}_{\pm}(\rho,j)$ is the distance between the puncture
corresponding to $p^{\rm in}_{\pm}(\rho,j)$ and the puncture at
$\pm1\in\rr=\pa H$. Then all conformal variations in $V'_{\rm
  con}(\rho)$ preserve $d(\rho,j)$ for $j=1,\dots,r$. Furthermore,
note that if $\tau=(0,\dots,x,\dots,0)$ where $x>0$ sits in the
$j^{\rm th}$ component and if $\rho'=\rho+x$ then
$d(\rho',k)>d(\rho,k)$ for all $k$ such that $u^{k}\in T^{j}$.

Define the approximately holomorphic map
\[
w_{u^{0},\dots,u^{r};\rho}\colon\Delta(\rho)\to P,
\]
by interpolating between the joined maps in the region
$[\rho+1,\rho]\times[0,1]$ as in \cite[Proof of Proposition
4.6]{ees:pxr}. 


\subsubsection{Definition and properties of the corner map}
\label{sssec:cornermap}

We apply Lemma \ref{lem:FloerPicard} to produce solutions near
$w_{u^{0},\dots,u^{r};\rho}$ for $\rho$ sufficiently large. We take
$f$ to be a restriction of the $\bar\pa_J$-operator in local
coordinates centered at $w_{u^{0},\dots,u^{r};\rho}$. More precisely,
trivializing the bundle of complex anti-linear maps in a neighborhood
of $w_{u^{0},\dots,u^{r};\rho}$ as described in \cite[Section
3.2]{ees:pxr} and using a trivialization of the tangent bundle of the
disk, we view the $\bar\pa_J$-operator as a map
\[
\bar\pa_J\colon \sblv_{2,\delta}\oplus V_{\rm con}(\rho)\to\sblv_{1,\delta},
\]
where $\sblv_{1,\delta}$ is the Sobolev space of vector fields along
$w_{u^{0},\dots,u^{r};\rho}$ with one derivative in $L^{2}$ weighted
by the same weight function used to define $\sblv_{2,\delta}$. As in \cite[Proof of Proposition
4.6]{ees:pxr}, we find:
\begin{equation}\label{e:f(0)}
\|\bar\pa_J w_{u^{0},\dots,u^{r};\rho}\|_{1,\delta}= \Ordo(e^{-\theta_0\min_{j}{\rho_j}}),
\end{equation}
where $\|\cdot\|_{1,\delta}$ is the norm on $\sblv_{1,\delta}$,
for some $\theta_0>0$ determined by the complex angles (see
\cite[Section 4.1]{ees:pxr}) at the double points of $\Pi_P(L)$.

We take $f$ of Lemma \ref{lem:FloerPicard} as the restriction of $\bar\pa_J$
to the subspace $\sblv_{2,\delta}\oplus V'_{\rm con}(\rho)$ so that
\[
f=\bar\pa_J\colon \sblv_{2,\delta}\oplus V_{\rm con}'(\rho)\to\sblv_{1,\delta},
\]
and by \eqref{e:f(0)}, $\|f(0)\|_{1,\delta}=\Ordo(e^{-\theta_0\min_{j}{\rho_j}})$

\begin{lem}\label{lem:partialglu}
  There exists $\rho_0>0$ such that for
  $\rho=(\rho_1,\dots,\rho_r)\in[\rho_0,\infty)^{r}$, the differential
  $df$ is surjective and admits a bounded right inverse. Moreover, the
  quadratic estimate \eqref{e:QuadraticFP} for the non-linear term in
  the Taylor expansion of $f$ holds.
\end{lem}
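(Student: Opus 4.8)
The plan is to prove Lemma~\ref{lem:partialglu} by the standard Newton/gluing scheme for punctured holomorphic disks, following \cite{floer-mem} and, in particular, the proof of \cite[Proposition~4.6]{ees:pxr} and the parallel estimates in \cite{ees:high-d-analysis}. Two things have to be established: the quadratic estimate \eqref{e:QuadraticFP} for the nonlinear term with a constant $C$ that does not depend on $\rho$, and surjectivity of $df$ together with a right inverse whose norm is bounded independently of $\rho$. The key idea for the second, harder, part is to manufacture an approximate right inverse for $df$ at $w_{u^{0},\dots,u^{r};\rho}$ by patching together, with cut-off functions supported away from the neck half-strips, the bounded right inverses of the linearized $\bar\pa_J$-operators at the individual pieces $u^{0},\dots,u^{r}$, and then to promote it to a genuine right inverse once the patching error is shown to be exponentially small in $\min_j\rho_j$.

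\emph{The quadratic estimate.} In the local coordinates around $w_{u^{0},\dots,u^{r};\rho}$ set up in Subsection~\ref{sssec:cornermap}, the nonlinear remainder $N(v)=f(v)-f(0)-df(v)$ has the usual pointwise form of $v$ times $\nabla v$ times a coefficient depending smoothly on $v$ and on the $1$-jet of $J$ along the image of $w_{u^{0},\dots,u^{r};\rho}$, plus a term quadratic in $v$ with a similar coefficient. Since $\sblv_{2,\delta}$ on a surface embeds continuously in $C^{0}$ and multiplies into $\sblv_{1,\delta}$, and since the domain metrics on $\Delta(\rho)$ and the $C^{1}$-size of $J$ near $\Pi_P(L)$ are bounded uniformly in $\rho$ --- on the neck regions the metric is the translation-invariant strip metric, so no $\rho$-dependence enters the Sobolev constants --- the usual Sobolev multiplication and composition estimates yield \eqref{e:QuadraticFP} with $C$ independent of $\rho$. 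I expect this part to be routine.

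\emph{Surjectivity and the uniform right inverse.} By Lemma~\ref{lem:tvmspc}, applied to each $u^{j}$ equipped with its four extra marked points constrained to the spheres $S_{c^{\pm}}^{\rm in/out}$, the linearized operators $d\bar\pa_J|_{u^{j}}\colon\sblv_{2,\delta}^{\,j}\oplus V_{\rm con}^{j}\to\sblv_{1,\delta}^{\,j}$ are surjective and hence admit bounded right inverses $Q^{j}$; write $Q_{\max}=\max_j\|Q^{j}\|$. Choosing cut-off functions $\beta^{j}$ equal to $1$ on the part of $\Delta(\rho)$ coming from $\Delta^{j}$ outside its neck half-strips, together with slightly larger companions $\tilde\beta^{j}$ equal to $1$ on $\operatorname{supp}\beta^{j}$, I would set $Q_{\rho}\eta=\sum_{j}\tilde\beta^{j}\,Q^{j}(\beta^{j}\eta)$, where the conformal-variation component of each $Q^{j}$ is reinterpreted on $\Delta(\rho)$: a variation at a free puncture of $u^{j}$ becomes a moving-puncture variation, and a variation at a bound puncture of $u^{j}$ becomes the cluster-translation variation $\gamma^{k}$ of the subtree attached there, so that $Q_{\rho}$ indeed takes values in $\sblv_{2,\delta}\oplus V'_{\rm con}(\rho)$, the domain of the restricted operator $f$. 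Then $df\circ Q_{\rho}-\id$ consists only of commutator terms $[\,d\bar\pa_J,\beta^{j}\,]$, $[\,d\bar\pa_J,\tilde\beta^{j}\,]$ supported in the neck half-strips, plus the $\Ordo(e^{-\theta_0\min_j\rho_j})$-error by which $d\bar\pa_J$ differs there from the translation-invariant model operator; using the exponential decay of model-kernel elements one gets $\|df\circ Q_{\rho}-\id\|_{\sblv_{1,\delta}\to\sblv_{1,\delta}}\le C_0\,e^{-\theta\min_j\rho_j}$ for some $\theta>0$ determined by the complex angles (and by $\delta$). Picking $\rho_0$ so that this is $<\tfrac12$ for $\rho\in[\rho_0,\infty)^{r}$ makes $df\circ Q_{\rho}$ invertible with inverse of norm $\le 2$, whence $Q=Q_{\rho}(df\circ Q_{\rho})^{-1}$ is a bounded right inverse of $df$ (equivalently of the restricted $f$) with $\|Q\|\le 2(r+1)Q_{\max}$, independent of $\rho$; in particular $df$ is surjective.

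\emph{The main obstacle.} The one genuinely delicate point is the $\rho$-uniform bound on the commutator error in the last display: it rests on the fact that solutions of the model linearized equation on a half-strip neck decay at a rate strictly larger than the Sobolev weight $\delta$, so that truncating such a solution costs only something exponentially small in $\sblv_{1,\delta}$. This is arranged by taking $\delta$ smaller than the smallest complex angle $\theta_0$ at the double points of $\Pi_P(L)$ (the same $\theta_0$ appearing in \eqref{e:f(0)}); see \cite[Section~4.1]{ees:pxr}. With that choice fixed, the remaining work is bookkeeping of exactly the type in \cite[Proof of Proposition~4.6]{ees:pxr} and \cite{ees:high-d-analysis}, together with the check that the patched inverse stays inside $V'_{\rm con}(\rho)$ rather than the full space $V_{\rm con}(\rho)$ of conformal variations --- which, as indicated above, holds because the $Q^{j}$ only move free punctures of the pieces and these reassemble, under the gluing, into the moving-puncture variations and the variations $\gamma^{j}$ that span $V'_{\rm con}(\rho)$.
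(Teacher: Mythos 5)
Your proposal is correct in substance, but it takes a genuinely different route from the paper. You build an approximate right inverse $Q_\rho$ by splicing the right inverses $Q^j$ of the linearizations at the pieces with cut-off functions (reinterpreting bound-puncture conformal variations as the cluster variations $\gamma^k$ so that the image lands in $\sblv_{2,\delta}\oplus V'_{\rm con}(\rho)$), show $\|df\circ Q_\rho-\id\|$ is exponentially small in $\min_j\rho_j$ using the decay rate $\theta_0>\delta$ at the necks, and then correct by a Neumann series to get $Q=Q_\rho(df\circ Q_\rho)^{-1}$ with a $\rho$-independent bound. The paper argues dually: it observes that $df$ is Fredholm of index $d=\sum_j\dim(u^j)$, forms the $d$-dimensional subspace $K$ of cut-off kernel elements transplanted from the pieces (with exactly the same bookkeeping you describe, namely that the component at a bound puncture becomes a cluster-translation variation), and then invokes a uniform injectivity estimate $\|df(v)\|\ge C\|v\|$ on the $L^2$-complement of $K$ for $\rho$ large (a standard elliptic/contradiction argument as in \cite[Lemma 8.9]{ees:high-d-analysis}); since $\dim K$ equals the index, this gives surjectivity and a bounded right inverse simultaneously. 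Both arguments hinge on the same analytic input (exponential decay at the double points with weight $\delta$ below the complex angles, and the quadratic estimate taken from the proof of \cite[Proposition 4.6]{ees:pxr}), and both must make the same check about $V'_{\rm con}(\rho)$ versus $V_{\rm con}(\rho)$, which you do. What the approaches buy: yours produces the right inverse constructively and keeps the error analysis local to the necks; the paper's version identifies the explicit approximate kernel $K$, which is not wasted effort, since the identification of $\ker(df)$ with the span of cut-off kernel functions is reused later (in Lemma~\ref{lem:corneremb} and in the verification that the charts $\Phi$ and $\Psi$ are $C^1$-compatible), whereas with your route that identification would still have to be extracted separately. One small caveat: your error term should also include the discrepancy between the linearization at the preglued map $w_{u^0,\dots,u^r;\rho}$ and at the pieces $u^j$ on the interpolation regions, and the comparability of the weight on the glued neck with the weights of the pieces needs to be fixed once and for all; both are of the same exponentially small order and fit into the bookkeeping you indicate.
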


\begin{proof}
  The quadratic estimate follows from \cite[Proof of Proposition
  4.6]{ees:pxr}. In order to see that the differential is surjective
  and admits a bounded right inverse, we first note that it is a
  Fredholm operator of index $d=\sum_j\dim(u^j)$. Second, consider the
  subspace $K$ of cut-off kernel functions corresponding to all pieces
  of the disk. It follows that $\dim(K)=d$. Note that the component of
  the cut off kernel element in the disk $\Delta^{j}$ along the
  conformal variation supported at a negative puncture where the disk
  $\Delta^{l}$ is attached corresponds to the conformal variation
  $\gamma^{j}$.  Now a standard argument (see, for example,
  \cite[Lemma 8.9]{ees:high-d-analysis}) shows that on the subspace
  which is the $L^{2}$-complement of $K$, there is an estimate
  \[
  \|df(v)\|\ge C\|v\|,
  \]
  provided the components of $\rho$ are sufficiently large.
\end{proof}

With Lemma \ref{lem:partialglu} established, define
$\Phi_\rho(u^{0},\dots,u^{r};\rho)$ as the result of applying Newton
iteration to $w_{u^{0},\dots,u^{r};\rho}$ as in the proof of Lemma
\ref{lem:FloerPicard}. Combining the above maps for all
$\rho\in[\rho_0,\infty)^r$ with sufficiently large $\rho_0$ and
reparametrizing $[0,\infty)\approx[\rho_0,\infty)$, we get the map
\[
\Phi\colon\ms^{0}\times\dots\dots\ms^{r}\times[0,\infty)^{r}\to\ms.
\]
\begin{lem}\label{lem:corneremb}
The Newton iteration map $\Phi$ is an embedding.
\end{lem}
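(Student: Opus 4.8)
The plan is to show that $\Phi$ is a smooth, injective immersion between manifolds of the same dimension; since such a map is automatically a local diffeomorphism, hence open, it is a homeomorphism onto an open subset of $\ms$, i.e.\ an embedding. The dimensions match: comparing the index formula of Lemma~\ref{lem:tvmspc} for the glued disk with the formulas for its pieces gives $\dim\ms=\sum_{j=0}^{r}\dim\ms^{j}+r$, each gluing parameter contributing one unit, and the source $\ms^{0}\times\dots\times\ms^{r}\times[0,\infty)^{r}$ has exactly this dimension. Smoothness of $\Phi$ is built into its construction: in the notation of the proof of Lemma~\ref{lem:FloerPicard}, $\Phi_{\rho}(u^{0},\dots,u^{r})$ is the unique zero of $\hat f$ in the $\epsilon$-ball about $w_{u^{0},\dots,u^{r};\rho}$, and the pregluing $w_{u^{0},\dots,u^{r};\rho}$, the operator $\hat f$, and its right inverse $\hat Q$ all depend smoothly on $(u^{0},\dots,u^{r},\rho)$, so the implicit function theorem gives smooth dependence of the solution.

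For the immersion property I would first analyze the derivative of the pregluing $w=w_{u^{0},\dots,u^{r};\rho}$ as a map into the ambient bundle of Banach manifolds. A tangent vector to the source splits into variations of the individual pieces $u^{j}$ and variations of the gluing parameters $\rho_{j}$. The former produce, via the pregluing construction, vector fields along $w$ that are cut-off versions of solutions of the linearized $\bar\pa_{J}$-equation supported in the regions $\hat\Delta^{j}(\rho)$ of the glued domain; since distinct such regions overlap only in the neck strips, these vector fields are uniformly linearly independent in $\sblv_{1,\delta}$. A variation of $\rho_{j}$ produces (up to lower order) the cut-off kernel element $\gamma^{j}$, which is the translation along the half-strip at the $j^{\mathrm{th}}$ neck and changes the modulus $d(\rho,j)$; it is $L^{2}$-orthogonal, modulo exponentially small error, to all the piece-variations and to the translations at the other necks. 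Hence $dw$ is injective with a lower bound $\|dw(v)\|_{1,\delta}\ge c\,\|v\|$ uniform for $\rho\in[\rho_{0},\infty)^{r}$. By \eqref{e:f(0)} and the iteration estimate \eqref{e:iterest} the Newton correction $\Phi-w$, together with its derivative in all parameters, is $\Ordo(e^{-\theta_{0}\min_{j}\rho_{j}})$; enlarging $\rho_{0}$ so that this error is $<c$, the derivative $d\Phi$ is injective as well, and therefore $\Phi$ is a local diffeomorphism onto an open subset of $\ms$.

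It remains to prove that $\Phi$ is injective; this is the converse of the gluing construction and is the main obstacle. One must show that a holomorphic disk lying close to the broken disk $(u^{0},\dots,u^{r})$ in $\overline{\ms}$ determines both the pieces and the gluing parameters uniquely. Suppose $\Phi(u^{0},\dots,u^{r};\rho)=\Phi(\tilde u^{0},\dots,\tilde u^{r};\tilde\rho)=v$. The disk $v$ has $r$ long thin necks, one near each Reeb chord at which a piece is attached; because $L$ is normalized at double points, $v$ decays exponentially to the trivial strip over that Reeb chord along each neck. Cutting $v$ along the necks and using the four extra marked points of Subsection~\ref{sssec:fansetup} to pin down the reparametrization, the components of $v$ between consecutive necks are identified with both the $u^{j}$ and the $\tilde u^{j}$, so $u^{j}=\tilde u^{j}$ for all $j$. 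The length of the $j^{\mathrm{th}}$ neck, measured by $d(\,\cdot\,,j)$, agrees with $\rho_{j}$ and with $\tilde\rho_{j}$ up to the $\Ordo(e^{-\theta_{0}\min\rho})$ Newton error, and the $\epsilon$-ball uniqueness clause of Lemma~\ref{lem:FloerPicard} then forces $\rho=\tilde\rho$. Thus $\Phi$ is an injective immersion between equidimensional manifolds, hence an embedding. (Equivalently, the same neck-length analysis, combined with Gromov compactness, shows $\Phi$ is proper onto its image: a sequence $\Phi(p_{n})$ can leave every compact subset of the image only if $\min_{j}\rho_{j}(p_{n})\to\infty$, in which case $\Phi(p_{n})$ Gromov-converges to a strictly more broken disk and leaves every compact subset of $\ms$.) This completes the proof.
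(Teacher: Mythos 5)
Your overall strategy (smooth injective immersion between equidimensional spaces, hence an embedding) differs from the paper's, and the step where it diverges is exactly where a genuine gap appears: injectivity in the gluing parameters. You argue that if $\Phi(u;\rho)=\Phi(\tilde u;\tilde\rho)$ then the neck lengths agree with $\rho_j$ and $\tilde\rho_j$ ``up to the Newton error'' and that the $\epsilon$-ball uniqueness clause of Lemma~\ref{lem:FloerPicard} then forces $\rho=\tilde\rho$. That clause, however, asserts uniqueness of the zero of $f$ in a ball around one \emph{fixed} approximate solution, in one fixed functional analytic setup on the fixed domain $\Delta(\rho)$; it does not compare the outputs of two Newton iterations run on different domains $\Delta(\rho)$ and $\Delta(\tilde\rho)$ with different Banach spaces, so ``agreement up to a small error'' of neck lengths cannot be upgraded to equality this way. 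The paper's proof avoids this entirely by an exact, not approximate, observation: the Newton iteration is performed only over $\sblv_{2,\delta}\oplus V'_{\rm con}(\rho)$, and by construction every conformal variation in $V'_{\rm con}(\rho)$ preserves the conformal invariants $d(\rho,j)$ cut out by the extra marked points $p^{\rm in}_\pm$ (this is why $V''_{\rm con}$ was split off). Hence the domain of $\Phi(u;\rho)$ has $d(\cdot,j)=d(\rho,j)$ \emph{exactly}, these quantities are independent of $u$, and their strict monotonicity along the tree (choosing a suitable index $j$ with $\rho_j\ne\rho_j'$ and $\rho_l=\rho_l'$ for $l\ne j$ with $u^{j}\in T^{l}$) shows that $\rho\ne\rho'$ already forces different conformal structures on the domains, so the images differ. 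For fixed $\rho$, injectivity then follows from the local statement of Lemma~\ref{lem:FloerPicard} together with the estimate \eqref{e:iterest}, which places $w_{u';\rho}$ in a small neighborhood of $w_{u;\rho}$.

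A secondary problem is your immersion step: you assert that the Newton correction $\Phi-w$ is $\Ordo(e^{-\theta_0\min_j\rho_j})$ ``together with its derivative in all parameters,'' including $\rho$. Estimate \eqref{e:iterest} controls only the size of the correction, not its derivatives, and differentiating the glued solution with respect to the gluing parameter is precisely the delicate point in gluing analysis (the domain, the weight, and the Banach spaces all vary with $\rho$); nothing in the paper provides such an estimate, and the paper's argument is structured so as not to need it. So as written, both the local-diffeomorphism claim and the global injectivity claim rest on unproved assertions; the missing idea is the exact invariance of $d(\rho,j)$ under the restricted space of conformal variations.
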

\begin{proof}
  Let $u=(u^{0},\dots,u^{k})$. We first look at the conformal
  structure of the domain of $\Phi(u;\rho)$: by construction
  $d^{j}(\rho)$ is independent of $u$. If $\rho\ne \rho'$ then let $j$
  be an index such that the $\rho_j\ne \rho'_j$ and such that
  $\rho_l=\rho_l'$ for all $l\ne j$ with $u^{j}\in T^{l}$ and it
  follows that the conformal structures of $\Phi(u;\rho)$ and
  $\Phi(u';\rho')$ are different if $\rho\ne \rho'$.

  It is then a consequence of Lemma \ref{lem:FloerPicard} that $\Phi$
  is a local embedding and, if $\Phi(u;\rho)=\Phi(u';\rho)$, then the
  estimate \eqref{e:iterest} implies that $w_{u';\rho}$ lies in a
  small neighborhood of $w_{u;\rho}$.  Thus, $\Phi$ is an actual
  embedding.
\end{proof}

\subsubsection{Corner structure}
\label{sssec:cornerstr}

With Lemma \ref{lem:corneremb} established, we can define the
compactification of the moduli space $\ms$ by adding broken
configurations at corners with the Newton iteration map as a local chart. To discuss this, assume that $\ms$ and $\ms^{0},\dots,\ms^{r}$ are as in Lemma \ref{lem:corneremb} and let
\[
  \widehat{\Phi} \colon
  \ms^{0}\times\dots\times\ms^{r}\times[0,\infty)^{r}\to\ms,
\]
denote the Newton iteration embedding. Assume further that for each $j=1,\dots,r$ there are broken disks in $\ms^{j;1}\times\dots\times\ms^{j;k_j}$ which can be joined to a disk in $\ms^{j}$. Let
\[
  \Phi^{j} \colon
  \ms^{j;0}\times\dots\times\ms^{j;k_j}\times[0,\infty)^{k_j}\to\ms^{j},\quad j=0,\dots,r,
\]
denote the corresponding Newton iteration maps.
Finally, Newton iteration can be applied directly to broken disks in
\[
(\ms^{0;0}\times\dots\times\ms^{0;k_0})\times\dots\times(\ms^{r;0}\times\dots\ms^{r;k_r})
\]
and produces disks in $\ms$. Let
\begin{align*}
  \Phi \colon
  &\ms^{0;0}\times \cdots \times
  \ms^{0;k_0}\times\dots\times\ms^{r;0}\times\dots\times\ms^{r;k_r} \\
  &\quad \quad \times[0,\infty)^{k_0}\times\dots\times[0,\infty)^{k_r}\times[0,\infty)^{r} \to \ms,
\end{align*}
denote the corresponding embedding. 

Combining the maps $\Phi^{j}$ and $\widehat{\Phi}$, we get an
embedding:
\[
\Psi\colon
\ms^{0;0}\times\dots\times\ms^{0;k_0}\times\dots\times\ms^{r;0}\times\dots\times\ms^{r;k_r}
\times[0,\infty)^{k_0}\times\dots\times[0,\infty)^{k_r}\times[0,\infty)^{r} \to \ms,
\]
with
$\tilde\rho=(\rho_0,\dots,\rho_{r})\in[0,\infty)^{k_0}\times\dots\times[0,\infty)^{k_r}$ and $\rho\in[0,\infty)^{r}$
\begin{align*}
&\Psi\bigl(u^{0;0},\dots,u^{0;k_0},\dots,
u^{r;0},\dots,u^{r;k_r};\tilde\rho,\rho\bigr)=\\
&\quad \quad \widehat\Phi\bigl(\Phi^{0}(u^{0;0},\dots,u^{0;k_0};\rho^{0}),\dots,
\Phi^{r}(u^{r;0},\dots,u^{r;k_r};\rho^{r});\rho\bigr).
\end{align*}

We claim that the maps $\Psi\circ\Phi^{-1}$ and $\Phi\circ\Psi^{-1}$
are $C^{1}$ diffeomorphisms. To see this consider the definitions of
the gluing maps involved and note that $L^{2}$-projections define
linear isomorphisms between the spaces spanned by cut off kernel
functions of the differentials on pieces of a broken disk and the
kernel of the differential of the approximately holomorphic disk which
is the starting point for the Newton iteration of Lemma
\ref{lem:FloerPicard}; see the proof of Lemma \ref{lem:partialglu}.
Since the kernel of the differential at the approximately holomorphic
disk is the domain of the chart, the composition statements follow
since the corresponding statements on the level of cut off kernel
functions clearly hold.

In order to define the structure of a manifold with boundary with
corners identify $[0,\infty)$ with $[0,1)$. Consider the inclusion
$[0,1)\subset[0,1]$, and complete the map $\Phi$ to a chart
\[
\overline{\Phi}\colon\ms^{0}\times\dots\times\ms^{r}\times[0,1]^{r}\to\ms
\]
by identifying the points $(u^{0},\dots,u^{r};\rho)$ with
$\rho=(\rho_1,\dots,\rho_r)$, where some of the $\rho_{j}$ equal $1$,
as follows. Consider the tree $T$ corresponding to the broken
disk $u^0, \ldots, u^r$.  For each $s$ such that $\rho_s = 1$, replace
each edge from the vertex $s$ to the vertex $j$ by two edges, one of
which connects $s$ to a new root and the other of which connects $j$
to a new leaf that has the same Reeb chord label as the new leaf.  For
edges leaving $s$, the role of roots and leaves are reversed. See
Figure~\ref{fig:glue-tree}.  Each component of the new graph
corresponds to a broken disk $u^{l;0},\dots,u^{l;l_k}$ and
$\rho^{l}=(\rho_{l;0},\dots,\rho_{l:l_s})$, where $\rho^{l:j}$ is the
component of $\rho$ corresponding to $u^{l;j}$.  There is a
corresponding chart
\[
\Phi^l\colon \ms^{l;0}\times\ms^{l;l_s}\times[0,1)^{l_s}\to\ms^l.
\]
Identify the point $(u^{0},\dots,u^{r};\rho)$ with the broken disk
with components $\Phi^l(u^{l;0},\dots,u^{l;l_s};\rho^{l})$,
$l=1,\dots,k$ and use the coordinate chart $\Phi$ which glues this
configuration to define a coordinate neighborhood.  The compatibility
between $\Phi$ and $\Psi$ discussed above then ensures that this
construction gives a well-defined $C^{1}$-structure.

\begin{figure}[ht]
  \relabelbox
  \small{\epsfxsize=4in\centerline{\epsfbox{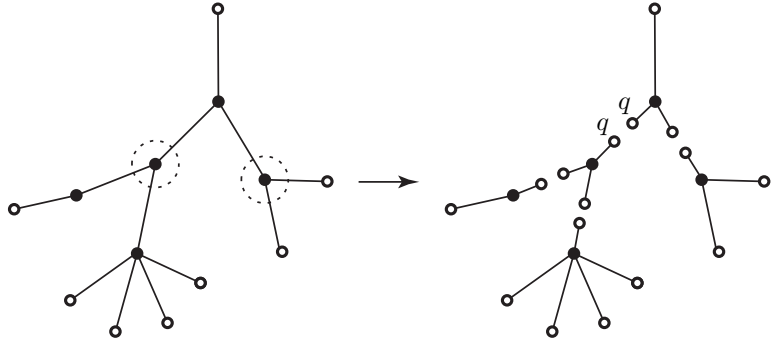}}}
  \relabel {1}{$q$}
  \relabel {2}{$q$}
  \endrelabelbox
  \caption{Splitting the broken disk into sub-disks when $\rho_s =
    1$.}
  \label{fig:glue-tree}
\end{figure}


In order to build the compactified moduli space, we use the area
filtration.  The moduli space of disks of smallest area is a compact
manifold since no disk in this space can break into smaller pieces. In
fact, the moduli space of disks of second smallest area is a compact
manifold as well since a disk with only one positive puncture cannot
be glued to itself. In order to define the boundaries of moduli spaces
of disks of a given area $A$ we use the gluing maps discussed above
applied to moduli spaces of disks of area smaller than $A$. The
boundaries of the smaller area moduli spaces were constructed in
previous steps and by the compatibility discussed above this
construction yields a manifold with boundary with corners of class
$C^{1}$. Note though that the $C^{1}$-structure is non-canonical since
it depends on our specific choices of conformal models of the domains, on the interpolations between maps, and on the choice of identification of $[0,\infty)$ with $[0,1)$.

\subsection{Evaluation maps}\label{s:evmaps}

The generalized disks that appear in Theorem~\ref{thm:disk+Morse}
require the use of evaluation maps on compactified moduli spaces of
$J$-holomorphic disks which in turn requires a slight extension of the
definition of compactified moduli space above. As in Subsection
\ref{s:cmdli}, we add marked points near positive punctures in order
to get stable domains of all $J$-holomorphic disks. We then add one
additional marked point on the boundary. If $\ms$ is the original
moduli space, we let $\ms^{\ast}$ denote the corresponding moduli space
of disks with one extra marked point on the boundary. Note that we
have a fibration $\ms^{\ast}\to\ms$. Furthermore, there is a natural
evaluation map
\[
\ev\colon\ms^{\ast}\to L,
\]
which corresponds to evaluation of the boundary lift of $u$ at the
marked point and this map has a natural extension to configurations
where the extra marked point agrees with one of the extra punctures by
evaluating at the puncture. It is straightforward to extend the notion
of a disk with marked point on the boundary to broken configurations:
here the marked point is in one of pieces of the disk. Furthermore,
the gluing map $\Phi$ discussed in Subsection \ref{s:cmdli} can be
applied also to broken disks with marked points to give a disk with
marked point as long as the marked point lies outside the piece which
is cut off from the domain. However, for a fixed broken configuration
with a marked point on the boundary, there is a $\rho_0$ such that if
the components of $\rho$ in Lemma \ref{lem:corneremb} are all bigger
than $\rho_0$ then the map $\Phi$ gives an embedding into the moduli
space of disks with one extra marked point. Thus, there is a
compactification $\overline{\ms^{\ast}}$ of the space $\ms^{\ast}$,
where the boundary has two types of strata: one corresponds to broken
disks with one marked point, and the other to the marked point lying
at Reeb chord endpoints. As in Subsection \ref{s:cmdli}, this
compactified space has the structure of a $C^{1}$ manifold with
boundary with corners and the evaluation maps fit together to give a
$C^{1}$-map
\[
\ev\colon\overline{\ms^{\ast}}\to \Pi_P(L).
\]
See \cite{fooo} for a discussion of evaluation maps in a similar situation.

\subsection{Morse flows and generalized disks}\label{s:morseflow}

\subsubsection{Transversality}

The next step in proving Theorem~\ref{thm:disk+Morse} is to find
triples $(f,g,J)$ where $f\colon L\to\rr$ is a Morse function, $g$ is
a Riemannian metric on $L$, and $J$ is an almost complex structure on
$P$ which have the following properties:
\begin{itemize}
\item[$({\bf a1})$] The almost complex structure $J$ is adjusted to
  $L$ and $L$ is normalized at double points; see Subsection
  \ref{ssec:diffl}.
\item[$({\bf a2})$] Condition $({\bf a1})$ implies that there are Reeb
  chord coordinates at the Reeb chord endpoints in $L$ in which the
  Lagrangian projection is a linear embedding as a map into double
  point coordinates; see Subsection \ref{ssec:diffl} for notation. The
  metric $g$ agrees with the standard flat $\rr^{n}$ metric in Reeb
  chord coordinates.
\item[$({\bf a3})$] There is a symplectic neighborhood map $\Phi\colon
  U\to P$, where $U\subset T^{\ast}L$ is a neighborhood of the
  $0$-section, which is linear in the canonical coordinates
  corresponding to Reeb chord coordinates as a map into double point
  coordinates, and which is such that the almost complex structure on
  $U$ induced by $g$ is equal to $\Phi^{\ast}J$. (In other words, $J$
  is standard in a neighborhood of $\Pi_P(L)$ with respect to $g$.)
\item[$({\bf a4})$] The Morse function $f\colon L\to\rr$ is real
  analytic and regular in the Reeb chord coordinate neighborhoods; see
  Lemma \ref{lem:2pos}.
\item[$({\bf a5})$] If $p$ is a critical point of $f$, then the
  eigenvalues of the Hessian of $f$ at $p$ (i.e. the critical values
  of the Hessian quadratic form restricted to the unit sphere in
  $T_pL$ determined by $g$) all have the same absolute value. If a
  Morse function has this property we say that it is {\em round at
    critical points with respect to $g$}.
\end{itemize}
We call triples $(f,g,J)$ with properties $({\bf a1})-({\bf a5})$ {\em
  adjusted to $L$}.

Let $L\subset P\times\rr$ be chord generic, let $J$ be an almost
complex structure, and let $g$ be a Riemannian metric on $L$ such that
$({\bf a1})-({\bf a3})$ above hold and such that Lemma
\ref{lem:tvmspc} holds. Let $\overline{\ms^{\ast}}$ denote the moduli
space of $J$-holomorphic disks with one positive puncture and with a
marked point on the boundary and let
$\ev\colon\overline{\ms^{\ast}}\to L$ denote the evaluation map. If
$f\colon L\to\rr$ is a Morse function such that the $g$-gradient of
$f$ defines a Morse-Smale flow and if $p\in L$ is a critical point of
$f$, then the stable and unstable manifolds $W^{\rm s}(p)$ and $W^{\rm
  u}(p)$ are submanifolds of $L$ with natural compactifications
consisting of broken (and constant) flow lines.

\begin{lem}\label{lem:morsetv} 
  There exists a Morse function $f\colon L\to\rr$ with $g$-gradient
  which is Morse-Smale such that the triple $(f,g,J)$ is adjusted to
  $L$ and such that, for each critical point $p$ of $f$, $W^{\rm
    s}(p)$ and $W^{\rm u}(p)$ are stratumwise transverse to $\ev\colon
  {\overline{\ms^{\ast}}}\to L$. In particular, generalized disks defined by
  $(f,g,J)$ have the following properties:
  \begin{itemize}
  \item[$({\bf g1})$] There are no generalized disks of formal
    dimension $<0$.
  \item[$({\bf g2})$] All generalized disks of dimension $0$ are
    transversely cut out.
  \end{itemize}
Furthermore, $(f,g,J)$ can be chosen so that the following condition holds.
\begin{itemize}
\item[$({\bf g3})$] Rigid generalized disks defined by $(f,g,J)$
  intersect in general position in the following sense.
  \begin{itemize}
  \item If $\dim(L)>2$ then the interior of the $J$-holomorphic disk
    part of any rigid generalized disk is disjoint from $\Pi_P(L)$,
    boundary lifts of two distinct $J$-holomorphic disks which are
    parts of rigid generalized disks are disjoint and if the boundary
    lifts of two rigid generalized disks intersect then these two
    generalized disks have either have the same disk part of flow line
    parts which are subsets of the same flow line.
  \item If $\dim(L)=2$ then the interior of the $J$-holomorphic disk
    part of any rigid generalized disk is disjoint from the
    $\Pi_P$-image of the boundary lift of any rigid generalized disk,
    boundary lifts of two distinct $J$-holomorphic disks which are
    parts of two rigid generalized disks intersect transversely in
    finitely many points and if the if the boundary lifts of two rigid
    generalized disks intersect non-transversely or in infinitely many
    points, then these two generalized disks have either have the same
    disk part of flow line parts which are subsets of the same flow
    line.
  \end{itemize}
\end{itemize}
\end{lem}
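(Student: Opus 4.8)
The plan is to fix the metric $g$ and the almost complex structure $J$ first, so that the compactified moduli space $\overline{\ms^{\ast}}$ and the evaluation map $\ev\colon\overline{\ms^{\ast}}\to L$ become fixed data, and then to extract all the asserted transversality from a generic choice of the Morse function $f$. Since the hypotheses already give $J$ adjusted to $L$, standard with respect to $g$ in a neighborhood $N$ of $\Pi_P(L)$, with $g$ flat in Reeb chord coordinates, and Lemma~\ref{lem:tvmspc} in force, the discussion of Subsections~\ref{s:cmdli} and~\ref{s:evmaps} already endows $\overline{\ms^{\ast}}$ with the structure of a $C^{1}$ manifold with boundary with corners and a $C^{1}$ evaluation map. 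I will also observe at the outset that, after replacing $J$ by a further generic perturbation supported in the complement of $N$ (which changes neither the standard model near $\Pi_P(L)$ nor the validity of Lemma~\ref{lem:tvmspc}), the boundary lifts of any two distinct $J$-holomorphic disks occurring in rigid configurations are in general position in $L$; this perturbation is harmless and is subsumed once and for all into the choice of $J$. Finally, the space $\mathcal F$ of admissible Morse functions that are round at their critical points with respect to $g$ and have Morse--Smale $g$-gradient is nonempty (modify $f$ near each critical point so that it is $g$-normally quadratic there, and invoke Kupka--Smale for the Morse--Smale condition), and all perturbations used below keep $f$ inside $\mathcal F$.

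The heart of the proof is a Sard--Smale parametric transversality argument showing that, for a residual set of $f\in\mathcal F$, the stable and unstable manifolds $W^{\rm s}(p),W^{\rm u}(p)$ of every critical point $p$ are stratumwise transverse to $\ev$. I would parametrize the compactified, stratified unstable manifold $\overline{W^{\rm u}_f(p)}$ by the negative gradient flow, form the universal space $\{(x,f)\colon x\in\overline{W^{\rm u}_f(p)}\}$ over $\mathcal F$ together with its universal evaluation into $L$, and note that at any point of $L$ outside the (fixed) Reeb chord coordinate neighborhoods one may tilt the flow line through that point in an arbitrary transverse direction by a compactly supported modification of $f$; hence the universal evaluation is a submersion over that open subset, the universal fiber product with $\overline{\ms^{\ast}}$ is a $C^{1}$-Banach manifold, and Sard--Smale gives the desired transversality there for generic $f$. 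Over the Reeb chord coordinate neighborhoods, where $f$ is frozen and real analytic, the portion of $\overline{\ms^{\ast}}$ mapping there is controlled by the asymptotics of holomorphic disks at punctures recalled in Subsection~\ref{sssec:fansetup}: the boundary strata on which the extra marked point coincides with a puncture evaluate to the finitely many Reeb chord endpoints, and for generic $f\in\mathcal F$ these finitely many points avoid the positive-codimension stable and unstable manifolds; real analyticity of the flow on these neighborhoods then propagates transversality from the complementary open set. Granting this, $({\bf g1})$ and $({\bf g2})$ follow immediately from \eqref{eqn:gen-formal-dim}: stratumwise transversality identifies the space of generalized disks of each type with a fiber product $\overline{W^{\rm u/s}(p)}\times_L\overline{\ms^{\ast}}$ of the predicted dimension, which is therefore empty when the dimension is negative and, when it is $0$, is a compact (by Gromov compactness of $\overline{\ms^{\ast}}$) transversely cut out $0$-manifold, hence a finite set.

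For $({\bf g3})$ I would argue as follows. Since $J$ is standard with respect to $g$ near $\Pi_P(L)$ and the function $\tfrac12|p|^{2}$ on $T^{\ast}L$ is plurisubharmonic for the induced almost complex structure, the maximum principle together with unique continuation forces the interior of any nonconstant $J$-holomorphic disk with boundary on $\Pi_P(L)$ to be disjoint from $\Pi_P(L)$: an interior point on $\Pi_P(L)$ would make $\tfrac12|p|^{2}\circ u$ vanish at an interior minimum, so $u$ would map into a Lagrangian and hence be constant. This yields the interior statement of $({\bf g3})$ when $\dim L>2$; when $\dim L=2$ one only needs disjointness of the $2$-dimensional disk interiors from the $1$-dimensional boundary lifts, which is generic since $2+1<4=\dim P$. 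The boundary-lift assertions are a dimension count once the disks and flow lines entering the (finitely many) rigid generalized disks are in general position: the boundary lift of a single disk is a curve in $L$, so boundary lifts of two distinct rigid disks are disjoint when $\dim L>2$ and meet transversely in finitely many points when $\dim L=2$, by the generic choice of $J$ made at the start together with generic $f$; and two flow lines of the Morse--Smale field are, by uniqueness of ODE solutions, either disjoint or overlap along a common arc, so any remaining nontransverse or infinite intersection of boundary lifts of rigid generalized disks forces the two configurations to share their disk part or to have flow-line parts on a common flow line. Assembling the generic $J$ and the generic $f$ produces a triple $(f,g,J)$ adjusted to $L$ with $({\bf g1})$--$({\bf g3})$.

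The step I expect to be the main obstacle is the parametric transversality in the presence of the frozen Reeb chord coordinate neighborhoods and of the merely $C^{1}$ corner structure on $\overline{\ms^{\ast}}$: one must verify that the strata of $\overline{\ms^{\ast}}$ meeting those neighborhoods introduce no genuinely new transversality failures (using the asymptotic control of disks at punctures and real analyticity of the flow there), and that Sard--Smale is legitimately applicable to the $C^{1}$ universal fiber product, e.g.\ by working stratumwise and with the finite-dimensional reductions already available from the corner charts of Subsection~\ref{s:cmdli}.
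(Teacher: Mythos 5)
Your overall strategy (fix $g$ and $J$, then run a parametric transversality argument over the space of admissible Morse functions) is reasonable, but the step you yourself flag as the main obstacle is where the argument actually breaks, and your proposed resolution does not work. Inside the Reeb chord coordinate neighborhoods the image of $\ev\colon\overline{\ms^{\ast}}\to L$ is not just the finitely many Reeb chord endpoints: the boundary lift of every disk runs through a whole arc in each such neighborhood near each of its punctures, so the stable and unstable manifolds must be made transverse to $\ev$ along these arcs as well. Your perturbations only tilt flow lines at points outside these neighborhoods, and the claim that ``real analyticity of the flow propagates transversality from the complementary open set'' is not a valid mechanism --- transversality is a pointwise condition and is not inherited from nearby regions, whether or not the data are analytic. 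The missing idea, which is exactly how the paper argues, is Smale's: perturbations of $f$ supported in small neighborhoods of spheres around the critical points (which lie outside the Reeb chord coordinate neighborhoods by admissibility) already span the normal bundles of $W^{\rm s}(p)$ and $W^{\rm u}(p)$ at \emph{every} point along them, because moving the germ of the (un)stable manifold near $p$ moves it by the flow through the frozen region. With the perturbation placed there, transversality to $\ev$ is achieved stratumwise everywhere at once, and your universal/Sard--Smale framework would then go through; as written, however, it only gives transversality over the complement of the Reeb chord neighborhoods.

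A second genuine error is your argument for the interior statement in $({\bf g3})$ when $\dim(L)>2$. Subharmonicity of $\tfrac12|p|^{2}\circ u$ gives a maximum principle, not a minimum principle, so an interior zero of this nonnegative function forces nothing: nonconstant holomorphic disks with boundary on a Lagrangian can perfectly well have interior points on the Lagrangian (already for $n=1$, e.g.\ a branched double cover of a half-plane with boundary on $\rr$ hits $\rr$ in its interior). The disjointness of the $2$-dimensional disk interiors from the $n$-dimensional $\Pi_P(L)$ for $n>2$ is a genericity statement coming from the codimension count $2+n<2n$, and has to be arranged by a perturbation argument compatible with the constraint that $J$ is standard near $\Pi_P(L)$ --- it is not automatic from the maximum principle. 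The remaining parts of your treatment of $({\bf g3})$ (pairwise position of boundary lifts, flow lines overlapping only along common arcs) are in the right spirit, but they sit on top of these two flawed steps.
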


\begin{proof}
  Fix a Morse function with Morse-Smale $g$-gradient for which $({\bf
    a4})$ and $({\bf a5})$ hold, and for which the critical points of
  $f$ are in general position with respect to
  $\ev(\overline{\ms^{\ast}})$. As in \cite{smale}, linearized
  perturbations of $f$ supported in small neighborhoods of spheres
  around the critical points span the normal bundles of all stable and
  unstable manifolds. Consequently, we may achieve the desired
  transversality by perturbation of $f$ near the critical points. The
  technical details of this argument are analogous to
  \cite{smale}, so we leave them out.
\end{proof}

We say that the \dfn{special points} of a rigid generalized disk
consist of the junction points and critical points; in case
$\dim(L)=2$, we also add in the intersection points of lifts of rigid
generalized disks.  We call triples $(f,g,J)$ which are adjusted to
$L$ and for which $({\bf g1})-({\bf g3})$ hold {\em generic with
  respect to rigid generalized disks}.


\subsubsection{Normal Forms}

We next show that we can deform a triple $(\hat f,\hat g,\hat J)$
which is
generic with respect to rigid generalized disks to a triple $(f,g,J)$
which maintains those properties and has a certain normal form near
special points. This will allow us to apply the techniques of
\cite{ekholm:morse-flow} during our analysis of the correspondence
between rigid generalized disks and $J$-holomorphic disks with
boundary on the perturbed $2L$.

First, we add more special points: consider a flow line segment
$\gamma$ which connects a critical point and a junction point. Note
that any such segment has finite length and add a finite number of
points along $\gamma$ so that they are of distance at most $\eta>0$
apart. Call these new points {\em special points} as well.

We say that a triple $(f,g,J)$ which is generic with respect to
rigid generalized disks is {\em semi-normalized} if the following
conditions hold.
\begin{itemize}
\item[$({\bf n1})$] If $p$ is a critical point of $f$ then there are
  coordinates $x=(x_1,\dots,x_n)$ on a neighborhood $U(p)$ around $p$
  in which $f$ and $g$ are given by
    \begin{equation*}
    f(x)=c+ \sigma_1 x_1^{2}+\dots+\sigma_n x_n^{2},\quad
    g(x)=\sum_j dx_j\otimes dx_j,
    \end{equation*}
    where $|\sigma_j|=\sigma> 0$, $j=1,\dots,n$.
  \item[$({\bf n2})$] If $q$ is a special point which is not a critical point (or an intersection
    point if $\dim(L)=2$) of a generalized disk, then there are
    coordinates $x=(x_1,\dots,x_n)$ on a neighborhood $U(q)$ around
    $q$ in which $f$ and $g$ are given by
    \begin{equation*}
    f(x)= c+\mu x_1,\quad 
    g(x)=\sum_j dx_j\otimes dx_j,
    \end{equation*}
    for constants $c,\mu$, where $\mu\ne 0$.
\end{itemize}
Note that $({\bf g1})-({\bf g3})$ of Lemma \ref{lem:morsetv} hold for semi-normalized triples.

\begin{lem}\label{lem:seminormal} 
  Let $\hat f\colon L\to\rr$ be a Morse function, let $\hat g$ be a
  metric, and let $\hat J$ be an almost complex structure such that
  the triple $(\hat f,\hat g,\hat J)$ is generic with respect to rigid
  generalized disks. Then there exist a Morse function $f\colon L\to\rr$, a
  metric $g$, and an almost complex structure $J$ such that the triple
  $(f,g,J)$ is semi-normalized and has the following property. There
  is a $\delta>0$ and a bijective correspondence between the rigid
  generalized disks determined by $(\hat f,\hat g,\hat J)$ and rigid
  generalized disks determined by $(f,g,J)$ such that there is exactly
  one rigid generalized disk determined by one of the triples with a
  lift in a $\delta$-neighborhood of a given rigid generalized disk
  determined by the other triple.
\end{lem}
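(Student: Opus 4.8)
The plan is to carry out the normalization \emph{locally}, in disjoint neighbourhoods of the finitely many special points, by standard differential-topological moves kept arbitrarily $C^\infty$-small, and then to invoke persistence of rigid generalized disks under small perturbations to get the bijective correspondence with $\delta$-close lifts. So first I would fix disjoint coordinate balls $U(q)$ around the special points $q$ of the generalized disks determined by $(\hat f,\hat g,\hat J)$ (the critical points of $\hat f$, the junction points, the intersection points of lifts when $\dim L=2$, and the auxiliary $\eta$-spaced points along flow segments). Since all of these lie outside the Reeb chord coordinate neighbourhoods, the $U(q)$ may be taken disjoint from those neighbourhoods, so conditions $({\bf a1})$--$({\bf a4})$ will be untouched by any modification supported in $\bigcup_q U(q)$.

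Inside these balls I would bring $(\hat f,\hat g)$ to normal form. Near a critical point $p$, the Morse lemma gives coordinates in which $\hat f$ is quadratic; a linear change of coordinates makes $\hat g$ Euclidean at $p$ and simultaneously diagonalizes the Hessian (using that $(\hat f,\hat g,\hat J)$ is already round at $p$ by $({\bf a5})$), so that $\hat g_{ij}=\delta_{ij}+O(|x|)$, and a $C^\infty$-small compactly supported deformation makes the metric exactly Euclidean on a smaller ball, leaving $f$ of the form in $({\bf n1})$ (automatically round, with $\sigma=2$ say). Near a non-critical special point $q$ one has $df(q)\neq 0$, so I can take $x_1=(\hat f-c)/\mu$ as one coordinate, complete it to a coordinate system whose remaining coordinates are constant along $\nabla_{\hat g}\hat f$, and then make a $C^\infty$-small compactly supported deformation of the metric to the flat form of $({\bf n2})$ while keeping $f=c+\mu x_1$ fixed. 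In each case the almost complex structure must be deformed simultaneously so that $({\bf a3})$ survives: since $\Pi_P(q)\in\Pi_P(L)$ lies in the region where $J$ is standard with respect to $g$, the metric deformation is transported through the symplectic neighbourhood map to a $C^\infty$-small deformation of $J$ near $\Pi_P(q)$, still compatible with $d\theta$ and standard near $\Pi_P(L)$. If a further arbitrarily small perturbation, supported away from all the balls $U(q)$, is needed to restore the Morse--Smale condition (which can fail only along flow lines connecting critical points, never inside the $U(q)$), make it; conditions $({\bf a1})$--$({\bf a5})$ and the semi-normal form $({\bf n1})$--$({\bf n2})$ persist.

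It remains to argue persistence of rigid generalized disks. The properties $({\bf g1})$--$({\bf g3})$ are transversality and general-position conditions on a finite collection of maps --- the evaluation maps $\ev\colon\overline{\ms^{\ast}}\to L$ against the compactified stable and unstable manifolds, and the boundary lifts of rigid generalized disks against one another --- hence are open, so for a sufficiently small deformation $(f,g,J)$ is again generic with respect to rigid generalized disks. For the correspondence, recall from the consequences of Lemma~\ref{lem:morsetv} that every rigid generalized disk is transversely cut out and that the set of them is finite; transversality and the implicit function theorem, applied to the problem parametrized over triples $(f,g,J)$, show that each rigid generalized disk of $(\hat f,\hat g,\hat J)$ deforms to a unique nearby one of $(f,g,J)$, with its holomorphic disk part (transversely cut out by Lemma~\ref{lem:tvmspc}) and its flow-line part varying $C^0$-continuously, which gives the $\delta$-closeness of lifts for small enough deformation. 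No rigid generalized disk can be created: if along a sequence of deformations shrinking to the identity a new one stayed at bounded distance from the original finite list, then Gromov compactness of the disk moduli spaces together with compactness of the (broken) flow lines would extract a limiting rigid generalized disk for $(\hat f,\hat g,\hat J)$ outside that list, a contradiction; and transverse cutting-out prevents any from disappearing. Hence the correspondence is bijective, and uniqueness in a $\delta$-neighbourhood follows because rigid generalized disks are isolated.

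The main obstacle is exactly this last, global, point: controlling the \emph{full} set of rigid generalized disks under the deformation rather than just deforming a given one locally. This forces one to combine the compactness and manifold-with-corners structure of the compactified disk moduli spaces $\overline{\ms^{\ast}}$ from Subsection~\ref{s:cmdli} with the stratumwise transversality of the evaluation maps to the stable and unstable manifolds, so as to rule out bubbling or flow-breaking creating spurious rigid configurations; by comparison the local normal-form moves are routine. A secondary technical nuisance is keeping the metric deformation and the induced deformation of $J$ mutually compatible with $({\bf a3})$ throughout, but this is automatic from the construction via the symplectic neighbourhood map.
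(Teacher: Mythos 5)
There is a genuine gap, and it sits at the very first step of your plan: the normalization moves cannot be ``kept arbitrarily $C^\infty$-small''. Conditions $({\bf n1})$--$({\bf n2})$ require the metric to be \emph{exactly} Euclidean in coordinates on a neighborhood of each special point. If $\hat g$ has nonvanishing curvature at such a point, any metric that is flat on a neighborhood, however small, differs from $\hat g$ by a definite amount in $C^{2}$ (the curvature is built from second derivatives of $g$), so the deformation is only small in $C^{1}$ — of size $\Ordo(\epsilon)$ on an $\epsilon$-ball — and of size $\Ordo(1)$ in $C^{2}$; the same is then true of the induced change of $J$ forced by $({\bf a3})$ (the paper's estimates are $d_{C^{0}}(\hat J,J)=\Ordo(\epsilon^{2})$, $d_{C^{1}}(\hat J,J)=\Ordo(\epsilon)$, and no better in higher norms). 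With the ``arbitrarily small'' premise gone, your persistence step collapses to an assertion: openness of $({\bf g1})$--$({\bf g3})$ and the parametrized implicit function theorem are not available off the shelf, because the objects they are applied to — the compactified moduli spaces $\overline{\ms^{\ast}}$ and their evaluation maps — themselves move when $J$ moves, and one must prove that a perturbation of $J$ that is only $C^{1}$-small moves them only $C^{1}$-small. This is exactly the analytic content of the paper's proof, carried out by comparing the zero sets of $\bar\pa_{\hat J}$ and $\bar\pa_{J}$ in weighted Sobolev spaces ($W^{2,p}$ for the $C^{1}$-statement, $W^{1,p}$ for the sharper $C^{0}$-statement), stratum by stratum through the inductive corner construction of Subsection~\ref{s:cmdli}.

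A second, related omission: the quantitative hierarchy $\Ordo(\epsilon^{2})$ versus $\epsilon$ is not a technical nicety but is what makes the conclusion meaningful. The semi-normal form must hold at the special points of the rigid generalized disks of the \emph{new} triple $(f,g,J)$, which are a priori displaced from those of $(\hat f,\hat g,\hat J)$. The paper's $C^{0}$-estimate of order $\Ordo(\epsilon^{2})$ on the change of the Morse flow and of the evaluation images guarantees that the new flow lines and junction points still pass well inside the $\epsilon$-sized balls where $f$ is linear (resp.\ quadratic) and $g$ is flat, and it underlies the claim that exactly one new rigid generalized disk appears near each old one. Your argument, which only invokes an unquantified $\delta$-closeness downstream of an ``arbitrarily small'' perturbation, does not deliver this. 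The local normal-form constructions you describe (Morse lemma at critical points, $f/\mu$ as a coordinate at regular special points, transport of the metric change to $J$ through the symplectic neighborhood map) do match the paper's, but the heart of the lemma is the controlled comparison of moduli spaces under a non-small (in $C^{2}$) change of $(g,J)$, and that is missing.
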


\begin{proof} 
  We start by making a general remark about almost complex structures
  induced by Riemannian metrics. Let $\hat g$ and $g$ be Riemannian
  metrics on $L$ and let $\hat J$ respectively $J$ denote the
  corresponding almost complex structures induced on $T^{\ast}L$.  In
  order to discuss distances, we fix a reference metric $h$ on $L$ and
  note that it induces a metric on all tensor bundles of $L$ as well
  as on $T^{\ast}L$ and on its tensor bundles. Let
  $d_{C^{j}}(T_1,T_2)$ denote the $C^{j}$-distance between tensor
  fields $T_1$ and $T_2$ as measured with respect to $h$.
  Consider a
  small $\epsilon>0$.  It follows from the local coordinate formula
  \eqref{e:Jloccoord} that if $d_{C^{1}}(\hat g,g)=\Ordo(\epsilon)$
  and if $d_{C^{2}}(\hat g,g)=\Ordo(1)$ then, in an
  $\epsilon$-neighborhood of the $0$-section,
  \begin{align}\label{e:JC0}
    d_{C^{0}}(\hat J,J)&=\Ordo(\epsilon^{2})\\\label{e:JC1}
    d_{C^{1}}(\hat J,J)&=\Ordo(\epsilon).
  \end{align}

  Consider a critical point $p$ of $\hat f$. Choose normal coordinates
  in an $\epsilon$-ball around $p$. Define $f$ to equal the second
  degree Taylor polynomial of $\hat f$ in these coordinates in a
  neighborhood of $0$ and define $g$ by letting the metric be constant
  in a small neighborhood of $0$. Then $d_{C^{2}}(f,\hat
  f)=\Ordo(\epsilon^{3})$, $d_{C^{1}}(\hat g,g)=\Ordo(\epsilon)$, and
  $d_{C^{2}}(\hat g,g)=\Ordo(1)$. We conclude that stable and unstable
  manifolds determined by $f$ and $g$ are at $C^{1}$-distance
  $\Ordo(\epsilon)$ from those determined by $\hat f$ and $\hat g$.

  Let $J$ be an almost complex structure which is standard with
  respect to $g$ in an $\epsilon$-neighborhood of the $0$-section and
  which is such that $d_{C^{1}}(\hat J,J)=\Ordo(\epsilon)$; see
  \eqref{e:JC1}. If $J'$ is an almost complex structure, write
  $\ms_{J'}$ for the moduli space of $J'$-holomorphic disks.

  As in Subsection \ref{s:cmdli}, we consider $\ms_{J'}$ as the
  $0$-sets of a $\bar\pa_{J'}$-operator acting on a bundle of Sobolev
  spaces. In order to compare the evaluation maps
  $\ev\colon\overline{\ms^{\ast}_{\hat J}}\to L$ and
  $\ev\colon\overline{\ms^{\ast}_J}\to L$, we use weighted Sobolev
  spaces $W^{2,p}$ with two derivatives in $L^{p}$, $p>2$. Since
  $d_{C^{1}}(\hat J,J)=\Ordo(\epsilon)$, we find that for $\epsilon>0$
  small enough, the zero sets of $\bar\pa_J$ and $\bar\pa_{\hat J}$
  are $C^{1}$-diffeomorphic and zeros of the $\bar\pa_J$- and
  $\bar\pa_{\hat J}$-operator are at distance $\Ordo(\epsilon)$ in the
  corresponding Sobolev norm. Since the $W^{2,p}$-norm controls the
  $C^{1}$-norm it follows, using this argument at each step in the
  inductive construction of the compactification of moduli spaces as
  manifolds with boundary with corners that the moduli spaces are
  $C^{1}$-diffeomorphic and that $\ev(\overline{\ms_{\hat J}^{\ast}})$
  and $\ev(\overline{\ms_{J}^{\ast}})$ are
  $C^{1}$-close. Combining the fact that both Morse flows and evaluation maps determined by the triples $(\hat f,\hat g,\hat J)$ and $(f,g,J)$, respectively, are arbitrarily $C^{1}$-close with the genericity of $(\hat f,\hat g,\hat J)$ with respect to rigid generalized disks, we draw the following two conclusions. First, the triple $(f,g,J)$ is generic with respect to rigid generalized disks. Second, there is a $1-1$ correspondence between
  rigid generalized disks determined by $(\hat f,\hat g,\hat J)$ and
  those determined by $(f,g,J)$ with properties as claimed in the statement. 

  Relabel the triple $(f,g,J)$ just constructed and which has desired
  properties near critical points $(\hat f,\hat g,\hat J)$ and
  consider a special point $q$ which is not a critical point. Choose normal coordinates in an
  $\epsilon$-ball around $q$, replace $\hat f$ by its first degree
  Taylor polynomial in this ball, and let $g$ be the flat metric in
  the normal coordinates. Since the flow time that a flow line spends
  in the region where the function is deformed is $\Ordo(\epsilon)$,
  since the deformation of the vector field is $\Ordo(\epsilon)$ in
  $C^{0}$-norm and $\Ordo(1)$ in $C^{1}$-norm, and since the
  deformation of the metric is $\Ordo(\epsilon)$ in $C^{1}$-norm, we
  conclude that the resulting deformation of the Morse flow is
  $\Ordo(\epsilon^{2})$ in $C^{0}$-norm and $\Ordo(\epsilon)$ in
  $C^{1}$-norm.

  We next derive a corresponding result for the induced deformation of
  evaluation maps.  First consider the moduli spaces as $0$-sets in
  bundles of weighted Sobolev spaces $W^{1,p}$ with one derivatives in
  $L^{p}$, $p>2$. Using \eqref{e:JC0}, we find that the zero sets of
  $\bar\pa_J$ and  $\bar\pa_{\hat J}$ are at distance
  $\Ordo(\epsilon^{2})$ in the corresponding Sobolev norm. Since the
  $W^{1,p}$-norm controls the $C^{0}$-norm, the image of a smooth
  component of an evaluation map lies in an
  $\Ordo(\epsilon^{2})$-neighborhood of the other. This shows in
  particular that if $\epsilon>0$ is sufficiently small then
  $\ev(\overline{\ms^{\ast}_J})$ intersects the $\epsilon$-ball of
  normal coordinates since $\ev(\overline{\ms^{\ast}_{\hat J}})$
  passes through its center. Second, we argue as above, using weighted
  Sobolev spaces $W^{2,p}$ to show that the $C^{1}$-distance between
  the evaluation maps is $\Ordo(\epsilon)$. Together with the above
  estimates on the Morse flow, it follows as in the argument at critical points above that $(f,g,J)$ is generic with respect to rigid generalized disks and there is a unique
  generalized disk of $(f,g,J)$ near every generalized rigid disk of
  $(\hat f,\hat g,\hat J)$.

  If $\dim(L)=2$ then an argument similar to that used for special
  points shows that it is possible to normalize at intersection points
  introducing only a small perturbation of rigid generalized disks.
\end{proof}

Consider a semi-normalized triple $(\hat f,\hat g,\hat J)$ as above
and the corresponding $1$-parameter family of Morse functions $\hat
f_\lambda=\lambda \hat f$, $0<\lambda\le 1$. We next deform the metric
$\hat g$ and the $1$-parameter family of Morse functions $\hat
f_\lambda$, $0<\lambda\le 1$ in a neighborhood of the rigid
generalized disks defined by $(\hat f,\hat g,\hat J)$ in order to
facilitate the construction of holomorphic disks from generalized
disks, which will be carried out in Subsection
\ref{ssec:gen-to-hol}. We will do this maintaining genericity with
respect to rigid generalized disks and without changing rigid
generalized disks.

We say that $(f_\lambda,g,J)$, where $f_\lambda$, $0<\lambda\le 1$ is
a $1$-parameter family of Morse function, is {\em normalized} if
$(f_\lambda,g,J)$ is semi-normalized for each $\lambda$ and if the
following conditions hold:
\begin{itemize}
\item[$({\bf n3})$] Outside a neighborhood of the flow lines of rigid generalized disks $f_\lambda=\lambda f$ for some function $f$.
\item[$({\bf n4})$] Let $\gamma$ be a flow line of a rigid generalized
  disk ending or beginning at a critical point $p$. Choose coordinates
  $(x_1,\dots,x_n)$ around $p$ as in $({\bf n2})$  such that $\gamma$
  corresponds to the $x_1$-axis. Then there is a {\em plateau point}
  $q=(a,0,\dots,0)$ on $\gamma$ (which lies between the critical point
  and any special point on $\gamma$) such that
  \begin{equation*}
    f_\lambda(q+y)=
    \begin{cases}
      \lambda(c+ \sigma_1 a^{2}) + 2\sigma_1 ay_1 &\text{ for }y_1\ge 0,\\
      \lambda(c + \sigma_1(a+y_1)^{2}+\sum_{j=2}^{n}\sigma_j y^{2}_j)
      &\text{ for }y_1<-K\lambda,
    \end{cases}
  \end{equation*}
  where $K>0$ is a large constant. 
\end{itemize}

Assume that there is a plateau point near every critical point and let
$\gamma$ be a flow line of a rigid generalized disk. Then parts of
$\gamma$ between special points, between special points and plateau
points, or between plateau points are finite flow segments
$\hat\gamma$ with endpoints $p_1$ and $p_2$ around which there are
coordinates $(x_1,\dots,x_n)\in\rr^{n}$ in which $g$ is the standard
Euclidean metric and in which function looks likes $f(x)=c_j+k_jx_1$,
$j=1,2$. Pick two inflection points $q_1$ and $q_2$ between any two
special points $p_1$ and $p_2$. Assume that the order of these points
along $\gamma$ is $p_1,q_1,q_2,p_2$.
\begin{itemize}
\item[$({\bf n5})$] In a neighborhood of the inflection points $q_1$
  and $q_2$ there are coordinates
  $u=(u_1,\dots,u_n)$ and $v=(v_1,\dots,v_n)$ such
  that the metric $g$ is the flat metric $g(u)=\sum_{j}
  du_j\otimes du_j$ and $g(v)=\sum_{j}
  dv_j\otimes dv_j$, and such that
  \begin{align*}
    f_\lambda(u)&=
    \begin{cases}
      \lambda(c_{1}+k_{1}u_1), &\text{ for }0\le u_1\le K\lambda\\
      \lambda(c_{1}+k_{1}u_1-\alpha u_1^{2}), &\text{ for } -2K\lambda\le u_1\le -K\lambda, 
    \end{cases}\\
    f_\lambda(v)&=
    \begin{cases}
      \lambda(c_{2}+k_{2}v_1-\alpha v_1^{2}), &\text{ for }0\le u_1\le K\lambda\\
      \lambda(c_{2}+k_{2}v_1), &\text{ for } -2K\lambda\le u_1\le -K\lambda, 
    \end{cases}\\
  \end{align*}
  where $k_1$ and $k_2$ are determined by the nearby special points as
  explained above and where
  $\alpha=\frac{k_1^{2}-k_2^{2}}{2(c_2-c_1)}$. We call the region
  $-K\lambda\le y_1\le 0$ the {\em inflection region}.
\item[$({\bf n6})$] There are flow coordinates
  $x=(x_1,\dots,x_n)=(x_1,x')$ in a neighborhood $U(\hat \gamma)$ of
  $\hat \gamma$ such that the following hold. The metric is the
  standard flat metric $g(x)=\sum_j dx_j\otimes dx_j$.  The flow line
  $\gamma$ corresponds to $\{x\colon 0\le x_1\le T,x'=0\}$ and outside
  a finite number of {\em inflection regions} of the form $\{x\colon
  a\le x_1\le a+K\lambda\}$ for some large constant $K>0$ the function
  $f$ is given by
    \[
    f_\lambda(x)=\lambda(c+\mu x_1+\nu x_1^{2}),
    \]
    for constants $c,\mu,\nu$ where $|\mu|+|\nu|\ne 0$. 
\end{itemize}

\begin{rem}
  By $({\bf n3})$, outside the interpolation neighborhood, the
  functions $f_\lambda$ are simply scalings of a function and we
  define a gradient line of $f_\lambda$ in this region as a gradient
  line of $f$. Inside the interpolation region the functions converge
  after rescaling to a piecewise linear Lagrangian. We define the
  Morse flow lines of $f_\lambda$ as the flow lines of the rescaled
  limit function. We also point out that a normalized triple
  $(f_\lambda,g,J)$ gives rise to a non $C^{1}$ gradient field in the
  re-scaled limit obtained by scaling the fiber only. For local
  questions about holomorphic curves it is more useful to scale also
  the base and under such scaling the limit is smooth.
\end{rem}

\begin{lem}\label{lem:normal} 
  Let $(\hat f,\hat g,\hat J)$ be a semi-normalized triple and let
  $\hat f_\lambda=\lambda \hat f$, $0<\lambda\le 1$. Then there exists
  a normalized triple $(f_\lambda,g,J)$, $0<\lambda\le 1$ such that
  $({\bf n3})$ hold with $f=\hat f$, with no generalized disks of formal dimension $<0$, and with rigid generalized disks of dimesion $0$ which are
  identical to the rigid generalized disks of $(\hat f,\hat g,\hat
  J)$.
\end{lem}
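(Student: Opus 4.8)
The plan is to follow the template of the proof of Lemma~\ref{lem:seminormal}: perform explicit local deformations of $\hat g$ and of the one-parameter family $\hat f_\lambda$, supported in small tubular neighborhoods of the flow lines of the (finitely many, by genericity of $(\hat f,\hat g,\hat J)$) rigid generalized disks, and then verify that these deformations are mild enough to preserve genericity with respect to rigid generalized disks and not to alter the set of such disks. The guiding observation is that multiplying a function by a positive constant does not change its gradient flow lines, and that every modification performed below will, in suitable local coordinates, depend only on the coordinate $x_1$ along the flow line while $g$ stays flat in the transverse directions; consequently the flow lines of the rigid generalized disks, and the critical points of $f$, will be literally unchanged for every $\lambda$, which is what makes the conclusion ``identical to the rigid generalized disks of $(\hat f,\hat g,\hat J)$'' attainable.

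First I would treat the critical points. Near a critical point $p$ on the flow line $\gamma$ of a rigid generalized disk, use the semi-normal coordinates of $({\bf n1})$, pick a point $q=(a,0,\dots,0)$ on $\gamma$ strictly between $p$ and the first special point of $\gamma$, and redefine $\hat f_\lambda$ in a small neighborhood of $q$ so that $({\bf n4})$ holds, matching the new linear piece to the $x_1$-derivative of the quadratic at $q$ so that the result is $C^1$, and keeping $g$ flat; this introduces a plateau point. Next, along each finite flow segment $\hat\gamma$ of $\gamma$ between two consecutive special or plateau points, the endpoints carry the linear normal form $f=c_j+k_jx_1$ of $({\bf n2})$ with $g$ flat; choose inflection points $q_1,q_2$ on $\hat\gamma$ and redefine $\hat f_\lambda$ near them as in $({\bf n5})$, inserting parabolic bridges of width $O(K\lambda)$ passing from slope $k_1$ to slope $k_2$. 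Here $\alpha$ is not free: requiring the two parabolic arcs to patch to a single $C^1$ function along $\hat\gamma$ forces $\alpha=\tfrac{k_1^2-k_2^2}{2(c_2-c_1)}$, which is a one-line computation comparing the vertex values of the two arcs. Between the plateau and inflection points one then has flow coordinates in which $f_\lambda=\lambda(c+\mu x_1+\nu x_1^2)$, giving $({\bf n6})$, and away from all of these tubular neighborhoods one keeps $f_\lambda=\lambda\hat f$, giving $({\bf n3})$ with $f=\hat f$. One must also check that the tubular neighborhoods can be shrunk so as to be pairwise disjoint, disjoint from the Reeb chord coordinate neighborhoods (where $g$ is already flat by $({\bf a2})$ and no deformation is needed), and disjoint from the boundary lifts of the holomorphic-disk parts of all rigid generalized disks --- the plateau and inflection points lie on $\gamma$ away from junction points, which are themselves special points.

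Finally I would verify that the resulting triple is still generic with respect to rigid generalized disks, that its $0$-dimensional rigid generalized disks coincide with those of $(\hat f,\hat g,\hat J)$, and that it has none of formal dimension $<0$. The deformation of $\hat g$ is $C^1$-small and supported off the crossing neighborhoods of $\Pi_P(L)$, so by \eqref{e:JC1} the induced change of $J$ is $C^1$-small; running the inductive compactification argument of Subsection~\ref{s:cmdli} exactly as in the proof of Lemma~\ref{lem:seminormal} shows that $\overline{\ms^\ast}$ and the evaluation map $\ev$ change by only a $C^1$-small amount. Since the flow lines of the rigid generalized disks and the critical points with their local stable and unstable manifolds are unchanged and $(\hat f,\hat g,\hat J)$ was generic, the transversality of $({\bf g1})$--$({\bf g3})$ persists and each rigid generalized disk is carried to itself, while a Gromov-compactness-plus-one-parameter-homotopy argument (again as in Lemma~\ref{lem:seminormal}) rules out new generalized disks of formal dimension $\le 0$ other than these. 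I expect the main obstacle to be the bookkeeping: arranging the plateau points, the inflection points, and the $\lambda$-dependent interpolation regions so that the normal forms $({\bf n1})$--$({\bf n6})$ hold \emph{simultaneously} and patch into a globally defined family $f_\lambda$, in particular verifying the forced value of $\alpha$, while keeping the total perturbation $C^0$-small (uniformly of size $O(\lambda)$ in the interpolation regions) so that the persistence argument above applies verbatim.
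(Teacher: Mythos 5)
There is a genuine gap, and it sits exactly where you wave your hands: the claim that between the special points one ``then has flow coordinates in which $f_\lambda=\lambda(c+\mu x_1+\nu x_1^2)$, giving $({\bf n6})$'' with the metric kept flat and the deformation of $\hat g$ being $C^1$-small. Conditions $({\bf n5})$--$({\bf n6})$ demand, on a neighborhood of an \emph{entire} segment $\hat\gamma$ between consecutive special points, coordinates in which $g$ is the standard flat metric \emph{and} the function is a quadratic polynomial of the flow coordinate. The semi-normal forms $({\bf n1})$--$({\bf n2})$ give this only in small balls around the special points; along the rest of $\hat\gamma$ neither $\hat g$ is flat nor $\hat f$ has the required form, so both must be deformed along the whole segment. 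This is the technical heart of the paper's proof: one uses Fermi coordinates on level surfaces of $\tilde f_\lambda$ perpendicular to $\hat\gamma$ (via \cite[Lemmas 4.5 and 4.6]{ekholm:morse-flow}) to make the function a quadratic $Q(u_1)$ and declares $g$ flat in these coordinates. The resulting metric change is $C^1$-small only of order $\Ordo(\eta)$ (this is precisely why the special points were inserted at separation $\le\eta$) and merely \emph{bounded} in $C^2$ --- flattening a metric along a curve cannot be done $C^2$-small in general --- so the induced change of the almost complex structure is controlled only in $C^0$, not in $C^1$ as you assert via \eqref{e:JC1}.

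Consequently your concluding step, ``the induced change of $J$ is $C^1$-small, so the transversality of $({\bf g1})$--$({\bf g3})$ persists,'' is not available. The paper's proof is structured to need only $C^0$-closeness: a $C^0$-small change of $J$ gives a $C^0$-small change of the evaluation map $\ev(\overline{\ms^{\ast}})$, so the holomorphic-disk part of any generalized disk of the new triple of dimension $\le 0$ must lie in a small neighborhood of the disk part of some rigid generalized disk of $(\hat f,\hat g,\hat J)$; but on such neighborhoods $J=\hat J$ exactly (the deformation is supported near the flow-line parts, and $({\bf g3})$ keeps these away from the old disks), so the disks literally coincide, the rigid generalized disks agree with the old ones, and a generalized disk of formal dimension $<0$ for the new triple would produce one for $(\hat f,\hat g,\hat J)$, contradicting $({\bf g1})$ for the semi-normalized triple. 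Your plateau-point step and the determination of $\alpha$ are fine and match the paper, but without the Fermi-coordinate construction and with the persistence argument replaced by the $C^0$/``$J=\hat J$ near the old disks'' argument, the proof as proposed does not close.
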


\begin{proof}
  To prove this lemma we first note that a $C^{2}$-small deformation
  $\tilde f_\lambda$ of $\hat f_\lambda$ near critical points allows
  us to introduce plateau points and furthermore that this can be done
  without altering any flow line which is part of a rigid generalized
  disk. It then follows that the rigid generalized disks of $(\tilde
  f_\lambda,\hat g,\hat J)$ agree with those of $(\hat f,\hat g,\hat
  J)$.

  With plateau points introduced, we apply \cite[Lemmas 4.5 and
  4.6]{ekholm:morse-flow} to achieve $({\bf n5})$. More precisely,
  these lemmas allow us to find coordinates $(u_1,\dots,u_n)$ around
  $\hat\gamma$ which agree with the coordinates already defined at the
  endpoints of $\hat\gamma$ and in which $\tilde f_\lambda(u)=Q(u_1)$
  where $Q$ is a quadratic polynomial. The coordinates
  $(u_1,\dots,u_n)$ are constructed from Fermi coordinates on level
  surfaces of $\tilde f_\lambda$ perpendicular to $\hat\gamma$ and
  hence the metric
  \[
  g(u)=\sum_j du_j\otimes du_j
  \]
  in a neighborhood of $\hat\gamma$ is at $C^1$-distance $\Ordo(\eta)$
  from $\hat g$, where $\eta$ is the separation between special
  points, and of bounded $C^{2}$-distance from it provided the
  neighborhood is taken sufficiently small. Letting $J$ be the almost
  complex structure induced by $g$ we then find that we can take $\hat
  J$ and $J$ to be $C^{0}$-close as well; see Remark
  \ref{rem:acsfrommetr}.

  It remains to show that the rigid generalized disks agree with those of $(\hat f,\hat g,\hat J)$ and that there are no generalized disks of formal dimension $<0$ of $(f_\lambda,g,J)$. By
  construction, rigid generalized disks of $(\hat f,\hat g,\hat J)$
  are rigid generalized disks of $(f_\lambda,g,J)$ as well. As in the
  proof of Lemma \ref{lem:seminormal}, a $C^{0}$-small deformation of
  the almost complex structure leads to a $C^{0}$-small deformation of
  the evaluation map $\ev(\overline{\ms^{\ast}})$. In particular, for
  $\hat J$ and $J$ sufficiently close in $C^{0}$, any holomorphic disk
  part of a generalized rigid disk of $(f_\lambda,g,J)$ lies in a
  small neighborhood of some holomorphic disk part of a rigid
  generalized disk of $(\hat f,\hat g,\hat J)$. However, in such a
  neighborhood $J=\hat J$ and it follows that the disks, and
  consequently the rigid generalized disks agree. 

The same argument shows that there would exist generalized disks $(\hat f,\hat g,\hat J)$ of dimension $<0$ near any such generalized disk of $(f_\lambda,g,J)$. Since $(\hat f,\hat g,\hat J)$ is semi-normalized it is in particular generic with respect to rigid generalized disks and we conclude there are no generalized disks of dimension $<0$. 
\end{proof}

\begin{rem} In the language of \cite{ekholm:morse-flow}, the midpoints
  of the intervals where the functions $f_\lambda$ in $({\bf n4})$ and
  $({\bf n5})$ are not necessarily given by a polynomial of degree at
  most two, are called edge points and their neighborhoods edge point
  regions. Although not mentioned explicitly in
  \cite{ekholm:morse-flow} also the $\Ordo(\lambda)$-neighborhoods
  there should be taken to have diameter $K\lambda$ where $K$ is some
  large constant so that the Lagrangian is sufficiently close to the
  $0$-section on the $\lambda$-scale. See Remark \ref{rem:lambdascale}
  for details.
\end{rem}

\begin{rem}
  The introduction of special points and coordinates along flow lines
  as described above essentially correspond to a piecewise linear
  approximation of the graph of $df_\lambda$ with corners smoothed in
  regions of size $K\lambda$.
\end{rem}

\subsection{From holomorphic to generalized disks}\label{s:disktotree}
 
Fix a Morse function $f$, a metric $g$, and an almost complex
structure $J$ such that the triple $(f,g,J)$ is normalized as in Lemma
\ref{lem:normal}. Let $L_\lambda$ denote the Legendrian submanifold
obtained by shifting $L$ a large distance $s$ upwards in the
$z$-direction and then along $f_\lambda$ (which is a function in a
$C^{1}$ $\Ordo(\lambda)$-neighborhood of the $0$-function). With
notation as in Lemma \ref{lem:2pos}, we have
$\Pi_P(L_\lambda)=\Pi_P(L_1(f_\lambda))$). As in
Section~\ref{ssec:2-copy}, Reeb chords of $L\cup L_\lambda$ are
separated into pure chords, mixed chords, and Morse chords and we
study $J$-holomorphic disks with boundary on $L\cup
L_\lambda$. (Recall that a $J$-holomorphic disk with boundary on
$L\cup L_{\lambda}$ is a map $u\colon D_{m+1}\to P$ such that $u(\pa
D_{m+1})\subset \Pi_{P}(L\cup L_{\lambda})$ and such that $u|_{\pa
  D_{m+1}}$ has a continuous lift to $L\cup L_{\lambda}$; see
Subsection \ref{ssec:diffl}.) By Lemma~\ref{lem:one-punc}, such a disk
with one positive puncture has either zero or two mixed Reeb
chords. Disks with all their punctures at pure Reeb chords correspond
naturally to holomorphic disks with boundary on $L$. We therefore
concentrate on disks with mixed punctures. In this subsection, we show
that holomorphic disks with boundary on $L\cup L_\lambda$ and with at
least one Morse chord converge to generalized disks as $\lambda\to
0$. (Once the analysis of such disks is complete, disks without Morse
chords are controlled by Lemmas \ref{lem:tvmspc} and \ref{lem:2pos};
see the end of Section~\ref{ssec:gen-to-hol}.)
  
In outline, the proof of generalized disk convergence runs as follows:
For disks with two Morse chords, convergence follows from
\cite[Theorem 1.2]{ekholm:morse-flow} in combination with a
monotonicity argument. For disks with one Morse chord, we represent
the domains of the holomorphic maps $u_\lambda$ with boundary on
$L\cup L_\lambda$ as strips $\rr\times[0,m]$, $m\in\zz$, $m\ge 1$ with
slits around rays $[a_j,\infty)\times\{j\}$, where $j\in\zz$,
$0<j<m$. After adding a uniformly finite number of punctures to the
domains, we get a uniform derivative bound $|du_\lambda|=\Ordo(1)$. It
is then a consequence of Gromov compactness that this sequence
converges to a broken disk $u_0$ with boundary on $L$, uniformly on
compact subsets. In the present situation, that does not give the full
picture because areas of holomorphic disks with boundary on $L\cup
L_\lambda$ are not uniformly bounded from below as $\lambda\to 0$. For
example, the holomorphic disks corresponding to a Morse flow line of
$f_\lambda$ constructed in \cite[Theorem 1.3]{ekholm:morse-flow} have
areas of size $\Ordo(\lambda)$ and on every compact neighborhood of a
point which maps to a point in the flow line to which the disks
converge, the holomorphic maps converge to a constant map. In order to
capture the full picture, we must understand holomorphic disks with
areas of size $\Ordo(\lambda)$ as well. To this end, we establish the
existence of vertical line segments in the domain $\Delta$ of the
$u_\lambda$ which subdivide $\Delta$ into two pieces
$\Delta=\Delta_1\cup\Delta_2$ such that on $\Delta_2$, the stronger
derivative bound $|du_\lambda|=\Ordo(\lambda)$ holds. With this
derivative bound established, the arguments from \cite[Section
5]{ekholm:morse-flow} show that $u_\lambda|\Delta_2$ converges to a
Morse flow line of $f_\lambda$. Finally, choosing ``maximal''
$\Delta_2$, we show that the limiting flow line of
$u_\lambda|\Delta_2$ starts at a point on the boundary of a component
of the (possibly broken) disk with boundary on $L$ which is the limit
of $u_\lambda|\Delta_1$.

\begin{lem} \label{lem:2-punc}
  Let $u_\lambda\colon D\to P$ be holomorphic disks with boundary on
  $L\cup L_\lambda$ with the positive puncture and one negative
  puncture at Morse chords. Then $u_\lambda$ has no other punctures
  and $u_\lambda$ converges to a (possibly broken) Morse flow line of
  $f_\lambda$ as $\lambda\to 0$. Furthermore, if the disks $u_\lambda$
  are rigid, then so is the limiting flow line.
\end{lem}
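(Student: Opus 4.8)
The plan is to argue in three stages: first pin down the punctures, then take the adiabatic limit $\lambda\to 0$, and finally track indices to get the rigidity assertion.

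\textbf{Punctures.} By Lemma~\ref{lem:one-punc}, a $J$-holomorphic disk with boundary on $L\cup L_\lambda$ and a single positive puncture at a mixed Reeb chord has exactly one negative puncture at a mixed chord, with all other negative punctures (if any) at pure chords. Here both the positive puncture $a$ and the distinguished negative puncture $c$ are Morse chords, so by \eqref{eqn:length} one has $\ell(a),\ell(c)=s+O(\lambda)$, whence the action $\ell(a)-\ell(c)-\sum_j\ell(b_j)$ is at most $O(\lambda)$. On the other hand each pure chord $b_j$ has length bounded below by a constant independent of $\lambda$, and Lemma~\ref{lem:stokes} gives $\ell(a)-\ell(c)-\sum_j\ell(b_j)\ge C\,\area(u_\lambda)\ge 0$. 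Hence for $\lambda$ sufficiently small there are no pure punctures, so $u_\lambda$ is a twice-punctured disk (a strip), and moreover $\area(u_\lambda)=O(\lambda)$.

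\textbf{Adiabatic limit.} With the domain a strip and $\area(u_\lambda)=O(\lambda)$, I would invoke the main adiabatic-limit theorem of \cite{ekholm:morse-flow} (Theorem~1.2 there). Its hypotheses are met: $\Pi_P(L_\lambda)=\Pi_P(L_1(f_\lambda))$, the triple $(f_\lambda,g,J)$ is normalized in the sense of Lemma~\ref{lem:normal} (so $J$ is standard near $\Pi_P(L)$ with respect to $g$ and $f_\lambda$ has the required normal forms along flow lines), and a monotonicity argument, fed by the area bound, confines the image of $u_\lambda$ to a neighborhood of $\Pi_P(L)$ shrinking with $\lambda$, so $u_\lambda$ lies in the region where the cited theorem applies. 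The conclusion is that, as $\lambda\to 0$, $u_\lambda$ converges to a negative gradient flow line of $f_\lambda$ from the critical point underlying $a$ to the one underlying $c$, possibly broken at intermediate critical points.

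\textbf{Rigidity.} The moduli space containing $u_\lambda$ has formal dimension $|a|_{Z_a}-|c|_{Z_c}-1+\mu(A,\alpha)$; for the homotopy class carried by these thin disks $\mu(A,\alpha)=0$ (this is precisely the bookkeeping behind \eqref{eqn:c-grading}, where the capping paths were arranged to lie in a small ball). So rigidity of $u_\lambda$ is equivalent, via \eqref{eqn:c-grading}, to $\ix_{c_i}(f)-\ix_{c_j}(f)=1$, which is exactly the condition that the limiting (broken) flow line have total index drop one; broken configurations of total index drop one consisting of more than one flow line occur only in positive-dimensional families, so the limit is a single unbroken, hence rigid, flow line. (Equivalently, this index-preserving correspondence is part of \cite[Theorem~1.2]{ekholm:morse-flow}.) The main obstacle is the second stage: what is needed from \cite{ekholm:morse-flow} is not merely Gromov compactness but the sharper statement that, after adjoining a uniformly bounded number of auxiliary punctures, one obtains the \emph{strong} derivative bound $|du_\lambda|=O(\lambda)$ on the entire domain — this is what forces the limit to be an honest flow line rather than a disk-with-flow-line hybrid (in contrast with the one-Morse-chord case treated in Subsection~\ref{ssec:gen-to-hol}) — and the real work that remains is verifying that the normalization of Lemma~\ref{lem:normal} matches the normal forms imposed in \cite{ekholm:morse-flow} and that the monotonicity argument genuinely localizes the disks.
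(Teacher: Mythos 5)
Your proposal follows the paper's proof in essentially the same way: the paper likewise derives the $\Ordo(\lambda)$ area bound from Lemma~\ref{lem:stokes}, uses monotonicity to confine the disk to an $\Ordo(\lambda^{\frac12})$-neighborhood of $\Pi_P(L)$ where $J$ is standard, invokes the adiabatic-limit results of \cite{ekholm:morse-flow} for convergence to a (possibly broken) flow line, and obtains rigidity by comparing dimension formulas for holomorphic disks and flow lines. Your explicit exclusion of pure punctures via the length relations \eqref{eqn:length} and your index bookkeeping for rigidity merely spell out steps the paper leaves implicit, so the two arguments coincide in substance.
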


\begin{proof} By Lemma~\ref{lem:stokes}, the area of the disk is of
  size $\Ordo(\lambda)$. By monotonicity, the disk cannot leave an
  $\Ordo(\lambda^{\frac12})$-neighborhood of $\Pi_P(L)$. Since the
  complex structure is standard in a such neighborhood of $\Pi_P(L)$,
  it follows from \cite[Lemmas 5.13]{ekholm:morse-flow} that the
  holomorphic disk converges to a flow line. This flow line must
  furthermore be rigid by comparison of dimension formulas for
  holomorphic disks and flow lines, see \cite[Proposition
  3.18]{ekholm:morse-flow}.
\end{proof}

\begin{rem}\label{rem:lambdascale} In the proof of \cite[Lemma
  5.13]{ekholm:morse-flow}, the edge point regions (see \cite[Section
  4.3.8]{ekholm:morse-flow}, in the notation of this paper these
  regions are the regions around plateau points and inflection points
  described in $({\bf n4})$ and $({\bf n5})$ of Subsection
  \ref{s:morseflow}, respectively) were not explicitly mentioned. For
  completeness, we give the explicit argument here. An edge point
  region is a region of size $\Ordo(\lambda)$ around a point on a flow
  line in a rigid generalized disk where the Lagrangian interpolates
  between its nearby affine pieces. In fact, these edge point regions
  should be chosen to have size $K\lambda$, where $K$ is a
  sufficiently large constant. With such a choice, the derivative of
  the interpolation function can be taken as small as
  $\Ordo(\frac{1}{K})$ after rescaling of base and fiber by
  $\lambda^{-1}$ as compared to all other gradient differences
  nearby. In the proof of the convergence result \cite[Lemma
  5.12]{ekholm:morse-flow}, one uses a split coordinate system near
  edge points. In the directions perpendicular to the flow line the
  argument is the one given in the lemma. In the directions along the
  flow line the re-scaled correction function $f$ in \cite[Equation
  (5-8)]{ekholm:morse-flow} is now $\Ordo(\frac{1}{K})$ rather than
  $\Ordo(\lambda)$. However, changes in this direction just correspond
  to reparametrization of the holomorphic disk (i.e. variations along
  the flow direction). The fact that the error term is small after
  rescaling shows that the time spent by a holomorphic map $u_\lambda$
  in the $K\lambda$-neighborhood (the length of the part of the domain
  mapping to the $K\lambda$-neighborhood) is $\Ordo(1)$. Consequently,
  the disk lies at most $\Ordo(\lambda)$ from a flow line after having
  passed through the edge point region.
\end{rem}

We next work with disks that have one mixed Morse puncture and another
mixed puncture. It will be convenient to think of the domains and
spaces of conformal structures of our holomorphic disks as ``standard
domains'' in the language of \cite{ekholm:morse-flow}. For details we
refer to \cite[Subsection 2.2.1]{ekholm:morse-flow}; here we give a
brief description. Consider $\rr^{m-2}$ with coordinates
$a=(a_1,\dots, a_{m-2})$. Let $t\in\rr$ act on $\rr^{m-2}$ by $t\cdot
a=(a_1+t,\dots,a_{m-2}+t)$. The orbit space of this action is
$\rr^{m-3}$. Define a {\em standard domain} $\Delta_{m}(a)$ as the
subset of $\rr\times[0,m]$ obtained by removing $m-2$ horizontal slits
of width $\epsilon$, $0<\epsilon\ll1$, starting at $(a_j,j)$,
$j=1,\dots,m-2$ and going to $\infty$. All slits have the same shape,
ending in a half-circle, see Figure \ref{fig:stdom}.

\begin{figure}[ht]
  \relabelbox \small {\epsfxsize=3in\centerline{\epsfbox{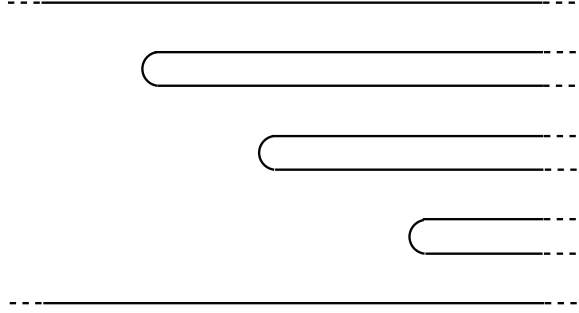}}}
  \endrelabelbox
  \caption{A standard domain}
  \label{fig:stdom}
\end{figure}

Endowing $\Delta_{m}(a)$ with the flat metric, we get a conformal
structure $\kappa(a)$ on the $m$-punctured disk. Moreover, using the
fact that translations are biholomorphic, we find that
$\kappa(a)=\kappa(t\cdot a)$ for all $a$. As shown in \cite[Lemma
2.2]{ekholm:morse-flow}, $\kappa$ is a diffeomorphism from $\rr^{m-2}$
to the space of conformal structures on the $m$-punctured disk.  Below
we will often drop $a$ from the notation and write $\Delta_m$ for a
standard domain.

If $I$ is a boundary component of $\Delta_m[a]$ which has both of its
ends at $\infty$, we will call the point with smallest real part along
$I$ a {\em boundary minimum} of $\Delta_m$. A {\em vertical line
  segment} of a standard domain $\Delta_m$ is a line segment of the
form $\{\tau\}\times[a,b]$ contained in $\Delta_m$ and with $(\tau,a)$
and $(\tau,b)$ in $\pa\Delta_m$.

The first step toward establishing generalized disk convergence is to
show that for any sequence of $J$-holomorphic disks $u_\lambda\colon
\Delta_m^{\lambda}\to P$ with boundary on $L\cup L_\lambda$, where
$\Delta_m^{\lambda}$ are standard domains, there are neighborhoods of
each Morse puncture where the disks converge to a (possibly constant)
flow line.

The key to establishing this convergence is a certain derivative
bound. More precisely, we have the following. As in Subsection \ref{ssec:projs} let $\theta$ denote the $1$-form on $P$ which is the primitive of its symplectic form. Fix a constant $M>0$ and
let $l_\lambda\approx [0,1]$ be a vertical segment in
$\Delta_m^\lambda$ with the following properties:
\begin{itemize}
\item[({\bf l1})] $u_\lambda(l_\lambda)$ is contained in an
  $\Ordo(\lambda)$-neighborhood of $L$.
\item[({\bf l2})] $\left|\int_{l_\lambda}u_\lambda^{\ast}(\theta)\right|\le
  M\lambda$.
\item[({\bf l3})] $\left||z(1)-z(0)|-s\right|\le M\lambda$, where we think
  of $0$ and $1$ as the endpoints of $l_\lambda$ and $z$ as the $\rr$
  coordinate in $P \times \rr$ (evaluated on the boundary lift of $u$).
\end{itemize}

Note that $l_\lambda$ subdivides $\Delta_m^\lambda$ into two
components. Let $\Delta^\lambda(l_\lambda)$ denote the component which
contains the Morse puncture. Also, for $d>0$, let
$\Delta^\lambda(l_\lambda,d)$ denote the subset of points in
$\Delta^\lambda(l_\lambda)$ which are at distance at least $d$ from
$l_\lambda$.

\begin{lem}\label{lem:outest} For all sufficiently small $\lambda>0$,
  the following derivative bound holds:
  \[ |du_\lambda(z)|=\Ordo(\lambda),\quad
  z\in\Delta^\lambda(l_\lambda,1).
  \]
\end{lem}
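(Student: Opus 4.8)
The plan is to argue in three stages: first, that the part of $u_\lambda$ lying on the Morse side of $l_\lambda$ has area $\Ordo(\lambda)$; second, that by monotonicity this part is confined to a thin tube around $\Pi_P(L)$ and there becomes a holomorphic curve in $T^\ast L$ with boundary on the zero section and on $\mathrm{graph}(df_\lambda)$; and third, that in that tube the elliptic analysis of \cite{ekholm:morse-flow} upgrades the confinement to the stated derivative bound. For the first stage I would compute $\area(u_\lambda|_{\Delta^\lambda(l_\lambda)})$ by Stokes' theorem. The boundary of $\Delta^\lambda(l_\lambda)$ is made up of $l_\lambda$, arcs mapped by $u_\lambda$ into $\Pi_P(L)$ and $\Pi_P(L_\lambda)$, a loop around the Morse puncture, and a priori loops around any other punctures contained in $\Delta^\lambda(l_\lambda)$. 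Since $u_\lambda^\ast(d\theta)$ is the area form and $\theta$ restricts to $dz$ on Legendrian lifts, the area equals $\int_{l_\lambda}u_\lambda^\ast\theta$, plus the net $z$-variation along the boundary arcs, minus the actions at the punctures; by $(\mathbf{l2})$ the first term is $\Ordo(\lambda)$, by $(\mathbf{l3})$ the boundary arcs contribute $\pm s+\Ordo(\lambda)$, and by \eqref{eqn:length} the Morse puncture contributes $\mp\ell(c^1)=\mp s+\Ordo(\lambda)$. Hence $\area(u_\lambda|_{\Delta^\lambda(l_\lambda)})=\Ordo(\lambda)$; since this is non-negative and every further negative puncture would subtract a definite positive amount, $\Delta^\lambda(l_\lambda)$ in fact contains no puncture of $u_\lambda$ besides the Morse one.

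Next, the arcs of $\partial\Delta^\lambda(l_\lambda)$ map into $\Pi_P(L)\cup\Pi_P(L_\lambda)$, which lies in an $\Ordo(\lambda)$-neighborhood of $\Pi_P(L)$ since $f_\lambda$ is $C^1$-small of order $\lambda$; by $(\mathbf{l1})$ so does $u_\lambda(l_\lambda)$; and near the Morse puncture $u_\lambda$ is close to the double point. So by monotonicity and the area bound, $u_\lambda(\Delta^\lambda(l_\lambda))$ lies in an $\Ordo(\lambda^{1/2})$-neighborhood of $\Pi_P(L)$. Since $J$ is standard there, the symplectic neighborhood map lets us view $u_\lambda|_{\Delta^\lambda(l_\lambda)}$ as a $J$-holomorphic curve $v_\lambda$ in $T^\ast L$ with boundary on the zero section and on $\mathrm{graph}(df_\lambda)$, lying in an $\Ordo(\lambda^{1/2})$-neighborhood of the zero section and asymptotic at its Morse puncture to the chord over the corresponding critical point $p$ of $f_\lambda$. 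At $p$ the two boundary Lagrangians meet with complex angle $\Ordo(\lambda)$ (near $p$, $\mathrm{graph}(df_\lambda)$ is the graph of $x\mapsto d^2f_\lambda(p)(x-p)+\cdots$), so the exponential decay rate and the leading Fourier coefficient of $v_\lambda$ at the Morse puncture are both $\Ordo(\lambda)$; in particular $|dv_\lambda|=\Ordo(\lambda)$ on a fixed neighborhood of the Morse puncture.

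Finally, on the rest of $\Delta^\lambda(l_\lambda,1)$ — a region of bounded geometry, at distance $\ge 1$ from $l_\lambda$ and away from the Morse puncture, on which the two boundary sheets are $\Ordo(\lambda)$-close in $C^{1}$ to the flat configuration — I would run the estimates of \cite[Section 5]{ekholm:morse-flow}. Since the area tends to $0$ there is no bubbling, so a first crude elliptic estimate gives $|dv_\lambda|=\Ordo(1)$; inserting this into the $\bar\partial_J$-equation shows that the fibre component of $v_\lambda$ solves an equation whose inhomogeneity is comparable to the fibre component itself, which by the maximum principle — carried out after the rescaling used in \cite{ekholm:morse-flow}, and using that its boundary and asymptotic values are $\Ordo(\lambda)$ and that the metric is flat near the double points — is $\Ordo(\lambda)$; a second elliptic estimate then gives $|dv_\lambda|=\Ordo(\lambda)$, and undoing the symplectic neighborhood map proves the lemma, with the edge-point regions along the flow lines treated as in Remark~\ref{rem:lambdascale}. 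I expect this last stage to be the main obstacle: extracting the sharp rate $\Ordo(\lambda)$ uniformly in $\lambda$, rather than merely $\Ordo(\lambda^{1/2})$, is precisely where one must exploit both that the two boundary Lagrangians are $C^{1}$-apart only to order $\lambda$ and that the complex angle at the Morse chord is $\Ordo(\lambda)$, and the technical bookkeeping runs parallel to \cite[Section 5]{ekholm:morse-flow}.
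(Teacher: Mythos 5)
Your proposal is correct and follows essentially the same route as the paper: a Stokes'/chord-length computation using $(\mathbf{l2})$, $(\mathbf{l3})$ and \eqref{eqn:length} to get area $\Ordo(\lambda)$ and rule out further negative punctures, monotonicity to confine the image to an $\Ordo(\lambda^{1/2})$-neighborhood of $\Pi_P(L)$ where $J$ is standard, and then a maximum-principle bound $|p|=\Ordo(\lambda)$ on the fibre coordinate followed by an elliptic estimate for the derivative. The paper simply packages your last stage by citing Lemmas 5.4 and 5.6 of \cite{ekholm:morse-flow} (subharmonicity of $|p|^{2}$ plus the resulting derivative bound at distance $\ge 1$ from $l_\lambda$), so no separate Fourier analysis at the Morse puncture is needed.
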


\begin{proof} The idea of the proof is to localize the situation and
then use technology from the analysis of flow lines (trees) in
\cite[Section 5]{ekholm:morse-flow}. 

Let $b$ denote the Morse chord.  We first show that the area of
$u_\lambda(\Delta^\lambda(l_\lambda))$ is of order
$\Ordo(\lambda)$. If the puncture of $u_\lambda$ at $b$ is negative,
then the area of $u_{\lambda}(\Delta^\lambda(l_\lambda))$ is smaller
than
  \[ \int_{l_\lambda} u_\lambda^{\ast}(\theta) + |z(1)-z(0)| -\ell(b) -
\sum\ell(c),
  \] where the sum ranges over all negative punctures in
$\Delta^\lambda(l_\lambda)$. Since the second and third terms are both
of size $s+\Ordo(\lambda)$, since the first term is of size
$\Ordo(\lambda)$, and since $\ell(c)>\ell_{\rm min}>0$ for all Reeb
chords $c$, we find that the sum must be empty, and consequently that
the area of $u_{\lambda}(\Delta^\lambda(l_\lambda))$ is of size
$\Ordo(\lambda)$.

If the puncture of $u_\lambda$ at $b$ is positive, the area of
$u_{\lambda}(\Delta^\lambda(l_\lambda))$ is smaller than
  \[ \ell(b)-\int_{l_\lambda} u_\lambda^{\ast}(\theta) - |z(1)-z(0)| -
\sum\ell(c),
\] and an analogous argument shows that there are no negative
punctures and that the area of
$u_{\lambda}(\Delta^\lambda(l_\lambda))$ is also of size
$\Ordo(\lambda)$.

We conclude by monotonicity that
$u_\lambda(\Delta^\lambda(l_\lambda))$ must lie in an
$\Ordo(\lambda^{\frac12})$-neighborhood of $L$. Since $J$ agrees with
the almost complex structure induced by the metric on $L$ in such a
neighborhood and since, in local the coordinates given by a symplectic neighborhood map $\Phi\colon T^{\ast}L\to X$, $\theta=p\,dq$, \cite[Lemma 5.4]{ekholm:morse-flow} shows that the
function $|p|^2$, where $p$ is the fiber coordinate in $T^\ast L$
composed with $u_\lambda$, is subharmonic on
$\Delta^\lambda(l_\lambda)$ and therefore attains its maximum on the
boundary. The lemma then follows from \cite[Lemma
5.6]{ekholm:morse-flow}.
\end{proof}

\begin{cor}\label{cor:outconv} The restrictions
  $u_\lambda|_{\Delta^\lambda(l_\lambda,\log(\lambda^{-1}))}$ converge
  to a flow line of $f_\lambda$. (Here we also allow constant flow
  lines.)
\end{cor}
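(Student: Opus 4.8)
The plan is to combine the derivative bound of Lemma~\ref{lem:outest} with the convergence machinery for holomorphic strips near the zero section developed in \cite{ekholm:morse-flow}. First I would use the symplectic neighborhood map to express $u_\lambda$, restricted to the component $\Delta^\lambda(l_\lambda)$ containing the Morse puncture, in coordinates $(q,p)$ near the zero section of $T^\ast L$; by the proof of Lemma~\ref{lem:outest} this restriction has image in an $\Ordo(\lambda^{\frac12})$-neighborhood of $L$, has boundary on the zero section and on the graph of $df_\lambda$, and has area $\Ordo(\lambda)$. Rescaling the fiber coordinate (and, for local questions about the holomorphic equation, also the base) by $\lambda^{-1}$ as in \cite{ekholm:morse-flow}, the rescaled boundary conditions converge to the zero section together with the graph of $df_0$, where $f_0$ is the rescaled limit of $\lambda^{-1}f_\lambda$ (a piecewise-linear function, as in the remark following $({\bf n6})$), and the rescaled $\bar\pa_J$-equation converges to the equation whose solutions are negative gradient trajectories of $f_0$.

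Next I would invoke the convergence estimates of \cite[Section~5]{ekholm:morse-flow}, in the form used in the proof of Lemma~\ref{lem:2-punc} and adapted, as in Remark~\ref{rem:lambdascale}, to the edge point regions arising from $({\bf n4})$ and $({\bf n5})$. These show that on any subdomain of $\Delta^\lambda(l_\lambda)$ on which the stronger derivative bound $|du_\lambda|=\Ordo(\lambda)$ holds and which stays a definite distance away from the vertical segment $l_\lambda$, the rescaled maps converge, after reparametrization, in $C^\infty_{\rm loc}$ to a (possibly constant) negative gradient flow line of $f_0$, and hence to a flow line of $f_\lambda$ in the sense of the remark following $({\bf n6})$. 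The reason the buffer must have width $\log(\lambda^{-1})$ rather than merely $\Ordo(1)$ is the following: near $l_\lambda$ the map $u_\lambda$ interpolates between its ``disk part'' (where $|z(1)-z(0)|\approx s$) and its ``flow line part'', and its deviation from a genuine flow line there is only $\Ordo(1)$ on the $\lambda$-scale; but the exponential decay estimate for holomorphic strips of energy $\Ordo(\lambda)$ forces this deviation to decay like $e^{-c\,\dist}$ as one moves into $\Delta^\lambda(l_\lambda)$ away from $l_\lambda$, so at distance $\log(\lambda^{-1})$ it is $\Ordo(\lambda^{c})$, which is small on the $\lambda$-scale and puts us in the regime where the arguments of \cite[Section~5]{ekholm:morse-flow} apply.

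The main obstacle will be making this transition-region analysis quantitative with uniform constants: one must establish the exponential decay estimate for holomorphic strips of small energy in the present setting (this follows from the asymptotic analysis near Reeb chords already exploited to add marked points in Subsection~\ref{sssec:fansetup}, together with monotonicity), and one must rule out any additional breaking inside $\Delta^\lambda(l_\lambda,\log(\lambda^{-1}))$ --- which is again forced by the $\Ordo(\lambda)$ area bound, since any nonconstant disk or nontrivial flow segment of $f_0$ would carry a definite amount of rescaled area. The degenerate case, that the flow line is constant, occurs precisely when the Morse chord $b$ lies within distance $\Ordo(1)$ on the $\lambda$-scale of a critical point of $f_\lambda$, and is accommodated by allowing constant flow lines in the statement.
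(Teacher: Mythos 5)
Your proposal is correct and follows essentially the same route as the paper: the paper's (two-line) proof likewise combines the $\Ordo(\lambda)$ derivative bound of Lemma~\ref{lem:outest} with the strip-to-flow-line convergence machinery of \cite[Section 5, proof of Theorem 1.2]{ekholm:morse-flow}, exactly the ingredients you invoke. Your extra discussion of the $\log(\lambda^{-1})$ buffer and of ruling out breaking via the $\Ordo(\lambda)$ area bound is supplementary detail the paper leaves implicit, not a different argument.
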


\begin{proof} Note that any region of diameter $\log(\lambda^{-1})$
  maps inside a disk of radius
  $\Ordo(\lambda\log(\lambda^{-1}))$. Moreover, along any strip region
  in $\Delta^\lambda(l_\lambda,\log(\lambda^{-1}))$, the maps converge
  to a flow line at rate $\Ordo(\lambda)$ by the proof of
  \cite[Theorem 1.2]{ekholm:morse-flow}.
\end{proof}

\begin{rem} If the limiting flow line in Corollary \ref{cor:outconv}
  is constant, then it lies at $\Pi_P(c)$ for some Reeb chord $c$ of
  $L$. To see this, note that
  $\Delta^\lambda(l_\lambda,\log(\lambda^{-1}))$ always contains a
  half infinite strip and that if the starting point of this strip
  does not converge to the projection of a Reeb chord, then the flow
  line is non-constant.
\end{rem}

\subsubsection{Blow up analysis}
\label{ssec:blowup}

We next show that for any sequence of $J$-holomorphic disks
$u_\lambda$ with boundary on $L\cup L_\lambda$, we can choose
conformal representatives $\Delta^{\lambda}_m$ of their domains such
that the derivatives $|du_\lambda|$ are uniformly bounded. When a
bubble forms in a sequence of maps on such domains, some coordinate of
the domains $\Delta^\lambda_m$ in the space of conformal structures
goes to $\infty$ (rather than that the derivative of $u_\lambda$
blowing up).

\begin{lem}\label{lem:blowup} Let $u_\lambda\colon\Delta_m^\lambda\to
  P$ be a sequence of $J$-holomorphic disks with boundary on $L\cup
  L_\lambda$. After addition of a finite number of punctures in
  $\Delta_m^\lambda$, creating new domains $\Delta_{m+k}^{\lambda}$,
  the induced maps $u_\lambda\colon\Delta_{m+k}^\lambda\to P$ satisfy
  a uniform derivative bound.
\end{lem}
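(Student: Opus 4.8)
The plan is to run the standard rescaling (``bubbling-off'') argument, the crucial input being a uniform energy bound. By Lemma~\ref{lem:stokes} applied to the link $L\cup L_\lambda$, the area of any $J$-holomorphic disk with boundary on $L\cup L_\lambda$ is dominated by the sum of the lengths of the Reeb chords at its positive punctures; since $L\cup L_\lambda$ has only finitely many Reeb chords (the pure chords of $L$, together with the mixed and Morse chords of length $\approx s$, with $s$ fixed) and all of these lengths stay bounded as $\lambda\to 0$, there is a constant $E_0$, independent of $\lambda$, with $\area(u_\lambda)\le E_0$ for every disk under consideration. I would then argue by contradiction: assuming $\sup_{\Delta^\lambda_m}|du_\lambda|$ is unbounded along some sequence $\lambda\to 0$, I will exhibit a finite set of ``bubble points'', bounded in number independently of $\lambda$, such that adding a puncture at each one restores a uniform derivative bound.

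To locate and analyze the bubbles, I would choose rescaling centers $z_\lambda$ by the usual almost-maximum (Hofer) lemma and rescale the domain by the blow-up rate $R_\lambda=|du_\lambda(z_\lambda)|$. Passing to a limit produces a non-constant $J$-holomorphic map $v$ of finite energy $\le E_0$. Since $P$ is exact, $v$ can be neither a plane $\cc\to P$ nor a sphere: Stokes' theorem would force $\area(v)=0$. Hence $v$ is a half-plane with boundary on the limit of the moving Lagrangian $L\cup L_\lambda$; because $L_\lambda\to L$ in $C^\infty$ (cf.\ Lemma~\ref{lem:normal}), if $z_\lambda$ lies over a smooth point of $\Pi_P(L)$ then, after rescaling, the boundary condition degenerates to a single linear Lagrangian subspace in $\cc^n$ and $J$ to the standard structure, so Schwarz reflection together with exactness again force $v$ to be constant --- a contradiction. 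Therefore every bubble center lies over a double point of $\Pi_P(L)$, and $v$ is a non-constant finite-energy half-plane whose puncture is asymptotic to the corresponding Reeb chord (a removable puncture would once more make $v$ constant).

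The counting step uses energy quantization. In the linear model at a double point the area of such a half-plane is bounded below by a positive constant determined by the complex angles of the two sheets (see \cite[Section~4.1]{ees:pxr}); since there are finitely many double points, there is $\hbar>0$ with $\area(v)\ge\hbar$ for every bubble. Combined with $\area(u_\lambda)\le E_0$, at most $N:=\lfloor E_0/\hbar\rfloor$ bubbles can form, a bound uniform in $\lambda$. Adding a puncture at each bubble center --- these converge to punctures mapping to Reeb chords, so the enlarged domains are again standard domains $\Delta^\lambda_{m+N}$ with additional slits --- removes all energy concentration, and then the mean value inequality for $J$-holomorphic curves (local energy on small disks tending to $0$) together with interior and boundary elliptic estimates gives $|du_\lambda|=\Ordo(1)$ on $\Delta^\lambda_{m+N}$, contradicting the assumption. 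The same discussion applies verbatim near the extra marked points introduced for the positive punctures and near Morse-chord endpoints, so no bubbling is hidden there.

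The main obstacle will be the rescaling analysis over a double point of $\Pi_P(L)$ with the moving boundary condition $L\cup L_\lambda$: one must verify $C^\infty_{\mathrm{loc}}$ convergence of the rescaled maps to a non-constant finite-energy half-plane whose boundary lies on the two linearized sheets of $L$ at the double point (the two sheets of $L_\lambda$ collapsing onto them as $\lambda\to0$), and extract from the complex-angle description both the uniform quantum $\hbar$ and the asymptotic convergence to a Reeb chord. A secondary, more bookkeeping, difficulty is checking that the number and placement of the added punctures can be chosen uniformly in $\lambda$ and compatibly with the standard-domain formalism of \cite{ekholm:morse-flow}, so that the surgered domains $\Delta^\lambda_{m+N}$ are genuinely of the required type.
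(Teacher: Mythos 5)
Your overall strategy (a uniform energy bound from Stokes' theorem, domain rescaling at points of derivative blow-up, a positive energy quantum per bubble, hence a uniformly finite number of added punctures) is the same as the paper's, but two of your intermediate steps are wrong as stated. First, the claim that every bubble center lies over a double point of $\Pi_P(L)$ does not hold, and your argument for it implicitly rescales the target: in the blow-up one rescales only the domain, $g_\lambda(z)=u_\lambda\bigl(p_\lambda+\tfrac{z}{M_\lambda}\bigr)$, so over a smooth boundary point the limit is a half-plane with boundary on the whole immersed exact Lagrangian $\Pi_P(L)$, not on a single linear Lagrangian subspace of $\cc^n$, and such non-constant half-planes do exist --- they are precisely the bubbles the paper identifies, namely disks with boundary on $L$ with one positive puncture asymptotic to a Reeb chord and no other punctures. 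The puncture of the bubble maps to a Reeb chord, but the concentration point $p_\lambda$ may lie over any point of $\Pi_P(L)$, so bubbling is not localized at double points.

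Second, your energy quantum is justified incorrectly: the local model of two transverse Lagrangian planes in $\cc^n$ is scale invariant, so the complex angles at a double point do not by themselves bound the area of a half-plane from below. The correct (and simpler) quantization is the one the paper uses: since each bubble is a non-constant disk on the exact Lagrangian with a single positive puncture and no negative punctures, Lemma~\ref{lem:stokes} bounds its area from below by (a constant times) the length of the shortest Reeb chord of $L$, while the total area of $u_\lambda$ is uniformly bounded, which makes the iteration terminate. Finally, the paper does not puncture at the bubble centers; it punctures at a point that $u_\lambda$ maps to a fixed local hypersurface $H$ intersecting the bubble boundary transversely far from all Reeb chords. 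Your added punctures do not map to Reeb chords (that claim rests on the first error), and puncturing at the center does not obviously restore the derivative bound in the new standard-domain metric without further argument; moreover, the choice of $H$ away from Reeb chords is used later (Remark~\ref{r:extrapuncture}) to guarantee a definite area for subdisks containing the extra puncture, so this detail of the paper's construction should be retained.
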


\begin{proof} The proof is a standard blow up argument, so we only
  sketch the details. Assume that
  $M_\lambda=\sup_{\Delta_m^{\lambda}}|du_\lambda|$ is not
  bounded. The asymptotics near the punctures of $u_\lambda$ show that
  there exist points $p_\lambda\in\Delta^\lambda_m$ at which
  $|du_\lambda|=M_\lambda$. Consider the sequence of maps
  $g_\lambda=u_\lambda\left(p_\lambda+\frac{z}{M_\lambda}\right)$
  defined on $\bigr\{z\in\cc: (p_\lambda+\frac{z}{M_\lambda})\in
  \Delta_m^\lambda\bigr\}$. Note that the derivatives of these maps
  are uniformly bounded. Therefore we can extract a convergent
  subsequence. This gives a non-constant holomorphic disk with
  boundary on $L$ which has one positive puncture and no other
  puncture. Denote this limit disk $v^{[1]}\colon D\to P$ and fix a
  local hypersurface $H$ transversely intersecting $v^{[1]}(\partial
  D)$ at a point far from all Reeb chords. It follows from the
  convergence $g_\lambda\to v^{[1]}$ that there exists a point in a
  neighborhood of $p_\lambda$ which $u_\lambda$ maps to
  $H$. Puncturing $\Delta_m^\lambda$ at this point induces a new
  sequence of maps $u_\lambda^{[1]}\colon\Delta_{m+1}^\lambda\to
  P$. If $|du^{[1]}_\lambda|$ is uniformly bounded then the lemma
  follows.

  Assume that $\sup_{\Delta_{m+1}^\lambda}|du_\lambda^{[1]}|$ is
  unbounded. Arguing as above, we find another bubble $v^{[2]}$ in the
  limit with one positive puncture and no other punctures. Adding
  another puncture in the domain which corresponds to some point in
  $v^{[2]}$ in the limit, we get new maps and domains
  $u^{[2]}_\lambda\colon \Delta^{\lambda}_{m+2}\to P$. Repeating this,
  we either have no derivative blow up in which case the lemma follows
  or we add punctures as above. To see that this is a finite process
  note that each bubble has area bounded from below by the length of
  the shortest Reeb chord and that the sum of the areas of all bubbles
  must be smaller than the length of the longest Reeb chord of $L$.
\end{proof}

\begin{rem}\label{r:extrapuncture} Let $p$ be the image point
  corresponding to an additional puncture added in the procedure
  described above. Note that there exists a disk of finite radius
  around $p$ which does not contain any Reeb chords. It follows from
  monotonicity that for $\lambda>0$ small enough, the area
  contribution of any subdisk obtained by cutting off a vertical
  segment which connects $L$ to $L_\lambda$ and which contains the
  puncture corresponding to the marked point is uniformly bounded from
  below. It follows that $\Delta^{\lambda}(l_\lambda)$ in
  Lemma \ref{lem:outest} cannot contain any additional punctures.
\end{rem}

\subsubsection{Generalized disk convergence} Consider a sequence
$u_\lambda$ of $J$-holomorphic disks with boundary on $L\cup
L_\lambda$ with one puncture at a Morse chord. Using Lemma
\ref{lem:blowup}, we assume that these are maps
$u_\lambda\colon\Delta^{\lambda}_m\to P$ with uniformly bounded
derivatives. It is a consequence of Gromov compactness that this
sequence converges to a broken disk $v$ on $L$, uniformly on compact
subsets. Let $\partial v$ denote the image in $L$ of the boundary of
the possibly broken non-constant limit disk $v$ or if there are only
constant limit disks then let $\pa v$ denote the double point
corresponding to the mixed puncture of the disk which is not a Morse
chord.  Lemma \ref{cor:outconv} and Remark \ref{r:extrapuncture} imply
that if a vertical line segment $l_\lambda\subset \Delta^{\lambda}_m$
satisfies $({\bf l1})-({\bf l3})$ then on
$\Delta^{\lambda}_m(l_\lambda)$ the disks converge to a flow line.

\begin{lem}\label{lem:nogap} There exist vertical segments $l_\lambda$
  which satisfies $({\bf l1})-({\bf l3})$ and such that
  $u_\lambda(l_\lambda)$ converges to a point in $\partial v$.
\end{lem}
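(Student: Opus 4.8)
The plan is to use a continuity/sweeping argument along the family of vertical segments in $\Delta^\lambda_m$ that connect $L$ to $L_\lambda$, exploiting the two extreme behaviors: segments far out toward the Morse puncture satisfy $({\bf l1})$–$({\bf l3})$ but carry essentially no area of the limit disk $v$, while segments close to the bulk of $v$ fail $({\bf l2})$ or $({\bf l3})$. First I would parametrize the relevant vertical segments: for a point $\tau$ in a suitable subinterval of $\rr$ in the standard-domain coordinates, let $l^\tau_\lambda=\{\tau\}\times[0,m]$ (or the appropriate truncation between two boundary components), chosen so that it separates the Morse puncture from the rest of $\Delta^\lambda_m$. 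By Lemma~\ref{lem:outest} and Corollary~\ref{cor:outconv}, whenever $l^\tau_\lambda$ satisfies $({\bf l1})$–$({\bf l3})$ the restriction $u_\lambda|_{\Delta^\lambda(l^\tau_\lambda,\log(\lambda^{-1}))}$ converges to a (possibly constant) flow line of $f_\lambda$, and by the Remark after Corollary~\ref{cor:outconv} this flow line is nonconstant precisely when the starting point of the half-infinite strip in $\Delta^\lambda(l^\tau_\lambda)$ does not converge to the projection of a Reeb chord of $L$.

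The key step is to show that among all such separating vertical segments there is a ``maximal'' one — pushed as far as possible toward the bulk of $v$ while still satisfying $({\bf l1})$–$({\bf l3})$ — and that this maximal segment is exactly where the convergent flow line attaches to $\partial v$. Concretely, I would consider the monotone quantity $\int_{l^\tau_\lambda}u_\lambda^\ast(\theta)$ together with the $z$-jump $|z(1)-z(0)|$ across $l^\tau_\lambda$; as $\tau$ decreases from the far end (near the Morse puncture, where these are $\Ordo(\lambda)$ and $s+\Ordo(\lambda)$) the accumulated area picked up is controlled by Stokes (Lemma~\ref{lem:stokes}) and by the area of the limit broken disk $v$. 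Since $v$ is a fixed broken holomorphic disk with boundary on $L$ and hence has area bounded below by the length of the shortest Reeb chord once it is nonconstant, the segments that still satisfy $({\bf l2})$ cannot be pushed past the boundary of the first nonconstant component of $v$ (or, in the all-constant case, past the double point $\partial v$). Taking $l_\lambda$ to be the infimum (over $\tau$) of segments satisfying $({\bf l1})$–$({\bf l3})$ — equivalently, using the ``maximal $\Delta_2$'' from the outline preceding the lemma — I would then invoke the convergence statement of \cite[Section~5]{ekholm:morse-flow} in the form of Corollary~\ref{cor:outconv} to conclude $u_\lambda|_{\Delta^\lambda(l_\lambda,\log(\lambda^{-1}))}$ converges to a flow line whose initial point is the limit $\lim u_\lambda(l_\lambda)$, and that this point lies on $\partial v$ because just on the other side of $l_\lambda$ the disks converge, uniformly on compact sets, to the broken disk $v$.

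Finally I would check that $u_\lambda(l_\lambda)$ genuinely converges (after passing to a subsequence, using the uniform derivative bound from Lemma~\ref{lem:blowup} and Remark~\ref{r:extrapuncture} that rules out extra punctures inside $\Delta^\lambda(l_\lambda)$, the image $u_\lambda(l_\lambda)$ is a curve of uniformly bounded length in an $\Ordo(\lambda^{1/2})$-neighborhood of $L$, so it subconverges to a point), and that this limit point is independent of the choices in a way compatible with the maximality. The main obstacle I expect is the precise bookkeeping at the ``edge point'' / inflection regions of the normalized triple $(f_\lambda,g,J)$: one must ensure that pushing the separating segment toward $v$ does not inadvertently cross one of these $\Ordo(\lambda)$-scale interpolation regions in a way that violates $({\bf l2})$ for the wrong reason, and Remark~\ref{rem:lambdascale} (the $K\lambda$-scale estimate) is exactly what is needed to control the $\theta$-integral and $z$-jump across those regions. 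Establishing that the maximal separating segment exists and varies controllably with $\lambda$ — rather than, say, oscillating — is the technical heart, and I would handle it via the monotonicity of the area functional in $\tau$ together with the finiteness of the number of nonconstant components of the limit broken disk $v$.
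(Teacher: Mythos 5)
There is a genuine gap, and it sits exactly at the step you dismiss in one clause: ``this point lies on $\partial v$ because just on the other side of $l_\lambda$ the disks converge, uniformly on compact sets, to the broken disk $v$.'' Uniform convergence on compact subsets of the domain gives no control over the image of a vertical segment whose position in the domain escapes every compact set as $\lambda\to 0$, and that is precisely the situation for your ``maximal'' segment. A priori there could be an intermediate ``neck'' region between the bulk of $v$ and the region where $({\bf l1})$--$({\bf l3})$ hold, in which $|du_\lambda|$ is larger than $M\lambda$ for every $M$ but still $o(1)$, so that the maps travel a definite distance $\epsilon>0$ away from $\partial v$ before the flow-line behaviour of Corollary~\ref{cor:outconv} sets in. Your monotonicity-of-area sweeping argument does not exclude this: the question is quantitative, namely whether such a gap region can be traversed with only $\Ordo(\lambda)$ area (which is all that Stokes/Lemma~\ref{lem:stokes} allows beyond the area of $v$), and nothing in your proposal rules that out. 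Declaring $l_\lambda$ to be the infimal (``maximal'') admissible segment and then asserting its image limits to $\partial v$ assumes the conclusion of the lemma rather than proving it; likewise, the difficulties you flag at the end (edge/inflection bookkeeping, existence of the maximal segment) are not where the real work lies.

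The paper closes this gap by contradiction with a rescaling argument: assuming every admissible segment stays at distance $\ge\epsilon$ from $\partial v$, one covers the gap by finitely many disjoint strips $[-d,d]\times[0,1]$ with derivative suprema $K_j$, shows via a blow-up at scale $K_j^{-1}$ (using that the boundary condition is $\Ordo(\lambda^{1/2})$ from the zero section and solvability of $\bar\partial$ with $L^2$-estimates) that each strip contributes area at least $CK_j^{2}$, notes that the strips must cover image length at least of order $\epsilon$ so that $\sum_j K_j$ is bounded below, and finally that $\inf_j K_j\ge M\lambda$ for every $M$ --- since otherwise a vertical segment with $|du_\lambda|\le 2M\lambda$ would satisfy $({\bf l1})$--$({\bf l3})$ within $\tfrac34\epsilon$ of $\partial v$, contrary to the assumption. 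Hence the gap would carry area exceeding every multiple of $\lambda$, contradicting the $\Ordo(\lambda)$ bound from Stokes' theorem. Some quantitative mechanism of this kind (area quantization per strip at the intermediate derivative scale) is indispensable; without it your sweeping construction produces a candidate segment but cannot show its image converges to a point of $\partial v$.
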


\begin{proof} We prove this lemma by contradiction: if the statement
  of the lemma does not hold, then the area difference between a limit
  disk and the disks before the limit violates an $\Ordo(\lambda)$
  bound derived from Stokes' theorem.

  Thus, we assume that the lemma does not hold. Then there exists
  $\epsilon>0$ such that for any sequence of $l_\lambda$ which
  satisfies $({\bf l1})-({\bf l3})$, some point on $l_\lambda$ maps a distance at
  least $\epsilon>0$ from $\partial v$. Consider a strip region
  $[-d,d]\times[0,1]\subset \Delta^{\lambda}_m$ for which some point
  converges to a point a distance $\delta$ from $\partial v$, where
  $\frac{\epsilon}{4}<\delta<\frac{\epsilon}{2}$. Let
  $\sup_{[-d,d]\times[0,1]}|du_\lambda|=K$. Then $K$ is not bounded by
  $M\lambda$ for any $M>0$. Since the difference between the area of
  the limit disk $v$ and that of $u_\lambda$ is of order of magnitude
  $\Ordo(\lambda)$ it follows that
  $|du_\lambda|=\Ordo(\lambda^{\frac12})$ from the usual bootstrap
  estimate. Thus $K=\Ordo(\lambda^{\frac12})$. Consider next the
  scaling of the target by $K^{-1}$. We get a sequence of maps $\hat
  u_\lambda$ from $[-d,d]\times[0,1]$ with bounded derivative. Note
  moreover that the boundary condition is $\Ordo(\lambda^{\frac12})$
  from the $0$-section. Changing coordinates to the standard
  $(\cc^{n},\rr^{n})$ respecting the complex structure at the limit
  point, we find that there are maps
  $f_\lambda\colon[-d,d]\times[0,1]\to\cc^{n}$ with the following
  properties
  \begin{itemize}
  \item
    $\sup_{[-d,d]\times[0,1]}|D^{k}f_\lambda|=\Ordo(\lambda^{\frac12})$,
    $k=0,1$.
  \item $u_\lambda+f_\lambda$ satisfies $\rr^{n}$ boundary conditions
  \item $\bar\partial (u_\lambda+f_\lambda)=\Ordo(\lambda^{\frac12})$.
  \end{itemize} It follows that $u_\lambda + f_\lambda$ converges to a
  holomorphic map with boundary on $\rr^{n}$, which takes $0$ to $0$
  and which has derivative of magnitude $1$ at $0$. Using solubility
  of the $\bar\partial$-equation in combination with $L^{2}$-estimates
  in terms of area we find that the area of $\hat u_\lambda$ must be
  uniformly bounded from below by a constant $C$. The area
  contribution to the original disks near the limit is thus at least
  $K^{2} C$.  Since $[-d,d] \times [0,1]$ covers a length along $L$ of
  at most $2Kd$, we may repeat the argument with many disjoint finite
  strips which together cover a finite length and with maximal
  derivatives $K_j$. We find that the area contribution is bounded
  from below by $C \sum K_j^{2}$ and that, since the length
  contribution is finite, we get:
  \[ 2d \sum K_j\ge \frac{\epsilon}{100}.
  \] Now,
  \[ C \sum K_j^{2}\ge C\inf_j\{K_j\}\sum K_j\ge C'\inf_j\{K_j\}.
  \] For any $M>0$, $\inf_j\{K_j\}\ge M\lambda$. To see this assume
  that it does not hold true. Then there is a sequence of vertical
  segments $l_\lambda$ such that $|du_\lambda|\le 2M\lambda$ with the
  property that the distance between $u(l_\lambda)$ and $\partial v$
  is at most $\frac34\epsilon$. This however contradicts our
  hypothesis. Consequently, the area contribution from the remaining
  part of the disk is not $\Ordo(\lambda)$, which contradicts Stokes'
  theorem.
\end{proof}

As a consequence we get the following:

\begin{cor}\label{cor:conv1} 
  Any sequence of rigid holomorphic disks
  $u_{\lambda}\colon\Delta^{\lambda}_m\to P$ with boundary on $L\cup
  L_\lambda$ and with one Morse puncture has a subsequence which
  converges to a rigid generalized disk.
\end{cor}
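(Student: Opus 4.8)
The plan is to assemble Corollary~\ref{cor:conv1} from the compactness statements already in hand, verifying only that the various limit pieces fit together into a single rigid generalized disk. First I would apply Lemma~\ref{lem:blowup} to the given sequence $u_\lambda$: after adding a uniformly bounded number of punctures we may assume the domains are standard domains $\Delta^\lambda_m$ (with a uniform bound on $m$, since the total number of punctures is controlled by the length of the longest Reeb chord divided by the length of the shortest) and that $|du_\lambda|$ is uniformly bounded. Passing to a subsequence, Gromov compactness then yields convergence, uniformly on compact subsets, to a (possibly broken, possibly constant) holomorphic limit $v$ with boundary on $L$; by Lemma~\ref{lem:one-punc} the disk $u_\lambda$ has exactly one further mixed puncture besides the Morse puncture, and exactly one Morse puncture by hypothesis, so the limit configuration has the combinatorial shape of a lifted generalized disk once we identify its flow-line part.

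Next I would locate the flow-line part. Lemma~\ref{lem:nogap} produces vertical segments $l_\lambda$ satisfying $({\bf l1})$--$({\bf l3})$ such that $u_\lambda(l_\lambda)$ converges to a point $p\in\partial v$. Corollary~\ref{cor:outconv}, together with Remark~\ref{r:extrapuncture} (which guarantees $\Delta^\lambda(l_\lambda)$ contains no added punctures), shows that on $\Delta^\lambda(l_\lambda,\log(\lambda^{-1}))$ the maps $u_\lambda$ converge to a (possibly constant) flow line $\gamma$ of $f_\lambda$; by the remark following Corollary~\ref{cor:outconv} and the fact that the other end of the Morse chord of $L\cup L_\lambda$ sits over a critical point of $f_\lambda$, the far end of $\gamma$ is a critical point of $f$. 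Thus $\gamma$ starts at the junction point $p\in\partial v$ and ends at a critical point, so $(v,\gamma)$ — read with the sheet labels induced by the lift of $u_\lambda|_{\partial\Delta^\lambda_m}$ to $L\cup L_\lambda$ — is a lifted generalized disk. (If $v$ is constant, $\partial v$ is by definition the double point over which the non-Morse mixed puncture sits, and the same description applies.)

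Finally I would check rigidity. Since each $u_\lambda$ is rigid, it lies in a $0$-dimensional moduli space, so the index formula of Lemma~\ref{lem:2pos} forces $\dim\ms_{A'}^{\alpha'}=0$ for the limit disk component carrying the Morse puncture data, while Lemma~\ref{lem:2-punc}-style dimension bookkeeping (cf.\ \cite[Proposition~3.18]{ekholm:morse-flow}) forces the flow line $\gamma$ to be rigid as well; comparing with the formal dimension formula \eqref{eqn:gen-formal-dim} gives $\dim((v,\gamma))=0$, so $(v,\gamma)$ is a rigid generalized disk in the sense of the genericity conditions $({\bf g1})$--$({\bf g2})$ of Lemma~\ref{lem:morsetv}. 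The main obstacle is the step that is packaged in Lemma~\ref{lem:nogap}: ruling out a ``gap'' between the limit disk $v$ and the limiting flow line, i.e.\ showing that the flow line actually emanates from $\partial v$ rather than from some point a definite distance away. That lemma is proved by a blow-up and area-accounting argument showing any such gap would force an area contribution that is not $\Ordo(\lambda)$, contradicting Stokes' theorem (Lemma~\ref{lem:stokes}); once that is granted, the rest of the corollary is bookkeeping with the dimension formulas and the definitions of lifted and lifted generalized disks.
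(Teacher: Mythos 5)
Your compactness skeleton coincides with the paper's: Lemma~\ref{lem:blowup}, Gromov compactness, Corollary~\ref{cor:outconv} together with Remark~\ref{r:extrapuncture}, and Lemma~\ref{lem:nogap} are exactly the ingredients the paper assembles. The genuine gap is in the final step. Gromov compactness only produces a \emph{possibly broken} disk part $v$, and a broken disk together with a flow line is not a generalized disk in the sense of Section~\ref{ssec:gen-disks} (the definition requires a single unbroken $u\in\ms_A(a;b_1,\dots,b_k)$). You acknowledge the possible breaking when you invoke compactness, but then silently treat $v$ as unbroken when you declare $(v,\gamma)$ a lifted generalized disk, and your rigidity paragraph never rules the breaking out. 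This is precisely the content of the paper's proof: the limit configuration inherits formal dimension $0$ from the rigid disks $u_\lambda$; if the disk part were broken, additivity of the index over the pieces (with a positive correction for each breaking, as in the corner charts of Subsection~\ref{sssec:cornerstr}) would force some piece --- a holomorphic disk on $L$ or the generalized-disk piece --- to have negative formal dimension, which is excluded by the transversality of Lemma~\ref{lem:tvmspc} and by the fact that the \emph{normalized} triple $(f_\lambda,g,J)$ has no generalized disks of formal dimension $<0$ (Lemma~\ref{lem:normal}). Hence the disk part is unbroken, the limit is a generalized disk of formal dimension $0$, and condition $({\bf g2})$ then gives that it is transversely cut out, i.e.\ rigid.

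A secondary problem is your dimension bookkeeping: the appeal to Lemma~\ref{lem:2pos} is off target, since that lemma concerns disks with two positive punctures on $L_0\cup L_1(f)$ and plays no role here; the disk part of the limit has boundary on $L$ alone and its dimension is governed by Lemma~\ref{lem:tvmspc}, while the relevant quantity is the formal dimension \eqref{eqn:gen-formal-dim} of the pair $(v,\gamma)$, which equals $0$ because the gradings of the mixed chords of $2L$ agree with those of the corresponding data for $L$. Once unbrokenness and formal dimension $0$ are in place, your conclusion via $({\bf g1})$--$({\bf g2})$ (through the normalization hypothesis, Lemma~\ref{lem:normal}, rather than Lemma~\ref{lem:morsetv} directly) is the right one and matches the paper.
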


\begin{proof} It follows from Lemmas \ref{cor:outconv} and
  \ref{lem:nogap} and from Gromov compactness that the limit gives a
  generalized disk. This generalized disk must furthermore have formal
  dimension $0$. Since the triple $(f_\lambda,g,J)$ is normalized, it is in particular generic with respect to rigid generalized disks, see Lemma \ref{lem:normal}, and it follows that the
  $J$-holomorphic disk part is not broken and that the generalized disk is
  transversely cut out.
\end{proof}

\subsection{From generalized to holomorphic disks}
\label{ssec:gen-to-hol}

Let $f_\lambda$, $0<\lambda\le 1$ be a $1$-parameter family of Morse
functions on $L$, $g$ be a metric on $L$, and $J$ an almost complex
structure on $P$ such that the triple $(f_\lambda,g,J)$ is normalized.
The goal of this section is to produce a unique rigid holomorphic disk
with boundary on $L\cup L_\lambda$ near each generalized holomorphic
disk determined by $(f_\lambda,g,J)$ for all sufficiently small
$\lambda>0$. We begin by associating to each generalized holomorphic
disk a family of domains and approximately holomorphic maps with
boundary on $L\cup L_\lambda$. We then define a functional analytic
space of variations of the approximately holomorphic map, show that
the linearized $\bar\partial_J$-operator is uniformly invertible on
this space, and derive a second derivative estimate. By Lemma
\ref{lem:FloerPicard}, the invertibility and second derivative
estimate gives a unique holomorphic disk in the functional analytic
neighborhood of the approximate solution, and we show that any
solution must lie in this neighborhood.

\subsubsection{Approximate solutions}

Consider a rigid generalized disk $(u,\gamma)$, where $u\colon
\Delta_{m-1}\to P$ is the holomorphic disk part with $m-1$ punctures
mapping to Reeb chords and $\gamma$ is the Morse flow line part. We
consider the following three cases separately:
\begin{itemize}
\item[$({\bf gd1})$] The map $u$ is constant,
\item[$({\bf gd2})$] The map $u$ is non-constant and $\gamma$ is
  constant, and
\item[$({\bf gd3})$] The map $u$ is non-constant and $\gamma$ is
  non-constant.
\end{itemize}
For $(u,\gamma)$ of type $({\bf gd1})$, the rigid generalized disk is
a flow line and existence of a unique holomorphic disk with boundary
on $L\cup L_\lambda$ near the rigid generalized disk follows from
\cite[Theorem 1.3]{ekholm:morse-flow}.

In cases $({\bf gd2})$ and $({\bf gd3})$, add a puncture to $u$ at the
junction point to obtain a map $u\colon\Delta_m\to P$. Fix a reference
point $0$ in $\Delta_m$ which $u$ maps to a point far from any
puncture and note that the derivative of $u$ is bounded. It follows
from the standard form of the Lagrangian projection and the complex
structure near junction points in $({\bf n2})$ that, in a half strip
neighborhood of the junction point $[0,\infty)\times\rr$, the map $u$
looks like:
\begin{equation}\label{e:locjunct}
  u(z)=\sum_{n<0} c_n e^{n\pi(z)},\quad z\in [0,\infty)
  \times\rr,\quad c_n\in\rr^{n},
\end{equation}
where $\rr^{n}$ corresponds to $L$. Similarly, near a Reeb chord
puncture as in $({\bf a2})$, the map $u=(u_1,\dots,u_n)$ looks like:
\begin{equation}\label{e:solcritp} 
  u_j(z)=\sum_{n\le 0} c_{j,n} e^{(-\theta_j+n\pi)z},\quad
  z\in[0,\infty) \times[0,1],\quad c_{j,n}\in\rr,
\end{equation}
where $0<\theta_j<\pi$. Consequently, there are vertical segments
$l_\lambda$ in a half strip neighborhood of any puncture at distance
$\Ordo(\log(\lambda^{-1}))$ from $0\in\Delta_m$ such that a finite
neighborhood of $l_\lambda$ maps into an $\Ordo(\lambda)$ neighborhood
of the corresponding special point.

In case $({\bf gd2})$, we construct approximately holomorphic maps
$w_\lambda\colon \Delta_m\to P$ with boundary on $L\cup L_\lambda$ as
follows. At each Reeb chord puncture of $u$ and at the junction point
consider vertical line segments $l_\lambda$ in $\Delta_m$. The
vertical line segment $l_\lambda$ subdivides $\Delta_m$ into an inner
component $\Delta_m^{0}(\lambda)$ containing $0$ and outer half strip
regions near punctures. We change the boundary condition of $u$ on
$\Delta_m^{0}(\lambda)$ according to the boundary lift of the rigid
disk. To this end we must move the boundary of
$u|_{\Delta_m^{0}(\lambda)}$ a distance $\Ordo(\lambda)$. Supporting
such a deformation in an $\Ordo(\lambda^{\frac12})$-neighborhood of
the boundary, one can achieve this while changing the derivative of
$u$ by at most $\Ordo(\lambda^{\frac12})$. Finally, we interpolate to
constant maps to double points in finite region in $\Delta_m$ near the
vertical segments $l_\lambda$. Then the derivative of the resulting
map in these finite regions is of size $\Ordo(\lambda)$. This
completes the definition of the approximately holomorphic maps
$w_\lambda\colon\Delta_m\to P$ in case $({\bf gd2})$.

In case $({\bf gd3})$, the construction of approximately holomorphic
maps $w_\lambda\colon\Delta_m\to P$ is a bit more involved. Here we
subdivide the domain into three pieces. First pick vertical segments
$l_\lambda$ in $\Delta_m$ near each Reeb chord puncture exactly as in
case $({\bf gd2})$. Near the junction point puncture we instead pick a
vertical segment $l$ which is mapped into a small but finite
$\epsilon$-neighborhood of the junction point. Such a segment lies at
distance $\Ordo(1)$ from $0\in\Delta_m$. Cut $\Delta_m$ off at $l$ and
$l_\lambda$ and denote the component containing $0$ by
$\Delta_m^{0}(\lambda)$. The two other pieces are half strips
$[0,d\lambda^{-1}]\times[0,1]$, where $d>0$ is a suitable constant and
a half infinite strip $[0,\infty)\times[0,1]$. To construct
$\Delta_m$, we glue $[0,d\lambda^{-1}]\times [0,1]$ to
$\Delta_m^{0}(\lambda)$ along $l$ and glue in $[0,\infty)\times[0,1]$
to complete the resulting domain.

On the piece $\Delta_{m}^{0}(\lambda)$, we define the almost
holomorphic map $w_\lambda^{0}\colon\Delta_m^{0}\to P$ by moving the
boundary condition of $u$ to $L\cup L_\lambda$ according to the
boundary lift of the rigid generalized disk and then interpolate to
constant maps at all Reeb chord punctures, exactly as in case $({\bf
  gd2})$.

Near the Morse flow line $\gamma$, cut off at a small distance from
the junction point. As in \cite[Section 6.1]{ekholm:morse-flow}, we
construct almost holomorphic maps
$w^{\gamma}_\lambda\colon[0,\infty)\times[0,1]\to P$ which agree with
the natural holomorphic strip over the gradient flow lines in the flat
metric, as in \cite[Sections 6.1.1 and 6.1.2]{ekholm:morse-flow}
outside the regions near plateau- and inflection points of $({\bf
  n4})$ and $({\bf n5})$, where it interpolates between these
solutions and satisfies $|\bar\partial_J
w^{\gamma}_\lambda|=\Ordo(\lambda)$.

Finally, consider the region near the junction point. We define a
holomorphic map $w^{\rm jun}_\lambda\colon
[0,d\lambda^{-1}]\times[0,1]\to P$ which satisfies the boundary
conditions. To define this map, we assume that the
$\cc^{n}$-coordinates $x+iy$ have been chosen so that $L$ corresponds
to $\rr^{n}=\{y_1=\dots=y_n=0\}$ and $L_\lambda$ corresponds to
$\{y_1=\lambda,y_2=\dots=y_n=0\}$. The holomorphic map is then
\begin{equation}\label{e:junctsol} w^{\rm jun}_\lambda(z)=(\lambda
z,0,\dots,0) + \sum_{n<0} c_n e^{n\pi z},\quad c_n\in\rr^{n}.
\end{equation} 

Near the junction point, the maps $w_\lambda^{0}$ and $w^{\rm
  jun}_\lambda$ (as well as $w_\lambda^{\gamma}$ and $w^{\rm
  jun}_\lambda$) are then of distance $\Ordo(\lambda)$ apart (see
\eqref{e:locjunct} and \eqref{e:junctsol}), and we interpolate between
them using a function of size $\Ordo(\lambda)$ on a finite
rectangle. The function resulting from this interpolation is
$w_\lambda\colon \Delta_m\to P$ in case $({\bf gd3})$.

As mentioned above, we will produce $J$-holomorphic disks near
$w_\lambda\colon\Delta_m\to P$ parametrizing a neighborhood of this
map by a weighted Sobolev space of vector fields. We next describe the
weight function in cases $({\bf gd2})$ and $({\bf gd3})$. In the
former case, this is straightforward: take $h\colon\Delta_m\to\rr$ to
be a function which equals $1$ on $\Delta_m(\lambda_0)$ for some fixed
$\lambda_0$. On the remaining strip regions of the form
$[0,\infty)\times[0,1]$ we take $h(\tau+it)=e^{\delta|\tau|}$ for
$\delta>0$, where $\delta\ll\theta_j$ for all $\theta_j$ as in
\eqref{e:solcritp} at any Reeb chord.  In the latter case, the
function $h\colon\Delta_m\to\rr$ equals $1$ on the piece
$\Delta_m^{0}(\lambda_0)$ for some fixed $\lambda_0$ and equals to
$e^{\delta|\tau|}$ for $\tau+it\in[0,\infty)\times[0,1]$ in the
neighborhood $[0,\infty)\times[0,1]$ of each Reeb chord puncture for
small $\delta>0$. In
$[0,\frac{d}{\lambda}]\times[0,1]\subset\Delta_m$, we let:
\begin{equation}\label{e:weightjunct}
h(\tau+it)=e^{\delta\left(\tfrac12d\lambda^{-1}-\left|\tau-\tfrac12d\lambda^{-1}\right|\right)}.
\end{equation}
In $[0,\infty)\times[0,1]$, the function $h$ has the same shape as in
\eqref{e:weightjunct} along edges where $w^{\gamma}_\lambda$ agrees
with a explicit solution and equals $1$ in a neighborhood of the
interpolation regions at plateau- and inflection points, as in
\cite[Section 6.3.1]{ekholm:morse-flow}.

Let $\|\cdot\|_{k,\delta}$ denote the Sobolev norm in the Sobolev space of functions with $k$ derivatives in $L^{2}$ with weight $h$.

\begin{lem} The approximately holomorphic function $w_\lambda$
satisfies
  \[ \|\bar\partial_J
w_\lambda\|_{1,\delta}=\Ordo(\lambda^{\frac34-\delta}(\log\lambda^{-1})^{\frac12})
  \]
\end{lem}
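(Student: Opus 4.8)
The plan is to use the fact that, by construction, $w_\lambda$ already equals an honest $J$-holomorphic map on all of $\Delta_m$ except on a handful of explicitly described transition regions, and to estimate $\bar\partial_J w_\lambda$ there region by region. First I would dispose of case $({\bf gd1})$: there $(u,\gamma)$ is a flow line and both the approximate solution and the estimate for its $\bar\partial$ are supplied directly by \cite[Theorem 1.3]{ekholm:morse-flow}, so it remains to treat $({\bf gd2})$ and $({\bf gd3})$. In those cases $\bar\partial_J w_\lambda$ is supported on: (i) the annulus of width $\Ordo(\lambda^{\frac12})$ along $\partial\Delta_m^{0}(\lambda)$ where the boundary condition of $u$ is deformed from $\Pi_P(L)$ to the boundary lift of the rigid generalized disk; (ii) the finitely many finite rectangles near the vertical segments $l_\lambda$ where $w_\lambda$ is interpolated to constant maps at the Reeb chord punctures, together with, in case $({\bf gd3})$, the finite rectangle near the junction point where $w^{0}_\lambda$ is glued to $w^{\rm jun}_\lambda$; and (iii) in case $({\bf gd3})$, the $\Ordo(\lambda)$-size plateau and inflection regions along $\gamma$ on which $w^{\gamma}_\lambda$ interpolates between the explicit holomorphic strips, cf.\ $({\bf n4})$, $({\bf n5})$ and \cite[Sections 6.1.1, 6.1.2]{ekholm:morse-flow}. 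Outside these regions $\bar\partial_J w_\lambda=0$.

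Next I would record the pointwise size of $\bar\partial_J w_\lambda$ and the area of each region. On (i), moving the boundary a distance $\Ordo(\lambda)$ across a strip of width $\Ordo(\lambda^{\frac12})$ changes the derivative of the map by $\Ordo(\lambda^{\frac12})$, so $|\bar\partial_J w_\lambda|=\Ordo(\lambda^{\frac12})$ there; the area of this annulus is $\Ordo(\lambda^{\frac12})$ times the length of $\partial\Delta_m^{0}(\lambda)$. The key geometric input is that, by the exponential asymptotics \eqref{e:locjunct} and \eqref{e:solcritp} of $u$ near the junction point and the Reeb chord punctures, each cutting segment $l_\lambda$ lies at distance $\Ordo(\log\lambda^{-1})$ from the reference point $0\in\Delta_m$; hence $\partial\Delta_m^{0}(\lambda)$ has length $\Ordo(\log\lambda^{-1})$ and the annulus has area $\Ordo(\lambda^{\frac12}\log\lambda^{-1})$. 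On (ii), $|\bar\partial_J w_\lambda|=\Ordo(\lambda)$ (the map has derivative $\Ordo(\lambda)$ there and the interpolating function is $\Ordo(\lambda)$) and the total area is $\Ordo(1)$. On (iii), $|\bar\partial_J w_\lambda|=\Ordo(\lambda)$ by the cited estimates and the total area is $\Ordo(\lambda)$. Finally, the weight function $h$ equals $1$ on the fixed part of $\Delta_m$ and, by the choice in \cite[Section 6.3.1]{ekholm:morse-flow}, near the plateau and inflection regions, while on the parts of (i) and (ii) that reach out toward the segments $l_\lambda$ one has $h\le e^{\delta\,\Ordo(\log\lambda^{-1})}=\Ordo(\lambda^{-\delta})$.

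To finish, I would sum the contributions to $\|\bar\partial_J w_\lambda\|_{1,\delta}^{2}$: region (i) contributes at most $\Ordo(\lambda)\cdot\Ordo(\lambda^{-2\delta})\cdot\Ordo(\lambda^{\frac12}\log\lambda^{-1})=\Ordo(\lambda^{\frac32-2\delta}\log\lambda^{-1})$, region (ii) at most $\Ordo(\lambda^{2-2\delta})$, and region (iii) at most $\Ordo(\lambda^{3})$; the first dominates, and taking square roots yields $\|\bar\partial_J w_\lambda\|_{1,\delta}=\Ordo(\lambda^{\frac34-\delta}(\log\lambda^{-1})^{\frac12})$. The step demanding the most care — and the main obstacle — is that $\|\cdot\|_{1,\delta}$ also measures one covariant derivative of $\bar\partial_J w_\lambda$, so one must verify that on each region the cutoff and interpolation functions entering the construction (chosen following \cite[Proof of Proposition 4.6]{ees:pxr} and \cite[Section 6]{ekholm:morse-flow}) have their higher domain-derivatives controlled well enough that $|\nabla(\bar\partial_J w_\lambda)|$, integrated against $h^{2}$ over these small regions, still obeys the same order of magnitude; one must also check that the pointwise bound $\Ordo(\lambda^{\frac12})$ on the boundary deformation and the length bound $\Ordo(\log\lambda^{-1})$ on $\partial\Delta_m^{0}(\lambda)$ hold uniformly over the finitely many rigid generalized disks determined by $(f_\lambda,g,J)$ and over all sufficiently small $\lambda>0$.
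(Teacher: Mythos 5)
Your proposal is correct and follows essentially the same route as the paper: the paper's proof is exactly your region-by-region estimate, with the dominant contribution coming from the $\Ordo(\lambda^{\frac12})$-wide collar where the boundary condition is deformed, estimated by the same product $\Ordo(\lambda)\cdot\Ordo(\lambda^{-2\delta})\cdot\Ordo(\lambda^{\frac12})\cdot\Ordo(\log\lambda^{-1})$ under the square root, while the finite interpolation rectangles (your regions (ii)--(iii), which the paper lumps together) are dismissed as lower order $\Ordo(\lambda)$. The derivative term in $\|\cdot\|_{1,\delta}$ that you flag as the delicate point is not treated any more explicitly in the paper's own (very terse) proof.
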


\begin{proof} The regions where the map is non-holomorphic are of two kinds: finite rectangles where the
size of the derivatives are $\Ordo(\lambda)$ and an
$\Ordo(\lambda^{\frac12})$-neighborhood of the boundary in
$\Delta_m(d)$ cut off at all $l_\lambda^{\rm Reeb}$. The former
regions give a contribution of size $\Ordo(\lambda)$ the latter gives
the contribution
  \[
\sqrt{\Ordo(\lambda)\cdot\Ordo(\lambda^{-2\delta})\cdot\Ordo(\lambda^{\frac12})\cdot\Ordo(\log\lambda^{-1})},
  \] where the first factor is the square of the size of the
deformation, the second factor is the maximum value of the weight function $h$ at a point where $w_\lambda$ is not holomorphic, and the product of the last two estimates the area of the region in which $w_\lambda$ is non-holomorphic.
\end{proof}

We associate a variation space $\hat\sblv_{2, \delta}$ to
$w_\lambda$. This is a direct sum
\[ \hat\sblv_{2,\delta}=\sblv_{2,\delta}\oplus V_{\rm con} \oplus
V_{\rm sol},
\] where the summands are the following:
\begin{itemize}
\item In both cases $({\bf gd2})$ and $({\bf gd3})$,
  $\sblv_{2,\delta}$ is a Sobolev space of vector fields $v$ along
  $w_\lambda$ with two derivatives in $L^{2}$ weighted by $h$ which
  satisfy the following additional conditions: $v$ is tangent to $L$
  along the boundary, $\bar\nabla_J v=0$ along the boundary (as in
  \cite[Lemma 3.2]{ees:pxr} and \cite[Section
  6.3.1]{ekholm:morse-flow}). In case $({\bf gd3})$, the vector fields
  $v$ satisfy the following additional conditions: $v$ vanishes at one
  boundary point midway between plateau-points and inflection points,
  midway between inflection points (as in \cite[Section
  6.3.1]{ekholm:morse-flow}) as well as at a boundary point in the
  middle of the strip at the junction point. In other words, $v$
  vanishes at one of the boundary points of every vertical segment
  along which the weight function $h$ has a local maximum.
\item In both cases $({\bf gd2})$ and $({\bf gd3})$, $V_{\rm con}$ is
  the space of conformal variations of $\Delta_m$ (see \cite[Section
  5.6]{ees:high-d-analysis} for a description).
\item In case $({\bf gd2})$, $V_{\rm sol}=0$. In case $({\bf gd3})$,
  $V_{\rm sol}$ is a finite dimensional space consisting of cut off
  constant solutions of the $\bar\pa_J$-equation supported in the
  regions where the weight function $h$ is large, as follows. Along
  the flow line part, there are $n$-dimensional cut-off constant
  solutions between any two plateau/inflection-points exactly as in
  \cite[Section 6.3.2]{ekholm:morse-flow}. There is also an
  $n$-dimensional space corresponding to the junction point. More
  precisely, in the coordinates of \eqref{e:junctsol}, this
  $n$-dimensional space is spanned by
  \[
  \beta(1,0,\dots,0),\,\,\beta(0,1,0,\dots,0),\,\,\dots\,\,,\beta(0,\dots,0,1),
  \] 
  where $\beta$ is a cut off function equal to $1$ in the region
  $[0,d\lambda^{-1}]\times[0,1]$ corresponding to the junction point
  and equal to $0$ outside a uniformly finite neighborhood of it. We
  equip $V_{\rm sol}$ with the supremum norm.
\end{itemize}

Choosing a $1$-parameter family of Riemannian metrics $G^{\sigma}$,
$0\le \sigma\le 1$, on $P$ as in \cite[Section 3.1.2]{ees:pxr}, see
also \cite[Section 5.2]{ees:high-d-analysis}, and an extension
$z\colon\Delta_m\to\rr$ of the $z$-coordinate of the boundary lift of
$w_\lambda$, we define an exponential map:
\[
\exp(v)=\exp^{G^{\sigma(z(\zeta))}}_{w_\lambda(\zeta)}(v(\zeta)),
\] 
where $\sigma\colon\rr\to[0,1]$, and where $\exp^{G^{\sigma}}$ is the
exponential map in the metric $G^{\sigma}$, which gives a local chart
in the configuration space of maps around $w_\lambda$. In particular,
as in Subsection \ref{sssec:cornermap}, we think of the (non-linear)
$\bar\pa_J$-operator on $\hat\sblv_{2,\delta}$ as the operator
\[
\bar\pa_J(v)=d\exp(v)+Jd\exp(v)j.
\]

\subsubsection{Uniform invertibility} Let $L\bar\partial_J$ denote the
linearization of the $\bar\partial_J$-operator acting on elements
$v\in \hat\sblv_{2,\delta}$. This map takes vector fields in
$\hat\sblv_{2,\delta}$ to complex anti-linear maps $T\Delta_m\to
w_\lambda^{\ast} TP$. We pick a trivialization of $T\Delta_m$ and
identify the complex anti-linear map with the image of the
trivializing vector field. In this way, we view the $L\bar\pa_J$ as a
map $\hat\sblv_{2,\delta}\to\sblv_{1,\delta}$ where $\sblv_{1,\delta}$
is the Sobolev space of vector fields along $w_\lambda$ with one
derivative in $L^{2}$ weighted by $h$.

\begin{lem}\label{lem:unifinv} The differential
  \[ L\bar\partial_J\colon\hat\sblv_{2,\delta}\to\sblv_{1,\delta}
  \] is uniformly invertible.
\end{lem}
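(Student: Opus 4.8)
The plan is to prove the lemma by establishing a uniform a priori estimate $\|L\bar\partial_J v\|_{1,\delta}\ge C\|v\|_{2,\delta}$ for all $v\in\hat\sblv_{2,\delta}$, with $C>0$ independent of $\lambda$ for $\lambda$ small, and then to note that $L\bar\partial_J$ is a Fredholm operator of index $0$. The index count is exactly the formal dimension of the rigid generalized disk $(u,\gamma)$, which is $0$ by hypothesis: the extra summands $V_{\rm con}$ and $V_{\rm sol}$ in $\hat\sblv_{2,\delta}$ are balanced against the conditions that vector fields in $\sblv_{2,\delta}$ vanish at a boundary point of every vertical segment along which the weight $h$ has a local maximum. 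Granting index $0$, the uniform estimate forces $\krn(L\bar\partial_J)=0$ and hence surjectivity with a uniformly bounded right inverse, which is the assertion.

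To prove the estimate I would argue by contradiction and localization, following the scheme of \cite[Section 6.3]{ekholm:morse-flow} together with \cite[Section 4]{ees:pxr}. Suppose there were $v_\lambda\in\hat\sblv_{2,\delta}$ with $\|v_\lambda\|_{2,\delta}=1$ and $\|L\bar\partial_J v_\lambda\|_{1,\delta}\to 0$. Using cut-off functions subordinate to the decomposition of $\Delta_m$ into its constituent regions --- the core piece $\Delta_m^{0}(\lambda)$ over the holomorphic disk $u$, the junction strip $[0,d\lambda^{-1}]\times[0,1]$ (in case $({\bf gd3})$), the flow-line strip with its plateau- and inflection-point subregions from $({\bf n4})$--$({\bf n5})$, and the outgoing Reeb-chord half-strips --- write $v_\lambda=\sum_i v_\lambda^{(i)}$ and estimate each piece separately. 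The weight function $h$ was defined (see \eqref{e:weightjunct}) precisely so that it attains interior local maxima along the vertical segments separating these regions; combined with the vanishing condition at a boundary point of each such segment, this makes the commutators of $L\bar\partial_J$ with the cut-offs contribute only terms of size $\Ordo(\delta)\|v_\lambda\|_{2,\delta}$ plus exponentially small interface terms, so that the pieces decouple to leading order.

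The per-piece estimates then come from known uniform-invertibility statements. On the core piece the relevant operator is the linearized $\bar\partial_J$ at $u$ with the boundary and asymptotic conditions of $\ms_A(a;b_1,\dots,b_k)$ together with matching of the evaluation $\ev$ at the junction point to the flow line $\gamma$; since $(u,\gamma)$ is a rigid, transversely cut-out generalized disk (conditions $({\bf g1})$--$({\bf g2})$ of Lemma~\ref{lem:morsetv}, preserved by Lemma~\ref{lem:normal}), this operator is injective, and because $u$ ranges over a fixed compact family its inverse is bounded independently of $\lambda$. On the flow-line strip, its plateau/inflection subregions, and the corresponding $V_{\rm sol}$ directions, the required uniform invertibility is furnished by the estimates of \cite[Section 6.3]{ekholm:morse-flow}, the $n$-dimensional cut-off constant solutions in $V_{\rm sol}$ absorbing the near-kernel created by the long thin strips. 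On the junction strip, rescaling $z\mapsto\lambda^{-1}z$ turns $[0,d\lambda^{-1}]\times[0,1]$ into the fixed strip $[0,d]\times[0,1]$ and, by \eqref{e:junctsol}, turns $w^{\rm jun}_\lambda$ into $z\mapsto(z,0,\dots,0)$ plus an exponentially small tail (cf. \eqref{e:locjunct}); the linearized operator is then the standard Cauchy--Riemann operator on a fixed strip with totally real boundary conditions $\rr^{n}$ and its translate, whose only kernel consists of constant sections transverse to $\rr^{n}$ --- and that kernel is exactly the $n$-dimensional model summand of $V_{\rm sol}$ attached at the junction, with the boundary point where $h$ peaks killed by the vanishing condition. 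Summing the pieces and absorbing the $\Ordo(\delta)$ commutator errors together with the non-holomorphicity errors (which are $\Ordo(\lambda^{1/2-\delta})$ or better, by the estimate proved just before this lemma) gives $1=\|v_\lambda\|_{2,\delta}\le\tfrac12\|v_\lambda\|_{2,\delta}+\ordo(1)$, a contradiction. I expect the main obstacle to be precisely the analysis at the junction point: arranging the cut-offs and the weight so that the core disk $u$, the $\Ordo(\lambda^{-1})$-long junction strip, and the flow line decouple uniformly, and verifying that the kernel of the rescaled junction operator is captured \emph{exactly} by the $n$-dimensional summand of $V_{\rm sol}$ --- this is the one interface not already present in \cite{ekholm:morse-flow} or \cite{ees:pxr}, and it is where the definitions of $\hat\sblv_{2,\delta}$, $V_{\rm sol}$, and $h$ must fit together.
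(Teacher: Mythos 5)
Your overall architecture coincides with the paper's proof: argue by contradiction from a sequence with $\|v_\lambda\|_{2,\delta}=1$ and $\|L\bar\partial_J v_\lambda\|_{1,\delta}\to 0$, split $v_\lambda$ by cut-off functions into a disk piece, a junction piece, and a Morse piece, control the disk and Morse pieces through the transversality of the evaluation map $\ev$ with the (un)stable manifolds (that is, through the rigidity and transversality of the generalized disk furnished by Lemmas~\ref{lem:morsetv} and~\ref{lem:normal}), and let the cut-off constant solutions in $V_{\rm sol}$, together with the peaked weight $h$, absorb the near-kernel created by the long strip regions. Making the index-zero count explicit, so that the uniform injectivity estimate also yields surjectivity with a bounded right inverse, is a useful supplement that the paper leaves implicit.

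The one step that does not work as written is your treatment of the junction strip: no rescaling $z\mapsto\lambda^{-1}z$ (nor any conformal map) can turn $[0,d\lambda^{-1}]\times[0,1]$ into a fixed strip $[0,d]\times[0,1]$, since the conformal modulus $d\lambda^{-1}$ of the junction region tends to infinity; a dilation of the plane sends the long strip to a strip of width $\Ordo(\lambda)$, not of width $1$, so there is no ``standard Cauchy--Riemann operator on a fixed strip'' to appeal to. The paper's argument at this point is the correct replacement: include $[0,d\lambda^{-1}]\times[0,1]$ into the infinite strip $\rr\times[0,1]$ centered at its midpoint, note that the $\bar\partial$-operator with $\rr^{n}$ boundary conditions and small exponential weight $e^{-\delta|\tau|}$ (this is exactly the shape of $h$ in \eqref{e:weightjunct}) is invertible on the complement of the cut-off constant solutions, and observe that those constants are precisely the $n$-dimensional junction summand of $V_{\rm sol}$, whose coefficients are forced to vanish in the limit by the transversality of $\ev$ with the (un)stable manifold. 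Your conclusion for this region --- kernel captured exactly by $V_{\rm sol}$ and uniform control of the rest --- is right; only the mechanism must be the weighted infinite-strip argument rather than a conformal rescaling.
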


\begin{proof} The proof in case $({\bf gd3})$ is similar to the proof
  of \cite[Proposition 6.20]{ekholm:morse-flow}, so we just give an
  outline. Recall that the domain of $w_\lambda$ was built out of
  three pieces $\Delta_m=\Delta_m^{0}\cup[0,d\lambda^{-1}]\times[0,1]
  \cup[0,\infty)\times[0,1]$. Consider a variation $v_\lambda$ of
  $w_\lambda$. Write $v_\lambda=v^{\rm 0}_\lambda+v^{\rm
    jun}_\lambda+v^{\rm mo}_\lambda$ where we use cut off functions to
  subdivide $v$ into a disk-piece $v^{0}$ supported in a neighborhood
  of $\Delta_m^{0}$, a junction-piece $v^{\rm jun}$ supported in a
  neighborhood of $[0,d\lambda^{-1}]\times[0,1]$, and a Morse-piece
  $v^{\rm mo}$ supported in a neighborhood of
  $[0,\infty)\times[0,1]$. If the operator is not uniformly
  invertible, then there is a sequence of variation maps $v_\lambda$
  such that
  \begin{equation}\label{e:contr} \|v_\lambda\|_{2,\delta}=1,\quad
    \|L\bar\partial_J v_\lambda\|_{1,\delta}\to 0.
  \end{equation}
  Parametrize a neighborhood of the holomorphic disk part using a
  Sobolev space with small positive exponential weight near the added
  marked point and cut off constant solutions transverse to the
  disk. We infer from the properties on the disk part that
  $v^{0}_\lambda$ has non-zero component along the cut-off and
  conformal solutions at the junction point. Similarly, $v^{\rm
    mo}_\lambda$ has non-zero component in the tangent directions of
  the (un)stable manifold in which it lies. The components of the
  cut-off solutions correspond to cut-off solutions at the junction
  point in the space of variations of $w_\lambda$. Since the
  evaluation map from the moduli space of holomorphic disks is
  transverse to the (un)stable manifold at the junction point, it
  follows that the component of $v_\lambda^{\rm jun}$ along the cut
  off solutions must go to $0$ with $\lambda$. We conclude from this
  that $v^{0}_\lambda\to 0$ and $v^{\rm mo}_\lambda\to 0$. Consider
  the affine inclusion
  $[0,d\lambda^{-1}]\times[0,1]\subset\rr\times[0,1]$ taking
  $\frac{d}{2\lambda}$ to $0$. Since the $\bar\partial$-operator on
  $\rr\times[0,1]$ with $\rr^{n}$ boundary conditions acting on a
  Sobolev space with weight $e^{-|\tau|}$, $\tau+it\in\rr\times[0,1]$
  is invertible on the complement of cut off solutions, we conclude
  that $v^{\rm jun}_\lambda\to 0$ as well. This contradicts
  \eqref{e:contr}.

  The proof in case $({\bf gd2})$ is similar but simpler.
\end{proof}

\subsubsection{Existence and uniqueness of solutions near generalized
  disks} 

As mentioned above, after trivializing the bundle of complex
anti-linear maps over a neighborhood of $w_\lambda$, see Subsection
\ref{sssec:cornermap}, we consider the $\bar\partial_J$-operator on
maps in a neighborhood of $w_{\lambda}$ as a map
\[
f\colon\hat\sblv_{2,\delta}\to\sblv_{1,\delta},
\] 
with $w_\delta$ corresponding to $0\in\hat\sblv_{2,\delta}$.

\begin{lem}\label{lem:quadest} There exists a constant $C>0$ such that
  \[ f(v)=f(0)+ df(v) + N(v),
  \] where
  \[ \|N(v_1)-N(v_2)\|_{1,\delta}\le
C(\|v_1\|_{2,\delta}+\|v_2\|_{2,\delta})\|v_1-v_2\|_{2,\delta}.
  \]
\end{lem}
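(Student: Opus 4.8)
The plan is to follow the argument for the analogous quadratic estimate in \cite[Proof of Proposition~4.6]{ees:pxr} (see also \cite[Section~6.3]{ekholm:morse-flow}); the one genuinely new point is that the constant $C$ must be controlled uniformly as $\lambda\to 0$, even though both the domain $\Delta_m$ and the weight function $h$ depend on $\lambda$. First I would recall that, after trivializing the bundle of complex anti-linear maps near $w_\lambda$ and using the exponential map $\exp(v)=\exp^{G^{\sigma(z(\zeta))}}_{w_\lambda(\zeta)}(v(\zeta))$ introduced above, the operator $f$ has the pointwise form $f(v)(\zeta)=F\bigl(\zeta,v(\zeta),\nabla v(\zeta)\bigr)$, where $F$ is smooth in its arguments, affine in the $\nabla v$-slot, and assembled from $J$, the family of metrics $G^{\sigma}$, their first derivatives, and the $1$-jet of $w_\lambda$. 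Since $|dw_\lambda|=\Ordo(1)$ on all of $\Delta_m$ and $J$ and the $G^{\sigma}$ have uniformly bounded $C^{2}$-norms, every coefficient appearing in $F$ is uniformly $C^{1}$-bounded independently of $\lambda$. Taylor expanding $F$ in the $(v,\nabla v)$-variables about $0$ gives $f(v)=f(0)+df(v)+N(v)$, where $df=L\bar\partial_J$ is the linearized operator of Lemma~\ref{lem:unifinv} and the remainder $N$ collects the terms of order $\ge 2$, so $N(0)=0$ and $dN(0)=0$; concretely $N(v)$ is a finite sum of terms, each a uniformly bounded coefficient times $v\otimes v$, $v\otimes\nabla v$, or a monomial in $v$ of degree $\ge 2$ with at most one additional factor of $\nabla v$.

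Next I would write $N(v_1)-N(v_2)=\int_0^1 dN\bigl(v_2+t(v_1-v_2)\bigr)(v_1-v_2)\,dt$. Because $dN(0)=0$ and $dN$ is smooth with uniformly bounded derivative, the pointwise bound $\bigl|dN(w)(\xi)\bigr|\le C\bigl(|w|+|\nabla w|\bigr)\bigl(|\xi|+|\nabla\xi|\bigr)$ holds with $C$ depending only on the uniform $C^{2}$-bounds above. Substituting $w=v_2+t(v_1-v_2)$ and $\xi=v_1-v_2$ and integrating reduces the lemma to a weighted Sobolev multiplication estimate of the schematic form $\|h\,\phi\,\psi\|_{L^{2}}\le C\,\|\phi\|_{2,\delta}\,\|\psi\|_{1,\delta}$, together with its analogue carrying one extra derivative and the analogues for the cubic and higher terms, all with $C$ independent of $\lambda$. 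To prove these I would argue piece by piece on the decomposition $\Delta_m=\Delta_m^{0}(\lambda_0)\cup[0,d\lambda^{-1}]\times[0,1]\cup[0,\infty)\times[0,1]$ together with the finitely many plateau- and inflection-regions. In real dimension two, $\sblv_{2,\delta}$ controls both $C^{0}$ and $W^{1,4}$ on each piece with a Sobolev constant that does not degenerate as $\lambda\to 0$, since each piece has bounded geometry and is a translate of a standard strip of uniformly bounded width. Using $h\ge 1$ one absorbs the weight into the two factors: on the compact piece $\|h\phi\psi\|_{L^{2}}\le\|\phi\|_{C^{0}}\|h\psi\|_{L^{2}}$; on a strip where $h=e^{\delta|\tau|}$ one splits $h=e^{\delta|\tau|/2}\cdot e^{\delta|\tau|/2}$ and estimates $\|h\phi\psi\|_{L^{2}}\le\|e^{\delta|\tau|/2}\phi\|_{L^{4}}\,\|e^{\delta|\tau|/2}\psi\|_{L^{4}}$; and on the junction rectangle the same splitting works since there $h$ is a fixed-rate exponential that first grows and then decays. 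Summing over the finitely many pieces --- their number being bounded independently of $\lambda$, determined by the fixed rigid generalized disk --- yields the global estimate, and hence the lemma.

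I expect the main obstacle to be exactly this uniformity bookkeeping: one must check that neither the Sobolev embedding constants on the growing junction rectangle $[0,d\lambda^{-1}]\times[0,1]$ nor the interplay between the $\lambda$-dependent weight $h$ and the multiplication estimates degenerate. Both points are handled by the structural properties built into $h$ in Subsection~\ref{ssec:gen-to-hol} --- on each piece $h$ equals $1$ or a fixed-rate exponential, and all pieces have uniformly bounded width --- exactly as in \cite[Section~6.3]{ekholm:morse-flow}. Once the reduction to the multiplication estimates above is in place, the remaining verification is a routine if lengthy adaptation of that reference, which I would cite for the details rather than reproduce here.
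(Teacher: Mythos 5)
Your reduction to weighted multiplication estimates handles one of the two issues correctly: since $h\ge 1$, the weight enters the left-hand side linearly and the right-hand side quadratically, so splitting $h=h^{1/2}\cdot h^{1/2}$ and absorbing each half into a factor (with uniform Sobolev constants on strips of bounded width) is exactly the mechanism the paper invokes. But there is a genuine gap: the operator $f$ is defined on $\hat\sblv_{2,\delta}=\sblv_{2,\delta}\oplus V_{\rm con}\oplus V_{\rm sol}$, and the summand $V_{\rm sol}$ of cut-off constant solutions carries only the \emph{supremum} norm while being supported precisely where $h$ is exponentially large (of size up to $e^{\delta d/(2\lambda)}$ in the middle of the junction strip and along the flow-line strips). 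Your argument bounds $N$ pointwise by $C(|w|+|\nabla w|)(|\xi|+|\nabla\xi|)$ with uniformly bounded coefficients and then estimates products in the weighted norms; applied to a variation $v$ with a $V_{\rm sol}$-component this produces the weighted Sobolev norm of a cut-off constant solution on the right, which is \emph{not} controlled by its sup norm uniformly in $\lambda$. Concretely, if the quadratic coefficients of $N$ were merely bounded (rather than vanishing) in the large-weight region, then taking $v_1$ a unit cut-off solution and $v_2=0$ gives $\|N(v_1)\|_{1,\delta}\sim\|h\|_{L^{2}}$ over the junction strip, which blows up as $\lambda\to 0$ while your right-hand side stays $\Ordo(1)$; so the lemma would fail for the constant $C$ you produce.

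What rescues the estimate — and what your proposal does not use — is that in exactly those large-weight regions the problem is genuinely linear: $J$ is standard there, the boundary conditions are affine ($\rr^{n}$ and a parallel translate at the junction, graphs of differentials of quadratic functions along the flow strips, by the normal forms $({\bf n2})$, $({\bf n4})$--$({\bf n6})$), so the cut-off elements of $V_{\rm sol}$ are true solutions and the nonlinear remainder $N$ is supported only in the finite interpolation regions where $h$ is bounded independently of $\lambda$. This is the content of the paper's remark that ``the cut-off solutions are true solutions except in finite regions where the weight is bounded,'' and it is the missing ingredient needed to make your pointwise-plus-multiplication scheme yield a $\lambda$-uniform constant on all of $\hat\sblv_{2,\delta}$ (the finite-dimensional summand $V_{\rm con}$ needs the analogous, easier, remark). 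With that support statement added, your argument goes through; without it, the step from the pointwise bound on $dN$ to the claimed estimate is not valid on the $V_{\rm sol}$ directions.
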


\begin{proof} This is a standard argument in the case there are no
  weights and no cut-off solutions. However, as the weight function is
  $\ge 1$, the left hand side is linear in the weight and the right
  hand side is quadratic, so the weight does not interfere with the
  estimate.  The cut-off solutions are true solutions except in finite
  regions where the weight is bounded. We conclude that the estimate
  holds.
\end{proof}

\begin{cor}\label{cor:exist} There exists a unique disk in a finite
  $\|\cdot\|_{2,\delta}$-neighborhood of $w_\lambda$ for all
  sufficiently small $\lambda>0$.
\end{cor}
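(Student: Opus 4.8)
The plan is to deduce the corollary directly from Floer's Picard lemma (Lemma~\ref{lem:FloerPicard}), applied to the non-linear $\bar\partial_J$-operator
\[
f\colon\hat\sblv_{2,\delta}\longrightarrow\sblv_{1,\delta}
\]
written in the chart around $w_\lambda$ constructed above, so that $w_\lambda$ corresponds to $0\in\hat\sblv_{2,\delta}$ and $f(0)=\bar\partial_J w_\lambda$. All three hypotheses of Lemma~\ref{lem:FloerPicard} have been prepared. First, the linearization $df=L\bar\partial_J$ is surjective and has a bounded right inverse $Q$ by Lemma~\ref{lem:unifinv}, with $\|Q\|$ bounded uniformly in $\lambda$. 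Second, the non-linear term $N$ in the Taylor expansion $f(v)=f(0)+df(v)+N(v)$ satisfies the quadratic estimate \eqref{e:QuadraticFP} with a constant $C$ independent of $\lambda$, by Lemma~\ref{lem:quadest}. Third, using the estimate $\|\bar\partial_J w_\lambda\|_{1,\delta}=\Ordo\bigl(\lambda^{\frac34-\delta}(\log\lambda^{-1})^{\frac12}\bigr)$ established just above, one gets
\[
\|Qf(0)\|_{2,\delta}\le\|Q\|\,\|\bar\partial_J w_\lambda\|_{1,\delta}=\Ordo\bigl(\lambda^{\frac34-\delta}(\log\lambda^{-1})^{\frac12}\bigr),
\]
which is $\le\tfrac{1}{8C}$ once $\lambda>0$ is sufficiently small.

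Granting these inputs, Lemma~\ref{lem:FloerPicard} produces, for every sufficiently small $\lambda$, an $\epsilon>0$ — and since $C$ may be taken independent of $\lambda$, this $\epsilon$ can be chosen independent of $\lambda$, hence ``finite'' in the sense of a neighborhood whose size does not shrink with $\lambda$ — such that $f^{-1}(0)\cap B(0;\epsilon)$ is a smooth submanifold diffeomorphic to the $\epsilon$-ball in $\krn(df)$. But uniform invertibility in Lemma~\ref{lem:unifinv} means in particular that $df$ is injective, so $\krn(df)=0$ and $f^{-1}(0)\cap B(0;\epsilon)=\{v_\lambda\}$ is a single point. By the construction of the chart and of the variation space $\hat\sblv_{2,\delta}$ — vector fields tangent to $L\cup L_\lambda$ along the boundary, with the asymptotic and vanishing conditions at punctures and at the maxima of the weight function $h$ built in — the map $\exp(v_\lambda)$ is a $J$-holomorphic disk with boundary on $L\cup L_\lambda$ lying in a fixed-size $\|\cdot\|_{2,\delta}$-neighborhood of $w_\lambda$, and it is the unique such disk.

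Two remarks complete the picture. The argument above handles generalized disks of types $({\bf gd2})$ and $({\bf gd3})$; for type $({\bf gd1})$ (constant holomorphic part) the assertion is exactly \cite[Theorem~1.3]{ekholm:morse-flow}, so no new work is needed there. The only genuine subtlety — and what I expect to be the main point to get right — is that every constant entering the application ($\|Q\|$ and $C$, hence also the admissible $\epsilon$) must be bounded as $\lambda\to 0$; this uniformity is precisely what Lemmas~\ref{lem:unifinv} and~\ref{lem:quadest} supply, and it is the reason for the careful choice of the weight function $h$, the vanishing of the variation fields at the local maxima of $h$, and the auxiliary space $V_{\rm sol}$ of cut-off constant solutions. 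Thus the corollary amounts to assembling those uniform estimates into the hypotheses of Floer's Picard lemma.
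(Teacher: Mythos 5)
Your proposal is correct and follows essentially the same route as the paper, whose proof of this corollary is precisely the combination of Lemma~\ref{lem:quadest} with Floer's Picard lemma (Lemma~\ref{lem:FloerPicard}), fed by the uniform invertibility of Lemma~\ref{lem:unifinv} and the $\Ordo\bigl(\lambda^{\frac34-\delta}(\log\lambda^{-1})^{\frac12}\bigr)$ bound on $\|\bar\partial_J w_\lambda\|_{1,\delta}$. Your added details (uniformity of the constants in $\lambda$, triviality of $\krn(df)$, and the separate treatment of case $({\bf gd1})$ via \cite{ekholm:morse-flow}) simply make explicit what the paper leaves implicit.
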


\begin{proof} This is a consequence of Lemma \ref{lem:quadest} in
  combination with Lemma \ref{lem:FloerPicard}.
\end{proof}

\begin{lem}\label{lem:unique} For sufficiently small $\lambda>0$, if a
  holomorphic disk lies in a sufficiently small $C^{0}$-
  neighborhood of $w_\lambda$, then it lies inside a small
  $\|\cdot\|_{2,\delta}$-neighborhood of $w_\lambda$.
\end{lem}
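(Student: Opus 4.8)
The plan is to write the given $J$-holomorphic disk $u$ as $u=\exp(v)$ for a section $v$ in the local chart around $w_\lambda$ introduced before Lemma~\ref{lem:quadest}, and then to upgrade the hypothesized $C^{0}$-smallness of $v$ to $\|v\|_{2,\delta}$-smallness. First I would fix the domain: since $u$ lies $C^{0}$-close to $w_\lambda$, the marked-point conditions used to stabilize the source domains (the extra punctures near the positive puncture, together with the vertical-segment/intersection constraints entering the construction of $w_\lambda$) single out a conformal representative $\Delta_m$ for $u$ that differs from that of $w_\lambda$ only by an element of the finite-dimensional space $V_{\rm con}$. Thus $u=\exp(v)$ with $v\in\hat\sblv_{2,\delta}$, and $v$ is $C^{0}$-small by hypothesis; moreover $\bar\partial_J u=0$ gives the equation $L\bar\partial_J v=-\bar\partial_J w_\lambda-N(v)$. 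The main difficulty is that the weight function $h$ grows like $e^{\delta d/(2\lambda)}$ on the long junction strip $[0,d\lambda^{-1}]\times[0,1]$ and on the flow-line half strip in case $({\bf gd3})$, so pointwise smallness of $v$ does not by itself bound $\|v\|_{2,\delta}$; this is exactly where one must use the splitting $\hat\sblv_{2,\delta}=\sblv_{2,\delta}\oplus V_{\rm con}\oplus V_{\rm sol}$.

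On the compact part of $\Delta_m$ --- the piece $\Delta_m^{0}(\lambda_0)$ for a fixed small $\lambda_0$, where $h\equiv 1$ --- standard interior and boundary $W^{2,2}$-estimates for the Cauchy--Riemann operator with totally real boundary conditions (as in \cite[Section 3]{ees:pxr}), together with the $C^{0}$-smallness of $v$ controlling the nonlinear term $N(v)$ and the uniform smallness of $\bar\partial_J w_\lambda$ established above, bound $\|v\|_{2,\delta}$ over this region by a small multiple of $\|v\|_{C^{0}}+\|\bar\partial_J w_\lambda\|_{1,\delta}$. On the strip-like ends at the Reeb-chord punctures, both $u$ and $w_\lambda$ converge to the same Reeb chords, so the exponential-convergence estimates for $J$-holomorphic half strips with totally real boundary conditions (see \cite[Section 4]{ees:pxr}) show that $v$ decays at rate at least $\min_j\theta_j>\delta$ there; hence the contribution of these ends to $\|v\|_{2,\delta}$ is finite and small.

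The remaining and main step is the long junction and flow-line regions in case $({\bf gd3})$. There I would decompose $v=v_{\rm sol}+v'$ with $v_{\rm sol}\in V_{\rm sol}$ the cut-off constant solutions supported where $h$ is large and $v'$ carrying the vanishing conditions at the boundary midpoints of the vertical segments where $h$ peaks. The $C^{0}$-smallness of $v$ bounds $\|v_{\rm sol}\|_{C^{0}}$, and $L\bar\partial_J v_{\rm sol}$ is supported in the finite interpolation (plateau/inflection/junction) regions where $h$ is bounded, so $\|L\bar\partial_J v_{\rm sol}\|_{1,\delta}=\Ordo(\lambda)\|v_{\rm sol}\|_{C^{0}}$. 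For $v'$, the linearized $\bar\partial_J$-operator on $\rr\times[0,1]$ with $\rr^{n}$-boundary conditions, acting on the weight $e^{-|\tau|}$ under the midpoint vanishing condition, is invertible off the cut-off solutions --- the mechanism used in the proof of Lemma~\ref{lem:unifinv} and in \cite[Proposition 6.20 and Section 6.3]{ekholm:morse-flow} --- so from $L\bar\partial_J v'=-\bar\partial_J w_\lambda-N(v)-L\bar\partial_J v_{\rm sol}$ one gets $\|v'\|_{2,\delta}\le C(\|\bar\partial_J w_\lambda\|_{1,\delta}+\|N(v)\|_{1,\delta}+\Ordo(\lambda)\|v_{\rm sol}\|_{C^{0}})$, all of which are small. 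Case $({\bf gd2})$ is the same argument with $V_{\rm sol}=0$, and case $({\bf gd1})$ is covered by \cite[Theorem 1.3]{ekholm:morse-flow}.

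Combining the three regional estimates yields an a priori bound $\|v\|_{2,\delta}\le\epsilon$ with $\epsilon$ as small as desired once $\lambda$ and the $C^{0}$-neighborhood are taken small; feeding this into the uniform invertibility of Lemma~\ref{lem:unifinv} together with the quadratic estimate of Lemma~\ref{lem:quadest} --- $\|v\|_{2,\delta}\le C\|L\bar\partial_J v\|_{1,\delta}\le C\|\bar\partial_J w_\lambda\|_{1,\delta}+C\|N(v)\|_{1,\delta}$ with $\|N(v)\|_{1,\delta}\le C'\|v\|_{2,\delta}^{2}$ --- lets us absorb the quadratic term and conclude $\|v\|_{2,\delta}\le 2C\|\bar\partial_J w_\lambda\|_{1,\delta}$, which tends to $0$ with $\lambda$. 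The anticipated main obstacle is precisely the bookkeeping in the exponentially weighted long regions: one must verify that the part of $v$ which need not decay there is genuinely captured by $V_{\rm sol}$ (hence controlled in the supremum norm rather than the huge weighted norm), and that the complementary piece $v'$ really inherits the decay forced by the midpoint vanishing conditions, so that the invertibility statement applies.
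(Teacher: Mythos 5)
Your overall scheme---write the nearby disk as $\exp(v)$ and upgrade $C^{0}$-smallness of $v$ to $\|\cdot\|_{2,\delta}$-smallness region by region---is fine on the compact piece and at the Reeb-chord ends, but the step you yourself flag as the main obstacle, namely the junction strip of length $\Ordo(\lambda^{-1})$ and the flow-line strips in case $({\bf gd3})$, contains a genuine circularity. To bound $\|v'\|_{2,\delta}$ you feed $\|N(v)\|_{1,\delta}$ into the invertibility estimate and assert it is small; but by Lemma~\ref{lem:quadest} the only available bound is $\|N(v)\|_{1,\delta}\le C\|v\|_{2,\delta}^{2}$, and $\|v\|_{2,\delta}$ is exactly the quantity the lemma is supposed to control. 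The $C^{0}$ hypothesis cannot substitute for it: even after elliptic bootstrapping gives pointwise derivative bounds, integrating a merely $C^{0}$-small quantity against the weight $h$, which reaches size $e^{\delta d/(2\lambda)}$ over a strip of length $\Ordo(\lambda^{-1})$, yields an enormous bound unless one knows pointwise exponential decay toward the middle of the strip; the midpoint vanishing conditions do not by themselves force such decay---the decay has to come from the equation, and using the equation through the linearization brings back the uncontrolled term $N(v)$. The same circularity reappears in your final absorption step, which presupposes the a priori bound $\|v\|_{2,\delta}\le\epsilon$ that is in question.

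The paper's proof supplies precisely the missing mechanism and avoids linearization in these regions altogether. Over the junction strip the almost complex structure is standard, so if $y_\lambda$ is the nearby holomorphic disk then $y_\lambda-u^{\rm jun}_\lambda$ is an honestly holomorphic $\cc^{n}$-valued map with $\rr^{n}$ boundary conditions; its Fourier modes decay at rate $\pi>\delta$ from the two ends of the strip, the constant modes being absorbed by the cut-off solutions in $V_{\rm sol}$, so the $C^{0}$-distance at the two ends---which tends to $0$ by Corollary~\ref{cor:conv1}---controls the full weighted norm on the strip. The gradient-line part is controlled by the corresponding computation in \cite{ekholm:morse-flow}, and only the big-disk part is handled by the soft convergence statement together with the decomposition into cut-off constants and exponentially decaying functions near the marked-point puncture. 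If you replace your invertibility step by this exact-decay argument for the difference from the explicit local models, the rest of your outline goes through; as written, the key estimate in the weighted regions is assumed rather than proved.
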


\begin{proof} The proof of this lemma is analogous to the last
  argument in the proof of \cite[Theorem 1.3]{ekholm:morse-flow}, so
  we only sketch it. Fix a generalized disk. It is a consequence of
  the convergence result Corollary \ref{cor:conv1} that for
  sufficiently small $\lambda$, any holomorphic disk in a finite
  neighborhood of the generalized disk converges to it as $\lambda\to
  0$. Further, the domains of such a sequence of disks can be
  subdivided into three pieces: a subset of a domain
  $\Delta_m(\lambda)$ which converges to the domain $\Delta_m$ of the
  limit disk on which the map converges to the limit map, a strip part
  of length $\Ordo(\lambda^{-1})$ mapping to a small neighborhood of
  the junction point, and a half infinite strip converging to a Morse
  flow line. We estimate the $\|\cdot\|_{2,\delta}$-distance by
  considering these three pieces separately. Over the big-disk part,
  closeness to $w_\lambda$ in the $\|\cdot\|_{2,\delta}$-norm follows
  from the convergence result just mentioned using the decomposition
  of holomorphic functions into cut-off constant solutions and
  exponentially decaying functions near the marked point
  puncture. Over the gradient line part, the
  $\|\cdot\|_{2,\delta}$-norm can be controlled exactly as in \cite[Proof of Theorem
  1.3]{ekholm:morse-flow} (cf. the calculation on page 1218 and the argument for the ``finte number of strip regions'' on page 1219). Finally, in the region over the junction
  point we note that if $y_\lambda$ is any solution and if $u^{\rm
    jun}_\lambda$ is the local solution that was used to build
  $w_\lambda$, then $y_\lambda-u^{\rm jun}_\lambda$ maps into
  $\cc^{n}$, is holomorphic, and satisfies $\rr^{n}$ boundary
  conditions. As in the proof of \cite[Theorem
  1.3]{ekholm:morse-flow}, we find that the $C^{0}$-distance near the
  endpoints of the strip region controls the
  $\|\cdot\|_{2,\delta}$-norm and the $C^{0}$-distance goes to $0$ by
  Corollary \ref{cor:conv1}.
\end{proof}

\begin{cor}\label{cor:disk+Morse} For all sufficiently small
  $\lambda>0$, there is a unique rigid holomorphic disk with at most
  two Morse chords corresponding to each rigid generalized disk.
\end{cor}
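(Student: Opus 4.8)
The plan is to assemble the corollary from the gluing and convergence results of Sections~\ref{s:disktotree} and~\ref{ssec:gen-to-hol}, handling the three types $({\bf gd1})$--$({\bf gd3})$ of rigid generalized disks separately and then verifying that the resulting correspondence is a bijection. First I would dispose of type $({\bf gd1})$: here the holomorphic part is constant, so the rigid generalized disk is essentially a rigid Morse flow line (with the trivial strip attached), and the existence of a unique nearby rigid holomorphic disk with boundary on $L\cup L_\lambda$, carrying one or two Morse chords, follows from \cite[Theorem~1.3]{ekholm:morse-flow} together with Lemma~\ref{lem:2-punc}.

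For types $({\bf gd2})$ and $({\bf gd3})$ I would run Floer's Picard lemma (Lemma~\ref{lem:FloerPicard}) on the operator $f\colon\hat\sblv_{2,\delta}\to\sblv_{1,\delta}$ obtained by trivializing the $\bar\partial_J$-operator in a chart centered at the approximate solution $w_\lambda$ constructed in Subsection~\ref{ssec:gen-to-hol}. All three hypotheses are already available: the error bound $\|f(0)\|_{1,\delta}=\Ordo(\lambda^{3/4-\delta}(\log\lambda^{-1})^{1/2})$, the uniform invertibility of $df=L\bar\partial_J$ with a bounded right inverse $Q$ (Lemma~\ref{lem:unifinv}), and the quadratic estimate (Lemma~\ref{lem:quadest}). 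Hence for $\lambda$ sufficiently small $\|Qf(0)\|\le\frac{1}{8C}$, and Lemma~\ref{lem:FloerPicard} produces in an $\epsilon$-ball about $w_\lambda$ a zero set diffeomorphic to the $\epsilon$-ball in $\krn(df)$; since $df$ is an isomorphism this zero set is a single point, i.e.\ a unique holomorphic disk near $w_\lambda$ (this is Corollary~\ref{cor:exist}). To see that this disk is \emph{rigid}, I would note that transversality is immediate from $L\bar\partial_J$ being an isomorphism on $\hat\sblv_{2,\delta}$, while a routine additivity-of-index computation over the three pieces of the glued domain $\Delta_m$ (the big-disk piece $\Delta_m^{0}$, the $\Ordo(\lambda^{-1})$ junction strip, and the half-infinite Morse strip), taking into account the contributions of $V_{\rm con}$ and $V_{\rm sol}$, shows that the formal dimension of the glued disk equals that of the generalized disk $(u,\gamma)$, which is $0$ because $(f_\lambda,g,J)$ is normalized and so generic with respect to rigid generalized disks (Lemma~\ref{lem:normal}).

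Finally I would check that $(u,\gamma)\mapsto(\text{the disk near }w_\lambda)$ is a bijection onto rigid holomorphic disks with at most two Morse chords. Injectivity follows from Condition $({\bf g3})$, since the lifts of distinct rigid generalized disks are $C^{0}$-separated, so the associated $w_\lambda$'s are too. For surjectivity, Corollary~\ref{cor:conv1} (one Morse chord) and Lemma~\ref{lem:2-punc} (two Morse chords) show that any sequence of rigid holomorphic disks with at most two Morse chords subconverges as $\lambda\to 0$ to a rigid generalized disk; hence for all sufficiently small $\lambda$ every such disk lies in a small $C^{0}$-neighborhood of the $w_\lambda$ attached to some rigid generalized disk, and by Lemma~\ref{lem:unique} it then lies in the small $\|\cdot\|_{2,\delta}$-neighborhood in which the solution of Corollary~\ref{cor:exist} is unique --- so it coincides with the disk already produced. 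I expect the main obstacle not to be any single estimate (all are in hand) but precisely this last point: arranging that the gluing construction of Subsection~\ref{ssec:gen-to-hol} and the a~priori convergence analysis of Section~\ref{s:disktotree} are genuine inverses, so that no spurious holomorphic disk $C^{0}$-close to a generalized disk escapes the Sobolev neighborhood (Lemma~\ref{lem:unique}) and the rigidity/index bookkeeping across the long junction strip is exactly balanced.
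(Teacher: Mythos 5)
Your argument is correct and is essentially the paper's: the corollary is deduced from Corollary~\ref{cor:exist} (Floer--Picard existence and local uniqueness near $w_\lambda$, using Lemmas~\ref{lem:unifinv} and~\ref{lem:quadest}) together with Lemma~\ref{lem:unique}, with the type $({\bf gd1})$ case handled by \cite[Theorem~1.3]{ekholm:morse-flow}. Your final paragraph on surjectivity via Corollary~\ref{cor:conv1} and Lemma~\ref{lem:2-punc} goes beyond what the corollary asserts; that bookkeeping is exactly what the paper defers to the proof of Theorem~\ref{thm:disk+Morse} in Subsection~\ref{s:disk+Morse}.
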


\begin{proof} This follows from Corollary \ref{cor:exist} and Lemma
  \ref{lem:unique}.
\end{proof}

\subsection{Proof of Theorem \ref{thm:disk+Morse}}
\label{s:disk+Morse}

Lemma~\ref{lem:2-punc} and Corollaries \ref{cor:conv1} and
\ref{cor:disk+Morse} imply that there is a family of normalized
triples $(f_\lambda,g,J)$, $0<\lambda<1$ such that any rigid
holomorphic disk with boundary on $L\cup L_\lambda$ which has at least
one Morse puncture is transversely cut out and corresponds either to a
flow line or to a generalized disk. In particular, it follows that
$({\rm i})$ and $({\rm ii})$ hold, that $({\rm iv})$ holds for any
moduli space of disks with at least one Morse puncture, and that $(3)$
and $(4)$ hold.

It thus remains to deal with disks with two
mixed punctures, neither of which are Morse punctures. Consider a
mixed puncture of such a disk. At such a puncture, the disk has an
incoming and an outgoing sheet. Identify these sheets of $L\cup
L_\lambda$ with the corresponding sheets of $L$. This gives asymptotic
data for a holomorphic disk with boundary on $L$ at the Reeb chord
corresponding to the mixed Reeb chord we started with. That asymptotic
data may correspond to a positive or a negative puncture. We call it
the {\em induced asymptotic data} at the mixed puncture. There are the
following two cases to consider.
\begin{itemize}
\item[(${\rm I}$)] The asymptotic data of one of the mixed punctures
  corresponds to a positive puncture and that of the other corresponds
  to a negative puncture.
\item[(${\rm II}$)] The asymptotic data of both mixed punctures correspond to
  positive punctures.
\end{itemize}

To get the correspondence between rigid disks of type $({\rm I})$ and liftings
of rigid disks in $\ms(a; b_1,\dots,b_k)$, we argue as follows. View
the boundary condition for a disk of type $({\rm I})$ as a perturbation of the
boundary condition for a disk in $\ms(a; b_1,\dots,b_k)$. As the
latter moduli space is transversely cut out, it follows by Gromov
compactness that for $\lambda>0$ small enough there is a bijective
correspondence between rigid disks of type $({\rm I})$ and liftings of rigid
disks in $\ms(a;b_1,\dots,b_k)$.

Fix a small $\lambda>0$ so that this bijective correspondence exists
and so that Corollary \ref{cor:disk+Morse} holds. Assume that the
Morse function $f_\lambda$ is sufficiently generic so that Lemma \ref{lem:2pos}
holds for $L_0=L$ and $L_1=L_1(f_\lambda)=L_\lambda$: as the proof of Lemma \ref{lem:2pos} shows, to achieve this genericity we need only perturb $f_\lambda$ an arbitrary small amount near the double points of $L$. For sufficiently small such perturbation $(f_\lambda,g,J)$ remains normalized. Then, by definition, rigid
holomorphic disks of type $({\rm II})$ correspond to liftings of rigid disks
in $\ms(a_1,a_2;b_1,\dots,b_k)$. This shows that $({\rm iii})$, the general version of $({\rm iv})$, as well as $(2)$ hold. As mentined in the beginning of Subsection \ref{s:disktotree}, $(1)$ holds provided $\lambda>0$ is small enough, and we conclude that the theorem holds.

\appendix

\section{Torsion Framings and the Grading}
\label{apdx:framings}

The goal of this section is to show how a choice of sections of $TP$
over the $3$-skeleton of some fixed triangulation of $P$ determines a
loop $\zz_g$-framing used for grading in
Section~\ref{ssec:grading}. We will prove these results in the
slightly more general setting of a rank $k$ complex vector bundle
$\eta \to M$, where $k \geq 2$. See Section~\ref{ssec:grading} for the
definitions of the greatest divisor $g(\eta)$ and a $\zz_g$-framing of
$\eta$ along a closed curve $\gamma \subset M$.

Fix a triangulation $T$ of $M$ and let $T^{(j)}$ denote the
$j$-skeleton of $T$. Fix a Hermitian metric on $\eta$.

\begin{lem}\label{l:triv-1} 
  The complex vector bundle $\eta$ admits $k-1$ everywhere orthonormal
  sections over $T^{(3)}$. Any two restrictions of such $(k-1)$-frames
  over $T^{(2)}$ are homotopic.
\end{lem}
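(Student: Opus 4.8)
The plan is to run obstruction theory against the connectivity of the complex Stiefel manifold. Fix the Hermitian metric on $\eta$, and let $F=V_{k-1}(\cc^k)$ be the manifold of orthonormal $(k-1)$-frames in $\cc^k$; then the orthonormal $(k-1)$-frames of $\eta$ over a subcomplex $K\subset T$ are exactly the sections over $K$ of the associated fiber bundle $E\to M$ with fiber $F$. Note $F\cong U(k)/U(1)$, where $U(1)$ is the circle acting on the complex line orthogonal to a given $(k-1)$-frame.

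The first step is to show that $F$ is $2$-connected. I would read this off the principal $U(1)$-bundle $U(1)\to U(k)\to F$ (sending a unitary matrix to its first $k-1$ columns, the fiber over a frame being the circle of choices of a unit $k$-th vector): the inclusion of the fiber $U(1)\hookrightarrow U(k)$ as the last-coordinate circle induces an isomorphism $\pi_1(U(1))\xrightarrow{\cong}\pi_1(U(k))\cong\zz$, and since $\pi_0(U(k))=\pi_2(U(k))=0$, the long exact sequence of the fibration forces $\pi_i(F)=0$ for $i=0,1,2$ (and, incidentally, $\pi_3(F)\cong\pi_3(U(k))\cong\zz$). Equivalently, one may simply invoke the classical fact that $V_{k-1}(\cc^k)$ is $2(k-(k-1))=2$-connected.

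With that in hand, existence is immediate: the obstruction to extending a section of $E$ over the $i$-skeleton, having chosen one over the $(i-1)$-skeleton, lies in $H^i\!\big(M;\pi_{i-1}(F)\big)$, where the coefficient system is simple for $i\le 3$ because the groups in question vanish; since $\pi_0(F)=\pi_1(F)=\pi_2(F)=0$, a section of $E$ over $T^{(3)}$ exists, i.e.\ $\eta$ admits $k-1$ everywhere orthonormal sections over $T^{(3)}$. For the uniqueness statement, given two such frames restricted to $T^{(2)}$, regard them as sections $s_0,s_1$ of $E$ over $T^{(2)}$; a homotopy through $(k-1)$-frames is a section of the pullback of $E$ to $T^{(2)}\times[0,1]$ agreeing with $s_0,s_1$ on $T^{(2)}\times\{0,1\}$. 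Building such a section cell by cell relative to the boundary, the obstructions lie in $H^{i+1}\!\big(T^{(2)}\times[0,1],\,T^{(2)}\times\{0,1\};\,\pi_i(F)\big)\cong H^i\!\big(T^{(2)};\pi_i(F)\big)$ for $i=0,1,2$, all of which are zero since $\pi_i(F)=0$ for $i\le 2$; hence the homotopy exists. The only step needing genuine care is the connectivity computation for $V_{k-1}(\cc^k)$; everything after that is routine obstruction theory and works for arbitrary (possibly non-compact) triangulated $M$, since only finitely many skeleta and one cell at a time enter.
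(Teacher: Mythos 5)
Your proof is correct and follows essentially the same route as the paper: obstruction theory driven by the $2$-connectivity of the complex Stiefel manifold $V(k,k-1)$. The only cosmetic differences are that you compute $\pi_j(V(k,k-1))=0$ for $j\le 2$ from the fibration $U(1)\to U(k)\to V(k,k-1)$ rather than by induction over the fibrations $V(r-1,r-2)\to V(r,r-1)\to S^{2r-1}$, and you deduce existence over $T^{(3)}$ directly from this connectivity, where the paper instead quotes the vanishing of the higher obstructions (Chern classes) of $\eta|_{T^{(3)}}$ for degree reasons.
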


\begin{proof}
  The first statement follows from the fact that the higher Chern
  classes of $\eta|_{T^{(3)}}$ vanish as $H^i(T^{(3)}) = 0$ for $i
  \geq 4$.


  We next consider the uniqueness of such frames.  Let $V(r,r-1)$
  denote the Stiefel manifold of orthonormal $(r-1)$-frames in
  $\cc^{r}$ and consider the natural fibration
  \[
  \begin{CD} V(r-1,r-2) @>{\iota}>> V(r,r-1) @>{\pi}>> S^{2r-1}
  \end{CD},
  \] where $\pi$ is the projection which maps a frame to its first
  vector and where $\iota$ is the inclusion of the fiber. The long
  exact homotopy sequences of these fibrations show that for $j=1,2$
  we have
  \[
  \pi_j(V(r,r-1))\cong\pi_j(V(r-1,r-2))\cong\dots\cong\pi_j(V(2,1))=0,\quad j=1,2.
  \]

  A $(k-1)$-frame in $\eta$ over $T^{(2)}$ gives a section in the
  bundle $V(\eta,k-1)|_{T^{(2)}}$, where $V(\eta,k-1)$ is the bundle
  with fiber $V(k,k-1)$ naturally associated to $\eta$. The
  obstructions to finding a homotopy between two such sections over
  $T^{(j)}$ lie in
  \[
  H^{j}\Bigl(T^{(2)};\pi_j\bigl(V(k,k-1)\bigr)\Bigr)=0,\quad j=1,2
  \]
  and the lemma follows.
\end{proof}

Pick $k-1$ orthonormal vector fields $(v_1,\dots,v_{k-1})$ of $\eta$
over $T^{(3)}$. This induces a decomposition
\[
\eta|_{T^{(3)}}=\epsilon_1\oplus\dots\oplus\epsilon_{k-1}\oplus L,
\]
where $\epsilon_j$, $j=1,\dots,k-1$ are trivial and trivialized line
bundles and where $L$ is a line bundle. The line bundle $L$ has Chern
class $c_1(L)=c_1(\eta)=g(\eta)a$ for some $g(\eta)\ge 0$ and some
$a\in H^{2}(M;\zz)=H^{2}(T^{(3)};\zz)$.
Let $K$ be a line bundle over $T^{(3)}$ with $c_1(K)=a$ and let $w$ be
a generic section of $K$. (Here we say that $w$ is generic if it does
not vanish along $T^{(1)}$ and if its $0$-set is transverse to
$T^{(2)}$.) Since two line bundles are isomorphic if and only if they
have the same Chern class, it follows that $L\cong K^{\otimes
  g(\eta)}$. In particular, $\left(v_1,\dots,v_{k-1},
  w^{g(\eta)}\right)$ gives a framing $\zeta^{(1)}$ of $\eta$ over
$T^{(1)}$.

\begin{lem}\label{l:divobst} If $\delta_j\colon S^{1}\to T^{(1)}$,
  $j=1,\dots,m$ are curves in $T^{(1)}$ and if $C\colon \Sigma\to
  T^{(2)}$ is any map of an orientable surface with $m$ boundary
  components $\pa\Sigma=\pa\Sigma_1\cup\dots\cup\pa\Sigma_m$ such that
  $C|\pa\Sigma_j=\delta_j$, then the obstruction to extending the
  trivialization $C^{\ast}(\zeta^{(1)})$ of $C^{\ast}(\eta)$ from
  $\pa\Sigma$ to $\Sigma$ is a class $b\in
  H^{2}\bigl(\Sigma,\pa\Sigma;\zz\bigr)$ which is divisible by
  $g(\eta)$.
\end{lem}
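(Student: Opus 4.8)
Write $g=g(\eta)$. If $g=0$ then $c_1(\eta)=0$, the line bundle $L$ is trivial, one may take $K$ trivial with $w$ a constant nowhere-zero section, and $\zeta^{(1)}$ extends over all of $\Sigma$ with vanishing obstruction $b=0$; so assume $g>0$ from now on. The plan is to reduce the extension problem for the $U(k)$-frame $C^{\ast}(\zeta^{(1)})$ over $(\Sigma,\partial\Sigma)$ to an extension problem for a single nowhere-vanishing section of a line bundle, and then to use that this line bundle, together with its prescribed boundary section, is a $g$-th tensor power.

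First I would record the structure of $\zeta^{(1)}$ over $\Sigma$. Since $C$ takes values in $T^{(2)}\subset T^{(3)}$, the orthonormal sections $v_1,\dots,v_{k-1}$ pull back to nowhere-vanishing sections of $C^{\ast}(\eta)$ over $\Sigma$; together with the splitting $\eta|_{T^{(3)}}=\epsilon_1\oplus\dots\oplus\epsilon_{k-1}\oplus L$ they give $C^{\ast}(\eta)\cong\epsilon_1\oplus\dots\oplus\epsilon_{k-1}\oplus C^{\ast}(L)$ over $\Sigma$, compatibly with $\zeta^{(1)}$ along $\partial\Sigma$. Under the determinant this identifies $\det C^{\ast}(\eta)\cong C^{\ast}(L)$, carrying the boundary trivialization $\det C^{\ast}(\zeta^{(1)})$ to the section $C^{\ast}(w^{g})|_{\partial\Sigma}=\bigl(C^{\ast}(w)|_{\partial\Sigma}\bigr)^{\otimes g}$ of $C^{\ast}(L)\cong\bigl(C^{\ast}(K)\bigr)^{\otimes g}$, which is nowhere zero because $C(\partial\Sigma)\subset T^{(1)}$ and $w$ does not vanish on $T^{(1)}$. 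Since $\det\colon U(k)\to U(1)$ is an isomorphism on $\pi_1$ while $SU(k)$ is $2$-connected, relative obstruction theory over the $2$-dimensional pair $(\Sigma,\partial\Sigma)$ has a single obstruction, and it is faithfully detected by the determinant; hence $b\in H^{2}(\Sigma,\partial\Sigma;\zz)$ equals the relative Euler (first Chern) class $e\bigl((C^{\ast}K)^{\otimes g},(C^{\ast}w)^{\otimes g}|_{\partial\Sigma}\bigr)$.

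Next I would compute this class. The section $C^{\ast}(w)$ of $C^{\ast}(K)$ over $\Sigma$ is nowhere zero on $\partial\Sigma$, so it has a well-defined relative Euler class $c:=e\bigl(C^{\ast}K,C^{\ast}(w)|_{\partial\Sigma}\bigr)\in H^{2}(\Sigma,\partial\Sigma;\zz)$ (after a small perturbation rel $\partial\Sigma$ to a transverse section, $c$ is the signed zero count). Because the relative Euler class is additive under tensor products of line bundles --- equivalently, the zero divisor of $(C^{\ast}w)^{\otimes g}$ is that of $C^{\ast}w$ with every local multiplicity multiplied by $g$ --- one obtains $b=e\bigl((C^{\ast}K)^{\otimes g},(C^{\ast}w)^{\otimes g}|_{\partial\Sigma}\bigr)=g\cdot c$, which is divisible by $g(\eta)$, as claimed.

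The main obstacle will be the bookkeeping in the reduction step: verifying that for a surface-with-boundary the only obstruction to extending a $U(k)$-frame rel boundary is the primary one and that it is faithfully detected via $\det$ by the induced line-bundle section, and checking that $\det C^{\ast}(\zeta^{(1)})$ is genuinely $C^{\ast}(w^{g})|_{\partial\Sigma}$ under the isomorphism $\det(\eta|_{T^{(3)}})\cong L$ built from the $v_i$. Once these identifications are in place, the only remaining input is the multiplicativity of Chern/Euler classes of line bundles under tensor powers.
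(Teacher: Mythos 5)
Your proof is correct and follows essentially the same route as the paper: reduce the frame-extension problem over $(\Sigma,\pa\Sigma)$ to extending the nonvanishing section $w^{g(\eta)}$ of the complementary line bundle, and then use that its relative Euler class is $g(\eta)$ times that of $w$. The only cosmetic difference is in the reduction step: the paper simply observes that $C^{\ast}v_1,\dots,C^{\ast}v_{k-1}$ extend over all of $\Sigma$ (being pulled back from sections defined on $T^{(3)}$), so the obstruction lies entirely in the $C^{\ast}(L)$-direction, whereas you route through $\det\colon U(k)\to U(1)$ and obstruction theory; both identifications yield the same class and the same divisibility.
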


\begin{proof} The obstruction to finding such a trivialization is
  equal to the obstruction to extending the section $w^{g(\eta)}$ of
  $C^{\ast}(L)$ over $\Sigma$, which equals $g(\eta)$ times the
  obstruction of extending $w$.
\end{proof}

The first cohomology group $H^{1}(M;\zz_g)$ of $M$ acts naturally on
$\zz_g$-framings as follows. Let $g\ge 0$ and let $\gamma$ be a closed
curve in $M$. Assume that $\gamma$ is equipped with a $\zz_g$-framing
$\Xi_\gamma$ of the complex vector bundle $\eta$. If $b\in
H^{1}(M;\zz_g)$, then we define an action of $b$ on the
$\zz_g$-framing of $\gamma$, $\Xi_\gamma\mapsto\Xi'_\gamma$ as
follows. Pick a framing $Z_\gamma$  of $\eta|_\gamma$ representing
$\Xi_\gamma$. Then the $\zz_g$-framing $\Xi'_\gamma$ is represented by
any framing $Z_\gamma'$ with $d(Z_\gamma,Z'_\gamma)=\la
b,[\gamma]\ra \mod{g}$, where $\la a,\beta\ra$ denotes the homomorphism
$H_1(M;\zz)\to\zz_g$ corresponding to the cohomology class $a$
evaluated on the homology class $\beta$. Note that in the special case
$g=0$, a $\zz_g$-framing is simply a framing and $H^{1}(M;\zz)$ acts on
framings.

\begin{lem}\label{l:indframe} Let $M$ be a manifold with a complex
  vector bundle $\eta$. Let $g=g(\eta)$ be the greatest divisor of
  $\eta$, with $c_1(\eta)=g \cdot a$. Let $\gamma$ be any closed curve
  in $M$. Then there is an induced $\zz_g$-framing $\Xi_\gamma$ of
  $\gamma$ which is unique up to the action of $H^{1}(M;\zz_g)$. In
  particular, the $\zz_g$-framing of any $\gamma$ which represents a
  homology class in $H_1(M;\zz)$ which generates a subgroup isomorphic
  to $\zz_m$ where $m$ and $g$ are relatively prime is unique. In the
  special case $g=0$, the framing of any curve representing a torsion
  class is unique.
\end{lem}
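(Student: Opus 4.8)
The plan is to build $\Xi_\gamma$ from the framing $\zeta^{(1)}$ of $\eta|_{T^{(1)}}$ constructed above, show it is well defined for a given choice of data over $T^{(3)}$, and then check that changing the data changes it only by the $H^{1}(M;\zz_g)$-action. First I would push $\gamma$ into the $1$-skeleton: after a small homotopy we may assume $\gamma\colon S^1\to T^{(1)}$, and we set $\Xi_\gamma$ to be the reduction modulo $g$ of the framing $\zeta^{(1)}|_\gamma$ of $\eta|_\gamma$. To see independence of the homotopy, let $\gamma_0,\gamma_1\colon S^1\to T^{(1)}$ be two such representatives; they are homotopic in $M$, and by cellular approximation there is a map $C\colon S^1\times[0,1]\to T^{(2)}$ with $C|_{S^1\times\{i\}}=\gamma_i$. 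Applying Lemma~\ref{l:divobst} with $\Sigma=S^1\times[0,1]$ and its two boundary circles, the obstruction $b\in H^2(\Sigma,\partial\Sigma;\zz)\cong\zz$ to extending $C^{\ast}\zeta^{(1)}$ over the cylinder is divisible by $g$. For a cylinder this relative obstruction is exactly the difference $d(\zeta^{(1)}|_{\gamma_0},\zeta^{(1)}|_{\gamma_1})$ of the two boundary framings (the relevant homotopy group being $\pi_1(U(k))\cong\zz$), so the two restrictions represent the same $\zz_g$-framing. Thus $\Xi_\gamma$ depends only on $\gamma$ and on the data $(v_1,\dots,v_{k-1},K,w)$ chosen over $T^{(3)}$.

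Next I would prove uniqueness up to the $H^{1}(M;\zz_g)$-action. Let $\Xi$ and $\Xi'$ be the loop $\zz_g$-framings coming from two choices of such data and put $\delta(\gamma)=\Xi'_\gamma-\Xi_\gamma\in\zz_g$. It suffices to show $\delta$ factors through $H_1(M;\zz)$, for then $\delta\in\Hom(H_1(M;\zz),\zz_g)=H^{1}(M;\zz_g)$ and $\Xi'=\delta\cdot\Xi$ by definition of the action. So let $\gamma_0,\gamma_1$ be homologous loops in $T^{(1)}$ and realize $\gamma_0-\gamma_1$ as the oriented boundary of a map $C\colon\Sigma\to T^{(2)}$ of a compact oriented surface (attaching handles and point-mapped boundary circles as needed). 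By Lemma~\ref{l:divobst} the obstructions $o,o'\in H^2(\Sigma,\partial\Sigma;\zz)\cong\zz$ to extending $C^{\ast}\zeta^{(1)}$ and its counterpart for the second datum over $\Sigma$ are both divisible by $g$; on the other hand, changing the boundary framing on $\gamma_0$ (resp.\ $\gamma_1$) by an integral winding changes the obstruction by that winding, with opposite signs on the two ends, so $o'-o$ represents $\delta(\gamma_0)-\delta(\gamma_1)$ modulo $g$. Being a difference of multiples of $g$ this vanishes in $\zz_g$, so $\delta(\gamma_0)=\delta(\gamma_1)$; since $\delta$ is clearly additive under concatenation and disjoint union of loops, it descends to the desired homomorphism. (Lemma~\ref{l:triv-1} is what ensures the $(k-1)$-frame part of the data contributes no further ambiguity, since any two such frames are already homotopic over $T^{(2)}\supset T^{(1)}$.)

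Finally, the ``in particular'' assertions follow by an order count. Using universal coefficients, identify $H^{1}(M;\zz_g)$ with $\Hom(H_1(M;\zz),\zz_g)$. If $[\gamma]$ generates a subgroup $\zz_m\subset H_1(M;\zz)$ with $\gcd(m,g)=1$, then for any $b\in H^{1}(M;\zz_g)$ the element $b([\gamma])\in\zz_g$ is killed by $m$ and by $g$, hence by $\gcd(m,g)=1$, so $b([\gamma])=0$; thus the action fixes $\Xi_\gamma$ and, by the previous step, $\Xi_\gamma$ is unique. When $g=0$ one has $\zz_g=\zz$, and if $[\gamma]$ is torsion of order $m$ then $b([\gamma])\in\zz$ is killed by $m$, hence $0$, giving the same conclusion. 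The main obstacle I anticipate is the bookkeeping in the obstruction-theoretic steps --- in particular verifying carefully that the relative obstruction class in $H^2(\Sigma,\partial\Sigma;\zz)$ is the difference of the boundary framings, so that Lemma~\ref{l:divobst} genuinely controls $\delta$, and carrying out the cellular-approximation reductions so that everything really takes place over $T^{(1)}$ and $T^{(2)}$ where $\zeta^{(1)}$ and Lemma~\ref{l:divobst} are defined.
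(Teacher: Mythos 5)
Your overall strategy is sound, but there is a gap in the existence step for a general closed curve. The lemma must produce a $\zz_g$-framing of $\eta$ along the \emph{original} curve $\gamma$, which in the intended application is a loop in the Lagrangian projection of $L$ and has no reason to lie in $T^{(1)}$; so you cannot simply ``assume $\gamma\subset T^{(1)}$''. What is actually needed is to transport $\zeta^{(1)}|_{\gamma_0}$ back to $\gamma$ along the chosen pushing homotopy, and then to show that two different choices --- a homotopy $h$ from $\gamma$ to $\gamma_0\subset T^{(1)}$ and a homotopy $h'$ from $\gamma$ to $\gamma_1\subset T^{(1)}$ --- yield transported framings along $\gamma$ that agree modulo $g$. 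Your comparison of $\zeta^{(1)}|_{\gamma_0}$ with $\zeta^{(1)}|_{\gamma_1}$ across \emph{some} cellular cylinder $C$ in $T^{(2)}$ does not settle this: framings along two different curves can only be compared through a specified cylinder, and the concatenation of $h$ reversed with $h'$ need not be homotopic rel boundary to the cylinder $C$ you chose. The discrepancy is the obstruction over the glued torus $D$ in $M$, which equals $\la c_1(\eta),[D]\ra=g\la a,[D]\ra$; this is precisely the term the paper's proof computes, $d(Z_\gamma,Z'_\gamma)=g\bigl(\la a,[D]\ra-o'\bigr)$, and which your argument never invokes. The gap is fillable in either of two ways: use the divisibility $c_1(\eta)=g\cdot a$ on the glued torus, as the paper does, or apply cellular approximation \emph{rel boundary} to the concatenated homotopy itself, so that the cylinder fed into Lemma~\ref{l:divobst} genuinely computes the difference of the two transports; as written, neither is done.

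Two further remarks. Your uniqueness argument takes a somewhat different route from the paper's: you show the difference $\delta$ of two induced loop framings is a homomorphism $H_1(M;\zz)\to\zz_g$ by comparing the two extension obstructions over a surface in $T^{(2)}$ bounding $\gamma_0-\gamma_1$ and then quote universal coefficients, whereas the paper homotopes the two $(k-1)$-frames into each other via Lemma~\ref{l:triv-1} and identifies the difference with the difference class of two sections of the line bundle $L$, which in addition exhibits the set of induced loop framings as a principal homogeneous space over $H^{1}(M;\zz_g)$. Your version suffices for the statement as phrased, but note that it compares two choices of data over the \emph{same} triangulation; the paper also checks, via a common refinement, that changing the triangulation stays in the same $H^{1}(M;\zz_g)$-orbit, and some such step is needed if ``choice of data'' includes the triangulation. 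Your deduction of the ``in particular'' clauses by the gcd/order count is correct.
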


\begin{proof} Fix a triangulation $T$ of $M$. Construct a framing
  $\zeta^{(1)}$ over $T^{(1)}$ as described above. 
  Pick a homotopy $C$ which connects $\gamma$ to a
  curve in $T^{(1)}$. Transporting the trivialization $\zeta^{(1)}$
  over $C$ gives a trivialization $Z_\gamma$ over $\gamma$. Let $C'$
  be another homotopy inducing another trivialization $Z'_\gamma$. Let
  $C''$ be a homotopy in $T^{(2)}$ connecting the end-curve of $C$ to
  the end-curve of $C'$. Then the cylinder $D$ constructed by joining
  $C$ to $C''$ and $C''$ to $C'$ connects $\gamma$ to itself. The
  obstruction to finding a framing over the torus corresponding to $D$
  is on the one hand equal to $\la c_1(\eta),[D]\ra=g\la a,[D]\ra$. On
  the other hand, this obstruction equals the sum of
  $d(Z_\gamma,Z'_\gamma)$ and the obstruction $o\in\zz$ to extending
  the framing $\zeta^{(1)}$ over $C''$. The obstruction $o$ is
  divisible by $g$ by Lemma \ref{l:divobst}, $o=go'$. Thus
  \[ d(Z_\gamma,Z'_\gamma)=g\bigl(\la a,[D]\ra-o'\bigr),
  \] and existence of $\Xi_\gamma$ follows.

  We next consider uniqueness. Consider applying the above
  construction in two ways for a fixed triangulation $T$. This gives
  two generic frames $(v_1,\dots,v_{k-1},w)$ and
  $(v_1',\dots,v_{k-1}',w')$ over $T^{(2)}$. Lemma \ref{l:triv-1}
  implies that there is a homotopy of the $(k-1)$-frames. Note that
  such a homotopy induces $1$-parameter family of bundle isomorphisms
  on orthonormal complements of the $(k-1)$-frame. Deforming
  $(v_1',\dots,v_{k-1}')$ to $(v_1,\dots,v_{k-1})$ thus gives a new
  generic frame $(v_1,\dots, v_{k-1},w'')$. The generic frames give
  the same $\zz_g$-framing of all loops in $T^{(1)}$ provided the
  corresponding difference classes of the sections $w$ and $w''$ of
  the line bundle $L$ are divisible by $g$. It follows that
  $H^{1}(T^{(2)};\zz_g)$ acts transitively on homotopy classes of
  $\zz_g$-framings of loops in $T^{(1)}$. Thus all loop
  $\zz_g$-framings obtained from a fixed triangulation form a
  principal homogeneous space over
  $H^{1}(T^{(2)};\zz_g)=H^{1}(M;\zz_g)$.

  Let $S$ be some other triangulation of $M$. After
  small perturbation of $S$ we may assume that the triangulations $S$
  and $T$ have a common refinement $U$. Using $U$ we find that there
  exist trivializations $\zeta^{(1)}_T$ and $\zeta^{(1)}_S$ over the
  $1$-skeleta of $T$ and $S$, respectively such that the corresponding
  loop $\zz_g$-framings agree. Since the action of
  $H^{1}(M;\zz_g)$ on $\zz_g$-framings is independent of triangulation it
  follows that the set of loop $\zz_g$-framings is independent of the
  chosen triangulation.
\end{proof}

\begin{rem}\label{r:cotangent1} Let $M$ be an orientable
  manifold. Consider the tangent bundle of the cotangent bundle
  $T(T^{\ast}M)$. After fixing an almost complex structure $J$ on
  $T(T^{\ast}M)$ which is compatible with the standard symplectic
  form, we consider $T(T^{\ast}M)$ as complex vector bundle. Since
  $T^{\ast}M\simeq M$ and since the restriction of $T(T^{\ast}M)$ to
  $M$ is the complexification of a real vector bundle, we see that
  $c_1(T(T^{\ast}M))=0$. Furthermore, an orientation of $M$ gives a
  non-zero section of $\Lambda^{\rm max}TM$, which gives a non-zero
  section of $\Lambda^{\rm max}T(T^{\ast}M)$. This, in turn, induces a
  trivialization over the $3$-skeleton of $M$ following the
  construction above by requiring that the section
  $v_1\wedge\dots\wedge v_{n-1}\wedge w$, where $v_j$ and $w$ give the
  trivialization $\zeta^{(1)}$ in Lemma \ref{l:divobst}, is homotopic
  to the one induced by the orientation.  The result is the canonical
  trivialization discussed after Definition~\ref{d:Z/gZ-framing}.
\end{rem}


\def\cprime{$'$} \def\polhk#1{\setbox0=\hbox{#1}{\ooalign{\hidewidth
  \lower1.5ex\hbox{`}\hidewidth\crcr\unhbox0}}}
\providecommand{\bysame}{\leavevmode\hbox to3em{\hrulefill}\thinspace}
\providecommand{\MR}{\relax\ifhmode\unskip\space\fi MR }
\providecommand{\MRhref}[2]{%
  \href{http://www.ams.org/mathscinet-getitem?mr=#1}{#2}
}
\providecommand{\href}[2]{#2}

\end{document}